\newcommand*{\horzbar}{\rule[.5ex]{2.5ex}{0.5pt}}
\newcommand{\Franky}[1]{{\color{blue}\sf  Franky: [#1]}}
\newcommand{\Thomas}[1]{{\color{red}\sf  Thomas: [#1]}}
\numberwithin{equation}{section}
\newcommand{\sub}{\subseteq}
\newcommand{\Z}{\mathbb{Z}}
\newcommand{\R}{\mathbb{R}}
\newcommand{\N}{\mathbb{N}}
\newcommand{\eps}{\varepsilon}
\newcommand{\M}{\mathcal M}
\newcommand{\rank}{\mathrm{rank}}
\newcommand{\dist}{\mathrm{dist}}
\numberwithin{chap}{section}
\newtheorem{thm}{Theorem}
\numberwithin{thm}{section}
\newtheorem{prop}[thm]{Proposition}
\newtheorem{defn}[thm]{Definition}
\newtheorem{lem}[thm]{Lemma}
\newtheorem{cor}[thm]{Corollary}
\newtheorem{condition}[thm]{Condition}
\DeclarePairedDelimiter{\norm}{\lVert}{\rVert}
\let\oldnorm\norm
\def\norm{\@ifstar{\oldnorm}{\oldnorm*}}
\begin{document}

\pagestyle{myheadings} \thispagestyle{empty} \markright{}
\title{Two principles of decoupling}

\author{Jianhui Li and Tongou Yang}
\address[Jianhui Li]{Department of Mathematics, Northwestern University\\
Evanston, IL 60208, United States
}
\email{jianhui.li@northwestern.edu}

\address[Tongou Yang]{Department of Mathematics, University of California\\
Los Angeles, CA 90095, United States}
\email{tongouyang@math.ucla.edu\\ 
tomyangcuhk@gmail.com}

\date{}

\begin{abstract}
    We put forward a radial principle and a degeneracy locating principle of decoupling. The former generalises the Pramanik-Seeger argument used in the proof of decoupling for the light cone. The latter locates the degenerate part of a manifold and effectively reduces the decoupling problem to two extremes: the nondegenerate case and the totally degenerate case. Both principles aim to provide a new algebraic approach to reducing decoupling for new manifolds to decoupling for known manifolds.

    \end{abstract}

\maketitle

\section{Introduction}\label{sec:introduction}

Fourier decoupling was first introduced by Wolff in \cite{Wolff2000} as a tool to prove $L^p$ local smoothing estimates for large $p$. In \cite{Wolff2000}, the decoupling was formulated in the setting of a truncated light cone in $\R^3$. Later, observed by Pramanik and Seeger \cite{PS2007}, the decoupling inequalities for light cones in $\R^3$ can be reduced to that of circles in $\R^2$. The technique is now known as the Pramanik-Seeger iteration. In simple terms, a light cone can be approximated by cylinders at intermediate scales. By projection and induction on scales, decoupling for a light cone in $\R^n$ is then reduced to decoupling for the unit sphere in $\R^{n-1}$. Therefore, together with the seminal work of Bourgain and Demeter \cite{BD2015} on the sharp decoupling inequalities for the unit sphere (and a compact piece of an elliptic paraboloid), satisfactory decoupling results for light cones are obtained.

Based upon decoupling for spheres and light cones, decoupling inequalities for general manifolds have also been largely studied, and we consider two main types of such manifolds. The first type of manifolds carry certain nondegenerate conditions. See Table \ref{tab:nondegenerate} below for some important examples, and also the following list of decoupling for nondegenerate manifolds in $\R^n$: \cite{BD2surfaceR4}\cite{Oh3surfacesinR5}\cite{DGS2019}\cite{GOZZ}\cite{GZ2019Inventiones}. The nondegeneracy conditions in the latter list are slightly harder to state, and we encourage the reader to refer to their papers for details. In general, these results rely on the corresponding multilinear decoupling inequalities derived from the nondegeneracy of the manifolds. 

\begin{table}[h!]
    \centering
    \begin{tabular}{|l|l|l|l|l|}
    \hline Results & $n$ & $k$ & non-degeneracy & critial exponents \\
    \hline    Bourgain-Demeter  & $n$ & $n-1$ & $D^2 \phi(x) \succ 0$  & $\ell^2(L^{\frac{2(n+1)}{n-1}})$\\
    \cite{BD2015} & & & &\\
    \hline    Bourgain-Demeter  & $n$ & $n-1$ & $\det D^2 \phi(x) \neq 0$ & $\ell^{\frac{2(n+1)}{n-1}}(L^{\frac{2(n+1)}{n-1}})$ \\
    \cite{BD2017}  & & & &\\
    \hline    Bourgain-Demeter- & $n$ & $1$ & $\det \left(\phi'',\dots,\phi^{(n)}\right) $ & $\ell^2(L^{n(n+1)})$ \\
    Guth \cite{BDG2016} & & &  $\neq 0$ &\\
    %\hline    Bourgain-Demeter & 4 & 2 & $\rank (\partial_{11} \phi , \partial_{12} \phi,$ & $\ell^6(L^6)$ \\
    %\cite{BD2surfaceR4} & & &$\partial_{22} \phi) =2$ & \\    
    \hline Guth-Maldague- & $3$ & $2$ & $\det D^2 \phi(x) \neq 0$ & $\ell^2(L^4)$\\
    Oh \cite{GMO24} & & & &\\
    \hline
    \end{tabular}
    \caption{Decoupling inequalities for nondegenerate manifolds in $\R^n$ as graphs of $\phi : [-1,1]^{k} \to \R^{n-k}$. }
  \end{table} \label{tab:nondegenerate}

The second type of manifolds shown in Table \ref{tab:degenerate} below are more general as the nondegenerate conditions may not be satisfied. The major approach to these results is to deal with the decoupling for pieces of the manifold on which the nondegenerate factor is small. For instance, for the case of hypersurfaces, the nondegenerate factor is the Gaussian curvature. When compared to Table \ref{tab:nondegenerate}, very limited results are known in higher dimensions $n >3$ or higher co-dimensions $n-k \geq 2$.

\begin{table}[h!]
    \centering
    \begin{tabular}{|l|l|l|}
    \hline Results & $n$  & descriptions \\
    \hline Biswas-Gilula-Li- &  & \\
    Schwend-Xi \cite{BGLSX}; & 2 &  analytic/smooth planar curves\\
    Demeter \cite{Demeter2020}; & &\\
    Y. \cite{Yang2} & &\\
    \hline Bourgain-Demeter & $3$ & some surfaces of revolution: $\phi(x) = \gamma(|x|)$ \\
    -Kemp \cite{BDK2019} & &  \\
    \hline Kemp \cite{Kemp2} & 3 & surfaces lacking planar points\\ 
    \hline Kemp \cite{Kemp1,Kemp2024} & n & tangent developable surfaces  \\ 
    \hline L.-Y. \cite{LiYang} & 3 & mixed-homogeneous polynomials $\phi$\\
    \hline L.-Y. \cite{LiYang2023}; Guth-& 3 & smooth surfaces\\
    Maldague-Oh \cite{GMO24} & &\\
    \hline Gao-Li-Zhao-Zheng   & $n$ & $\phi(x) = \sum_{i} \phi_i(x_i)$, $\phi_i'' \sim 1$ or $\phi_i (x_i) = x_i^m$. \\
    \cite{GLZZ2022} & &\\
    \hline  
    \end{tabular}
    \caption{Decoupling inequalities for curves/surfaces in $\R^n$ as graphs of $\phi : [-1,1]^{n-1} \to \R$.}
    \label{tab:degenerate}
\end{table}

In this paper, we first generalise the cone-to-sphere reduction to a \textit{radial principle} that applies to more general manifolds that exhibit some conical structure. Then, we form the \textit{degeneracy locating principle} by abstracting the idea of reducing the manifolds with possible degeneracy to the nondegenerate counterpart. These results provide an algebraic approach to reducing decoupling for more complicated manifolds to simpler manifolds. In particular, we provide immediate corollaries of these principles, namely, decoupling inequalities for new manifolds.

\subsection{Decoupling inequalities}
We first formulate decoupling inequalities in a more general setting.

\begin{defn}\label{defn:decoupling}
    Given a compact subset $S\sub \R^n$ and a finite collection $\mathcal R$ of boundedly overlapping\footnote{To deal with technicalities, we will impose a slightly stronger overlapping condition. See Definition \ref{defn:enlarged_overlap}.} parallelograms $R\sub \R^n$. For Lebesgue exponents $p,q\in [2,\infty]$ and $\alpha\in \R$, we define the $(\ell^q(L^p),\alpha)$ decoupling constant $\mathrm{Dec}(S,\mathcal R,p,q,\alpha)$ to be the smallest constant $\mathrm{Dec}$ such that
    \begin{equation}\label{eq:defn_decoupling}
        \norm{\sum_{R}f_R}_{L^p(\R^n)} \leq \mathrm{Dec}\,\, (\#\mathcal R)^{\frac{1}{2}-\frac{1}{q}+\alpha}\norm{\norm{f_R}_{L^p(\R^n)}}_{\ell^q(R\in \mathcal R)}
    \end{equation}
    for all smooth test function $f_R$ Fourier supported on $R\cap S$.
    
    Given a subset $S\sub \R^n$, we say that $S$ can be $(\ell^q(L^p),\alpha)$ decoupled into the parallelograms $R\in \mathcal R$ at the cost of $K$, if $S\sub \cup \mathcal R$ and $\mathrm{Dec}(S,\mathcal R,p,q,\alpha)\le K$.
\end{defn}

When $p,q$ are given, the exponent $\alpha$ is most commonly taken to be $0$. Hence, we also say $S$ can be $\ell^q(L^p)$ decoupled if $S$ can be $(\ell^q(L^p),0)$ decoupled. To further reduce notation, if $S$ and the exponents $p,q,\alpha$ are clear from the context, we %will often abbreviate $\mathrm{Dec}(\mathcal R)=\mathrm{Dec}(S,\mathcal R,p,q,\alpha)$, and 
simply say that {\it $S$ can be decoupled into parallelograms $R\in \mathcal{R}$}.

By H\"older's inequality, $\ell^{q}(L^p)$ decoupling implies $\ell^{q'}(L^p)$ decoupling if $2\leq q'\leq q$. In the literature, $\ell^2(L^p)$ and $\ell^p(L^p)$ decoupling inequalities are the most common.

Thus, together with Plancherel's identity and ignoring the bounded overlap, we always have $\ell^{q}(L^2)$ decoupling for $q\ge 2$. Also, for fixed $q$ and $p'\ge p$,  $\ell^{q}(L^{p'})$ decoupling is generally stronger than $\ell^{q}(L^{p})$ decoupling. This follows from interpolation whenever applicable, or can usually be observed from the proof of decoupling. 

\subsubsection{Decoupling for manifolds in $\R^n$}

To formulate decoupling for manifolds, we first introduce the following notation: for $\delta>0$ and $1\le m\le n-1$, we say that a subset $R\sub \R^n$ is {\it $\delta$-flat in dimension $m$}, if it is contained in the $\delta$-neighbourhood of some $m$-dimensional affine subspace of $\R^n$. Here and throughout this article, for $M\sub \R^n$, we define the $\delta$-neighbourhood of $M$ by
\begin{equation}\label{eqn:delta_nbhd}
    N_\delta(M):=\{x+c:x\in M,|c|\le \delta\}.
\end{equation}
In the context of decoupling, we often consider the case where $S=N_\delta(M)$ where $0<\delta \ll 1$ and $M$ is a compact manifold.\footnote{In this article, by a compact manifold we always mean a compact piece of a connected manifold, with or without boundary; we also assume every manifold is at least $C^2$.} Since $M$ is compact, by choosing $O_n(1)$ many coordinate charts, we may assume that $M$ is locally given by the graph of a $C^2$ function $\phi:\Omega\to \R^{n-k}$, where $1\le k\le n-1$ and $\Omega$ is an open subset of $\R^k$. In this case, we often replace $N_\delta(M)$ by a more convenient notion of $\delta$-neighbourhood, called the $\delta$-{\it vertical neighbourhood}, defined by
\begin{equation}\label{eqn:vertical_nbhd}
N_\delta^\phi(\omega): =\{ (x , \phi(x) + c): x \in \omega, c \in [-\delta,\delta]^{n-k}\},
\end{equation}
for subsets $\omega\sub \R^k$, usually parallelograms.

We usually want to decouple $S=N_\delta(M)$ into smaller pieces $S'$, such that the \underline{convex hull} of $S'$ is {\it equivalent to} (see Section \ref{sec:notation}) a parallelogram $R\sub \R^n$ that is $\delta$-flat in dimension $m$. In particular, if $M$ is a hypersurface, then the convex hull of $S'$ will be essentially the same as $S'$, and we only need to consider the case $m=n-1$. On the other hand, if $M$ is a curve, then the convex hull of $S'$ will be nearly equivalent to the corresponding piece of an anisotropic neighbourhood\footnote{See, for instance, \cite{shortproofmomentcurve} or \cite[Chapter 11]{Demeter2020} for definitions of anisotropic neighbourhoods of a curve.} of $M$. We might need to consider all $m\in [1,n-1]$.

\begin{defn}[$\delta$-flat subsets for graphs]\label{defn:graphically_flat}
    Let $\Omega\sub \R^k$, $\phi:\Omega\to \R^{n-k}$ and $1\le m\le n-1$. We say that a parallelogram $\omega\sub \Omega$ is $(\phi,\delta)$-flat in dimension $m$, if the graph of $\phi$ above $\omega$ is $\delta$-flat in dimension $m$. Equivalently, this means that 
    \begin{equation}\label{eqn:Oct_22}
    \sup_{x\in \omega}  |\lambda(x,\phi(x))| \leq \delta,
\end{equation}
    for some affine function $\lambda : \R^{n} \to \R^{n-m}$ given by $y \mapsto A y + b$, where $A\in \R^{(n-m) \times n}$ has orthonormal rows, and $b \in \R^{n-m}$.
    \end{defn}

We have alternative definitions of flatness for graphs that are more convenient to use in practice; see Definition \ref{defn:new_flatness} in the appendix. It is shown in Proposition \ref{prop:equivalence_flatness_general} that Definition \ref{defn:graphically_flat} is equivalent to \eqref{eqn:F1} of Definition \ref{defn:new_flatness} when the parallelogram $\omega$ has width $\gg \delta$. Here and throughout this article, the {\it width} of a parallelogram refers to its smallest dimension (see \eqref{eqn:width} for the precise definition).

Since we often deal with decoupling for graphs of functions, we introduce the following notation for simplicity. See also Lemma \ref{lem:decoupling_entire_strip}.
\begin{defn}\label{defn:graphical_decoupling_convex_hull}
    Let $\Omega\sub \R^k$, $\phi:\Omega\to \R^{n-k}$ be $C^2$. We say that a parallelogram $\Omega_0\sub \Omega$ can be $\phi$-$(\ell^q(L^p),\alpha)$ decoupled into parallelograms $\omega$ at scale $\delta$ (at cost $C$), if $N_\delta^\phi(\Omega_0)$ can be $(\ell^q(L^p),\alpha)$ decoupled into parallelograms equivalent to the convex hulls of $N_\delta^\phi(\omega)$ (at cost $C$). When the exponents $p,q,\alpha$ are fixed, we simply say that $\Omega_0$ can be $\phi$-decoupled into parallelograms $\omega$ at scale $\delta$ (at cost $C$).
\end{defn}
%(The implicit constant of equivalence is usually unimportant, and clear from the context.)

With the above formulation, we are ready to heuristically describe the main results of this article.
\subsection{Radial principle of decoupling}\label{sec:intro_radial}

To push the ideas of Pramanik-Seeger iteration \cite{PS2007} further, we formulate and prove a radial principle of decoupling. This is a generalisation of the arguments used in \cite[Section 8]{BD2015} (see also \cite[Appendix B]{GGGHMW2022}). Roughly speaking, under certain regularity assumptions for the (possibly multivariate) functions $r(s)$ and $\psi(t)$, the decoupling for the manifold parameterised by 
$$
(s,r(s)t,r(s)\psi(t)), \quad r(s) \sim 1
$$
can be reduced to the decoupling for
$$
(s , t,r(s)+\psi(t)).
$$

The precise statement of the radial principle of decoupling is given in Section \ref{sec:radial}. We give two examples.

\begin{enumerate}    

\item Consider the surface $M$ given by the truncated cone $\{(t,|t|) , 1\leq |t|\leq 2\} \sub \R^n$. After a finite partition and rotation, we may reparametrise each part of the cone by 
$$\{(r,rt',r\psi(t')): 1\leq r\leq 2 , t'\in \R^{n-1},|t'| \leq 1/10\},$$ where $\psi(t') = \sqrt{1-|t'|^2}$. By the radial principle, it suffices to study decoupling for the following cylinder
$$
(r,t',r+\psi(t')).
$$
This is precisely the reduction given in \cite[Section 8]{BD2015}.

\item \label{item:radial} Consider the radial surface $M$ generated by a smooth curve $\gamma:\R \to \R$, that is, $M=\{(t,\gamma(|t|)): 1 \leq |t| \leq 2\} \subseteq \R^n$ under an additional assumption that $\gamma '(r) \sim 1$. $M$ can be thought of as a (possibly curved) perturbation of the light cone. The case $n=3$ has been studied in \cite{BDK2019}. Similarly, each part of the surface can be reparametrised by $\{(\gamma(r) , rt',r\psi(t')) , 1\leq r\leq  2 , |t'|\leq 1/10\}$, where $\psi(t')=\sqrt{1-|t'|^2}$. Since $\gamma$ is invertible, we further reparametrise the surface by 
$$
(s, \gamma^{-1}(s) t' , \gamma^{-1}(s) \psi(t')), \quad \gamma^{-1}(s) \sim 1.
$$
Now, the radial principle of decoupling reduces the original decoupling problem to the decoupling for the following hypersurface:
$$
(s,  t' , \gamma^{-1}(s) + \psi(t')).
$$

\noindent As the graph of a ``separable" function $\gamma^{-1}(s)+\psi(t')$, we can use elementary arguments to study its decoupling. However, we will instead put it under a more general framework introduced right below.
\end{enumerate}

\subsection{Degeneracy locating principle of decoupling}

We make abstract the idea of reducing the manifolds with degeneracy to two extremes: the totally degenerate and the nondegenerate cases. To avoid technicalities, we present a simplified heuristic version here. See Section \ref{sec:degeneracy_locating} for the detailed formulation.

\subsubsection{Degeneracy determinant}\label{sec:intro:DD}
Let $\mathcal M$ be a family of manifolds $M_\phi$ given by the graphs of $C^2$ functions $\phi : [-1,1]^k \to \R^{n-k}$. We say that $H$ is a {\it degeneracy determinant} if it quantitatively detects the degeneracy of the manifold in the following sense: $H$ maps $M_\phi \in \mathcal{M}$ to a smooth function on $[-1,1]^k$ such that the following holds:
\begin{itemize}
    \item if $\sup_R |HM_\phi| \leq \sigma\ll 1$ for some parallelogram $R \subseteq [-1,1]^k$ , then $M_{\phi}$ restricted to $R \times \R^{n-k}$ can be rescaled to a piece of manifold that stays within the $C\sigma^\beta$ neighbourhood of some totally degenerate manifold $M_\psi \in \mathcal{M}$, i.e. $HM_\psi \equiv 0$, for some $\beta>0$ and $C\ge 1$ depending on $\mathcal M,H$.
\end{itemize}

Technically, $H$ has to satisfy other regularity conditions. See Definitions \ref{defn:degeneracy_determinant} for a precise version. 

An important example is when $\mathcal M$ consists of graphs of polynomial vectors $\phi=(\phi_i)_{i=1}^{n-k}$ of degree at most $d$ with bounded coefficients. In this case, we give a sufficient condition for $H$ to be a degeneracy determinant in Proposition \ref{prop:degeneracy_determinant_Lojasiewicz}. In particular, we prove that when $HM_\phi$ is given by an algebraic combination of (possibly higher order) derivatives of $\phi$, then $H$ satisfies the condition above. The proof involves an innovative use of the \L ojasiewicz inequality \cite{Lojasiewicz}, applied to the coefficient space of the polynomial mapping $\phi$. See Section \ref{sec:degen_approximation_theorem}.

We now give some examples of degeneracy determinants $H$. Given $k=1$ and a polynomial space curve $(t,\phi_1(t),\dots,\phi_{n-1}(t)) , t\sub I \sub [-1,1]$. The functions 
\begin{equation*}
    H_1 \phi(t):=\max_{j=1,\dots,n-1}|\phi_j''(t)|,
\end{equation*}
and the Wronskian of $\phi''$, namely, 
\begin{equation*}
    H_2\phi(t):=\det \left(\phi'',\phi^{(3)},\dots,\phi^{(n)}\right)(t),
\end{equation*}
are examples of degeneracy determinants. For $k=2$ and $n-k=1$,
$\det D^2\phi$ is a degeneracy determinant, by Corollary \ref{cor:hessian}.

\subsubsection{Statement of degeneracy locating principle}
Our degeneracy locating principle says that to study decoupling for $M_\phi \in \mathcal{M}$, it suffices to understand the decoupling of these cases at a cost uniform in $M_\phi\in\mathcal{M}$:
\begin{itemize}
    \item decoupling of the sublevel set 
    $$
    \{x \in [-1,1]^k : |HM_{\phi}(x)|\leq \sigma\},
    $$
    where $M_\phi \in \mathcal{M}$, $\sigma\leq 1$, into parallelograms.
    \item decoupling of the totally degenerate manifolds $M_\phi \in \mathcal{M}$, i.e. $HM_\phi \equiv 0$.
    \item decoupling of the nondegenerate manifolds $M_\phi \in \mathcal{M}$, i.e. $|HM_\phi| \sim 1$.
\end{itemize}

In the examples given in Table \ref{tab:degenerate}, the totally degenerate cases are the same problems of at least one dimensional lower, and the nondegenerate cases follow from the famous theorems of Bourgain, Demeter, Guth \cite{BD2015,BDG2016,BD2017} and other results mentioned in Table \ref{tab:nondegenerate}. When $k=1$ and $\mathcal{M}$ consists of univariate polynomials of degree at most $d$, the sublevel set decoupling for $\mathcal{M}$ is a direct corollary of the fundamental theorem of algebra. For special cases where $k=2$ and $n=3$ in \cite{BDK2019} and \cite{Kemp2}, the sublevel sets are neighbourhoods of circles and parabolas. In \cite{LiYang2023}, exactly the same framework is used, where the sublevel set decoupling was called ``generalised 2D uniform decoupling". In fact, the degeneracy locating principle is an abstraction and generalisation of the main idea of proof used in \cite{LiYang2023}.

\subsection{Corollaries} As an application of the two principles, we present decoupling inequalities for new types of manifolds. The sketches of proof below assume the simplified versions above to avoid technicalities.
The reader is encouraged to use the detailed and rigorous versions of the two principles in Sections \ref{sec:radial} and \ref{sec:degeneracy_locating} to complete the proof. See also the remarks right before Section \ref{sec:outline}.

\begin{cor}[Decoupling for radial surfaces]\label{cor:radial}
Let $2\le p \le\frac{2(n+1)}{n-1}$. Let $\gamma:[1,2]\to \R$ be a smooth function with $\gamma' \sim 1$\footnote{The condition $\gamma'\sim 1$ can be dropped; See \cite[Corollary 1.4]{LiYang2025}.}. Let $M$ be the hypersurface in $\R^{n}$ given by the graph of the function $\phi(t) = \gamma(|t|)$ over 
$$
\Omega = \{1\leq |t| \leq 2 , |t_1| \geq |t|/2\},\quad \quad t:=(t_1,t'),\quad t'\in \R^{n-2}.
$$
For $0<\delta\ll_{\gamma} 1$, let $\mathcal{I}_\delta$ be a partition of $[1,2]$ into $(\gamma,\delta)$-flat intervals $I$ such that $2I\cap [1,2]$ is not $(\gamma,\delta)$-flat, and that each $I$ has length at least $\delta^{1/2}$. (When $\gamma'\not\equiv 0$ and $\delta\ll_\gamma 1$, the existence of such a partition is guaranteed by, for instance, \cite[Proposition 3.1]{Yang2}.) Let $\mathcal{R}_\delta$ be a boundedly overlapping covering of $\Omega$ by parallelograms $R$, each of which is equivalent to the region
$$
%\omega(I,T):=
\{(r\sqrt{1-|t'|^2}, rt') : r \in I, t'\in T \}
$$
for $I \in \mathcal{I}_\delta$ and some squares $T$ of side length $\delta^{1/2}$.

Then $\Omega$ can be $\phi$-$\ell^p(L^{p})$ decoupled into $(\phi,\delta)$-flat parallelograms $R \in \mathcal{R}_\delta$ at scale $\delta$ as in Definition \ref{defn:graphical_decoupling_convex_hull}, at a cost of $O_{\varepsilon,\gamma,p}(\delta^{-\varepsilon})$ for any $\varepsilon>0$. Moreover, if $ D^2\phi$ is positive-semidefinite, this $\ell^p(L^{p})$ decoupling can be improved to $\ell^2(L^{p})$ decoupling.

\end{cor}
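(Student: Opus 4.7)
The plan is to first apply the radial principle of Section \ref{sec:intro_radial} to reduce to an additively separated hypersurface, then apply the degeneracy locating principle of Section \ref{sec_degeneracy_locating}. By the symmetry $t_1\mapsto -t_1$ I may assume $t_1\geq |t|/2$. Writing $r=|t|$ and $u=t'/r$ (so $|u|\leq \sqrt{3}/2$), and setting $s=\gamma(r)$---legitimate because $\gamma'\sim 1$---the surface $M$ takes the form
\[
(\gamma^{-1}(s)\sqrt{1-|u|^2},\; \gamma^{-1}(s)\,u,\; s), \qquad \gamma^{-1}(s)\sim 1,
\]
which is exactly Example \ref{item:radial} up to a permutation of coordinates. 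The radial principle then reduces the decoupling of $M$ to the decoupling for the graph of the additively separated function
\[
\Phi(s,u) = \gamma^{-1}(s) + \sqrt{1-|u|^2}, \qquad (s,u)\in \gamma([1,2])\times\{|u|\leq \sqrt{3}/2\}.
\]

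Since $\Phi$ is separated, $D^2\Phi$ is block diagonal, with $s$-block $(\gamma^{-1})''(s) = -\gamma''(r)/\gamma'(r)^3$ and $u$-block equal to the Hessian of $\sqrt{1-|u|^2}$, which is uniformly negative definite of size $\sim 1$ on $|u|\leq \sqrt{3}/2$. Thus $|\det D^2\Phi|\sim |\gamma''(r(s))|$, and all degeneracy lives in the $s$-variable. Taking $H=\det D^2$ in the degeneracy locating principle produces three cases. \emph{(i) Sublevel set:} $\{|\det D^2\Phi|\leq\sigma\}$ is a product $E_\sigma\times\{|u|\leq \sqrt{3}/2\}$ where $E_\sigma$ is a union of intervals from a partition into $(\gamma,\sigma)$-flat intervals; the decoupling factors as elementary interval decoupling of $E_\sigma$ (at cost $O_\varepsilon(\delta^{-\varepsilon})$ via the argument of \cite{Yang2}, using the non-flatness of $2I$) times Bourgain--Demeter paraboloid decoupling \cite{BD2015} for $\sqrt{1-|u|^2}$ at scale $\delta^{1/2}$ in $u$. \emph{(ii) Totally degenerate:} $\gamma''\equiv 0$ makes $\gamma^{-1}$ affine and the graph of $\Phi$ a linear image of the standard truncated light cone in $\R^n$; its $\ell^2(L^p)$ decoupling for $p\leq 2n/(n-2)$ from \cite{BD2015} comfortably covers our range $p\leq 2(n+1)/(n-1)$. \emph{(iii) Nondegenerate:} $|\gamma''|\sim 1$ makes the eigenvalues of $D^2\Phi$ all of size $\sim 1$, so \cite{BD2017} gives $\ell^p(L^p)$ decoupling at $p=2(n+1)/(n-1)$, which upgrades to $\ell^2(L^p)$ via \cite{BD2015} whenever $D^2\Phi$ is definite.

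The degeneracy locating principle then glues (i)--(iii) into a decoupling for the graph of $\Phi$ at cost $O_\varepsilon(\delta^{-\varepsilon})$, which, on undoing the radial reduction, yields the claimed decoupling of $\Omega$. For the \emph{moreover} part, the direct computation $\det D^2\phi = \gamma''(|t|)\,(\gamma'(|t|)/|t|)^{n-2}$ with $\gamma'(|t|)/|t|\sim 1>0$ shows that $\det D^2\phi\geq 0$ forces $\gamma''\geq 0$; this makes $(\gamma^{-1})''\leq 0$ and $D^2\Phi$ negative semidefinite throughout, placing case (iii) in the elliptic regime and giving $\ell^2(L^p)$ in all three cases. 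The main obstacle I anticipate is the bookkeeping required to match the parallelograms output by the two principles---indexed in $(s,u)$-coordinates with anisotropic scales determined by the $(\gamma,\delta)$-flatness length in $s$, $\delta^{1/2}$ in $u$, and $\delta$ transverse to the graph of $\Phi$---with the stated parallelograms $\{(r\sqrt{1-|t'|^2}, rt') : r\in I, t'\in T\}$ in the original $t$-coordinates, uniformly in $\gamma$. A related technical point is applying the degeneracy locating principle to the family of smooth (not polynomial) $\gamma$; the hypothesis that $\mathcal I_\delta$ is a partition into $(\gamma,\delta)$-flat intervals with $2I\cap[1,2]$ not flat substitutes for polynomial control and lets one bypass Proposition \ref{prop:degeneracy_determinant_Lojasiewicz}.
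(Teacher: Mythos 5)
Your overall architecture coincides with the paper's: reduce via the radial principle to the additively separated surface $(s,t',\gamma^{-1}(s)+\psi(t'))$, then run the degeneracy locating principle with all degeneracy in the $s$-variable, splitting into sublevel, totally degenerate, and nondegenerate cases. The genuine gap is where you announce that the flatness hypothesis on $\mathcal I_\delta$ lets you ``bypass'' Proposition \ref{prop:degeneracy_determinant_Lojasiewicz} and apply the degeneracy locating principle directly to the single smooth function $\gamma^{-1}$. The principle (Theorem \ref{thm:degeneracy_locating_principle}, already in its simplified form) is an induction on scales over a family $\mathcal M$ that must be rescaling invariant with constants uniform over the family: at each step a sublevel piece is rescaled to the unit cube, approximated by a totally degenerate member of $\mathcal M$, decoupled, and the resulting flat pieces are rescaled again, producing new members of $\mathcal M$ to which the induction hypothesis is applied. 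A single smooth $\gamma^{-1}$ does not furnish such a family; moreover, for a general smooth function the sublevel set $\{|(\gamma^{-1})''|\le\sigma\}$ need not have boundedly many components, so the sublevel-set decoupling assumption is not available, and the approximation of a small-$H$ piece by a totally degenerate manifold requires a uniform power-law bound. The paper secures all of this by first invoking the smooth approximation argument of \cite[Section 2.3]{LiYang2023} to reduce to the polynomial family $\phi(s,t')=P(s)+c\psi(t')$ with $P\in\mathcal P_{1,d}$, for which the sublevel sets are $O_d(1)$ intervals (fundamental theorem of algebra), the degenerate approximation is the mean value theorem, and all constants are uniform in the coefficients. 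The hypothesis on $\mathcal I_\delta$ does not substitute for this; it enters only afterwards, via the special case \eqref{item:Mar_25_radial_S} of Theorem \ref{thm:radial_principle} and Section \ref{sec:corallary_1.4}, to match the intervals produced by the principle with the prescribed family $\mathcal I_\delta$ --- the bookkeeping step you flag but do not carry out.

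Two smaller points. In your totally degenerate case, with $\gamma^{-1}$ affine the graph of $\Phi$ is not a linear (or affine) image of the truncated light cone: subtracting the affine part shows it is a cylinder over the cap $(u,\sqrt{1-|u|^2})$, whose rulings are parallel rather than concurrent, so it is not affinely equivalent to a cone. The needed decoupling still holds in the stated range, but the correct justification is cylindrical decoupling combined with \cite{BD2015} for the elliptic factor, which is what the paper does. Your choice $H=\det D^2\Phi$ in place of the paper's $HM_\phi=P''(s)$ is harmless here since the $u$-block of $D^2\Phi$ is uniformly nondegenerate, and your verification of the positive-semidefinite case agrees with the paper's argument.
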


\begin{proof}[Sketch of Proof of Corollary \ref{cor:radial} using simplified versions of two principles] 
$ $\newline
Following the arguments in Example \ref{item:radial} in Section \ref{sec:intro_radial}, the manifold $M$ can be broken into $O(1)$ many pieces, each of which can be rotated and reparameterised by
\begin{equation}\label{eq:gamma-1*psi}
    \{(s, \gamma^{-1}(s) t' , \gamma^{-1}(s) \psi(t')): |t'|\leq 1/10 , s\in \gamma([1,2]) \},
\end{equation}
where $\psi(t')=\sqrt{1-|t'|^2}$. By the radial principle of decoupling, it reduces to the decoupling of the manifold given by
\begin{equation}\label{eq:gamma-1+psi}
    \{(s,  t' , \gamma^{-1}(s) + \psi(t')): |t'|\leq 1/10 , s\in \gamma([1,2]) \}.
\end{equation}

Since $\gamma$ is smooth with $\gamma' \sim 1$ on $[1,2]$, $\gamma^{-1}$ is smooth on $\gamma([1,2])$. By the smooth approximation argument in \cite[Section 2.3]{LiYang2023}, it suffices to prove the uniform decoupling result for the family $\mathcal M$ consisting of manifolds $M_\phi$ with $\phi(s,t')  = P(s) + \psi(t')$ where $P$ is a polynomial of degree at most $d$ and with bounded coefficients. We choose $HM_\phi(s,t') := P''(s)$. By the degeneracy locating principle, it suffices to check the following. 
\begin{itemize}

    \item $H$ is a degeneracy determinant: This follows from the mean value theorem.
    \item Decoupling of sublevel sets: this follows from the fundamental theorem of algebra.
    
    \item Decoupling of totally degenerate manifolds $HM_\phi\equiv 0$: These are cylinders because $P$ must be an affine function in this case. By cylindrical decoupling, it suffices to decouple
    $$
    \{(t',\psi(t')): |t'|\leq 1/10\}
    $$
    which has positive principal curvatures everywhere. The decoupling then follows from \cite{BD2015}.
    \item Decoupling of nondegenerate manifolds $|HM_\phi|\sim 1$: In this case, $M_\phi$ has non-vanishing Gaussian curvature everywhere. The decoupling then follows from \cite{BD2017}. Furthermore, if $D^2 \phi$ is positive-semidefinite, then $P''$ has the same signs as the eigenvalues of $D^2\psi$. Then a corresponding $\ell^2(L^p)$ decoupling follows from \cite{BD2015}.
\end{itemize}
We have checked all assumptions for the degeneracy locating principle. Therefore, we have obtained the decoupling of the manifold given in \eqref{eq:gamma-1+psi}, and hence \eqref{eq:gamma-1*psi}, into $\delta$-flat parallelograms as desired.

\end{proof}

\begin{cor}[Decoupling for graphs of additively separable functions]\label{cor:dec_add} Let $2\leq p \leq \frac{2(n+1)}{n-1}$. For $j=1,\dots,J$, let $n_j\in \N$ be such that $\sum_{j=1}^Jn_j = n-1$, and let $\phi_j: [-1,1]^{n_j} \to \R$ be $C^2$ functions. Assume that for each $j$, we either have
\begin{itemize}
    \item $\inf_{[-1,1]^{n_j}}|\det D^2 \phi_j|> 0$, or
    \item $n_j\in \{1,2\}$ and $\phi_j$ is smooth.
\end{itemize}
Let $\phi:[-1,1]^{n-1} \to \R$ be such that
\begin{equation}\label{eq:cor:dec_add}
    \phi(x_1,\dots,x_J) = \sum_{j=1}^J \phi_j(x_j), \quad x_j \in [-1,1]^{n_j}.
\end{equation}
Then for any $\varepsilon>0$, $0<\delta\ll_\phi 1$, $[-1,1]^{n-1}$ can be $\phi$-$\ell^p(L^p)$ decoupled into boundedly overlapping\footnote{See Definition \ref{defn:enlarged_overlap}; here $B$ depends on $\phi,\varepsilon$ but not $\delta$.} family $\mathcal R=\mathcal R_{\varepsilon,\delta}$ of $(\phi,\delta)$-flat parallelograms at scale $\delta$ as in Definition \ref{defn:graphical_decoupling_convex_hull}, at a cost of $O_{\varepsilon,\phi,p}(\delta^{-\varepsilon})$. Moreover, each $R\in \mathcal R$ is of the form $\prod_{j=1}^J R_j$ where $R_j$ is a parallelogram contained in $[-1,1]^{n_j}$ with width $\gg_\phi \delta$. Furthermore, if $ D^2\phi$ is positive-semidefinite, this $\ell^p(L^{p})$ decoupling can be improved to $\ell^2(L^{p})$ decoupling. 
\end{cor}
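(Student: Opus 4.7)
The plan is to induct on the number $J$ of additive summands, combining individual decoupling results for each $\phi_j$ via a product decoupling lemma for additively separable graphs. The base case $J = 1$ reduces to one of the cited theorems: \cite{BD2017} when $\inf|\det D^2\phi_1| > 0$ (with the $\ell^2(L^p)$ version from \cite{BD2015} when $D^2\phi_1$ is positive-semidefinite), \cite{Yang2} when $n_1 = 1$ and $\phi_1$ is smooth, and \cite{LiYang2023} or \cite{GMO24} when $n_1 = 2$ and $\phi_1$ is smooth. Since the critical exponents $\frac{2(n_j+2)}{n_j}$ for the individual hypersurfaces dominate the joint $\frac{2(n+1)}{n-1}$, each base case delivers the required $\ell^p(L^p)$ bound for $p$ in the allowed range.

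For the inductive step ($J \ge 2$), split $\phi = \phi_1 + \Phi$ where $\Phi := \sum_{j=2}^{J} \phi_j$ is additively separable with $J - 1$ summands. By the inductive hypothesis applied to $\Phi$ and the base case applied to $\phi_1$, both admit $\ell^p(L^p)$ decouplings at scale $\delta$ at cost $O_\varepsilon(\delta^{-\varepsilon})$, into $(\phi_1, \delta)$-flat parallelograms $R_1 \subseteq [-1,1]^{n_1}$ and $(\Phi, \delta)$-flat parallelograms $R' = R_2 \times \cdots \times R_J$ respectively. Since $\phi = \phi_1 + \Phi$, flatness factorizes: if $\ell_1, \ell'$ are the affine approximations of $\phi_1, \Phi$ on $R_1, R'$, then $\ell_1 + \ell'$ approximates $\phi$ on $R_1 \times R'$ to within $2\delta$, making $R_1 \times R'$ a $(\phi, 2\delta)$-flat parallelogram. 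A product decoupling lemma then combines the two individual decouplings into a joint one of $\phi$ into $R_1 \times R'$ at cost $O(C_1 C_\Phi) = O_\varepsilon(\delta^{-\varepsilon})$ (after reabsorbing constants into $\varepsilon$).

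The product decoupling lemma is established in two stages. Stage II (decouple each $R'$-slab in the $R_1$-direction): For fixed $R'$, after the $L^p$-preserving linear shear $\eta \to \eta - \ell'(\xi')$ that flattens $\Phi$ on $R'$, the Fourier support of $g_{R'} := \sum_{R_1} f_{R_1 \times R'}$ becomes approximately the cylinder $N_{O(\delta)}^{\phi_1}([-1,1]^{n_1}) \times R'$, and cylindrical $\phi_1$-decoupling gives $\|g_{R'}\|_p \lesssim C_1 \|f_{R_1 \times R'}\|_{\ell^p_{R_1}(L^p)}$. Stage I (decouple $f$ into $R'$-slabs): Since $\phi$ is not a cylinder in $\xi'$ before partitioning in $R_1$, a direct cylindrical application of $\Phi$-decoupling fails. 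I resolve this by exploiting the factorization of the extension operator
\[ E_\phi h(x_1, x_2, y) = \int E_\Phi[h(\xi_1, \cdot)](x_2, y)\, e^{2\pi i(x_1 \xi_1 + y\phi_1(\xi_1))}\, d\xi_1 \]
arising from additive separability, applying $\Phi$-decoupling to the inner $E_\Phi$-extension, and combining the fibers via a localization argument adapted to the $\phi_1$-dependence. Composing the two stages yields $\|f\|_p \lesssim C_1 C_\Phi \|f_{R_1 \times R'}\|_{\ell^p_{R_1, R'}(L^p)}$, and induction on $J$ completes the argument.

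The main obstacle is executing Stage I rigorously at $L^p$ with $p > 2$: a naive Fubini argument fails because Plancherel is unavailable beyond $L^2$, and the $\xi'$-direction carries non-trivial $\phi_1$-dependence that cannot be globally shear-eliminated (the required shear $\eta \to \eta - \ell_1(\xi_1)$ varies with $\xi_1$). The resolution is a delicate interplay between the extension operator factorization and refined localization on $\phi_1$-adapted frequency scales. For the $\ell^2(L^p)$ improvement when $D^2\phi$ is positive-semidefinite, the block-diagonality of $D^2\phi$ (forced by additive separability) ensures each $D^2\phi_j$ is also positive-semidefinite; the $\ell^2(L^p)$ versions of the base-case theorems then apply, and since the product decoupling argument never introduces any loss beyond $\ell^2$-indexing in either factor, the $\ell^2$-property is propagated through the induction.
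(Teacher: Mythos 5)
Your Stage II is fine, but Stage I is a genuine gap, and it sits exactly where the whole difficulty of the corollary lives. To decouple $f$ into the $\xi'$-slabs $R'$ you would need the Fourier support of $f$ to lie within $O(\delta)$ of a cylinder over the graph of $\Phi$ in the $(\xi',\eta)$-variables; it does not, because the shared vertical coordinate carries the term $\phi_1(\xi_1)$, which smears the support over a set of thickness $O(1)$ in $\eta$ (a vertical translate of the graph of $\Phi$ by an amount varying nonlinearly with $\xi_1$). No single shear removes this, as you note, and the proposed fix --- the extension-operator factorization ``combined via a localization argument adapted to the $\phi_1$-dependence'' --- is not an argument: at $p>2$ there is no Plancherel/Fubini mechanism that lets you apply $\Phi$-decoupling fiberwise in $\xi_1$ and then recombine the fibers into the $R_1$-structure without a triangle inequality in $\xi_1$, which loses a full power of the number of caps. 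In other words, the ``product decoupling lemma'' for sum-graphs sharing one vertical direction is not an available black-box; tensorization of decoupling constants in this setting is precisely the open-ended issue that forces the multi-scale machinery (this is also why \cite{GLZZ2022}, which your lemma would subsume, required a genuine induction-on-scales proof). Your flatness bookkeeping ($R_1\times R'$ is $(\phi,2\delta)$-flat) and the positive-semidefinite block-diagonality observation are correct, but they do not touch the core inequality, and the claimed propagation of $\ell^2(L^p)$ through the induction is moot until Stage I exists.

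For comparison, the paper does not induct on $J$ with a product lemma at a single scale. It first reduces, by smooth approximation, to polynomial $\phi_j\in\mathcal P_{n_j,d}$ with uniform constants, and then runs a double induction --- on the ambient dimension $n$ and on the number $l$ of factors already known to be nondegenerate --- applying the degeneracy locating principle (Theorem \ref{thm:degeneracy_locating_principle}) with $HM_\phi=\phi_1''$ or $\det D^2\phi_1$. The work your Stage I is hiding is distributed there among the sublevel-set decoupling of $\{|HM_\phi|\le\sigma\}$ (\cite[Theorem 3.1]{LiYang2023}), the \L ojasiewicz-based degenerate approximation (Proposition \ref{prop:degeneracy_determinant_Lojasiewicz}), parabolic rescaling of flat pieces, and an induction on scales; the totally degenerate case drops to dimension $n-1$ and the nondegenerate case advances $l$, with \cite{BD2015,BD2017}, \cite{Yang2}, \cite{LiYang2023} as base inputs. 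If you want to salvage your outline, you would have to replace the single-scale product lemma by such an iterative scheme rather than assert it.
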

This generalises the main theorem of \cite{GLZZ2022}.

\begin{proof}[Sketch of proof of Corollary \ref{cor:dec_add} using simplified degeneracy locating principle] 
$ $\newline
We only give the outline of proof assuming the case where for all $j$ we have $n_j\in \{1,2\}$ and $\phi_j$ is smooth. The proof of the general case is similar.

By the smooth approximation argument in \cite[Section 2.3]{LiYang2023}, it suffices to fix $d\in \N$ and prove such decoupling assuming each $\phi_j\in \mathcal P_{n_j,d}$ (see Section \ref{sec:notation}), with the cost of decoupling independent of the coefficients of $\phi_j$.

For each $0\le l\le J$, let $\mathcal M_l^n$ be the collection of manifolds $M_\phi$ given by graphs of polynomials of the form \eqref{eq:cor:dec_add}, such that each $\phi_j\in \mathcal P_{n_j,d}$, and $|\det D^2 \phi_j| \sim_d 1$ on $[-1,1]^{n_j}$ for $j = J-l+1,\dots,J$. 
It suffices to prove the decoupling for $M_\phi \in \mathcal{M}_{0}^n$ at cost $O_{\varepsilon,d,p}(\delta^{-\varepsilon})$.

We now induct on $n$. The base cases $\mathcal{M}_0^1, \mathcal{M}_0^2$ are proved in \cite{Yang2} and \cite{LiYang2023} respectively. Now we let $n\ge 3$ and impose the induction hypothesis that decoupling holds for $\mathcal{M}_0^{n-1}$ at cost $O_{\varepsilon,d,p}(\delta^{-\varepsilon})$.

With $n$ fixed, we then apply another induction on $l$. The base case $\mathcal{M}_{J}^n$ is given by \cite{BD2015, BD2017} at a cost of $O_{\varepsilon,d,p}(\delta^{-\varepsilon})$. Now we let $0\le l\le J-1$ and impose the induction hypothesis that decoupling holds for $\mathcal{M}_{l+1}^{n}$ at cost $O_{\varepsilon,d,p}(\delta^{-\varepsilon})$.

We apply the degeneracy locating principle on $\mathcal{M}_l^n$ with $HM_\phi: = \phi_{1}''$ when $n_1=1$ or $HM_\phi:=\det D^2 \phi_{1}$ when $n_1=2$. It suffices to prove the harder case $n_1=2$.

\begin{comment}
    For $n_1=1$, we need to check the following:
\begin{itemize}
    \item $H$ is a degeneracy determinant: this follows easily.
    \item Decoupling of sublevel sets: this follows from the fundamental theorem of algebra.
    \item Decoupling of totally degenerate manifolds $HM_\phi\equiv 0$: $\phi''_{n_1}\equiv 0$ means that $\phi_{n_1}\equiv 0$. Then its decoupling follows from the induction hypothesis on $\mathcal M^{n-1}_{0}$.
    \item Decoupling of nondegenerate manifolds $|HM_\phi|\sim 1$: this follows from the induction hypothesis on $\mathcal M^{n}_{l+1}$.
\end{itemize}

For $n_1=2$, we need to check the following:
\end{comment}
\begin{itemize}
    \item $H$ is a degeneracy determinant: this follows from Proposition \ref{prop:degeneracy_determinant_Lojasiewicz}. (See also Section \ref{sec:intro:DD}.)
    \item Decoupling of sublevel sets: this follows from Theorem \ref{thm:2D_general_uniform_IJ}. %It gives the width lower bound $\delta$ when decoupling at scale $\delta$.
    \item Decoupling of totally degenerate manifolds $HM_\phi\equiv 0$: $\det D^2 \phi_{1}\equiv 0$ means that by a rotation on $\R^2$ (and assuming $\phi$ has no affine terms), we can assume $\phi_{1}\in \mathcal P_{1,d}$. Then its decoupling follows from the induction hypothesis on $\mathcal M^{n-1}_0$.  
    \item Decoupling of nondegenerate manifolds $|HM_\phi|\sim 1$: by a rearrangement on the coordinates, this follows from the induction hypothesis on $\mathcal M^{n}_{l+1}$.
\end{itemize}

This closes the induction on $l$ and hence the induction on $n$.

Lastly, if $ D^2 \phi$ is positive-semidefinite, then $ D^2\phi_j$ is positive-semidefinite for every $j$. The corresponding decoupling results in \cite{BD2015}\cite{BD2017}\cite{Yang2}\cite{LiYang2023} can all be strengthened to $\ell^2(L^p)$ decoupling. Thus, we eventually obtain a $\ell^2(L^p)$ decoupling.
\end{proof}

\begin{cor}[Decoupling for analytic curves in $\R^n$]\label{cor:smooth_curve} Let $m \in [1,n-1]\cap \Z$ be an intermediate dimension and $2\leq p \leq (m+1)(m+2)$. Let $\phi=(\phi_i)_{i=1}^{n-1}: [-1,1] \to \R^{n-1}$ be such that each $\phi_i$ is real-analytic, and let $M_\phi$ be the curve in $\R^n$ given by 
\begin{equation*}
\{(s,\phi_1(s),\dots,\phi_{n-1}(s)): s \in [-1,1]\}.
\end{equation*}
For each $0<\delta\ll_{\phi} 1$, let  $\mathcal{I}_\delta$ be a collection of disjoint intervals $I$ that are $(\phi,\delta)$-flat but such that $2I\cap [1,2]$ is not $(\phi,\delta)$-flat in dimension $m$. Then $[-1,1]$ can be $\phi$-$\ell^2(L^p)$ decoupled into $\mathcal I_\delta$ at scale $\delta$ as in Definition \ref{defn:graphical_decoupling_convex_hull} at a cost $O_{\varepsilon,\phi,p}(\delta^{-\varepsilon})$ for any $\varepsilon>0$.
\end{cor}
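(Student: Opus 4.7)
The plan is to combine an analytic-to-polynomial reduction with induction on the ambient dimension $n$, driven by the degeneracy locating principle of Section~\ref{sec_degeneracy_locating}. Since $\phi$ is real-analytic on $[-1,1]$, I would invoke the smooth-approximation argument from \cite[Section 2.3]{LiYang2023} to reduce matters to proving the desired decoupling uniformly over polynomial curves $\phi=(\phi_i)_{i=1}^{n-1}$ with each $\phi_i$ a polynomial of degree at most $d=d(\phi,\varepsilon)$ and coefficients in a compact set, with the bound depending only on $d,n,\varepsilon,p$.

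I then induct on $n$, and at each step apply the degeneracy locating principle with the full Wronskian
\[
H(M_\phi)(s) := \det\bigl(\phi''(s),\,\phi^{(3)}(s),\,\ldots,\,\phi^{(n)}(s)\bigr),
\]
a polynomial in $s$ of degree $O_{n,d}(1)$. By Proposition~\ref{prop:degeneracy_determinant_Lojasiewicz}, $H$ is a valid degeneracy determinant. The sublevel-set hypothesis is immediate from the fundamental theorem of algebra, since $\{s\in[-1,1]:|H(s)|\le\sigma\}$ is a union of $O_{n,d}(1)$ intervals. The totally degenerate case $H\equiv 0$ is equivalent to the graph of $\phi$ being contained in a hyperplane of $\R^n$ (by the classical identity $H=\pm W(1,s,\phi_1,\ldots,\phi_{n-1})$, where the right-hand side is the Wronskian of the graph coordinates); after an orthogonal coordinate change the curve becomes a polynomial curve in $\R^{n-1}$ of degree $\le d$, to which the inductive hypothesis applies at the same intermediate dimension $m\le n-2$ and exponent $p\le(m+1)(m+2)$. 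In the nondegenerate case $|H|\sim 1$, the curve is fully nondegenerate in $\R^n$: for the base case $n=m+1$, this is precisely the BDG~\cite{BDG2016} setting at the critical exponent $p\le n(n+1)=(m+1)(m+2)$; for the inductive step $n>m+1$, I would derive the desired dimension-$m$-flat $\ell^2(L^p)$ decoupling from BDG by projecting the curve onto a well-chosen $(m+1)$-dimensional coordinate plane (on which the lower bound $|H|\sim 1$ produces a nondegenerate $\R^{m+1}$-curve), applying BDG at the critical $\R^{m+1}$-exponent $(m+1)(m+2)$, and absorbing the remaining $n-m-1$ directions cylindrically, using the observation that a dimension-$m$-flat interval of length $\sim\delta^{1/(m+1)}$ forces those remaining coordinates of the original curve to vary by at most $\delta$.

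The main obstacle lies in this last step in the inductive case $n>m+1$: selecting a projection direction that works uniformly on a whole nondegenerate sub-interval, and matching the anisotropic parallelograms supplied by BDG on the projection with the dimension-$m$-flat parallelograms of Definition~\ref{defn:graphical_decoupling_convex_hull} for the original $\R^n$-curve. Since the optimal projection plane can vary with $s$, one must further subdivide $\{|H|\sim 1\}$ into $O_{n,d}(1)$ pieces on each of which one fixed $(m+1)\times(m+1)$ minor of the osculating matrix $[\phi''(s),\ldots,\phi^{(m+1)}(s)]$ is uniformly bounded below, via an additional application of the fundamental theorem of algebra to each such minor. Once that is done, the cylindrical reduction over the $\delta$-small directions is routine and delivers the decoupling into $\mathcal I_\delta$ at the claimed cost $O_{\varepsilon,\phi}(\delta^{-\varepsilon})$.
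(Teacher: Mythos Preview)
Your approach is correct but differs from the paper's in the choice of degeneracy determinant. You take $H$ to be the full $(n-1)\times(n-1)$ Wronskian $\det(\phi'',\dots,\phi^{(n)})$, so that $H\equiv 0$ means the curve lies in a hyperplane and forces an induction on $n$. The paper instead takes $H(M_\phi)=\det(W_m\phi\,(W_m\phi)^T)$, the Gram determinant of the $m\times(n-1)$ matrix $W_m\phi$ with rows $\phi'',\dots,\phi^{(m+1)}$. With this choice, $H\equiv 0$ is equivalent to the curve lying in an $m$-plane, which makes the totally degenerate case completely trivial (nothing to decouple in dimension $m$), and no induction on $n$ is needed. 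In the nondegenerate case the two approaches converge: $|H|\sim 1$ forces some $m\times m$ minor of $W_m\phi$ to be $\gtrsim 1$ (for the paper this is immediate from the Cauchy--Binet formula; for you it follows from the generalised Laplace expansion along the first $m$ rows of your $(n-1)\times(n-1)$ Wronskian, using that all entries are $O_d(1)$), and both proofs then partition into $O_{n,d}(1)$ pieces on which one fixed minor dominates, project onto the corresponding $(m+1)$ coordinates, and invoke BDG plus cylindrical decoupling. So the ``main obstacle'' you identify is not avoided by the paper --- it handles it exactly as you propose --- but the paper's choice of $H$ buys a one-shot argument with a trivial degenerate branch, whereas yours buys nothing extra and costs an induction. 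One small correction: the minors of $[\phi'',\dots,\phi^{(m+1)}]$ you need are $m\times m$, not $(m+1)\times(m+1)$, since that matrix has $m$ rows and selecting $m$ of the $\phi_i$ gives a curve in $\R^{m+1}$ whose torsion condition is an $m\times m$ Wronskian.
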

{\it Remarks.}
\begin{itemize}
    \item Unless the whole curve lies in an affine space of dimension $m$, such a partition $\mathcal I_\delta$ always exists for $\delta\ll_\phi 1$, and each $I\in \mathcal I_\delta$ has length at least $\delta^{1/2}$. The interested reader may refer to \cite[Proposition 3.1]{Yang2} for a proof in the case $n=2$; the higher dimensional cases are similar. 
    \item See also \cite[Theorem 1.2.5]{Jaume_thesis} for an analogue of this corollary.
\end{itemize}

\begin{proof}[Sketch of proof of Corollary \ref{cor:smooth_curve} using simplified degeneracy locating principle.]
$ $\newline
For $1\le m\le n-1$, we define the generalised $m\times (n-1)$ Wronskian matrix
\begin{equation}\label{eq:Wons}
   W_{m}\phi(s) : = \left[
  \begin{array}{ccc}
    \horzbar & \phi''(s) & \horzbar \\
    \horzbar & \phi^{(3)}(s) & \horzbar \\
             & \vdots    &          \\
    \horzbar & \phi^{(m+1)}(s) & \horzbar
  \end{array}
\right]. 
\end{equation}
Define $HM_\phi:=\det (W_{m} \phi (W_{m} \phi)^T)$. Since $\phi$ is real analytic, it follows from linear algebra (see, for instance, \cite{Wronskian_linear_dependence}) that $H\phi\equiv 0$ if and only if the curve $\phi$ lies in an $m$-plane.

By the smooth approximation argument in \cite[Section 2.3]{LiYang2023}, it then suffices to fix $d\in \N$ and prove such decoupling assuming each $\phi_i\in \mathcal P_{1,d}$, with the cost of decoupling independent of the coefficients of $\phi_i$.

Let $\mathcal{M}$ be the collection of manifolds $M_\phi$ given by graphs of polynomials $\phi=(\phi_i)_{i=1}^{n-1}: [-1,1] \to \R^{n-1}$ where each $\phi_i\in \mathcal P_{1,d}$.
    
%We will apply the degeneracy locating principle to find for each $\delta\in (0,1]$ a partition $\mathcal I_\delta'$ that $\phi$-$\ell^2(L^p)$ decouples $[-1,1]$. Suppose that we have obtained the partition $\mathcal I_\delta'$. 

To apply the degeneracy locating principle, we need to check the following:
    \begin{itemize}
        \item $H$ is a degeneracy determinant: this follows from Proposition \ref{prop:degeneracy_determinant_Lojasiewicz}. (See also Section \ref{sec:intro:DD}.)
        \item Decoupling of sublevel sets: this follows from the fundamental theorem of algebra.
        \item Decoupling of totally degenerate manifolds $HM_\phi\equiv 0$: as mentioned above, these manifolds are contained in an $m$-plane. Since we are decoupling in dimension $m$, no decoupling is needed.
        \item Decoupling of nondegenerate manifolds $|HM_\phi|\sim 1$: by a partition of unity into $O_{n,d}(1)$ many pieces and relabelling the coordinates $\phi_i$, we may assume that the Wronskian matrix $W(\phi''_1,\dots,\phi''_{m})$ has determinant $\sim 1$. Its decoupling follows from direct applications of cylindrical decoupling and decoupling for the moment curve in $\R^m$ by Bourgain-Demeter-Guth \cite{BDG2016}.
    \end{itemize}
    
\end{proof}

{\it Remarks.}
\begin{enumerate}
    \item To rigorously prove Corollaries \ref{cor:radial} and \ref{cor:smooth_curve}, applying the full version of Theorem \ref{thm:degeneracy_locating_principle} only gives the existence of partitions $\mathcal I_\delta$. But by \cite[Section 3]{Yang2}, such $\mathcal I_\delta$ is ``essentially optimal" up to trivial partitions of length $\delta^\eps$. See Section \ref{sec:corallary_1.4} in the appendix for a more rigorous treatment for Corollary \ref{cor:radial}. The case of Corollary \ref{cor:smooth_curve} is easier, since there is no radial principle involved.

    \item To keep things simple, we did not mention how we can obtain the width lower bound $\delta$ of the decoupled parallelograms in Corollary \ref{cor:dec_add}. See Part \ref{item:width_lower_bound} of Theorem \ref{thm:degeneracy_locating_principle} and Theorem \ref{thm:refine_IJ}.
    \item (Added July 2025) In subsequent work \cite{LiYang2025}, \cite{LiYang202502}, we are able to apply the degeneracy locating principle to prove many more interesting decoupling results for other degenerate hypersurfaces.
\end{enumerate}

\subsection{Outline of the paper}\label{sec:outline}
In Section \ref{sec:radial} we state and prove the radial principle. In Section \ref{sec:degeneracy_locating} we state and prove the degeneracy locating principle. In Section \ref{sec:degen_approximation_theorem}, we state and prove the degenerate approximation theorem. Section \ref{sec_appendix} is the appendix, which discusses some fundamental concepts about parallelograms, polynomials and flatness in decoupling, and generalisations of previously established theorems in literature.

\subsection{Notation}\label{sec:notation}

We introduce some notation that will be used in this paper.
\begin{enumerate}
    \item \label{item:big_O} We use the standard notation $a=O_M(b)$, or $|a|\lesssim_M b$ to mean that there is a constant $C\ge 1$ depending on some parameter $M$, such that $|a|\leq Cb$. When the dependence on the parameter $M$ is unimportant, we may simply write $a=O(b)$ or $|a|\lesssim b$. Similarly, we define $\gtrsim$ and $\sim$.

    \noindent We use the notation $a\ll_M b$ or $b\gg_M a$ to mean that there is a constant $C\ge 1$ depending on some parameter $M$, such that $|a|\le C^{-1}b$. 
    %\item Given a manifold $M\sub \R^n$, we define the $\delta$-normal neighbourhood $N_\delta(M)$ to be
    %\begin{equation}\label{eqn:normal_neighbourhood}
%N_\delta(M) := \{ \xi + \eta: \xi \in M, \eta \perp T_{\xi} M, |\eta|<\delta\},
%\end{equation}
%where $T_\xi  M$ is the tangent space of $M$ at $\xi$.

\item Given a subset $M\sub \R^n$, we define the $\delta$-neighbourhood $N_\delta(M)$ to be
    \begin{equation*}
N_\delta(M) := \{ x + c: x \in M, |c|<\delta\}.
\end{equation*}

    \item Given a continuous function $\phi:\Omega\sub \R^k\to \R^{n-k}$, we define the $\delta$-vertical neighbourhood $N^\phi_\delta(\Omega)$ by    
    \begin{equation*}
N_\delta^\phi(\Omega): =\{ (x , \phi(x) + c): x \in \Omega, c \in [-\delta,\delta]^{n-k}\}.
\end{equation*}

\item Given a collection $\mathcal R$ of subsets of $\R^n$, we abbreviate $\cup \mathcal R$ to be the subset of $\R^n$ defined by $\cup_{R\in \mathcal R}R$.

\item We say that an affine transformation $L:\R^n\to \R^m$ given by $Lx=Ax+b$ is bounded by $C>0$, if every entry of $A,b$ is bounded by $C$. When the constant $C$ is clear from the context, we simply say that $L$ is bounded.

\item We denote by $O(n)$ the orthogonal group in $\R^n$, that is, the collection of all real $n\times n$ orthonormal matrices.% It will not be confused with the $O$-notation in \eqref{item:big_O} above since one is a set and the other is a relation.

%\item For a Lebesgue measurable subset $S\sub \R^n$, we denote by $\mathcal L^n(S)$ the $n$-dimensional Lebesgue measure of $S$.

\item \label{item:Pnd} We denote by $\mathcal P^0_{n,d}$ the collection of all $n$-variate real polynomials $\phi$ of degree at most $d$. In addition, we denote by $\mathcal P_{n,d}$ the collection of all $n$-variate real polynomials $\phi$ of degree at most $d$, such that $\sup_{[-1,1]^n}|\phi(x)|\le 1$.

\item For a $C^2$ function $\phi:[-1,1]^{k}\to \R^{n-k}$, we denote by $\nabla \phi$ the Jacobian matrix of $\phi$, that is, $\nabla \phi=(\frac{\partial \phi_i}{\partial x_j})_{1\le i\le n-k,1\le j\le k}$. For each $x\in [-1,1]^k$, we denote by $D^2 \phi(x)$ the bilinear map $\R^{k}\times \R^k\to \R^{n-k}$ defined by 
\begin{equation}
    (v,w)\mapsto 
        (v^T D^2 \phi_1(x) w,\dots, v^T D^2 \phi_{n-k}(x) w)^T.
\end{equation}
We denote by $\norm{D^2 \phi}_\infty:=\sup_i \norm{D^2 \phi_i}_\infty$, which is equivalent to the operator norm of $D^2 \phi$, up to a constant depending only on $n$. We also denote $\norm{\phi}_{C^2}=\norm{\phi}_\infty+\norm{\nabla \phi}_\infty+\norm{D^2 \phi}_\infty$.

\item For $\phi=(\phi_i):\R^k\to \R^{n-k}$ and $\vec \sigma=(\sigma_i)\in \R^{n-k}$, we denote by $\vec \sigma \phi$ the function $\R^k\to \R^{n-k}$ defined by the entrywise product
\begin{equation}\label{eqn:vector:entrywise_product}
    (\sigma_1 \phi_1,\dots,\sigma_{n-k}\phi_{n-k}).
\end{equation}
Furthermore, if each $\sigma_i\ne 0$, we denote
\begin{equation*}
    \vec \sigma^{-1}:=(\sigma_1^{-1},\dots,\sigma_{n-k}^{-1}).
\end{equation*}

\end{enumerate}

\subsubsection{Notation about parallelograms}\label{sec:equivalence_objects}
Decoupling inequalities are formulated using parallelograms in $\R^n$. To deal with technicalities, it is crucial that we make clear some fundamental geometric concepts.

\begin{enumerate}
    \item A parallelogram $R\sub \R^n$ is defined to be a set of the form
    \begin{equation*}
        \{x+b\in \R^n:|x\cdot u_i|\le l_i\},
    \end{equation*}
    where $\{u_i:1\le i\le n\}$ is a basis of $\R^n$ consisting of unit vectors, $b\in \R^n$ and $l_i\ge 0$. If $\{u_i:1\le i\le n\}$ is in addition orthogonal, we say that $R$ is a rectangle. Unless otherwise specified, we always assume a parallelogram in $\R^n$ has positive $n$-volume, namely, $l_i>0$ for all $1\le i\le n$. We say that two parallelograms are disjoint if their interiors are disjoint. 
    
    \noindent We define the {\it width} of a parallelogram $P$ to be 
    \begin{equation}\label{eqn:width}
        \inf\{\text{smallest dimension of $T$: $T$ is a rectangle containing $P$} \}.
    \end{equation}
    By the John ellipsoid theorem \cite{John}, every convex body in $\R^n$ is $C_n$-equivalent to a rectangle in $\R^n$, for some dimensional constant $C_n$. For this reason and in view of the enlarged overlap introduced in Definition \ref{defn:enlarged_overlap}, we often do not distinguish rectangles from parallelograms.

    \item Let $R\sub \R^n$ be a parallelogram. We denote by $\lambda_R$ an affine bijection from $[-1,1]^n$ to $R$. (There is more than one choice of $\lambda_R$, but in our paper, any such a choice will work identically, since in decoupling inequalities, the orientations of such affine bijections do not make any difference.)

    \item Unless otherwise specified, for a parallelogram $S\sub \R^n$ and $C>0$, we denote by $CS$ the concentric dilation of $S$ by a factor of $C$. Given $t\in \R^n$, we denote $S+t=\{s+t:s\in S\}$. Note that in this notation, we have $C(S+t)=CS+t$.

    \item Given a subset $S\sub \R^n$, a parallelogram $R\sub \R^n$ and $C\ge 1$, we say $S,R$ are $C$-equivalent if $C^{-1}R\sub S\sub CR$. If the constant $C$ is unimportant, we simply say that $S$ is equivalent to $R$, or that $S$ is an {\it almost parallelogram}.

    \item For $0<\delta<1$, by a {\it tiling} of a parallelogram $R\sub \R^n$ by cubes of side length $\delta$, we mean a covering $\mathcal T$ of $R$ by translated copies $T$ of a cube of side length $\delta$, such that different $T$ and $T'$ from $\mathcal T$ have disjoint interiors, and that $T\sub 2R$.

    \item For a subset $S\sub \R^n$, we denote by $\mathrm{Co}(S)$ the convex hull of $S$ in $\R^n$, namely, $\mathrm{Co}(S)$ is the smallest convex subset of $\R^n$ that contains $S$. Note that if $S\sub R$ where $R$ is a parallelogram, then $\mathrm{Co}(S)\sub R$.
    
\end{enumerate}

\subsubsection{Enlarged overlap}
Let $\mathcal R$ be a family of parallelograms. In all decoupling inequalities in the literature, we hope that the parallelograms in $\mathcal R$ do not overlap too much. Since we often deal with constant enlargement of parallelograms, we will need a slightly stronger version of the overlap condition.

\begin{defn}\label{defn:enlarged_overlap}
By an {\it overlap function} we mean an increasing function $B:[1,\infty)\to [1,\infty)$. Given a finite family $\mathcal R$ of parallelograms in $\R^n$ and an overlap function $B$, we say $\mathcal R$ is $B$-overlapping, or $B$ is an overlap function of $\mathcal R$, if for every $\mu\ge 1$ we have
    \begin{equation*}
        \sum_{R\in \mathcal R}1_{\mu R}\le B(\mu).
    \end{equation*}
\end{defn}

In the context of decoupling, the function $B$ may depend on many parameters such as $\eps$ and the family of the manifolds we are decoupling, but we hope that it is independent of $\delta$. In particular, if the function $B$ depends on unimportant parameters that are clear from the context, we simply say that $\mathcal R$ is boundedly overlapping. (In the main theorem of \cite{LiYang2023}, the overlap function depends on $\delta$, but we prove an analogous Theorem \ref{thm:refine_IJ} in the appendix, where the overlap is independent of $\delta$.)

\subsection{Acknowledgement}
The first author thanks Betsy Stovall for preliminary discussions at an early stage of the work. The first author was supported by AMS-Simons Travel Grants. The second author was supported by the Croucher Fellowships for Postdoctoral Research. The authors thank Terence Tao for numerous suggestions on both the content and the presentation of this article.

\section{The radial principle}\label{sec:radial}
In this section, we will introduce and prove the radial principle of decoupling.

\subsection{Setup}\label{subsub:01}
Let $k,l\ge 1$. Given a $C^2$ function $r(s)$ defined on $[-3,3]^k$ and mapping into $[1,2]$. Given a $C^2$ function $\psi(t)$ defined on $[-3,3]^l$ and mapping into $\R$. Suppose also that we are in either of the following scenarios:
\begin{itemize}
    \item $r(s)$ is affine, or
    \item $L\psi(t):=\psi(t)-t\cdot \nabla \psi(t)$ is away from $0$ on $[-3,3]^l$.    
\end{itemize}
We further assume that $\psi$ satisfies a regularity condition.
\begin{defn}\label{condition:2T_flat}
    We say $\psi$ is polynomial-like with parameter $C_{\mathrm{poly}}\ge 1$ if for every parallelogram $\omega$ such that $2\omega\sub [-3,3]^l$, we have $\norm{\psi}_{C^2(2\omega)}\le C_{\mathrm{poly}} \norm{\psi}_{C^2(\omega)}$.
\end{defn}
Thus, using Lemma \ref{lem:C2flat_entire} with $k=m=n-1=l$, we have the following corollary.
\begin{cor}\label{cor:2T_flat}
    Let $\psi:[-3,3]^l\to \R$ be polynomial-like with constant $C_{\mathrm{poly}}$. Then there is a constant $C'_{\mathrm{poly}}$ depending on only $C_{\mathrm{poly}}$ and $l$, such that the following holds: let $\omega\sub [-3,3]^l$ be a parallelogram with $2\omega\sub [-3,3]^l$ that is $(\psi,\delta)$-flat in the third sense of Definition \ref{defn:new_flatness}, namely,
    \begin{equation*}
        \sup_{\substack{t \in \omega, v\in \mathbb S^{l-1}  \\ a\in \R: t+a v\in \omega}} \left | v^T D^2 \psi(t)v\right| |a|^2 \le \delta.
    \end{equation*}
    Then $2\omega$ is $(\psi,C'_{\mathrm{poly}}\delta)$-flat in the third sense of Definition \ref{defn:new_flatness}, namely,
    \begin{equation*}
        \sup_{\substack{t \in 2\omega, v\in \mathbb S^{l-1}  \\ a\in \R: t+a v\in 2\omega}} \left | v^T D^2 \psi(t)v\right| |a|^2 \le C'_{\mathrm{poly}} \delta.
    \end{equation*}   
\end{cor}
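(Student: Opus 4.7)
The plan is to apply Lemma \ref{lem:C2flat_entire} twice, once to $T$ and once to $2T$, sandwiched around the polynomial-like hypothesis. The intuition is that the third-sense $(\psi,\delta)$-flatness of a parallelogram $R$ is essentially a condition of the shape
\begin{equation*}
\sup_{v \in \mathbb S^{l-1}} \bigl(\sup_{t \in R} |v^T D^2\psi(t) v|\bigr) \cdot \ell_v(R)^2 \lesssim_l \delta,
\end{equation*}
where $\ell_v(R)$ denotes the diameter of $R$ in direction $v$, and Lemma \ref{lem:C2flat_entire}, specialised to $k=m=n-1=l$, is designed to package exactly this equivalence between the pointwise flatness of the graph and a weighted $C^2$ control on $\psi$.

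Concretely, I would proceed in three steps. First, invoke the forward direction of Lemma \ref{lem:C2flat_entire} on $T$: the hypothesis that the third-sense flatness quantity on $T$ is at most $\delta$ translates into a bound on a suitable weighted $C^2$ quantity of $\psi$ over $T$, controlled by $\delta$ up to constants depending only on $l$. Second, transfer this bound from $T$ to $2T$ using Definition \ref{condition:2T_flat}, which gives $\|\psi\|_{C^2(2T)} \le C_{\mathrm{poly}}\|\psi\|_{C^2(T)}$; this costs a factor of $C_{\mathrm{poly}}$. Third, observe that $2T$ is the concentric dilate of $T$ by a factor $2$, so the chord lengths scale as $\ell_v(2T) = 2 \ell_v(T)$ and $\ell_v(2T)^2 = 4 \ell_v(T)^2$, contributing only a harmless dimensional constant. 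Plugging the resulting weighted $C^2$ control on $2T$ back into Lemma \ref{lem:C2flat_entire} (now in the reverse direction, applied to $2T$) converts it into the desired third-sense $(\psi, C'_\psi\delta)$-flatness of $2T$, where $C'_\psi$ depends only on $C_{\mathrm{poly}}$ and $l$.

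The main obstacle will be the bookkeeping between the pointwise quantity $|v^T D^2\psi(t) v|$ that appears in the flatness definition and the full $C^2$ norm that appears in the polynomial-like condition, since the latter also includes $L^\infty$ bounds on $\psi$ and $\nabla\psi$ which play no direct role in the flatness. The saving grace is that Lemma \ref{lem:C2flat_entire} is formulated precisely to carry out this translation with both directions controlled by constants depending only on $l$, so provided one trusts that statement the corollary is essentially automatic: the only genuine loss is the factor $C_{\mathrm{poly}}$ from the hypothesis, amplified by an $l$-dependent constant from the geometry of the parallelogram and the round trip through Lemma \ref{lem:C2flat_entire}. Once this is set up, no additional calculation is required.
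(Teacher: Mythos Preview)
Your approach is exactly the paper's: the corollary is stated immediately after the line ``Thus, using Lemma \ref{lem:C2flat_entire} with $k=m=n-1=l$, we have the following corollary,'' and your three-step outline (Lemma \ref{lem:C2flat_entire} forward on $T$, polynomial-like condition to pass to $2T$, Lemma \ref{lem:C2flat_entire} backward on $2T$) is precisely what that one-line citation unpacks to. Your flagged obstacle about the affine part of the $C^2$ norm is real, but it is absorbed by the affine term $L$ in the characterisation of Lemma \ref{lem:C2flat_entire}, so the bookkeeping goes through as you anticipate.
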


{\it Remarks.}
\begin{itemize}
    \item By Corollary \ref{cor:polycoeff}, every polynomial $\psi\in \mathcal P_{l,d}$ is polynomial-like, with $C_{\mathrm{poly}}$ depending only on $d,l$.
    \item If $\det D^2\psi$ is away from $0$ on $[-3,3]^l$, then $\psi$ is polynomial-like with $C_{\mathrm{poly}}$ depending on the maximum ratio between the largest and the smallest eigenvalues (in absolute value) of $D^2\psi(t)$, $t\in [-3,3]^l$.
    \item A canonical example of $\psi(t)$ is given by $\psi(t)=\sqrt{1-|t|^2/(10l)}$, arising from hypersurfaces of revolution (after a constant rescaling). This $\psi$ satisfies both $L\psi(t)\sim_l 1$ and Definition \ref{condition:2T_flat} with $C_{\mathrm{poly}}\sim_l 1$. A slightly different model case satisfying both conditions is $\psi(t)=1-|t|^2$. Both motivate the terminology ``radial principle". We actually prove a slightly more general statement, which will be used in \cite[Theorem 1.6]{LiYang2025}.
\end{itemize}

\fbox{Notation} We use the following notation for the rest of this section. Let $r(s)$ and $\psi(t)$ be given as in Section \ref{subsub:01}. Any kind of $\delta$-flatness refers to Definition \ref{defn:new_flatness} with $m=n-1$. The exponents $p,q,\alpha$ in the $(\ell^q(L^p),\alpha)$ decoupling will be fixed and omitted. Every implicit constant below in this section is allowed to depend on $p,q,\alpha,k,l,\norm{r}_{C^1},\norm{\psi}_{C^2}$, the constant $C_{\mathrm{poly}}$ in Definition \ref{condition:2T_flat}, and $\inf_{t\in [-3,3]^l} |L\psi(t)|$ (dependence on this last parameter can be dropped if $r(s)$ is affine).

\subsubsection{Decoupling assumption}\label{sec:decoupling_assumptions}
We assume that for every $0<2\delta<\sigma\le 1$ and every $\eps>0$, the following conditions are satisfied:

\begin{condition}[Decoupling for sum surface]\label{condition:decoupling}
If a parallelogram $S_\sigma\sub [-2,2]^k$ is $(r,O(\sigma))$-flat in the first sense of Definition \ref{defn:new_flatness}, namely, there exists an affine function $A(s)$ such that 
    \begin{equation}\label{eqn:F1_conical}
        \sup_{s\in S_\sigma}|r(s)-A(s)|\lesssim \sigma,
    \end{equation}
and if a parallelogram $T_\sigma\sub [-2,2]^l$ is $(\psi,O(\sigma))$-flat in the third sense of Definition \ref{defn:new_flatness}, namely, 
\begin{equation}\label{eqn:F3_conical}
    \sup_{\substack{t \in T_\sigma, v\in \mathbb S^{l-1}  \\ a\in \R: t+a v\in T_\sigma}} \left | v^T D^2 \psi(t)v\right| |a|^2 \lesssim \sigma,
\end{equation}
and $|b|\sim 1$, then $S_\sigma\times T_\sigma$ can be $(br(s)+\psi(t))$-decoupled into
a collection $\mathcal K_\delta(S_\sigma,T_\sigma)$ consisting of $(br(s)+\psi(t),O(\delta))$-flat parallelograms, at the cost of $\le C_\eps (\delta\sigma^{-1})^{-\eps}$, where $C_\eps$ is independent of $\delta,\sigma$. 

Furthermore, we have the following assumptions on the covering rectangles: 
\begin{enumerate}[label=(\alph*)]
\item \label{item_Feb_5} $S_\sigma$ and $T_\sigma$ are covered by a collection of parallelograms $\mathcal S_\delta(S_\sigma)$ and $\mathcal T_\delta(T_\sigma)$, respectively, such that
\begin{equation*}
        \mathcal K_\delta(S_\sigma,T_\sigma)=\{S_\delta\times T_\delta:S_\delta\in \mathcal S_\delta(S_\sigma),T_\delta\in \mathcal T_\delta(T_\sigma)\}.
    \end{equation*}    
Also, $\cup \mathcal S_\delta(S_\sigma)\sub (2S_\sigma)\cap [-3,3]^k$ and $\cup \mathcal T_\delta(T_\sigma)\sub (2T_\sigma)\cap [-3,3]^l$.

\item \label{item_02_Nov_20} The collection $\mathcal K_\delta(S_\sigma,T_\sigma)$ has cardinality $\le \delta^{-C_{\mathrm{card}}}$. The constant $C_{\mathrm{card}}$ and the overlap function of $\mathcal K_\delta(S_\sigma,T_\sigma)$ are independent of $\sigma,\delta$.

    %\item \label{item_01_Nov_20} We have $\cup\mathcal K_\delta(S_\sigma,T_\sigma)\sub [-3,3]^{k+l}\cap (CS_\sigma\times CT_\sigma)$, 
    
    %\item \label{item_03_Nov_20} The cardinality of $\mathcal K_\delta(S_\sigma,T_\sigma)$ is at most $\delta^{-O(1)}$.
    \item \label{item_04_Nov_20} We have the following size condition for each $T_\delta,T_\sigma$:
    \begin{equation}\label{eqn:size_condition_conical}
        \sigma^{-1}(T_\sigma-c(T_\sigma))\sub \delta^{-1} (T_\delta-c(T_\delta)),
    \end{equation}
    where $c(T_\sigma)$ and $c(T_\delta)$ are the centres of $T_\sigma$ and $T_\delta$, respectively.     
\end{enumerate}
\end{condition}

Before we move on to talk about the main theorem, we first leave a few important remarks, and explain why these assumptions are feasible in practice.

%{\it Remarks.}
\begin{enumerate}
\item Logically, the decoupling assumption for the sum function $br(s)+\psi(t)$ ($b\sim 1$) implies the corresponding decoupling for $r(s)$ and $\psi(t)$ separately. However, in practice, the decoupling for $br(s)+\psi(t)$ usually follows from the degeneracy locating principle (Theorem \ref{thm:degeneracy_locating_principle}), which in turn requires the decoupling for both $r(s)$ and $\psi(t)$ as input. Thus, this assumption is essentially the decoupling for all three functions $r(s)$, $\psi(t)$ and $br(s)+\psi(t)$.

%Similarly, the assumptions on flatness, cardinality and overlap about $\mathcal K_\delta(S_\sigma,T_\sigma)$ are equivalent to the corresponding assumptions on both $\mathcal S_\delta(S_\sigma)$ and $\mathcal T_\delta(T_\sigma)$.

\item \label{item:Nov_22_remark_2} The cardinality assumption in Part \ref{item_02_Nov_20} facilitates the dyadic pigeonholing argument needed to combine decoupling at different levels; see Proposition \ref{prop:combine_decoupling}.

\end{enumerate}
Now we briefly explain why Condition \ref{condition:decoupling} is usually feasible. The existence of $\mathcal S_\delta(S_\sigma)$ and $\mathcal T_\delta(T_\sigma)$ is usually guaranteed; we only provide the reason for $\mathcal T_\delta(T_\sigma)$ since it is similar to $\mathcal S_\delta(S_\sigma)$. For instance, we have:
\begin{itemize}
    \item If $\psi$ is affine, we take $\mathcal T_\delta(T_\sigma)$ to be the singleton $\{T_\sigma\}$. The size Condition \eqref{eqn:size_condition_conical} is trivial, since $\sigma> \delta$.
    \item If $D^2\psi\succ 0$, by \cite{BD2015}, we can take $ T_\sigma$ to be a cube of side length $\sigma^{1/2}$, and 
    $\mathcal T_\delta(T_\sigma)$ to be the tiling of $T_\sigma$ into cubes of side length $\delta^{1/2}$. Condition \eqref{eqn:size_condition_conical} is also easy to verify.
    \item If $\psi$ has at most two variables, then one can show that the existence of a family $\mathcal T_\delta(T_\sigma)$ obeying \eqref{eqn:size_condition_conical} follows from Theorem \ref{thm:refine_IJ}.
\end{itemize} 
%For all 9 combinations of $r(s)$ and $\psi(t)$ each satisfying one of the three cases above, we can use Corollary \ref{cor:refine_IJ} to prove that Part \ref{item_02_Nov_20} holds for $\mathcal K_\delta(S_\sigma,T_\sigma)$.

\subsection{Statement of Theorem}
We are now ready to state the radial principle of decoupling. For each $\delta>0$, we define the $\delta$-(vertical) neighbourhood of the ``product" surface:
\begin{align}
    %\Sigma_\delta(S,T)&:=\{(s,t,u):s\in S,t\in T,|u-r(s)-\psi(t)|\le \delta\},\\
    \Pi_\delta(S,T)&:=\{(s,r(s)t,u):s\in S,t\in T,|u-r(s)\psi(t)|\le \delta\}.\label{eqn:defn_Pi(S,T)}
\end{align}

\begin{thm}[Radial principle of decoupling]\label{thm:radial_principle}
    Assume Condition \ref{condition:decoupling}. Then for $\eps\in (0,1)$ and $0<\delta\ll 1$, there exist families $\mathcal{S}_\delta$ and $\mathcal{T}_\delta$ of parallelograms contained in $[-2,2]^k$, $[-2,2]^l$, respectively, such that the set $\Pi_\delta([-1,1]^k,[-1,1]^l)$ can be decoupled into the following family
    \begin{equation}\label{eqn:defn_mathcal_Pdelta}
        \mathcal P_\delta:=\{\Pi_{\delta}(S,T):S\in \mathcal S_\delta,\,\,T\in \mathcal T_\delta\}
    \end{equation}
    of almost parallelograms with width at least $\delta$ at cost $\le C'_\eps\delta^{-\eps}$. Here, the constant $C'_\eps$ and the overlap functions of $\mathcal S_\delta$, $\mathcal T_\delta$ and $\mathcal P_\delta$ are independent of $\delta$ but may depend on $\eps$ and the constants $C_\eps,C_{\mathrm{card}}$ mentioned in Condition \ref{condition:decoupling}.
%\begin{equation}\label{eqn:radial_parameters}
%\eps,\norm{r}_{C^1},\norm{\psi}_{C^2},\inf|L\psi(t)|,k,l, p,q,\alpha, C_\eps, C_{\mathrm{poly}}, C_{\mathrm{card}},
%end{equation}
 %for the sum surface $r(s)+\psi(t)$ given in Section \ref{sec:decoupling_assumptions}, $C_{\mathrm{poly}}$ is as in Definition \ref{condition:2T_flat}, and  is as in Part \ref{item_02_Nov_20} of Condition \ref{condition:decoupling}. The dependence of $C'_\eps$ on $\inf|L\psi(t)|$ can be dropped if $r(s)$ is affine.

\smallskip

Moreover, we have the following special cases: 
\begin{enumerate}
    \item \label{item:Mar_25_radial_T} Suppose that in Condition \ref{condition:decoupling}, $T_\sigma$ is a cube of side length $ \sigma^{1/2}$, and $\mathcal T_\delta(T_\sigma)$ can be taken to be a tiling of $T_\sigma$ by cubes of side length $\delta^{1/2}$. Then the final $\mathcal T_\delta$ can be taken to be a tiling of $[-1,1]^l$ by cubes of side length $ \delta^{1/2}$.
    \item \label{item:Mar_25_radial_S} Suppose that there exist given families $\{\tilde{\mathcal S_\delta}:0<\delta\le 1\}$ each consisting of $(r,\delta)$-flat parallelograms (in the first sense of Definition \ref{defn:new_flatness}) whose overlap function is independent of $\delta$, such that the following condition (*) holds:
\begin{itemize}
\item [*] If $0<2\delta<\sigma\le 1$ and $S_\sigma \in \tilde{\mathcal{S}}_\sigma$, then $\mathcal{S}_\delta(S_\sigma)$ in Condition \ref{condition:decoupling} can always be taken to be
\begin{equation}\label{eqn:Nov_21}
\mathcal{S}_\delta(S_\sigma) = \{ S_\delta \in \tilde{\mathcal{S}}_\delta : S_\delta \cap S_\sigma \neq \emptyset,\,\, S_\delta\sub C_r S_\sigma\},
\end{equation}
for some $C_r$ depending on the function $r(s)$.
\end{itemize}
Then in \eqref{eqn:defn_mathcal_Pdelta}, $\mathcal S_\delta$ can be taken to be $\tilde{\mathcal S}_\delta$. In this case, the overlap function of $\mathcal P_\delta$ (and in turn the decoupling constant $C'_\eps$) may also depend on the constant $C_r$.
\end{enumerate} 
\end{thm}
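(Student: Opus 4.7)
The plan is to perform a Pramanik--Seeger iteration over dyadic scales $\sigma = 2^{-j}$ ranging from $1$ down to $\delta$. I will maintain, at each scale $\sigma$, a decomposition of $\Pi_\delta([-1,1]^k, [-1,1]^l)$ into caps of the form $\Pi_\delta(S_\sigma, T_\sigma)$ indexed by a product family $\mathcal{S}_\sigma \times \mathcal{T}_\sigma$ satisfying the flatness conditions \eqref{eqn:F1_conical} and \eqref{eqn:F3_conical}, and refine from scale $\sigma$ to scale $\sigma/2$ by invoking Condition \ref{condition:decoupling} on each cap. The final families $\mathcal{S}_\delta, \mathcal{T}_\delta$ are obtained after $O(\log(1/\delta))$ iterations.

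The heart of the argument is a cylindrical approximation that converts each ``radial product'' cap into a cap amenable to the ``sum surface'' decoupling of Condition \ref{condition:decoupling}. Fix centres $s_0 \in S_\sigma$, $t_0 \in T_\sigma$ and set $r_0 = r(s_0)$. I would apply the affine map $(s, y, u) \mapsto (s,\, r_0^{-1} y,\, u - \mathrm{aff}(s, y))$, where $\mathrm{aff}$ absorbs the terms affine in $s$ and in the rescaled variable $\tilde t := r_0^{-1} r(s) t$. The elementary identity
\begin{equation*}
    r(s)\, \psi\bigl(r_0 r(s)^{-1} \tilde t\bigr) = r_0\, \psi(\tilde t) + L\psi(\tilde t)\,(r(s) - r_0) + O\bigl((r(s) - r_0)^2\bigr),
\end{equation*}
combined with the $(r,O(\sigma))$-flatness of $S_\sigma$, the $(\psi,O(\sigma))$-flatness of $2T_\sigma$ (available via Corollary \ref{cor:2T_flat}), and the matching of $t$- and $\tilde t$-scales encoded by \eqref{eqn:size_condition_conical}, then shows that the transformed cap lies in an $O(\delta)$-vertical neighbourhood of the graph of $b(s_0,t_0)\, r(s) + \psi(\tilde t)$ for some $|b(s_0, t_0)| \sim 1$, over a product domain still satisfying the flatness hypotheses of Condition \ref{condition:decoupling}. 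Invoking Condition \ref{condition:decoupling} refines this cap into product caps at scale $\sigma/2$, which I pull back through the affine map to a product family covering $\Pi_\delta(S_\sigma, T_\sigma)$.

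Iterating and combining the per-step costs $C_\eps 2^\eps$ via dyadic pigeonholing---controlled by the cardinality bound in Condition \ref{condition:decoupling}\ref{item_02_Nov_20}---yields total cost $\delta^{-O(\eps)}$ and uniform overlap bounds on $\mathcal{S}_\delta, \mathcal{T}_\delta, \mathcal{P}_\delta$, after one final rescaling of $\eps$. The special cases \eqref{item:Mar_25_radial_T} and \eqref{item:Mar_25_radial_S} follow immediately because the hypothesised forms of $\mathcal{T}_\delta(T_\sigma)$ (a $\delta^{1/2}$-cube tiling) and $\mathcal{S}_\delta(S_\sigma)$ (restriction of a fixed global family $\tilde{\mathcal{S}}_\delta$) are preserved by each refinement step. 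I expect the main obstacle to be verifying that the cylindrical approximation is accurate to $O(\delta)$ rather than only $O(\sigma^2)$: upgrading the naive Taylor error requires the anisotropic rescaling encoded by \eqref{eqn:size_condition_conical} to convert $\sigma$-sized deviations in the raw $t$-coordinate into $\delta$-sized deviations in $\tilde t$, together with the polynomial-like hypothesis on $\psi$ (so that flatness transfers from $T_\sigma$ to $2T_\sigma$). When $r$ is not affine, the coefficient $b(s_0, t_0)$ arises essentially from $L\psi$ evaluated near $\tilde t_0$, so the hypothesis $|L\psi| \gtrsim 1$ is precisely what ensures $|b| \sim 1$ and keeps Condition \ref{condition:decoupling} applicable throughout the iteration.
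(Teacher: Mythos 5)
Your overall strategy (iterated cylindrical approximation feeding into Condition \ref{condition:decoupling}, then recombination by pigeonholing) is the paper's strategy, but two of your specific choices would fail as stated.

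First, the dyadic schedule $\sigma\to\sigma/2$ with $O(\log(1/\delta))$ iterations cannot give the stated conclusion. Each invocation of Condition \ref{condition:decoupling} costs at least $C_\eps$, and each recombination via Proposition \ref{prop:combine_decoupling} costs an extra factor $\sim\log_2(\#\mathcal S)\sim\log(1/\delta)$; over $\log_2(1/\delta)$ steps the accumulated constant is at least $C_\eps^{\log_2(1/\delta)}=\delta^{-\log_2 C_\eps}$, a fixed power of $\delta^{-1}$ that no ``final rescaling of $\eps$'' removes (the $\log$ losses compound even worse). The same compounding destroys the other conclusions: each step only guarantees $\cup\mathcal S_\delta(S_\sigma)\sub 2S_\sigma$ (or $S_\delta\sub C_rS_\sigma$ in case (2)), so after $\log(1/\delta)$ steps the overlap functions (Proposition \ref{prop:iterative_overlap}) and the containment in $[-2,2]^{k+l}$ (Proposition \ref{prop:tiling_containedin_2R0}) become $\delta$-dependent, contradicting the statement. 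The paper instead performs a bounded number $N=N(\eps)$ of steps with $\delta_i=\delta_{i-1}^{1/(1-\eps)}$, after an initial trivial decoupling into cubes of side $\delta_1^{1/2}$, $\delta_1=\delta^{10\eps}$, so that the total cost is $C_\eps^{N}\delta^{-O(\eps)}$ and the overlaps compound only $O_\eps(1)$ times.

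Second, your approximation discards terms that are too large. Expanding $\rho\mapsto\rho\,\psi(r_0\tilde t/\rho)$ about $\rho=r_0$ and then freezing $L\psi(\tilde t)\approx L\psi(\tilde t_0)$ leaves errors of size roughly $\sup|\tilde t^{T}D^2\psi\,\tilde t|\cdot|r(s)-r_0|^2$ and $|L\psi(\tilde t)-L\psi(\tilde t_0)|\cdot|r(s)-r_0|$. Neither improves as the iteration proceeds, because $|r(s)-r_0|$ is controlled only by the size of the current $s$-cap, which need not shrink at all (for the light cone $r$ is affine and no decoupling in $s$ occurs beyond an initial coarse tiling); hence once the target scale $\delta_i$ drops below these fixed quantities, the cap is no longer within $O(\delta_i)$ of your sum surface, and Condition \ref{condition:decoupling} cannot be applied. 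The paper avoids this by never linearising in $r$: after recentring at $t_0$ it keeps $r(s)L\psi(t_0)$ exactly inside the sum surface (the term linear in the ambient variable $r(s)t$ is removed by a shear), and the only discarded quantities are $(r(s)-1)Q(\tau)$ and $Q(\theta)-Q(\tau)$, each carrying a factor of the previous-scale flatness $O(\delta_{i-1})$ times $\delta_1^{1/2}$ from the initial coarse tiling, which is below $\delta_i$ precisely because of the non-dyadic scale schedule. The two defects are linked: fixing the approximation forces the paper's bookkeeping, and with it the $N(\eps)$-step power-type iteration.
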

{\it Remarks.} 
\begin{itemize}
    \item Strictly speaking, $\Pi_\delta([-1,1]^k,[-1,1]^l)$ should be decoupled into exact parallelograms as in Definition \ref{defn:decoupling}, not subsets equivalent to parallelograms. However, by slightly enlarging the related overlap functions, we may abuse notation and pretend that they were exact parallelograms.
    \item The special case in Part \eqref{item:Mar_25_radial_S} roughly says the following: suppose that we already know the $r$-decoupling of $[-1,1]^k$ into some $\tilde{\mathcal S}_\delta$. Then we can guarantee that $\Pi_\delta([-1,1]^k,[-1,1]^l)$ can be decoupled into parallelograms of the form $\Pi_\delta(S_\delta,T_\delta)$ where $S_\delta$ belongs to the given $\tilde{\mathcal S}_\delta$. This is particularly useful when we study decoupling for surfaces of revolutions generated by a radius function $r(s)$ with an ``essentially unique maximal partition", such as Corollary \ref{cor:radial}. See also Section \ref{sec:corallary_1.4} for a more detailed discussion of this technicality.
\end{itemize}

    %\item If the cost of decoupling in the assumption of Section \ref{sec:decoupling_assumptions} and the overlap functions of $\mathcal S_\delta$ and $\mathcal T_\delta$ can be made to depend only on the parameters on the first line of \eqref{eqn:radial_parameters}, then the final decoupling cost can also depend only on the parameters on the first line of \eqref{eqn:radial_parameters}.

For example, in the case of the standard truncated light cone in $\R^3$ studied in \cite[Section 8]{BD2015}, we may take the following choices:
\begin{align*}
    &\cdot k=l=1,\quad r(s)=s+2, \quad \psi(t)=\sqrt{1-t^2/2},\\
    &\cdot \mathcal S_\delta \text{ is the trivial partition $\{[-1,1]\}$},\\
    &\cdot \mathcal T_\delta \text{ is the partition of $[-1,1]$ into intervals of length $\delta^{1/2}$},\\
    %&\cdot B_i(\mu)=B'_i(\mu)=(100\mu)^{100},\\
    &\cdot \alpha=0, \quad p=6, \quad q=2.
\end{align*}
Thus, we have the following chain of reductions of decoupling:
\begin{align*}
    &(s,st,s\sqrt{1-t^2/2})\\
    \implies & (s,t,s+\sqrt{1-t^2/2}) \quad \text{by radial principle},\\
    \implies & (s,t,\sqrt{1-t^2/2}) \quad \text{by affine equivalence},\\
    \implies & (t,\sqrt{1-t^2/2})\quad \text{by cylindrical decoupling},
\end{align*}
which can be dealt with using \cite[Theorem 1.1]{BD2015}.

\subsection{Proof of Theorem \ref{thm:radial_principle}}
The rest of this section is devoted to the proof of Theorem \ref{thm:radial_principle}, based an induction on scales argument.
\subsubsection{Preliminary reductions}\label{sec:trivial_decoupling}

We may let $\eps\in \N^{-1}\cap (0,10^{-10})$ and $\delta\ll_\eps 1$. Define $\delta_1=\delta^{10\eps}$, and define
\begin{equation}\label{eqn:defn_delta_i}
    \delta_i:=\delta^{(i+9)\eps}, \quad 2\le i\le N,
\end{equation}
where $N= \eps^{-1}-9$.

Our first step is to trivially decouple $[-1,1]^k$ and $[-1,1]^l$ by grids of side length $\sim\delta_1^{1/2}$, so that each of them is $(r,\delta_1)$-flat (respectively $(\psi,\delta_1)$-flat). The cost of this trivial decoupling is $\delta^{-O(\eps)}$ which is acceptable. 

Let $2\le i\le N$ and suppose we have already decoupled into $\Pi_{\delta_{i-1}}(S,T)$ (recall \eqref{eqn:defn_Pi(S,T)}, where $S\in \mathcal S_{\delta_{i-1}}$ and $T\in \mathcal T_{\delta_{i-1}}$.% By taking intersections and enlarging to an equivalent rectangle if necessary, we may assume $S$ is a rectangle of dimensions $\lesssim \delta^{3\eps}$; note that if $\delta\ll_\eps 1$, then by Proposition \ref{prop:tiling_containedin_2R0}, after $N-1$ steps of enlargement by a constant in the subsequent steps, we can still assume $S$ has dimensions $\lesssim_\eps \delta^{3\eps}<\delta^{2\eps}$ at each step. The same reduction holds similarly for $T$. 

The main goal now is to further decouple into $\Pi_{\delta_{i}}(S',T')$, where $S'\in \mathcal S_{\delta_i}$ and $T'\in \mathcal T_{\delta_i}$.

We also perform a reduction to $r(s)$. Denote by $\bar c$ the centre of $S$. By a trivial rescaling in $r(s)$ by $r(\bar c)^{-1}$ and a translation in $s$, we may assume for simplicity that $\bar c=0$ and $r(0)=1$. We record that
\begin{equation}\label{eqn:r(s)_bound}
    |r(s)-1|\lesssim \delta_1^{1/2},\quad \forall s\in S.
\end{equation}
We also record an estimate on the width of $T\in \mathcal T_{\delta_i}$ for each $2\le i\le N$, which follows from Condition \ref{condition:decoupling}. More precisely, if we take 
$\delta,\sigma$ in \eqref{eqn:size_condition_conical} to be $\delta_i,\delta_{i-1}$, respectively, this implies that for all $2\le i\le N$, the width of $T'$ is bounded below by $\delta_{i}\delta_{i-1}^{-1}w(T)$. Thus, for $i=2$, using $w(T)=\delta_1^{1/2}$, we have $w(T')\ge \delta^{6\eps}$. Inductively,
\begin{equation}\label{eqn:length_Ti}
T\in \mathcal T_{\delta_i} \text{ has width } \ge \delta^{(i+4)\eps}\gg \delta_{i-1},\quad \forall 2 \le  i\le N.
\end{equation}

\subsubsection{Approximation}
This subsection will be the key to this proof. We first describe the main heuristic argument. First, we argue by affine invariance that we can let $T$ be centred at $0$, and that $\nabla \psi(0)=0$. Then by Taylor expansion, we can write
\begin{equation*}
    r(s) \psi(t)=(1+E(s))(\psi(0)+Q(t)),
\end{equation*}
where $E(s)$ and $Q(t)$ are $O(\delta_{i-1})$ over $S$ and $T$, respectively, and their gradients are $0$ at $0$. Thus, by affine invariance, the $r(s)\psi(t)$ can be approximated by $E(s)\psi(0)+Q(t)$ at scale $\delta_i$, since $E(s)Q(t)=O(\delta^2_{i-1})\le \delta_{i}$. But then reversing the affine invariance, the decoupling of $E(s)\psi(0)+Q(t)$ is equivalent to $r(s)L\psi(0)+\psi(t)$. More precisely, we now need to decouple the manifold parametrised by
\begin{equation*}
    (s,t,r(s)L\psi(0)+\psi(t)).
\end{equation*}
If $r(s)$ is affine, then the decoupling problem reduces to that for
\begin{equation*}
    (s,t,\psi(t)).
\end{equation*}
If $L\psi(0)\ne 0$, the decoupling problem essentially reduces to that for
\begin{equation*}
    (s,t,r(s)+\psi(t)).
\end{equation*}

We now provide the technical details.

\fbox{Translation in $t$}\label{subsub:Nov_10}

Denote by $t_0$ the centre of $T$, and define $\tau=t-t_0$ and $T_0:=T-t_0$, so $\tau\in T_0$ and $T_0$ is centred at $0$. Then we need to decouple
\begin{equation}\label{eqn:Nov_17_01}
    \{(s,r(s)(\tau+t_0),r(s)\psi(\tau+t_0)+u):s\in S,\tau\in T_0,|u|\le \delta_{i}\}.
\end{equation}
Define
\begin{equation*}
    Q(\tau):=\psi(\tau+t_0)-\psi(t_0)-\nabla\psi(t_0)\cdot \tau,
\end{equation*}
so that by affine transformations, \eqref{eqn:Nov_17_01} becomes
\begin{equation}\label{eqn:Nov_17_02}
    \{(s,r(s)(\tau+t_0),r(s)L\psi(t_0)+r(s)Q(\tau)+u):s\in S,\tau\in T_0,|u|\le \delta_{i}\}.
\end{equation}
We also record that
\begin{equation}\label{eqn:Nov_17_Q}
    Q(0)=0,\quad \nabla Q(0)=0,\quad |Q(\tau)|\le \delta_{i-1}.
\end{equation}

\fbox{Affine transformation in $s$}

We now recall the assumption that $|r(s)-A(s)|\le \delta_{i-1}$ for $s\in S$. Define an affine transformation $\Lambda:\R^{k+l}\to  \R^{k+l}$ by 
\begin{equation}\label{eqn:Lambda_conical}
    \Lambda(s,t):=(s,t-A(s)t_0).
\end{equation}
Then applying $\Lambda$ to \eqref{eqn:Nov_17_02}, we first decouple 
\begin{equation}\label{eqn:Nov_17_03}
    \{(s,r(s)(\tau+t_0)-A(s)t_0,r(s)L\psi(t_0)+r(s)Q(\tau)):s\in S,\tau\in T_0,|u|\le \delta_{i}\}.
\end{equation}

\fbox{Change of variables}

We now change
\begin{equation*}
    \theta:=r(s)(\tau+t_0)-A(s)t_0,
\end{equation*}
so that \eqref{eqn:Nov_17_03} becomes
\begin{equation}\label{eqn:Nov_17_03'}
\{(s,\theta,r(s)L\psi(t_0)+r(s)Q(\tau)+u):(s,\theta)\in R(S,T_0),|u|\le \delta_{i}\},
\end{equation}
where
\begin{equation*}
R(S,T_0):=\{(s,\theta):s\in S,\theta\in r(s)T_0+(r(s)-A(s))t_0\}.
\end{equation*}
To analyse $R(S,T_0)$, we have the following lemma.
\begin{lem}\label{lem_Nov_17_01}
    The set $R(S,T_0)$ is equivalent to the parallelogram $S\times T_0$.
\end{lem}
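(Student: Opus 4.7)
The plan is to interpret the claim as asserting that the subset
\begin{equation*}
E = \{(s,\theta) \in \R^{k+l} : s \in S,\ \theta \in r(s) T_0 + (r(s) - A(s)) t_0\}
\end{equation*}
is $C$-equivalent to the parallelogram $S \times T_0$, for some constant $C$ depending only on the admissible parameters. I will establish this by verifying both inclusions $C^{-1}(S \times T_0) \subseteq E \subseteq C (S \times T_0)$ directly from two ingredients already recorded in the setup: first, $r(s) = 1 + O(\delta_1^{1/2})$ on $S$ by \eqref{eqn:r(s)_bound}, so $r(s) T_0$ is equivalent to $T_0$ with implicit constants $1 + O(\delta_1^{1/2})$ uniformly in $s$; second, $|(r(s)-A(s)) t_0| \lesssim \delta_{i-1}$, since $|r(s)-A(s)| \lesssim \delta_{i-1}$ on $S$ and $|t_0| \lesssim 1$, while every dimension of $T_0$ is $\gg \delta_{i-1}$ by \eqref{eqn:length_Ti}.

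For the forward inclusion $E \subseteq C(S \times T_0)$, pick $(s,\theta) \in E$ and write $\theta = r(s)\tau + (r(s)-A(s)) t_0$ with $\tau \in T_0$; then $r(s)\tau \in (1+O(\delta_1^{1/2})) T_0$, and the added vector of norm $\lesssim \delta_{i-1}$ is absorbed by $T_0$ because each of its dimensions vastly exceeds $\delta_{i-1}$, giving $\theta \in C T_0$. For the reverse inclusion $C^{-1}(S \times T_0) \subseteq E$, fix $(s,\theta) \in C^{-1}(S \times T_0)$ and verify the membership $r(s)^{-1}(\theta - (r(s)-A(s)) t_0) \in T_0$: here $r(s)^{-1}\theta$ lies in a slight enlargement of $C^{-1} T_0$ (again using $r(s)^{-1} = 1 + O(\delta_1^{1/2})$), and subtracting $r(s)^{-1}(r(s)-A(s)) t_0$, of norm $\lesssim \delta_{i-1}$, still leaves us in $T_0$ provided $C$ is chosen large relative to these two small errors.

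The only point requiring care is ensuring that the $O(\delta_{i-1})$ translation is absorbed by $T_0$ in every direction, which is precisely the content of the lower bound \eqref{eqn:length_Ti} on the dimensions of $T_0$. Beyond that, no serious obstacle arises; the resulting constant $C$ depends only on the parameters already allowed in Section \ref{subsub:01}, as required.
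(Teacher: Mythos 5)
Your proof is correct and follows essentially the same route as the paper's: both inclusions are checked fiberwise in $s$, using $|r(s)-1|\lesssim\delta_1^{1/2}$ from \eqref{eqn:r(s)_bound} together with the fact that the $O(\delta_{i-1})$ translation $(r(s)-A(s))t_0$ is absorbed by $T_0$ because all its dimensions are $\gg\delta_{i-1}$ by \eqref{eqn:length_Ti}. The paper simply phrases the two inclusions via the concrete dilates $2T_0$ and $\tfrac12 T_0$ instead of an unspecified constant $C$, which is an immaterial difference.
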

\begin{proof}
    We prove the $\sub$ direction first. Fix $s\in S$. If $\tau\in T_0$, then 
    \begin{align*}
        r(s)\tau-(r(s)-A(s))t_0=r(s)\tau+O(\delta_{i-1}). 
    \end{align*}
    But $|r(s)-1|\lesssim \delta_1$, $\delta\ll_\eps 1$, and $T_0$ has width $\gg \delta_{i-1}$ by \eqref{eqn:length_Ti}, we have $r(s)\tau+O(\delta_{i-1})\in 2T_0$.

    On the other hand, we show that if $\tau\in \tfrac 1 2 T_0$, then $r(s)^{-1}(\tau-(r(s)-A(s))t_0)\in T_0$. For, if $\tau\in \tfrac 1 2 T_0$, then
    \begin{align*}
        r(s)^{-1}(\tau-(r(s)-A(s))t_0)
        &=r(s)^{-1}\tau+O(\delta_{i-1}).
    \end{align*}
    Since $r(s)\ge 3/4$, $\tau\in \tfrac 1 2 T_0$ which has width $\gg \delta_{i-1}$ by \eqref{eqn:length_Ti}, the result follows.
\end{proof}
With this lemma, to decouple \eqref{eqn:Nov_17_03'}, we equivalently decouple
\begin{equation}\label{eqn:Nov_17_04}
    \{(s,\theta,r(s)L\psi(t_0)+r(s)Q(\tau)+u):s\in S,\theta\in 2T_0,|u|\le \delta_{i}\}.
\end{equation}

\fbox{Approximation}

We then approximate \eqref{eqn:Nov_17_04} at scale $\delta_i$ by 
\begin{equation}\label{eqn:Nov_17_05}
    \{(s,\theta,r(s)L\psi(t_0)+Q(\theta)+u):s\in S,\theta\in 2T_0,|u|\le  2\delta_{i}\}.
\end{equation}
To see this, we prove that
\begin{equation}
    |Q(\theta)-r(s)Q(\tau)|<\delta_i.
\end{equation}
To this end, we first use \eqref{eqn:Nov_17_Q} and \eqref{eqn:r(s)_bound} to get
\begin{equation*}
    |(r(s)-1)Q(\tau)|=O(\delta_1\delta_{i-1})<\delta_i/2.
\end{equation*}
Thus, it suffices to prove
\begin{equation}
    |Q(\theta)-Q(\tau)|<\delta_i/2.
\end{equation}
To this end, we use the following elementary calculus lemma.
\begin{lem}\label{lem:calculus}
Let $T_0\sub [-1,1]^l$ be a parallelogram centred at $0$. Let $Q:T\to \R$ be a $C^2$ function with $\nabla Q(0)=0$. For all $\theta\ne\tau\in T_0$, denote by $v$ the unit vector in the same direction as $\theta-\tau$. Then we have
\begin{equation*}
    |Q(\theta)-Q(\tau)|\lesssim \sup_{t'\in T_0}|v^T D^2 Q(t') v||\theta-\tau|(|\theta|+|\tau|),
\end{equation*}
where the implicit constant depends on $l$ only.
\end{lem}
\begin{proof}[Proof of lemma]
    Given $\theta\ne\tau\in T$. Then
    \begin{align*}
        |Q(\theta)-Q(\tau)|
        &=\left|\int_0^1 \nabla Q((1-u)\tau+u\theta)\cdot (\theta-\tau) du\right|\\
        &=\left|\int_0^1 \nabla Q((1-u)\tau+u\theta)\cdot (\theta-\tau) du-\int_0^1 \nabla Q(0)\cdot (\theta-\tau) du\right|\\
        &\le |\theta-\tau|\int_0^1 |\nabla Q((1-u)\tau+u\theta)-\nabla Q(0)|du\\
        &\lesssim \sup_{t'\in T}|v^T D^2 Q(t') v| |\theta-\tau| (|\theta|+|\tau|). 
    \end{align*}
\end{proof}
Now by \eqref{eqn:F3_conical}, Corollary \ref{cor:2T_flat} and Lemma \ref{lem:calculus} with $2T_0$ in place of $T_0$, we can estimate
\begin{align*}
    |Q(\theta)-Q(\tau)|
    &\lesssim \sup_{t'\in 2T_0}|v^T D^2 Q(t') v||\theta-\tau|(|\theta|+|\tau|)\\
    &\lesssim \delta_{i-1}(|\theta|+|\tau|)\\
    &\lesssim \delta_{i-1}\delta_1^{1/2}\\
    &<\delta_i,
\end{align*}
where the second to last line follows from \eqref{eqn:defn_delta_i}. In conclusion, we have reduced the decoupling \eqref{eqn:Nov_17_04} to \eqref{eqn:Nov_17_05}, which, in the language of Definition \ref{defn:graphical_decoupling_convex_hull}, is to $\phi$-decouple $S\times 2T_0$ where 
\begin{equation*}
    \phi(s,\theta):=r(s)L\psi(t_0)+Q(\theta).
\end{equation*}

\subsubsection{Applying decoupling for sum surface}\label{sec:decoupling_section_radial}
We now apply the decoupling assumption for the sum surface given in Section \ref{sec:decoupling_assumptions}. We first deal with the case where $|L\psi(c)|\sim 1$, and then point out the differences in the easier case where $r(s)$ is affine.

We apply Condition \ref{condition:decoupling} with $S_\sigma$ and $T_\sigma$ taken to be $S$ and $2T_0$, respectively, and $\sigma$ and $\delta$ taken to be $\delta_{i-1}$ and $\delta_i$, respectively, to $\phi$-decouple $S\times 2T_0$ into
\begin{equation*}
    \mathcal K_{\delta_i}(S,2T_0)=\{S'\times T':S'\in \mathcal S_{\delta_i}(S),T\in \mathcal T_{\delta_i}(2T_0)\}
\end{equation*}
at the cost of $C_\eps (\delta_i\delta_{i-1}^{-1})^{-\eps}$. In particular, by \eqref{eqn:size_condition_conical}, we record that
\begin{equation}\label{eqn:size_T0_T'}
    T'-c(T')\supseteq 2\delta_i \delta_{i-1}^{-1}T_0\supseteq \delta^{2\eps} T_0.
\end{equation}

The easier case where $r(s)$ is affine is similar, except that we do not partition $S$ further.

%By Proposition \ref{prop:iterative_overlap}, the collection 
%\begin{equation*}
    %\bigcup_{S\in \mathcal S_{\delta_{i-1}},T\in \mathcal T_{\delta_{i-1}}}\mathcal K_{\delta_i}(S,2T)
%\end{equation*}

\subsubsection{Reversing affine transformation}

Now we go back and consider \eqref{eqn:Nov_17_03}. The above decoupling gives rise to parallelograms of the form
\begin{equation*}
    \{(s,r(s)t-A(s)t_0):s\in S',r(s)t-A(s)t_0\in T'- t_0\}=S' \times (T'-t_0).
\end{equation*}
To close the induction, we prove the following lemma.
\begin{lem}\label{lem:Nov_17_02}
    The parallelogram $ S' \times (T'-t_0)$ is equivalent to the set 
    \begin{equation}
        \{(s,r(s)t'-A(s)t_0):s\in S',t'\in T'\}.
    \end{equation}
\end{lem}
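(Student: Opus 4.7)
My plan is to mirror the strategy used in Lemma \ref{lem_Nov_17_01}: establish both inclusions
$$
C^{-1}\bigl(S' \times (T' - t_0)\bigr) \subseteq \{(s, r(s)t' - A(s)t_0) : s \in S', t' \in T'\} \subseteq C\bigl(S' \times (T' - t_0)\bigr)
$$
for some constant $C$ independent of $\delta$, by direct computation on the second coordinate (the first coordinate being $s$ on both sides).

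For the $\subseteq$ direction, I will fix $s \in S'$ and $t' \in T'$ and decompose
$$
r(s)t' - A(s)t_0 - (t' - t_0) = (r(s) - 1)t' - (A(s) - 1)t_0.
$$
The size of each error term will be controlled using the available bounds: $|r(s) - 1| \lesssim \delta_1^{1/2}$ from \eqref{eqn:r(s)_bound}, $|r(s) - A(s)| \lesssim \delta_{i-1}$ from the $(r,\delta_{i-1})$-flatness of $S$, $|t'| \lesssim \delta_1^{1/2}$ because $T' \subseteq 4T_0 \subseteq [-\delta_1^{1/2},\delta_1^{1/2}]^l$, and $|t_0| \lesssim 1$. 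Combined with the lower bound $\gg \delta_{i-1}$ on the dimensions of $T'$ from \eqref{eqn:length_Ti} and the shape control from \eqref{eqn:size_T0_T'}, these errors will be absorbed into a constant enlargement of $T' - t_0$.

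For the $\supseteq$ direction, given $s \in S'$ and a target $y$ in a small concentric shrink of $T' - t_0$, I will solve $r(s)t' - A(s)t_0 = y$ explicitly, yielding $t' = r(s)^{-1}(y + A(s)t_0)$. The analogous decomposition
$$
t' - (y + t_0) = (r(s)^{-1} - 1)(y + A(s)t_0) + (A(s) - 1)t_0
$$
combined with the same size estimates will place $t'$ inside $T'$, provided the ambient constant $C$ is taken large enough.

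The main obstacle is handling the perturbation $(A(s) - 1)t_0$, which is of size $O(\delta_1^{1/2})$ in the direction $t_0$ rather than the cleaner $O(\delta_{i-1})$ that appears in Lemma \ref{lem_Nov_17_01}. The resolution is to exploit the $s$-affine structure of $A$, so that this contribution manifests as a bounded shear of $S' \times (T' - t_0)$ in the $s$-$t$ plane rather than as a genuine failure of equivalence; together with the shape preservation guaranteed by \eqref{eqn:size_T0_T'}, this shear keeps the two sets equivalent with a constant $C$ depending only on the quantities fixed in the setup of Section \ref{subsub:01} ($\norm{r}_{C^1}$, $\norm{\psi}_{C^2}$, $C_{\mathrm{poly}}$, and the polynomial dimensions), and independent of $\delta$ and the induction index $i$.
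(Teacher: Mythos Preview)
There is a genuine gap in your shear argument. You correctly isolate the obstruction $(A(s)-1)t_0$, which has size $O(\delta_1^{1/2})$, and propose to treat it as a shear in $s$. But a shear with bounded coefficient does \emph{not} send a parallelogram to an equivalent one unless the total shear displacement is small relative to the target's dimensions. Here the shear $(s,y)\mapsto(s,y+(\nabla A\cdot s)t_0)$ moves points by up to $\sim|\nabla A|\,|t_0|\,\mathrm{diam}(S')\sim\mathrm{diam}(S')$ in the second coordinate, while the smallest dimension of $T'-t_0$ is only guaranteed to be $\gtrsim\delta^{2\eps}$ times the smallest dimension of $T_0$ by \eqref{eqn:size_T0_T'}. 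When $r$ is affine one may take $S'=S$ with $\mathrm{diam}(S')\sim\delta_1^{1/2}$, whereas the smallest dimension of $T'$ can be as small as $\delta^{2\eps}\delta_1^{1/2}$; the resulting equivalence constant is $\sim\delta^{-2\eps}$, not $O(1)$. Invoking \eqref{eqn:size_T0_T'} does not close this: it only tells you $T'$ is $\delta^{2\eps}$-comparable to $T_0$, which is exactly the losing factor.

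The paper's proof avoids the shear entirely by a different algebraic decomposition. After replacing $A(s)t_0$ by $r(s)t_0+O(\delta_{i-1})$ and recentring at $t_0'=c(T')$, one writes
\[
r(s)(t'-t_0)+(t_0-t_0')\;=\;r(s)(t'-t_0')+(r(s)-1)(t_0'-t_0).
\]
The first summand lies in $2(T'-t_0')$ for free. For the second, the crucial observation---absent from your outline---is that $t_0'-t_0$ is not merely $O(1)$ in norm but actually lies in $CT_0$, because $T'$ sits inside a bounded dilate of $T$ and hence its centre is within $C$ units of $t_0$ \emph{in the shape of $T_0$}. Thus $(r(s)-1)(t_0'-t_0)\in C\delta_1^{1/2}T_0$, and since $\delta_1^{1/2}=\delta^{5\eps}\ll\delta^{2\eps}$ this is contained in $\delta^{2\eps}T_0\subseteq T'-t_0'$ by \eqref{eqn:size_T0_T'}. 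That directional containment, not a shear, is what keeps the equivalence constant uniform in $\delta$.
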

\begin{proof}
    We prove the $\sub$ direction first. Fix $s\in S'$. If $t'\in T'$, then we need to show that $r(s)^{-1}(t'-t_0+A(s)t_0)\in T'$, or $r(s)^{-1}(t'-t_0+A(s)t_0)-t'_0\in T'-t'_0$ where $t'_0$ is the centre of $T'$. To this end, we compute
    \begin{align*}
        r(s)^{-1}(t'-t_0+A(s)t_0)-t'_0
        &=r(s)^{-1}(t'-t_0)+t_0-t'_0+O(\delta_{i-1}).
    \end{align*}
    But using \eqref{eqn:length_Ti}, $T'$ has width $\gg \delta_{i-1}$. Thus, it suffices to show that $r(s)^{-1}(t'-t_0)+t_0-t_0'\in T'$.
    To see this, we further compute
    \begin{align*}
        &r(s)^{-1}(t'-t_0)+t_0-t_0'\\
        &=r(s)^{-1}(t'-t_0+t_0 r(s)-t'_0 r(s))\\
        &=r(s)^{-1}(t'-t_0+t_0 r(s)-t'_0-t'_0r(s)+t'_0)\\
        &=r(s)^{-1}\Big((t'-t'_0)+(r(s)-1)(t_0-t'_0)\Big).
    \end{align*}
    Since $t'_0\in T'\sub CT_0$, we use \eqref{eqn:r(s)_bound} and \eqref{eqn:size_T0_T'} to conclude that
    \begin{equation*}
        (r(s)-1)(t_0-t'_0)\in C\delta_1^{1/2} T-t_0\sub \delta^{2\eps}T_0\sub T'-t'_0.
    \end{equation*}
    Thus, $(t'-t'_0)+(r(s)-1)(t_0-t'_0)\in 2T-t_0'$. Since $r(s)\ge 1$, the $\sub$ direction follows.

    We then prove the $\supseteq $ direction. Fix $s\in S'$ and $t'\in T'$. We want to show that $r(s)t'-A(s)t_0+t_0\in T'$. To this end, using \eqref{eqn:length_Ti}, it suffices to show that $r(s)(t'-t_0)+t_0\in T'$, or $r(s)(t'-t_0)+t_0-t_0'\in T'-t_0'$. But then we compute
    \begin{align*}
        &r(s)(t'-t_0)+t_0-t'_0\\
        &=r(s)(t'-t_0'+t_0'-t_0)-t'_0+t_0\\
        &=r(s)(t'-t_0')+(r(s)-1)(t_0'-t_0).
    \end{align*}
    Using the same argument as in the $\sub$ direction, we are done.
\end{proof}
By Lemma \ref{lem:Nov_17_02}, we see that the decoupled coordinate parallelograms at scale $\delta_i$ can be regarded as of the form $\{(s,r(s)t-A(s)t_0):s\in S',t\in T'\}$. Reversing the affine transformation $\Lambda$ in \eqref{eqn:Lambda_conical}, in the original coordinate space of $(s,r(s)t)$, the above parallelogram becomes
\begin{equation*}
    \{(s,r(s)t):s\in S',t\in T'\}.
\end{equation*}
In this way, we have decoupled each $\Pi_{\delta_{i}}(S\times 2T)$ into almost parallelograms $\Pi_{\delta_{i}}(S'\times T')$, whose collection is denoted by $\mathcal P_{\delta_i}(S\times 2T)$, and whose overlap function is independent of $\delta$. The last step is to define
\begin{equation*}
    \mathcal P_{\delta_i}:=\bigcup_{S\in \mathcal S_{\delta_{i-1}},T\in \mathcal T_{\delta_{i-1}}}\mathcal P_{\delta_i}(S\times 2T),
\end{equation*}
which covers $\Pi_\delta([-1,1]^k,[-1,1]^l)$, and whose overlap function is independent of $\delta$ by Proposition \ref{prop:iterative_overlap}, using the fact that $N$ is independent of $\delta$. Also, using Proposition \ref{prop:tiling_containedin_2R0}, we see that the projection of $\cup\mathcal P_{\delta_i}$ onto $\R^{k+l}$ stays within $[-2,2]^{k+l}$ for every $i\le N$. Hence the induction closes.

\subsubsection{Decoupling cost}\label{sec:radial_principle_pigeon}
Recall from Section \ref{sec:decoupling_section_radial} that the cost of decoupling in Step $i$ is $\le C_\eps (\delta_i\delta_{i-1}^{-1})^{-\eps}$. Combining the decoupling costs using Proposition \ref{prop:combine_decoupling}, we see that the total cost of decoupling is 
\begin{equation*}
    \lesssim C_\eps^N \delta^{-\eps} \delta^{-O(\eps)}\sim C'_\eps \delta^{-O(\eps)},
\end{equation*}
using $N=N(\eps)$. This finishes the proof of the general case.

\subsubsection{Special cases}
We now prove the special cases. Part \eqref{item:Mar_25_radial_T} is trivial, so it suffices to prove Part \eqref{item:Mar_25_radial_S}. The main goal is to $r$-decouple $[-1,1]^k$ into the given family $\tilde {\mathcal S}_\delta$. Similarly to the argument in Section \ref{sec:trivial_decoupling}, by an elementary Fourier truncation argument and losing $\delta^{-O(\eps)}$, we may assume without loss of generality that each $S\in \tilde {\mathcal S}_{\delta_1}$ has diameters $\lesssim \delta_1^{1/2}$. Then at each step $2\le i\le N$ of the iteration, by assumption, we can always use members of $\tilde {\mathcal S}_{\delta_i}$ to $r$-decouple $[-1,1]^k$. In particular, taking $i=N$, we can use members of $\tilde {\mathcal S}_{\delta}$ to $r$-decouple $[-1,1]^k$ at scale $\delta_N=\delta$. Using Proposition \ref{prop:iterative_overlap}, the final overlap function and the decoupling cost will additionally depend on the constant $C_r$.

\section{The degeneracy locating principle}\label{sec:degeneracy_locating}
In this section, we introduce and prove the degeneracy locating principle of decoupling. We need a little notation to make this principle more general.
\subsection{The setup}
Let $1\le k\le n-1$. Let $V,W\sub \R^k$ be compact sets whose interiors are nonempty and connected. Let $\Phi:V\to \R^n$, $\Psi:W\to \R^n$ be parametrisations of $C^2$ manifolds. (Abusing notation, we identify $\Phi,\Psi$ with the manifolds they are parametrising.)

By the formulation of decoupling in Definition \ref{defn:decoupling}, it enjoys a fundamental property called affine invariance. Namely, if $\Lambda:\R^n\to \R^n$ is an affine bijection, and $\Lambda(\mathcal R):=\{\Lambda(R):R\in \mathcal R\}$, then one has 
\begin{equation*}
    \mathrm{Dec}(\Lambda(S),\Lambda(\mathcal R),p,q,\alpha)=\mathrm{Dec}(S,\mathcal R,p,q,\alpha).
\end{equation*}
To utilise this property, below we systematically develop the theory of affine equivalence for manifolds in $\R^n$.

\begin{defn}[Affine equivalence of manifolds in parametric form]\label{defn:affine_equivalence_parametric}
    We say $\Phi$ and $\Psi$ are affine equivalent in parametric form if there exist an affine bijection $\Lambda:\R^n\to \R^n$ and a diffeomorphism $\Xi:W\to V$, such that 
    \begin{equation}\label{eqn:affine_equivalence_parametric}
        \Lambda\circ\Psi=\Phi\circ \Xi.
    \end{equation}
\end{defn}
We give some examples below. 

\fbox{Notation} Throughout this section, we use row vectors by default, but switch to column vectors whenever we need to demonstrate matrix multiplications. 
\begin{itemize}
    \item Consider $\Psi_1(x)=(x,x^2)$ on $W_1=[1,1+\delta^{1/2}]$, and $\Phi_1(x)=(x,\delta x^2)$ on $V_1=[0,1]$. We can define $\Xi_1 x=\delta^{-1/2}(x-1)$ and $\Lambda_1 (x,y)=(\Xi x,y-2x+1)$, such that \eqref{eqn:affine_equivalence_parametric} holds.
    \item Consider $\Psi_2(x)=(x,x^2)$ on $W_2=[1,2]$, and $\Phi_2(x)=(x,\sqrt x)$ on $V_2=[1,4]$, $\Xi_2 x=x^2$ and $\Lambda_2(x,y)=(y,x)$, such that \eqref{eqn:affine_equivalence_parametric} holds. Note that the diffeomorphism $\Xi_2$ is a reparametrisation of $\Phi$, and is not always affine.
    \item Consider $\Psi_3(x)=(x,x^2,x^3)$ on $W_3=[1,1+\delta^{1/3}]$, and $\overline\Phi_3(x)=(x,\delta^{2/3}x^2,\delta x^3)$ on $V_3=[0,1]$. We can define $\Xi_3 x=\delta^{-1/3}(x-1)$ and 
    \begin{equation*}
        \overline\Lambda_3 (x,y,z)=(\Xi_3 x,-2x+y+1,3x-3y+z-1),
    \end{equation*}
    such that \eqref{eqn:affine_equivalence_parametric} holds with $\overline \Phi_3,\overline \Lambda_3$. We can also normalise by defining
    \begin{equation*}
    \begin{aligned}
        &\Phi_3(x)=\left(x,\frac{10}{\sqrt {10}}\delta^{2/3}x^2+\frac 3 {\sqrt {10}}\delta x^3,\frac 1 {\sqrt {10}}\delta x^3\right)^T,\\
        &\Lambda_3\begin{bmatrix}
            x \\ y \\z
        \end{bmatrix}=
        \begin{bmatrix}
            \delta ^{-1/3} & 0 & 0\\
            -\frac {11}{\sqrt {10}} & \frac 1 {\sqrt {10}} & \frac 3 {\sqrt {10}}\\[4pt]
            \frac 3 {\sqrt {10}} & -\frac 3 {\sqrt {10}} & \frac 1 {\sqrt {10}}
        \end{bmatrix}\begin{bmatrix}
            x \\ y \\z
        \end{bmatrix}
        +\begin{bmatrix}
            -\delta^{-1/3}\\ \frac 7 {\sqrt {10}}\\[4pt] -\frac 1 {\sqrt{10}}
        \end{bmatrix},
    \end{aligned}
    \end{equation*}
    such that \eqref{eqn:affine_equivalence_parametric} holds with $ \Phi_3, \Lambda_3$. Note that the lower right $2\times 2$ minor $U_3:=\begin{bmatrix}
        \frac 1 {\sqrt {10}} & \frac 3 {\sqrt {10}}\\[4pt]
        -\frac 3 {\sqrt {10}} & \frac 1 {\sqrt {10}}
    \end{bmatrix}$
    is an orthonormal matrix.
\end{itemize}

\subsubsection{Affine equivalence of manifolds in graph form}
In many of our applications of decoupling, we would like that our manifolds $\Phi$ be given in graph form. 

\fbox{Notation} For convenience, we also restrict ourselves to a subcollection $\mathcal{M}_0=\mathcal M_0(n,k,C_0)$ of manifolds $M_\phi$ in $\R^n$ given by graphs of vector valued $C^2$ functions $\phi: [-1,1]^k \to \R^{n-k}$, such that $\norm{\phi}_{C^2}\le C_0$. Denote $\Phi(x)=(x,\phi(x))$. 

We also denote by $\mathcal R_0$ the collection of all parallelograms contained in $[-1,1]^k$ with positive $k$-volume.

\begin{defn}[Affine equivalence of manifolds in graph form]\label{defn:affine_equivalence_graphic}
    Let $M_\phi,M_\psi\in \mathcal M_0$, and let $R_\phi,R_\psi\in \mathcal R_0$. Let $\Xi:R_\psi\to R_\phi$ be an affine bijection. We say that $(M_\psi,R_\psi)$ is $\Xi$-affine equivalent to $(M_\phi,R_\phi)$, or in symbols,
    $$
    (M_\psi, R_\psi) \overset{\Xi}{\longmapsto} (M_\phi,R_\phi),
    $$
    if there exist an orthonormal matrix $U:\R^{n-k} \to \R^{n-k}$, a matrix $A:\R^k\to \R^{n-k}$ and a vector $b\in \R^{n-k}$, such that
    \begin{equation}\label{eqn:affine_equivalence_graphic}
        \psi(x) = U ( \phi(\Xi x) + Ax + b), \quad \forall x \in R_\psi.
    \end{equation} 
    Typically, $\Xi$ is clear when the pairs $(M_\phi,R_\phi)$ and $(M_\psi,R_\psi)$ are given. We therefore simply say that $(M_\phi,R_\phi)$ and $(M_\psi,R_\psi)$ are affine equivalent (in graph form).
\end{defn}
For the examples after Definition \ref{defn:affine_equivalence_parametric}, we have
\begin{itemize}
    \item $(M_{\psi_1},R_{\psi_1})$ is $\Xi_1$-affine equivalent to $(M_{\phi_1},R_{\phi_1})$ with $U=1$, $A=-2$ and $b=1$; 
    \item $(M_{\psi_2},R_{\psi_2})$ is not affine equivalent to $(M_{\phi_2},R_{\phi_2})$ in graph form. 
    \item $(M_{\psi_3},R_{\psi_3})$ is $\Xi_3$-affine equivalent to $(M_{\phi_3},R_{\phi_3})$ with $U=U_3^{-1}$, $A=(\frac {11}{\sqrt{10}}, -\frac 3 {\sqrt{10}})^T$, and $b=(-\frac {7}{\sqrt{10}}, \frac 1 {\sqrt{10}})^T$.
\end{itemize}

%The reason we introduce the pair $(M_\phi,R_\phi)$ is for notational convenience later, so that we can fix the domain of the functions $\phi$ in consideration to be $[-1,1]^k$.

We summarise the following useful properties of affine equivalence for future reference.

\begin{lem}\label{lem:AE_property}
Let $M_\phi,M_\psi\in \mathcal M_0$, and let $R_\phi,R_\psi\in \mathcal R_0$. Then we have the following statements.
\begin{enumerate}
        \item \label{item:lemma3.3_part1} If $(M_\psi,R_\psi)$ is $\Xi$-affine equivalent to $(M_\phi,R_\phi)$, then for any parallelogram $R \subseteq R_\psi$, $(M_\psi,R)$ is $\Xi$-affine equivalent to $(M_\phi,\Xi(R))$.
        \item \label{item:lemma3.3_part2} If $(M_{\psi_0},R_{\psi_0})$ is $\Xi_1$-affine equivalent to $(M_{\psi_1},R_{\psi_1})$, and $(M_{\psi_1},R_{\psi_1})$ is $\Xi_2$-affine equivalent to $(M_{\psi_2},R_{\psi_2})$, then $(M_{\psi_0},R_{\psi_0})$ is $(\Xi_2\circ \Xi_1)$-affine equivalent to $(M_{\psi_2},R_{\psi_2})$.
        \item \label{item:lemma3.3_part3} If $(M_\psi,R_\psi)$ is $\Xi$-affine equivalent to $(M_\phi,R_\phi)$, then $R_\psi$ is $(\psi,\sigma)$-flat if and only if $R_\phi$ is $(\phi,\sigma)$-flat, in any of the three senses in Definition \ref{defn:new_flatness}. 
        \item \label{item:lemma3.3_part4} If $(M_\psi,R_\psi)$ is $\Xi$-affine equivalent to $(M_\phi,R_\phi)$, then $(M_{U\psi},R_\psi)$ is $\Xi$-affine equivalent to $(M_\phi,R_\phi)$ for any $U \in O(n-k)$.
    \end{enumerate}
\end{lem}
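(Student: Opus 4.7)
The plan is to verify each assertion by unpacking Definition \ref{defn:affine_equivalence_graphic}. Parts \eqref{item:lemma3.3_part1}, \eqref{item:lemma3.3_part2}, and \eqref{item:lemma3.3_part4} are essentially bookkeeping. For \eqref{item:lemma3.3_part1}, the same triple $(U,A,b)$ that witnesses $(M_\psi,R_\psi)\overset{\Xi}{\longmapsto}(M_\phi,R_\phi)$ still works when $\Xi$ is restricted to $R\subseteq R_\psi$, which remains an affine bijection onto $\Xi(R)$. For \eqref{item:lemma3.3_part4}, the orthogonal factor $U_0$ witnessing the original equivalence is replaced by $UU_0\in O(n-k)$, while $A$ and $b$ are kept unchanged. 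For \eqref{item:lemma3.3_part2}, I would substitute the defining identity for $\psi_1$ into that for $\psi_0$: writing $\Xi_1 x=\Xi_1^{(0)} x+\xi_1$ for the linear and translational parts of $\Xi_1$ and using $U_2^T=U_2^{-1}$, one arrives at
\[
\psi_0(x)=U_1U_2\bigl(\psi_2(\Xi_2\Xi_1 x)+(A_2\Xi_1^{(0)}+U_2^T A_1)\,x+(A_2\xi_1+b_2+U_2^T b_1)\bigr),
\]
which is precisely the data witnessing $(\Xi_2\circ\Xi_1)$-affine equivalence.

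The substance lies in Part \eqref{item:lemma3.3_part3}. My plan is to verify each of the three senses of flatness in Definition \ref{defn:new_flatness} separately, in each case exploiting that $U\in O(n-k)$ is an isometry of $\R^{n-k}$ and that the affine correction $Ax+b$ has vanishing second derivative. For the affine-approximation sense, given an affine $\lambda_0:\R^k\to\R^{n-k}$ with $|\phi(y)-\lambda_0(y)|\le\sigma$ on $R_\phi$, set $\tilde\lambda(x):=U(\lambda_0(\Xi x)+Ax+b)$, which is again affine, and compute
\[
|\psi(x)-\tilde\lambda(x)|=|U(\phi(\Xi x)-\lambda_0(\Xi x))|=|\phi(\Xi x)-\lambda_0(\Xi x)|\le\sigma,
\]
with the converse symmetric. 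For the second-derivative sense, the chain rule yields $D^2\psi(x)[v,v]=U\cdot D^2\phi(\Xi x)[\Xi_0 v,\Xi_0 v]$, where $\Xi_0$ denotes the linear part of $\Xi$; substituting $w:=\Xi_0 v/|\Xi_0 v|$ and $a':=a|\Xi_0 v|$ converts admissible triples $(x,v,a)$ with $x,x+av\in R_\psi$ into admissible triples $(\Xi x,w,a')$ with $\Xi x,\Xi x+a'w\in R_\phi$, and gives the term-by-term identity
\[
|v^T D^2\psi(x)v|\,|a|^2=|w^T D^2\phi(\Xi x)w|\,|a'|^2,
\]
so the two supremums coincide. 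The remaining sense of Definition \ref{defn:new_flatness} should yield to a similar argument, or alternatively follows once one sense is verified via the equivalence of the three senses established in the appendix.

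The main obstacle is conceptual rather than computational: Part \eqref{item:lemma3.3_part3} hinges on flatness being measured in a way that is intrinsic to the graph and compatible with the class of affine reparametrizations allowed in Definition \ref{defn:affine_equivalence_graphic}. This is precisely why Definition \ref{defn:new_flatness}, rather than Definition \ref{defn:graphically_flat} (which fixes a specific orthonormal projection onto an $(n-m)$-plane of $\R^n$), is the appropriate notion here. Once one verifies that each sense scales correctly under $v\mapsto\Xi_0 v$ and post-composition by $U$, each of the three calculations above is routine; the trickiest point is checking that the reparametrization $(v,a)\mapsto (w,a')$ is genuinely a bijection of admissible triples on the two sides, so that no spurious constants appear and the common threshold $\sigma$ is preserved.
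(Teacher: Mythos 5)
Your proposal is correct in substance and follows essentially the same route as the paper: the paper disposes of parts \eqref{item:lemma3.3_part1}, \eqref{item:lemma3.3_part2}, \eqref{item:lemma3.3_part4} as direct consequences of Definition \ref{defn:affine_equivalence_graphic} (your composition formula for part \eqref{item:lemma3.3_part2} is exactly the computation being waved at), and it obtains part \eqref{item:lemma3.3_part3} by citing Proposition \ref{prop:new_flatness_affine_invariance}, whose elementary proof is what you carry out by hand, with the extra observation that the orthogonal factor $U$ is harmless.

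Two caveats on part \eqref{item:lemma3.3_part3}. First, your displayed verifications of the first and third senses omit the matrix $U'\in\R^{(n-m)\times(n-k)}$ with orthonormal rows appearing in \eqref{eqn:F1} and \eqref{eqn:F3}; as written they only cover the case $m=k$, where $U'$ is a full orthogonal matrix and can be dropped. For general $m$, the point (and the only place the hypothesis $U\in O(n-k)$ is really used) is that one passes from a witness $U'$ on the $\phi$ side to $U'U^{-1}$ on the $\psi$ side (and back), which again has orthonormal rows, so that $U'U^{-1}\bigl(\psi(x)-U(L(\Xi x)+Ax+b)\bigr)=U'\bigl(\phi(\Xi x)-L(\Xi x)\bigr)$ and likewise $U'U^{-1}\bigl(v^TD^2\psi(x)v\bigr)=U'\bigl((\Xi_0v)^TD^2\phi(\Xi x)(\Xi_0v)\bigr)$; with this insertion your substitution $(v,a)\mapsto(w,a')$ gives the exact, constant-free equivalence claimed. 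You gesture at "post-composition by $U$" but the conjugation of $U'$ should be made explicit. Second, your suggested fallback of deducing the remaining sense from "the equivalence of the three senses established in the appendix" does not work: Proposition \ref{prop:F1F2F3_equivalent} only gives $F3(\delta)\Rightarrow F2(C\delta)\Rightarrow F1(C\delta)$ for general $C^2$ functions (the converse needs polynomials and loses a constant $C_{n,d}$), whereas part \eqref{item:lemma3.3_part3} asserts exact $\sigma$-preservation in each sense separately for arbitrary $M_\phi,M_\psi\in\mathcal M_0$. So the second sense must be checked directly; it is, as you say, the same routine computation.
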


\begin{proof}
    Part \eqref{item:lemma3.3_part3} follows immediately from Proposition \ref{prop:new_flatness_affine_invariance}. The other parts follow directly from Definition \ref{defn:affine_equivalence_graphic}.
\end{proof}

It will be useful to study when these two notions of affine equivalence in Definitions \ref{defn:affine_equivalence_parametric} and \ref{defn:affine_equivalence_graphic} are equivalent when $\Phi,\Psi$ are given by graphs. One direction is obvious: if $(M_\psi,R_\psi)$ and $(M_\phi,R_\phi)$ are affine invariant in graph form via \eqref{eqn:affine_equivalence_graphic}, then $\Psi:=(x,\psi(x))$ and $\Phi:=(x,\phi(x))$ are affine invariant in parametric form via \eqref{eqn:affine_equivalence_parametric}, where $\Lambda(x,y):=(\Xi x,U^{-1}y-Ax-b)$. What we will often need in the future is the converse statement below.

\begin{prop}
Let $U_\Phi, U_\Psi \subseteq \R^k$ be compact subsets whose interiors are open and connected. Let $\Phi \stackrel{\mathrm{def}}{=} (\bar \Phi,\tilde \Phi) :U_\Phi\to \R^n$, $\Psi \stackrel{\mathrm{def}}{=} (\bar \Psi,\tilde \Psi):U_\Psi\to \R^n$ be $C^2$ parametrisations such that $D\bar\Phi$ and $D\bar \Psi$ are $k\times k$ matrices whose eigenvalues have their absolute value $\sim 1$. Let $\phi = \tilde \Phi \circ (\bar\Phi)^{-1}$ and $\psi = \tilde \Psi \circ (\bar\Psi)^{-1}$ so that $\Phi=M_\phi$, $\Psi=M_\psi$ are both elements of $\mathcal M_0$.

Assume that $\Phi$ and $\Psi$ are affine equivalent in parametric form via \eqref{eqn:affine_equivalence_parametric}, that is, $\Lambda\circ \Psi=\Phi\circ \Xi$. Assume also that there exists an orthonormal matrix $Q:\R^{n-k}\to \R^{n-k}$ such that    \begin{equation}\label{eqn:condition_two_affine_equivalent}
        \Lambda(x,y)-\Lambda (x,0)=(0,Qy),\quad \forall x\in \R^k, \quad y\in \R^{n-k}.
    \end{equation}
Denote $\Lambda=(\bar \Lambda,\tilde \Lambda)$, where $\bar \Lambda$ denotes the first $k$ coordinates of $\Lambda$.     
Then we have
\begin{enumerate}
    \item \label{item:Mar_25_01} $\bar\Lambda (x,y)=\bar \Lambda(x,0)$. Thus, we may abuse notation and assume that $\bar \Lambda$ is defined on $\R^k$.
    \item \label{item:Mar_25_02} For any parallelogram $R_\psi\sub \bar\Psi(U_\Psi)$, $(M_\psi,R_\psi)$ is $\bar\Lambda$-affine equivalent to $(M_\phi,\bar \Lambda(R_\phi))$ are affine invariant in graph form. (See Figure \ref{fig:affine_equivalence} below.)
\end{enumerate}
\end{prop}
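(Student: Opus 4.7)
The plan is to extract a graph-form affine equivalence directly from the parametric one by exploiting the block structure of $\Lambda$ given by \eqref{eqn:condition_two_affine_equivalent}. Part \eqref{item:Mar_25_01} is immediate: applying the projection onto the first $k$ coordinates to both sides of \eqref{eqn:condition_two_affine_equivalent} yields $\bar\Lambda(x,y)-\bar\Lambda(x,0)=0$, so $\bar\Lambda(x,y)=\bar\Lambda(x,0)$. Thus $\bar\Lambda$ depends only on the first $k$ coordinates, and since $\Lambda$ is affine, $\bar\Lambda$ may be viewed as an affine map $\R^k\to\R^k$.

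For Part \eqref{item:Mar_25_02}, I would first decompose the parametric relation $\Lambda\circ\Psi=\Phi\circ\Xi$ coordinate-wise. Writing $\tilde\Lambda(x,y)=M_1 x+Qy+c$ (using \eqref{eqn:condition_two_affine_equivalent} to identify the $y$-block as $Q$, with $M_1$ a linear map and $c$ a vector), the first $k$ coordinates of $\Lambda\circ\Psi=\Phi\circ\Xi$ give
\begin{equation*}
\bar\Lambda(\bar\Psi(u))=\bar\Phi(\Xi u),\qquad u\in U_\Psi,
\end{equation*}
while the last $n-k$ coordinates together with $\tilde\Psi(u)=\psi(\bar\Psi(u))$ and $\tilde\Phi(\Xi u)=\phi(\bar\Phi(\Xi u))$ yield
\begin{equation*}
M_1 \bar\Psi(u)+c+Q\psi(\bar\Psi(u))=\phi\bigl(\bar\Lambda(\bar\Psi(u))\bigr).
\end{equation*}

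Next I would substitute $x=\bar\Psi(u)$. Since $D\bar\Psi$ has eigenvalues of absolute value $\sim 1$, $\bar\Psi$ is a $C^2$ diffeomorphism onto its image $\bar\Psi(U_\Psi)$, so the substitution is legitimate for all $x\in \bar\Psi(U_\Psi)$, giving
\begin{equation*}
Q\psi(x)=\phi(\bar\Lambda x)-M_1 x-c.
\end{equation*}
Applying $Q^{-1}=Q^T$ (which is orthonormal) to both sides produces
\begin{equation*}
\psi(x)=Q^T\bigl(\phi(\bar\Lambda x)+(-M_1)x+(-c)\bigr),
\end{equation*}
which is exactly the graph-form affine equivalence of Definition \ref{defn:affine_equivalence_graphic} with $U=Q^T$, $A=-M_1$, $b=-c$, and $\Xi=\bar\Lambda$. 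Restricting to any parallelogram $R_\psi\subseteq\bar\Psi(U_\Psi)$ and invoking Lemma \ref{lem:AE_property}\eqref{item:lemma3.3_part1} gives $(M_\psi,R_\psi)\overset{\bar\Lambda}{\longmapsto}(M_\phi,\bar\Lambda(R_\psi))$.

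The main subtle point to verify is that $\bar\Lambda$ is an affine bijection on $R_\psi$, so that the image $\bar\Lambda(R_\psi)$ is itself a parallelogram contained in $\bar\Phi(U_\Phi)$ on which $\phi$ is defined. This follows from the identity $\bar\Lambda\circ\bar\Psi=\bar\Phi\circ\Xi$: since $\bar\Phi,\bar\Psi$ have invertible differentials (their Jacobians have eigenvalues of absolute value $\sim 1$) and $\Xi$ is a diffeomorphism, $\bar\Lambda$ maps $\bar\Psi(U_\Psi)$ bijectively onto $\bar\Phi(U_\Phi)$; the linear part of the affine map $\bar\Lambda$ is therefore invertible, so $\bar\Lambda$ sends parallelograms to parallelograms. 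I expect no serious obstacle beyond this bookkeeping and the clean bookkeeping of the linear blocks of $\Lambda$; the only delicate ingredient is the hypothesis \eqref{eqn:condition_two_affine_equivalent}, which is precisely what forbids $\Lambda$ from mixing the graph variable $y$ back into the base variable $x$ and thereby enables Part \eqref{item:Mar_25_01}.
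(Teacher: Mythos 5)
Your proposal is correct and follows essentially the same route as the paper: project \eqref{eqn:condition_two_affine_equivalent} to get Part \eqref{item:Mar_25_01}, split $\Lambda\circ\Psi=\Phi\circ\Xi$ into its first $k$ and last $n-k$ coordinates to obtain $\bar\Lambda\circ\bar\Psi=\bar\Phi\circ\Xi$ and $Q(\psi\circ\bar\Psi)=\phi\circ\bar\Phi\circ\Xi-\tilde\Lambda(\bar\Psi,0)$, then substitute and apply $Q^{-1}$ to land on \eqref{eqn:affine_equivalence_graphic} with $U=Q^{-1}$, $\Xi=\bar\Lambda$, and restrict to a general $R_\psi$ via Part \eqref{item:lemma3.3_part1} of Lemma \ref{lem:AE_property}. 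Your extra remark verifying that $\bar\Lambda$ is an affine bijection (so $\bar\Lambda(R_\psi)$ is a genuine parallelogram) is harmless bookkeeping the paper leaves implicit, and it also clarifies that the target in Part \eqref{item:Mar_25_02} should read $\bar\Lambda(R_\psi)$.
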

\begin{figure}[h]
    \centering
        \[
        \begin{tikzcd}[column sep = small, row sep = large]
        \bar \Psi^{-1}(R_\Psi)\arrow[ddr, out=-150, in=-180, "\tilde \Psi", near end, swap] \arrow{rr}{\Xi} \arrow[rightarrow]{d}{\bar\Psi} & & \bar \Phi^{-1}(R_\Phi) \arrow[rightarrow]{d}{\bar\Phi} \arrow[ddl, out=-30, in=0, "\tilde \Phi", near end] \\
        R_\Psi \arrow{rr}{\bar\Lambda} \arrow{dr}{\psi} & & R_\Phi \arrow{dl}[swap]{\phi}\\
        & \R^{n-k}  &
        \end{tikzcd}
        \]
        \caption{Diagram of affine equivalence.}
        \label{fig:affine_equivalence}
\end{figure}

\begin{proof}
    By \eqref{eqn:condition_two_affine_equivalent}, we have
    \begin{align}
        &\bar\Lambda (x,y)=\bar\Lambda (x,0),\label{eqn:Mar_25_01}\\
        &\tilde \Lambda (x,y)=\tilde \Lambda(x,0)+Qy,\label{eqn:Mar_25_02}
    \end{align}
    and so Part \eqref{item:Mar_25_01} follows. To prove Part \eqref{item:Mar_25_02}, by Part \ref{item:lemma3.3_part1} of Lemma \ref{lem:AE_property}, it suffices to prove the case $R_\psi=\bar \Psi(U_\Psi)$. But by \eqref{eqn:affine_equivalence_parametric}, we have
    \begin{equation}\label{eqn:Oct_04}
        \Lambda(\bar \Psi,\psi\circ \bar \Psi)=\Lambda (\bar \Psi,\tilde \Psi)=(\bar \Phi\circ \Xi,\tilde \Phi\circ \Xi )=(\bar \Phi\circ \Xi,\phi\circ \bar \Phi\circ \Xi).
    \end{equation}
    By \eqref{eqn:Mar_25_01} and \eqref{eqn:Oct_04}, we have
    \begin{equation}\label{eqn:Mar_25_03}
        \bar \Lambda\circ \bar \Psi=\bar \Phi\circ \Xi.
    \end{equation}
    By \eqref{eqn:Mar_25_02} and \eqref{eqn:Oct_04}, we have
    \begin{equation}\label{eqn:Mar_25_04}
        Q(\psi\circ \bar \Psi)=\phi\circ \bar \Phi \circ \Xi-\tilde \Lambda(\bar \Psi,0).
    \end{equation}
    Denote $U=Q^{-1}$ and $\tilde \Lambda (x,0)=-Ax-b$. By \eqref{eqn:Mar_25_03} and \eqref{eqn:Mar_25_04}, we have    \begin{equation*}
        \psi\circ \bar \Psi=U(\phi\circ \bar \Phi \circ \Xi+A\bar \Psi+b)=U(\phi\circ  \bar\Lambda\circ \bar \Psi+A\bar \Psi+b),
    \end{equation*}
    that is,
    \begin{equation*}
        \psi(x)=U(\phi  (\bar\Lambda x)+Ax+b).
    \end{equation*}
    Thus, \eqref{eqn:affine_equivalence_graphic} follows with $\Xi$ replaced by $\bar \Lambda$.
\end{proof}

\smallskip

\fbox{Notation} Throughout the rest of this section, we restrict ourselves to the graphic case. We fix $1\le k\le m\le n-1$, and all flatness conditions are stated in dimension $m$ in the \underline{first} sense as in Definition \ref{defn:new_flatness}. %We also fix a large enough constant $C_0$, and by $a=O(1)$ or $a$ is bounded we mean $|a|\le C_0$. We say an affine transformation $x\mapsto Ax+b$ is bounded if every entry of $A$ and $b$ is bounded. 

\subsubsection{Affine invariance of decoupling}

Now we study the interaction between decoupling and affine equivalence. We recall the terminology introduced in Definitions \ref{defn:decoupling} and \ref{defn:graphical_decoupling_convex_hull} as well as the vertical neighbourhood notation introduced in \eqref{eqn:vertical_nbhd}.

\fbox{Notation} The exponents $p,q,\alpha$ in the $(\ell^q(L^p),\alpha)$ decoupling will be fixed and omitted throughout the rest of this section.

\begin{lem}\label{lem:equivalent_decoupling}
    Suppose that $M_\phi, M_\psi \in \mathcal{M}_0$, and assume that $R_\phi,R_\psi\in \mathcal R_0$ are parallelograms such that the pair $(M_\phi,R_\phi)$ is $\Xi$-affine equivalent to $(M_\psi,R_\psi)$. For $\vec{\sigma} \in \R^{n-k}$ with $\max_{i}|\sigma_i| \leq 1$ and $\sigma := \min_{i} |\sigma_i|>0$, denote by $\vec{\sigma}\psi$ the $\R^{n-k}$-valued function 
    \begin{equation*}
        (\sigma_1\psi_1,\dots,\sigma_{n-k} \psi_{n-k}).
    \end{equation*}
    For $\delta>0$, the following two statements are equivalent:
    \begin{enumerate}
        \item \label{item:Mar_3_01} $R_\phi$ can be $\phi$-decoupled into parallelograms $\omega$ at scale $\delta$ at cost $C$.
        \item \label{item:Mar_3_02} $R_\psi$ can be $\psi$-decoupled into parallelograms $\Xi(\omega)$ at scale $\delta$ at cost $C$.
     \end{enumerate}
     Either statement above implies:
    \begin{enumerate}
    \setcounter{enumi}{2}
        \item \label{item:Mar_3_03} $R_\psi$ can be $\vec \sigma\psi$-decoupled into parallelograms $\Xi(\omega)$ at scale $\sigma\delta$ at cost $C$.
    \end{enumerate}

     %Furthermore, if $\sigma_i=\sigma$ for all $1\le i\le n-k$, then all the above are equivalent to     
     
        %$\quad\,\,\,\, \mathrm{(4)}$ $ N_{\sigma\delta}(M_{\vec{\sigma}\psi})$ can be $(\ell^{q}(L^p),\alpha)$  decoupled into parallelograms $N_{\sigma\delta}^\psi(R)$ at cost $C$.  
    \end{lem}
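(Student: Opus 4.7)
My plan is to reduce all three claims to the affine invariance of the decoupling constant $\mathrm{Dec}(\cdot,\cdot,p,q,\alpha)$ noted after Definition~\ref{defn:decoupling}: any affine bijection of $\R^n$ sends Fourier supports to Fourier supports without changing the decoupling constant. I will construct two explicit affine bijections on $\R^n$, one converting problem \eqref{item:Mar_3_01} to problem \eqref{item:Mar_3_02}, and one converting problem \eqref{item:Mar_3_02} to problem \eqref{item:Mar_3_03}. All other reasoning will be a verification that the relevant vertical neighbourhoods (and their convex hulls, required by Definition~\ref{defn:graphical_decoupling_convex_hull}) are mapped into, or equivalent to, the correct targets.

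For the equivalence \eqref{item:Mar_3_01}$\Leftrightarrow$\eqref{item:Mar_3_02}, I will use the graphic affine equivalence $\psi(x)=U(\phi(\Xi x)+Ax+b)$ supplied by Definition~\ref{defn:affine_equivalence_graphic}, and define
\[
\tilde\Lambda(x,y):=(\Xi x,\, U^{-1}y-Ax-b).
\]
A direct computation gives $\tilde\Lambda(x,\psi(x)+c)=(\Xi x,\phi(\Xi x)+U^{-1}c)$. Since $U\in O(n-k)$, the cube $U^{-1}([-\delta,\delta]^{n-k})$ is contained in a cube of comparable size, so $\tilde\Lambda$ sends $N_\delta^\psi(R_\psi)$ to a set that is $O_{n-k}(1)$-equivalent to $N_\delta^\phi(R_\phi)$, and likewise for each $\omega\subseteq R_\psi$ and its image $\Xi(\omega)\subseteq R_\phi$. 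Because affine bijections commute with convex hulls, the parallelograms equivalent to these hulls match up under $\tilde\Lambda$, and the decoupling constants of the two problems are identical.

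For the implication to \eqref{item:Mar_3_03}, the plan is to use the componentwise dilation
\[
D_{\vec\sigma}(x,y):=(x,\vec\sigma y),
\]
which is an affine bijection (since each $\sigma_i\neq 0$) whose inverse is $(x,y)\mapsto(x,\vec\sigma^{-1}y)$. This map sends the graph of $\psi$ to the graph of $\vec\sigma\psi$. The key inclusion is that for $c\in[-\sigma\delta,\sigma\delta]^{n-k}$, the bound $|\sigma_i|\geq\sigma$ forces $|c_i/\sigma_i|\leq\delta$, so
\[
D_{\vec\sigma}^{-1}\bigl(N^{\vec\sigma\psi}_{\sigma\delta}(\Xi(\omega))\bigr)\subseteq N^\psi_\delta(\Xi(\omega)),
\]
and the same containment passes to convex hulls. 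Thus any test function Fourier-supported on (a parallelogram equivalent to) $\mathrm{Co}(N^{\vec\sigma\psi}_{\sigma\delta}(\Xi(\omega)))$ pulls back under $D_{\vec\sigma}$ to one supported inside $\mathrm{Co}(N^\psi_\delta(\Xi(\omega)))$, and applying \eqref{item:Mar_3_02} together with affine invariance yields \eqref{item:Mar_3_03} at the same cost $C$.

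The main obstacle, and the reason this is not entirely mechanical, is the bookkeeping forced by Definition~\ref{defn:graphical_decoupling_convex_hull}: we decouple into parallelograms \emph{equivalent to} the hulls rather than to the hulls themselves. In the first step, the distortion of $[-\delta,\delta]^{n-k}$ by $U^{-1}$ must be absorbed into the equivalence constant; in the second step, $D_{\vec\sigma}$ turns an isotropic $\delta$-neighbourhood into one with anisotropic scales $|\sigma_i|\delta$, and I need to use the John ellipsoid comment in Section~\ref{sec:equivalence_objects} to see that this anisotropic body is still equivalent to the parallelogram $\Xi(\omega)$ claimed in the statement. Once these equivalences are tracked, everything collapses to the invariance of $\mathrm{Dec}$ under affine bijections.
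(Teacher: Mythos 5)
Your proposal is correct and follows essentially the same route as the paper: affine invariance of the decoupling constant, an explicit affine change of variables (incorporating $U$, $A$, $b$) for the equivalence of (1) and (2), and the vertical anisotropic dilation $D_{\vec\sigma}$ together with the inclusion coming from $\sigma_i^{-1}\sigma\delta\le\delta$ for (3). The only real difference is that the paper disposes of the convex-hull/equivalent-parallelogram bookkeeping by passing through Lemma \ref{lem:decoupling_entire_strip} (decoupling into the full vertical strips $\omega\times\R^{n-k}$, which are invariant under the vertical maps and make the reductions exact at the same cost $C$); invoking that lemma instead of tracking equivalence constants by hand would tighten your ``parallelogram equivalent to the hull'' steps, where a support contained in $\mathrm{Co}(N^\psi_\delta(\Xi(\omega)))$ need not lie in the equivalent parallelogram itself but only in a bounded dilate of it.
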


%The importance of Lemma \ref{lem:equivalent_decoupling} is two-fold. First, in an induction on scale arguments, we apply Lemma \ref{lem:equivalent_decoupling} to relate the decoupling between two scales. Second, it is often helpful to normalise $(M_\phi,R_\phi)$ to $(M_\psi, [-1,1]^k)$ in an affine equivalent way. 

%If $R_\phi$ is $(\phi,\sigma)$-flat, then $[-1,1]^k$ is $(\psi,\sigma)$-flat. By Definition \ref{flat_subsets_general}, $N_\psi^\delta([-1,1]^k)$ lies entirely in the $\delta$ neighbourhood of some $m$-plane. By an affine transformation, we may assume that the $m$-plane contains the $k$-plane generated by the first $k$ coordinates. Thus, there is some $\vec\sigma \in \R^{n-k}$ which has at least $(n-m)$ entries equal to $\sigma$, such that $\psi = \vec{\sigma} \tilde \psi$ for some $\tilde \psi$ with bounded $C^2$ norm. By Lemma \ref{lem:equivalent_decoupling}, it suffices to decouple $\tilde \psi$ at a larger scale. Thus, we hope that we can always generate $\tilde \psi$ from a $(\phi,\sigma)$-flat $R_\phi$, which we will make precise in Definition \ref{defn:rescaling_invariant_pair} below.

Now we give a proof of Lemma \ref{lem:equivalent_decoupling}. It uses Lemma \ref{lem:decoupling_entire_strip}, which is purely technical and is thus put in the appendix.

\begin{proof}[Proof of Lemma \ref{lem:equivalent_decoupling}]
Define $\Lambda:\R^n\to \R^n$ by
    \begin{equation*}
        \Lambda (x,y):=(\Xi x,y),\quad x\in \R^{k},\,y\in \R^{n-k}.
    \end{equation*} 
Then $\Lambda$ maps $N^\psi_\delta(\omega\cap R_\psi)$ bijectively onto $N^\phi_\delta(\Xi(\omega)\cap R_\phi)$. Thus, the equivalence of parts \eqref{item:Mar_3_01}\eqref{item:Mar_3_02} is clear from Lemma \ref{lem:decoupling_entire_strip}. 

By Lemma \ref{lem:decoupling_entire_strip} again, proving Part \eqref{item:Mar_3_03} is equivalent to proving that $N^{\vec \sigma \psi}_{\sigma \delta}(R_\psi)$ can be decoupled into $\Xi(\omega)\times \R^{n-k}$. But by an anisotropic rescaling in the last $n-k$ variables, it is equivalent to proving that the subset
\begin{equation*}
    \{(x,y):x\in R_\psi:|y_i-\psi_i(x)|\le \sigma_i^{-1}\sigma \delta\quad \forall 1\le i\le n-k\}
\end{equation*}
can be decoupled into $\Xi(\omega)\times \R^{n-k}$. But by Part \eqref{item:Mar_3_02} and Lemma \ref{lem:decoupling_entire_strip} again, we see that $N^\psi_\delta(R_\psi)$ can be decoupled into $\Xi(\omega)\times \R^{n-k}$. Then the claim follows from the easy observation that
\begin{equation*}
    \{(x,y):x\in \Xi(\omega):|y_i-\psi_i(x)|\le \sigma_i^{-1}\sigma \delta\quad \forall 1\le i\le n-k\}\sub N^\psi_\delta(\Xi(\omega)),
\end{equation*}
since $\sigma_i\ge \sigma$ for every $i$.
\end{proof}

\begin{defn}[Rescaling invariant pair]\label{defn:rescaling_invariant_pair}
Let $\M\sub\M_0$ be a subfamily, and for each $M_\phi\in \mathcal M$, let $\mathcal R_\phi\sub \mathcal R_0$ be a subfamily\footnote{In all examples in this paper, we actually have $\mathfrak R=\{\mathcal R\}$ for some $\mathcal R\sub \mathcal R_0$. However, for future applications, we develop a more general theory.}. Let $\mathfrak R=\{\mathcal R_\phi:M_\phi\in \mathcal M\}$ be the collection of all such families $\mathcal R_\phi$.

We say that the pair of families $(\mathcal M,\mathfrak R)$ is rescaling invariant (in dimension $m$) if there exists a constant $C\ge 1$ such that the following holds. For every $\sigma\in (0,1]$, $M_\phi \in \M$ and every  parallelogram $R_\phi\in \mathcal R_\phi$ that is $(\phi,\sigma)$-flat, there exist some $ M_\psi \in \M$, some $\vec \sigma\in [-1,1]^{n-k}$, and some (bounded) affine bijection $\Xi:[-1,1]^k \to R_\phi$,
such that
\begin{equation}\label{eqn:rescaling invariant}
    (M_{\vec \sigma \psi},[-1,1]^k)\overset{\Xi}{\longmapsto} (M_\phi, R_\phi) ,
\end{equation}
where $\vec{\sigma}:=(\sigma_i)$ in addition obeys $\sigma_i\ge C^{-1}\sigma$ for all $1\le i\le n-k$, and $\sigma_i\le C\sigma$ for at least $n-m$ many $1\leq i \leq n-k$.

More generally, let $\mathcal A$ be a family of affine bijections on $\R^k$. We say that the pair $(\mathcal{M},\mathfrak R)$ is $\mathcal{A}$-rescaling invariant if we further assume that the affine bijection $\Xi$ in \eqref{eqn:rescaling invariant} can be taken from $\mathcal{A}$.

\end{defn}

In simple words, every $\sigma$-flat (in dimension $m$) piece of a manifold $M_\phi\in \mathcal M$ over $R\in \mathcal R_\phi$ can be rescaled to become another manifold $M_{\vec \sigma \psi}\in \mathcal M$ over $[-1,1]^k$, where at least $n-m$ components of $\vec \sigma$ are essentially $\sigma$, the smallest component of $\vec \sigma$. Also, by a permutation on the $n-k$ variables in the ``graphical" space, we may assume $\sigma_i\sim \sigma$ for $1\le i\le n-m$.

We illustrate a special case of Definition \ref{defn:rescaling_invariant_pair} and Lemma \ref{lem:equivalent_decoupling} using the following example.

\begin{itemize}
\item 
Consider $m=2$ and the family $\mathcal M$ of curves $M_{\phi_a}$ given by
\begin{equation*}
    \{(s,\phi_{a}(s)):=(s,a^{2/3}s^2,as^3):s\in [-1,1]\},\,\, a\in (0,1],
\end{equation*}
obtained from the standard moment curve $(s,s^2,s^3)$ by a suitable rescaling. Let $\sigma\in (0, 1)$. One can check that an interval $R\sub [-1,1]$ is $(\phi_a,\sigma)$-flat if and only if its translation centred at $0$ is $(\phi_a,\sigma)$-flat. It can also be checked that $R:=[-u^{1/3},u^{1/3}]$ is $(\phi_a,O(\sigma))$-flat in dimension $2$ if and only if $u\lesssim \min\{1,a^{-1}\sigma\}$. 

\noindent Rescaling $R$ to $[-1,1]$ gives the curve
    \begin{equation*}
        \{(s,(au)^{2/3}s^2,au s^3):s\in [-1,1]\},
    \end{equation*}    
    which corresponds to $\vec \sigma \phi_{a'}$, where $a'=au\sigma^{-1}$ and $\vec \sigma:=(\sigma^{2/3},\sigma)$ so that there is exactly $n-m=3-2=1$ entry of $\vec \sigma$ equal to $\sigma=\min_i \sigma_i$. Therefore, we can define the singleton family $\mathfrak R=\{\mathcal R_0\}$, such that the pair $(\mathcal M,\mathfrak R)$ is rescaling invariant. In this case, Lemma \ref{lem:equivalent_decoupling} is exactly the key rescaling property of the moment curve used in \cite{BDG2016}.

\item Consider $m=1$ and the family $\mathcal M$ of curves $M_{\phi_{a,b}}$ given by
\begin{equation*}
    \{(s,\phi_{a,b}(s)):=(s,as^2,bs^3):s\in [-1,1]\},\,\, a\in (0,1], \,\,b\in [0,a^{3/2}].
\end{equation*}
Let $\sigma\in (0, 1)$. One can check that an interval $R\sub [-1,1]$ is $(\phi_{a,b},\sigma)$-flat if and only if its translation centred at $0$ is $(\phi_{a.b},\sigma)$-flat. It can also be checked that $R:=[-u^{1/2},u^{1/2}]$ is $(\phi_{a,b},O(\sigma))$-flat in dimension $1$ if and only if $u\lesssim \min\{1,a^{-1}\sigma\}$.

\noindent Rescaling $R$ to $[-1,1]$ gives the curve
    \begin{equation*}
        \{(s,aus^2,bu^{3/2} s^3):s\in [-1,1]\},
    \end{equation*}    
    which corresponds to $\vec \sigma \phi_{a',b'}$, where $a':=au\sigma^{-1}$, $b':=bu^{3/2}\sigma^{-1}$, and $\vec \sigma:=(\sigma,\sigma)$, so that there are exactly $n-m=3-1=2$ entries of $\vec \sigma$ equal to $\sigma=\min_i \sigma_i$. We also have $b'\le (a')^{3/2}$. Therefore, we can define the singleton family $\mathfrak R=\{\mathcal R_0\}$, such that the pair $(\mathcal M,\mathfrak R)$ is rescaling invariant. 

\noindent The above discussion is relevant if we want to study the decoupling for the moment curve $(s,s^2,s^3)$, but unlike the canonical case in \cite{BDG2016}, we ignore the torsion given by the third coordinate $s^3$. By the projection principle and decoupling for the parabola in $\R^2$, at scale $\delta$, it gives a $\ell^2(L^6)$ decoupling of $[-1,1]$ into intervals of length $\delta^{1/2}$, which are $\delta$-flat in dimension $m=1$. 
\end{itemize}

\subsubsection{Invariance of family of manifolds $\mathcal{M}$ under particular affine transformations}\label{subsub:invariance_affine_manifolds}

For a given family of manifolds $\mathcal{M}$, it is often helpful to restrict our attention to a special class of affine transformations $\mathcal A\sub \mathcal A_0$ that preserve $\mathcal{M}$. (Here, $\mathcal A_0$ denotes the collection of all affine bijections on $\R^k$ that are bounded by a fixed constant $O(1)$.)

\begin{comment}
For a simple example, let $\mathcal M$ consist of manifolds with positive principal curvatures, which is studied in \cite{BD2015}. In our language, we take $\mathfrak R=\{\mathcal R\}$ where $\mathcal{R}$ is the collection of all (axis-parallel) cubes contained in $[-1,1]^k$, and take $\mathcal{A}$ to be the collection of all affine bijections $\Xi$ of the form $x\mapsto \delta x+x_0$, where $\delta\in (0,1]$ and $x_0\in [-1,1]^k$, such that $\Xi[-1,1]^k\in \mathcal R$.
\Franky{I don't think this is correct now. $\mathcal{M}$ cannot consist of manifolds with principal curvature bounded below by $1$.}
\Thomas{Let us discuss this later. Yes, I think we have to make $\mathcal M$ consist of all $c\phi$, where $\phi$ has bdd below principle curvatures, and $c\in [-1,1]$.}\Franky{In this case I would rather put it after definition 3.6, or just don't mention it at all.}

\end{comment}

For example, in Corollary \ref{cor:dec_add}, we consider $\mathcal{M}$ to consist of graphs of functions of the form $\sum_{j=1}^J{\phi_j}(x_j)$, where $x_j \in [-1,1]^{n_j}$. It is natural to only consider affine transformations that preserve the structure of $\mathcal{M}$, i.e. those affine transformations of the form $x \mapsto Dx +v$ where $D$ is a block diagonal matrix with only $2\times 2$ or $1\times 1$ (depending on whether $n_j=1$ or $2$) nonsingular blocks on its diagonal.

By restricting ourselves to a special collection of affine transformations $\mathcal{A}\sub \mathcal A_0$ that preserve $\mathcal{M}$, we gain additional information on what the decoupled sets look like. In Corollary \ref{cor:dec_add}, we have a natural choice of $\mathfrak R = \{\mathcal{R}\}$ where $\mathcal R$ consists of parallelograms of the form $R_1\times \dots \times R_J$ where $R_j$ are $n_j$-dimensional parallelograms.

For another example, namely Corollary \ref{cor:radial}, we consider $\mathcal M$ to consist of all functions $\phi(s,t') = P(s) + c\psi(t')$, where $P\in \mathcal P_{1,d}$, $|\det D^2 \psi|\sim 1$, and $|c|\le 1$. The collection $\mathfrak R=\{\mathcal R\}$ where $\mathcal R$ consists of $I\times Q$ where $I\sub [-1,1]$ is an interval and $Q\sub [-1,1]^{n-2}$ is a square. In this case, $\mathcal A$ consists of affine bijections of the form $(s,t')\mapsto (\delta_1 s+c_1,\delta_2 t'+c_2)$, where $\delta_1,\delta_2\in (0,1]$, $c_1\in [-1,1]$, $c_2\in [-1,1]^{n-2}$.

In the general case where $\mathfrak R$ consists of different families $\mathcal R_\phi$, the analysis is more subtle. In principle, we hope that the decoupling for $(M_\phi,R_\phi)$ gives parallelograms over $\mathcal{R}_\phi$.

We now describe the necessary conditions in our method to ensure that the decoupling for $M_\phi$ results in coordinate parallelograms $R_\phi\in \mathcal R_\phi$. (Strictly speaking, to decouple a parallelogram contained in $[-1,1]^k$, in general we need parallelograms contained in a cube slightly larger than $[-1,1]^k$, say $[-2,2]^k$, when there are rotations or shear transformations. We omit this slight technicality for simplicity.) We start with some terminology.

\begin{defn}[Compatibility with $\mathcal A$]\label{defn:compatible}
Let $\mathcal{M}\subseteq \mathcal{M}_0$, $\mathfrak{R}$ be a collection of families $\mathcal{R}_\phi \subseteq \mathcal{R}_0$ and $\mathcal{A}$ be a family of affine bijections on $\R^k$ that is closed under composition, namely, $\Xi_1\circ \Xi_2\in \mathcal A$ whenever $\Xi_1,\Xi_2\in \mathcal A$. We say that $(\mathcal M,\mathfrak R)$ is $\mathcal A$-compatible if the following condition is satisfied:

    For any $M_\phi,M_\psi \in \mathcal{M}$, $\Xi \in \mathcal{A}$, $\vec\sigma\in [-1,1]^{n-k}$ satisfying  \eqref{eqn:rescaling invariant} with $R_\phi=\Xi([-1,1]^k)$, i.e.
    $$
    (M_{\vec \sigma \psi},[-1,1]^k) \overset{\Xi}{\longmapsto} (M_\phi, \Xi([-1,1]^k)),
    $$
    and any $R_\psi \in \mathcal{R}_\psi$, we have $\Xi(R_\psi) \in \mathcal{R}_\phi$.

\end{defn}
{\it Remark.} Note that if $(\mathcal M,\mathfrak R)$ is $\mathcal A$-compatible, and $\Xi\in \mathcal A$, then for every $M_\phi\in \mathcal M$ and every $R\in \mathcal R_{\phi\circ \Xi}$, we have $\Xi(R)\in \mathcal R_\phi$. In fact, this follows if we take $\vec \sigma=(1,\dots,1)$ and $\psi=\phi\circ\Xi$.

The following version of Lemma \ref{lem:equivalent_decoupling} involving $\mathcal A$ will be crucial.

\begin{lem}\label{lem:equivalent_decoupling_A}
Let $(\mathcal M,\mathfrak R)$ be $\mathcal A$-compatible as in Definition \ref{defn:compatible}. Let $M_\phi,M_\psi \in \mathcal{M}$, $\vec\sigma=(\sigma_i)\in \mathbb{R}^{n-k}$ satisfy \eqref{eqn:rescaling invariant} with $R_\phi=\Xi([-1,1]^k)$, i.e.
    $$
    (M_{\vec \sigma \psi},[-1,1]^k)\overset{\Xi}{\longmapsto} (M_\phi, \Xi([-1,1]^k)).
    $$
For $\delta>0$, the following two statements are equivalent:
   \begin{enumerate}
        \item \label{item:Mar_5_01} $R_\phi$ can be $\phi$-decoupled into parallelograms $\omega\in \mathcal R_\phi$ at scale $\delta$ at cost $C$.
        \item \label{item:Mar_5_02} $R_{\vec \sigma\psi}$ can be $\vec \sigma\psi$-decoupled into parallelograms $\Xi^{-1}(\omega)\in \mathcal R_{\psi}$ at scale $\delta$ at cost $C$.
     \end{enumerate}
     If, in addition, $\sigma:=\min_i |\sigma_i|\ge \delta$, then either statement above is implied by:
    \begin{enumerate}
    \setcounter{enumi}{2}
        \item \label{item:Mar_5_03} $R_\psi$ can be $\psi$-decoupled into parallelograms $\Xi^{-1}(\omega)\in \mathcal R_\psi$ at scale $\sigma^{-1}\delta$ at cost $C$.
    \end{enumerate}
    
    \end{lem}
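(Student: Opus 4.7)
The plan is to deduce Lemma~\ref{lem:equivalent_decoupling_A} as a corollary of Lemma~\ref{lem:equivalent_decoupling}, with $\mathcal{A}$-compatibility used only to track parallelogram memberships. The key preliminary step is to promote the rescaling hypothesis to its ``reverse'' affine equivalence: writing $\vec\sigma\psi(x)=U(\phi(\Xi x)+Ax+b)$ for $x\in[-1,1]^k$, substituting $y=\Xi x\in R_\phi$, and rearranging, I would express $\phi(y)$ as $U^{-1}(\vec\sigma\psi(\Xi^{-1}y)+A'y+b')$ with $U^{-1}$ orthonormal and $A',b'$ absorbing $A\Xi^{-1}$ and the constants. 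This shows that $(M_\phi,R_\phi)$ is $\Xi^{-1}$-affine equivalent to $(M_{\vec\sigma\psi},[-1,1]^k)$ in the sense of Definition~\ref{defn:affine_equivalence_graphic}.

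For \eqref{item:Mar_5_01}$\Leftrightarrow$\eqref{item:Mar_5_02}, I would feed this reverse affine equivalence into Lemma~\ref{lem:equivalent_decoupling} parts \eqref{item:Mar_3_01}--\eqref{item:Mar_3_02}: a $\phi$-decoupling of $R_\phi$ into $\omega$ at scale $\delta$ and cost $C$ corresponds bijectively, with the same scale and cost, to a $\vec\sigma\psi$-decoupling of $[-1,1]^k$ into $\Xi^{-1}(\omega)$. The membership of the parallelograms is then handled by $\mathcal{A}$-compatibility: since $\Xi\in\mathcal{A}$, Definition~\ref{defn:compatible} ensures $\Xi(\mathcal{R}_\psi)\subseteq\mathcal{R}_\phi$, so $\Xi^{-1}(\omega)\in\mathcal{R}_\psi$ immediately yields $\omega=\Xi(\Xi^{-1}(\omega))\in\mathcal{R}_\phi$; the opposite inclusion is automatic whenever the decoupling is indexed by parallelograms of the form $\Xi(\omega')$ with $\omega'\in\mathcal{R}_\psi$, which is how we will produce these decompositions in applications.

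For \eqref{item:Mar_5_03}$\Rightarrow$\eqref{item:Mar_5_02} under the added hypothesis $\sigma\ge\delta$, I would invoke the vertical-rescaling content of Lemma~\ref{lem:equivalent_decoupling} part~\eqref{item:Mar_3_03} in the degenerate case where its underlying affine equivalence is the identity and $\phi=\psi$. In that specialisation, part~\eqref{item:Mar_3_03} just says: a $\psi$-decoupling of a set at scale $\delta_0$ automatically produces a $\vec\sigma\psi$-decoupling of the same set into the same parallelograms at scale $\sigma\delta_0$. Taking $\delta_0=\sigma^{-1}\delta$ converts the $\psi$-decoupling of $[-1,1]^k$ in \eqref{item:Mar_5_03} into the $\vec\sigma\psi$-decoupling of $[-1,1]^k$ in \eqref{item:Mar_5_02} at scale $\delta$. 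The hypothesis $\sigma\ge\delta$ is used precisely to guarantee that $\sigma^{-1}\delta\le 1$ is an admissible decoupling scale.

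The main potential pitfall I foresee is the algebraic bookkeeping of the inverse affine equivalence in the first step: one must verify that inverting the equation keeps $U^{-1}$ orthonormal and that $A'y+b'$ is genuinely affine in $y$ with no residual $\vec\sigma\psi$-cross terms. Once this algebraic check is secured, the remainder of the proof is essentially a transcription of Lemma~\ref{lem:equivalent_decoupling} plus a one-line invocation of $\mathcal{A}$-compatibility.
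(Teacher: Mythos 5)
Your proposal is correct and follows essentially the same route as the paper, whose proof of Lemma \ref{lem:equivalent_decoupling_A} is simply to transcribe Lemma \ref{lem:equivalent_decoupling}: the equivalence of \eqref{item:Mar_5_01} and \eqref{item:Mar_5_02} is the graph-form affine invariance, and \eqref{item:Mar_5_03} follows from the vertical rescaling in Part \eqref{item:Mar_3_03} applied with scale $\sigma^{-1}\delta$ (admissible since $\sigma\ge\delta$), with $\mathcal A$-compatibility supplying the membership $\Xi(\mathcal R_\psi)\subseteq\mathcal R_\phi$ in the direction actually used in Theorem \ref{thm:degeneracy_locating_principle}. Your explicit inversion of the affine equivalence is an unnecessary (though harmless) extra step, since Lemma \ref{lem:equivalent_decoupling} can be applied directly with the roles of $\phi$ and $\vec\sigma\psi$ as given.
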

Its proof follows from that of Lemma \ref{lem:equivalent_decoupling}. The rephrased Part \eqref{item:Mar_5_03} is what we will use in the proof of our main Theorem \ref{thm:degeneracy_locating_principle}.

\vspace{1cm}

For the remainder of this subsection before Section \ref{sec:degeneracy_determinant}, we state and prove some results concerned with the construction of families $\mathcal M,\mathfrak R,\mathcal A$ such that $(\mathcal M,\mathfrak R)$ is $\mathcal{A}$-rescaling invariant and $\mathcal A$-compatible. They are not logically necessary for the proof of Theorem \ref{thm:degeneracy_locating_principle} below, but we expect them to be useful in future applications of the principle.

\begin{condition}\label{cond_AM}
    For any $M_\phi \in \mathcal{M}$ and any affine bijection $\lambda:\R^{n-k}\to  \R^{n-k}$ (not necessarily bounded), if $M_{\lambda\phi} \in \mathcal{M}_0$, then $M_{\lambda\phi} \in \mathcal{M}$.
\end{condition}
Condition \ref{cond_AM} means that $\mathcal M$ is invariant under affine bijections on the ``graphical space" $\R^{n-k}$ as long as the transformed function $\lambda\phi$ still has bounded $C^2$ norm. In particular, in the case of hypersurfaces, i.e. $n-k=1$, this condition says that if $M_\phi\in \mathcal M$, then $M_{c\phi}\in \mathcal M$ for any scalar $c\in \R$ provided that $c\phi$ has bounded $C^2$ norm.

{\it Remark.} Definition \ref{defn:rescaling_invariant_pair} implies a weaker version of Condition \ref{cond_AM}. Indeed, choose some constant $C'$ much larger than the constant $C$ in Definition \ref{defn:rescaling_invariant_pair}. If $R$ is $(\phi,\sigma)$-flat, then $R$ is also $(\phi,\sigma^{(j)})$-flat for all $\sigma^{(j)} := (C')^{j}\sigma$, $j\geq 0$. Applying Definition \ref{defn:rescaling_invariant_pair} for each $j\gtrsim \log (\sigma^{-1})$, there exist $\vec {\sigma}^{(j)}$ and $\psi_j \in \mathcal{M}$ so that $(\vec \sigma^{(j)} \psi_j , [-1,1]^{k})$ are affine equivalent to each other. Since the first $n-m$ entries of $\vec{\sigma}^{(j)}$ are comparable to $\sigma^{(j)}$ (with constant $C$), all these $\psi_j$ differ by at a least constant factor. This means that under Definition \ref{defn:rescaling_invariant_pair}, once $M_\phi\in \mathcal M$, one can roughly say that $M_{\lambda\phi}\in \mathcal M$ for every $\lambda\ge 1$. This is implied by the slightly stronger Condition \ref{cond_AM}. Indeed, for simpler arguments later, we impose Condition \ref{cond_AM} to help us handle ``non-optimally" $\sigma$-flat rectangles $R$. However, Condition \ref{cond_AM} might be unnecessarily strong when $n-k \geq 2$. For example, the families given immediately after Definition \ref{defn:rescaling_invariant_pair} do not satisfy Condition \ref{cond_AM}.

\begin{condition}\label{cond:F1F3_equivalent}
    For any $M_\phi\in \mathcal M$ and any $\omega\sub [-1,1]^k$, $\omega$ is $\delta$-flat in the sense of \eqref{eqn:F1} if and only if it is $\sim\delta$-flat in the sense of \eqref{eqn:F3}, where the implicit constant depends on $\mathcal M$ only.
\end{condition}
This condition is guaranteed in many cases; see Propositions \ref{prop:F1F2F3_equivalent} and \ref{prop:squares_flat}.

\begin{lem}\label{lem:cond3.9_implies_rescaling_invariant}
    Let $\mathcal{M}\subseteq \mathcal{M}_0$, $\mathfrak{R}$ be a collection of families $\mathcal{R}_\phi \subseteq \mathcal{R}_0$ and $\mathcal{A}$ be a family of affine bijections on $\R^k$. If $\mathcal{M}$ satisfies Conditions \ref{cond_AM} and \ref{cond:F1F3_equivalent}, and for every $M_\phi\in \mathcal M$ and every rectangle $R_\phi \in \mathcal{R}_\phi$, there exist $M_\psi \in \mathcal{M}$ and $\Xi\in \mathcal{A}$ such that 
    $$
    (M_\psi, [-1,1]^k)\overset{\Xi}\longmapsto(M_\phi,R_\phi),$$
    Then $(\mathcal{M},\mathfrak{R})$ is $\mathcal{A}$-rescaling invariant.
\end{lem}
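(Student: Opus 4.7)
The plan is to combine the hypothesis with the flatness of $R_\phi$ and a singular-value decomposition in $\R^{n-k}$ to produce $M_\psi\in\mathcal M$, $\vec\sigma$, and $\Xi'\in\mathcal A$ witnessing Definition \ref{defn:rescaling_invariant_pair}.

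Fix $\sigma \in (0,1]$, $M_\phi \in \mathcal{M}$, and a $(\phi,\sigma)$-flat rectangle $R_\phi \in \mathcal{R}_\phi$. By the hypothesis there exist $M_{\tilde\psi} \in \mathcal{M}$ and $\Xi \in \mathcal{A}$ with $(M_{\tilde\psi},[-1,1]^k)\overset{\Xi}{\longmapsto}(M_\phi,R_\phi)$, and by Lemma \ref{lem:AE_property}(3) combined with Condition \ref{cond:F1F3_equivalent}, the $(\phi,\sigma)$-flatness of $R_\phi$ transfers to $(\tilde\psi,\sigma)$-flatness of $[-1,1]^k$ in all three senses. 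Unpacking the first-sense flatness yields $P=[P_1\mid P_2]$ with orthonormal rows and $q \in \R^{n-m}$ such that
\[
|P_1 x + P_2\tilde\psi(x) + q| \le \sigma \qquad \text{for all } x\in [-1,1]^k.
\]
A short argument shows that for $\sigma$ small enough (depending only on $\mathcal M$), the smallest singular value of $P_2$ is bounded below by some $c_0>0$: otherwise one could find a unit-norm affine function of $x$ on $[-1,1]^k$ whose $L^\infty$ norm is $O(\sigma+\mu C_0)\ll 1$, contradicting the fact that a unit-norm affine function has range at least $2$ on the unit cube.

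Using the SVD $P_2=U_L\Sigma U_R^T$, set $U := U_R^T \in O(n-k)$ and absorb the first $n-m$ rows of the rotated flatness condition into an affine-in-$x$ correction $Ax + b$ (leaving the last $m-k$ rows as zero). Defining
\[
f(x) := U\tilde\psi(x) + Ax + b,
\]
its first $n-m$ components have $\|\cdot\|_\infty = O(\sigma)$ by construction, while the remaining $m-k$ components are of size $O(1)$. To upgrade the $L^\infty$ bound to a $C^2$ bound of size $O(\sigma)$ on the small components, I invoke the third-sense flatness delivered by Condition \ref{cond:F1F3_equivalent}, which controls the Hessians of the linear combinations of $\tilde\psi$ dictated by the SVD, and then interpolate via $\|\nabla g\|_\infty \lesssim \|g\|_\infty^{1/2}\|D^2 g\|_\infty^{1/2}$. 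Now set $\sigma_i := \sigma$ for $1\le i\le n-m$ and $\sigma_i := 1$ otherwise, and define $\psi:=\vec\sigma^{-1}f$. The estimates above give $\psi\in\mathcal M_0$, and Condition \ref{cond_AM} (applied to the affine bijection $y\mapsto \vec\sigma^{-1}Uy$, together with bounded shifts of $\tilde\psi$) yields $\psi\in\mathcal M$. Since $\vec\sigma\psi=U\tilde\psi+Ax+b$, the pair $(M_{\vec\sigma\psi},[-1,1]^k)$ is $\mathrm{id}$-affine equivalent to $(M_{\tilde\psi},[-1,1]^k)$; composing with $\Xi$ gives $(M_{\vec\sigma\psi},[-1,1]^k)\overset{\Xi}{\longmapsto}(M_\phi,R_\phi)$ with $\vec\sigma$ of the shape required by Definition \ref{defn:rescaling_invariant_pair}.

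The main obstacle is the upgrade from $L^\infty$ to $C^2$ control on the small components of $f$: one has to carefully track how third-sense flatness (involving directional second derivatives of a vector-valued $\tilde\psi$ in dimension $m$) interacts with the SVD rotation in $\R^{n-k}$, to ensure that the correct linear combinations of components of $\tilde\psi$ inherit Hessian bounds of size $O(\sigma)$. A secondary subtlety is the invocation of Condition \ref{cond_AM}, which formally addresses only postcomposition with affine bijections of $\R^{n-k}$: the construction above also introduces an $x$-dependent linear term, which one must justify either via a mild strengthening of Condition \ref{cond_AM} or by noting that the concrete families $\mathcal M$ considered in this paper are manifestly closed under adding affine-in-$x$ functions.
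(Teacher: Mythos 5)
Your overall skeleton is the paper's: transfer the $(\phi,\sigma)$-flatness of $R_\phi$ to $(\tilde\psi,\sigma)$-flatness of $[-1,1]^k$ via Lemma \ref{lem:AE_property}, produce an orthonormal $U\in O(n-k)$, a vector $\vec\sigma$ and a bounded affine $L$ with $U\tilde\psi=\vec\sigma\psi+L$, get $M_\psi\in\mathcal M$ from Condition \ref{cond_AM}, and compose the equivalences. The gap is in the middle step, and it is exactly the obstacle you flag without resolving. The paper does not extract the frame from the first-sense witness at all: via Condition \ref{cond:F1F3_equivalent} it passes to third-sense flatness and quotes Lemma \ref{lem:C2flat_entire}, whose proof takes $U$ to be any completion of the F3 matrix $U'$ to an element of $O(n-k)$. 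With that choice the bounds $\norm{D^2(U\tilde\psi)_i}_\infty\lesssim\sigma$ for $1\le i\le n-m$ are immediate from \eqref{eqn:F3} (take $|t|\sim 1$ inside the unit cube), and the sup-norm and gradient smallness follow by subtracting the Taylor affine approximation of $(U\tilde\psi)_i$ at $0$; no interpolation and no use of the $L^\infty$ data from the first-sense witness is needed. Your route instead fixes the frame by an SVD of $P_2$ coming from the first-sense inequality and then needs the F3 Hessian bounds to hold for precisely those linear combinations of the components of $\tilde\psi$. That alignment is the crux and is not established: sup-norm smallness of $P_1x+P_2\tilde\psi(x)+q$ does not control Hessians (this failure is exactly why Condition \ref{cond:F1F3_equivalent} is assumed), and the orthonormal-row matrix furnished by F3 is a priori unrelated to the row space of $P_2$. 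The interpolation inequality cannot rescue this, since to apply it you already need the $O(\sigma)$ Hessian bound in your SVD frame, which is the unproven claim. The repair is simply to drop the SVD step and argue in the F3 frame, i.e.\ to quote (or reprove) the implication \eqref{item:Mar_6_01}$\Rightarrow$\eqref{item:Mar_6_03} of Lemma \ref{lem:C2flat_entire}.

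Two smaller remarks. First, the flatness you unpacked is the form of Definition \ref{defn:graphically_flat} (with $P_1$ acting on $x$ and $[P_1\,|\,P_2]$ jointly orthonormal); in this section flatness is fixed to be the first sense of Definition \ref{defn:new_flatness}, where the matrix acting on $\R^{n-k}$ already has orthonormal rows, so the singular-value lower bound digression (essentially Proposition \ref{prop:equivalence_flatness_general}) is not needed either way. Second, concerning Condition \ref{cond_AM} and the affine-in-$x$ term: for the equivalence itself the term $Ax+b$ is absorbed into Definition \ref{defn:affine_equivalence_graphic}, and for membership the paper's own proof applies Condition \ref{cond_AM} to $\psi=\vec\sigma^{-1}(U\tilde\psi-L)$ exactly as you propose, so this concern applies equally to the paper's argument and is not an additional defect of yours.
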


\begin{proof}
    Let $R_\phi\in \mathcal{R}_\phi$ be $(\phi,\sigma)$-flat in dimension $m$ for some $\sigma\in (0,1]$. By the assumption, there exist $M_{\tilde \psi} \in \mathcal M$ and $\Xi\in \mathcal A$ such that  
    \begin{equation}\label{eqn:Mar_7_lem3.10_01}
    (M_{\tilde \psi}, [-1,1]^k)\overset{\Xi}\longmapsto(M_\phi,R_\phi).
    \end{equation}
    By Part \eqref{item:lemma3.3_part3} of Lemma \ref{lem:AE_property}, $[-1,1]^k$ is $(\tilde \psi,\sigma)$-flat. By Condition \ref{cond:F1F3_equivalent} and Lemma \ref{lem:C2flat_entire}, there exist $U \in O(n-k)$, $\vec \sigma\in \R^{n-k}$ and a bounded affine transformation $L:\R^k\to \R^{n-k}$ such that $U\tilde \psi = \vec \sigma \psi+L$ for some $M_\psi \in \mathcal{M}_0$, and the first $(n-m)$ entries of $\vec \sigma$ are $\sim \sigma\sim \min_i \sigma_i$. Thus, by Part \eqref{item:lemma3.3_part4} of Lemma \ref{lem:AE_property}, we have
    \begin{equation}\label{eqn:Mar_7_lem3.10_02}
        (M_{\vec \sigma\psi},[-1,1]^k)\overset{I_k}\longmapsto (M_{\tilde \psi}, [-1,1]^k),
    \end{equation}
    where $I_k$ denotes the identity map on $\R^k$. Combining \eqref{eqn:Mar_7_lem3.10_01} and \eqref{eqn:Mar_7_lem3.10_02} and using Part \eqref{item:lemma3.3_part2} of Lemma \ref{lem:AE_property}, we thus have \eqref{eqn:rescaling invariant}, i.e.
    \begin{equation*}
        (M_{\vec \sigma\psi},[-1,1]^k)\overset{\Xi}\longmapsto (M_\phi,R_\phi).
    \end{equation*}
    Thus, our proof is complete if we can show that $M_\psi\in \mathcal M$. But this follows from Condition \ref{cond_AM}, since $M_\psi\in \mathcal M_0$, $\psi=\vec \sigma^{-1}(U\tilde \psi-L)$ and $M_{\tilde \psi}\in \mathcal M$.
\end{proof}

Now we return to Definition \ref{defn:compatible}. Although it is a key condition that we need, it tells us nothing about how we should construct the families $\mathcal{A}$ and $\mathcal{R}_\phi$ given a family $\mathcal M$ of manifolds. %The following proposition is useful for actual applications of the degeneracy locating principle.

\begin{prop}
    Let $\mathcal{M} \subseteq \mathcal{M}_0$ satisfy Conditions \ref{cond_AM} and \ref{cond:F1F3_equivalent}. Let $\mathcal{A} = \mathcal{A}(\mathcal{M})$ be the collection of all (bounded) affine bijections $\Xi:\R^k\to \R^k$ such that for any $M_\psi\in \mathcal{M}$, there exist $M_\phi\in \mathcal{M}$ and $\vec \sigma\in [-1,1]^{n-k}$, such that  \eqref{eqn:rescaling invariant} holds with $R_\phi=\Xi([-1,1]^k)\in \mathcal R_0$, i.e.
    $$
    (M_{\vec \sigma \psi},[-1,1]^k) \overset{\Xi}{\longmapsto} (M_\phi, \Xi([-1,1]^k).
    $$
    
    For $M_\phi \in \mathcal{M}$, let $\mathcal{R}_\phi$ be the collection of all parallelograms $\Xi([-1,1]^{k})$ in $\mathcal{R}_0$ that satisfy \eqref{eqn:rescaling invariant} for some $\Xi \in \mathcal{A}$, $M_\psi \in \mathcal{M}$ and $\vec \sigma\in [-1,1]^{n-k}$. Write $\mathfrak R=\{\mathcal R_\phi:M_\phi\in \mathcal M\}$. Then $(\mathcal{M},\mathfrak{R})$ is $\mathcal A$-rescaling invariant as in Definition \ref{defn:rescaling_invariant_pair} and $\mathcal{A}$-compatible as in Definition \ref{defn:compatible}.

\end{prop}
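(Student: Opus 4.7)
The plan is to treat the two assertions separately. For $\mathcal{A}$-rescaling invariance, I will reduce to Lemma \ref{lem:cond3.9_implies_rescaling_invariant}; for $\mathcal{A}$-compatibility, I will compose the two affine equivalences produced by the hypotheses and resolve the resulting matrix mismatch via a singular value decomposition combined with Condition \ref{cond_AM}.

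For the first assertion, the hypothesis of Lemma \ref{lem:cond3.9_implies_rescaling_invariant} requires an affine equivalence $(M_\psi,[-1,1]^k)\overset{\Xi}{\longmapsto}(M_\phi,R_\phi)$ with $M_\psi\in\mathcal{M}$ and $\Xi\in\mathcal{A}$, for every $M_\phi\in\mathcal{M}$ and $R_\phi\in\mathcal{R}_\phi$. The definition of $\mathcal{R}_\phi$ furnishes exactly such an equivalence, except with $M_{\vec\sigma\psi_0}$ on the left in place of $M_\psi$. Setting $\psi:=\vec\sigma\psi_0$, the bound $|\sigma_i|\le 1$ gives $\|\psi\|_{C^2}\le\|\psi_0\|_{C^2}\le C_0$, so $M_\psi\in\mathcal{M}_0$, and Condition \ref{cond_AM} applied to the diagonal affine bijection $\mathrm{diag}(\vec\sigma)$ promotes $M_\psi$ to $\mathcal{M}$.

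For $\mathcal{A}$-compatibility, after unfolding $R_\psi\in\mathcal{R}_\psi$ into $\Xi_1\in\mathcal{A}$, $M_{\psi_1}\in\mathcal{M}$, and $\vec{\sigma_1}\in[-1,1]^{n-k}$ realising $R_\psi=\Xi_1([-1,1]^k)$ and $(M_{\vec{\sigma_1}\psi_1},[-1,1]^k)\overset{\Xi_1}{\longmapsto}(M_\psi,R_\psi)$, I propose to take $\Xi':=\Xi\circ\Xi_1$ and observe that $\Xi'([-1,1]^k)=\Xi(R_\psi)$ automatically. Writing both equivalences in the graph form of Definition \ref{defn:affine_equivalence_graphic} and substituting the second (solved for $\psi\circ\Xi_1$) into the first evaluated at $x=\Xi_1 y$ produces
\begin{equation*}
\phi(\Xi'y)=M\,\psi_1(y)+(\text{affine in }y),\qquad M:=U^{-1}\mathrm{diag}(\vec\sigma)\,U_1^{-1}\,\mathrm{diag}(\vec{\sigma_1}),
\end{equation*}
with $\|M\|_{\mathrm{op}}\le 1$ since each factor has operator norm at most $1$. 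An SVD $M=O_1 D O_2^T$ then has orthonormal factors $O_1,O_2\in O(n-k)$ and singular values in $[0,1]$; taking $\psi':=O_2^T\psi_1$ (in $\mathcal{M}$ by Condition \ref{cond_AM}), $U':=O_1^{-1}$, and $\vec{\sigma'}:=\mathrm{diag}(D)\in[-1,1]^{n-k}$, a rearrangement yields the desired equivalence $(M_{\vec{\sigma'}\psi'},[-1,1]^k)\overset{\Xi'}{\longmapsto}(M_\phi,\Xi(R_\psi))$.

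The hard part will be verifying $\Xi'\in\mathcal{A}$, equivalently the closure of $\mathcal{A}$ under composition that Definition \ref{defn:compatible} presupposes. The plan is to rerun the same SVD-based composition starting from an arbitrary input $M_\tau\in\mathcal{M}$: the equivalence produced will naturally read $(M_{\vec{\tilde\sigma}\,O_2^T\tau},[-1,1]^k)\overset{\Xi'}{\longmapsto}(M_{\tilde\tau},\Xi'([-1,1]^k))$, with an unwanted right orthonormal factor $O_2^T$ on the input side forced by the SVD. Condition \ref{cond_AM} places $O_2^T\tau$ back into $\mathcal{M}$, so after a careful relabeling the equivalence can be recast in the canonical form demanded by the definition of $\mathcal{A}$; matching this relabeling to the universal quantifier over inputs---without invoking a fixed-point argument on $O(n-k)$---is the subtle bookkeeping step that I expect to require the most care, whereas the underlying algebra is routine once the SVD reduction is in place.
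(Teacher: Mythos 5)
Your first half (rescaling invariance) is essentially the paper's own argument: the definition of $\mathcal R_\phi$ hands you $(M_{\vec\sigma\psi_0},[-1,1]^k)\overset{\Xi}{\longmapsto}(M_\phi,R_\phi)$ with $\Xi\in\mathcal A$, Condition \ref{cond_AM} promotes $M_{\vec\sigma\psi_0}$ to $\mathcal M$, and Lemma \ref{lem:cond3.9_implies_rescaling_invariant} finishes. The genuine gap is in the compatibility half, and it is exactly the step you defer as ``subtle bookkeeping'': proving $\Xi'=\Xi\circ\Xi_1\in\mathcal A$, i.e.\ that $\mathcal A$ is closed under composition (the paper's claim \eqref{eqn:Feb_19_A_transitive}). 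This is not optional: Definition \ref{defn:compatible} presupposes closure under composition, and without $\Xi'\in\mathcal A$ you cannot conclude $\Xi(R_\psi)=\Xi'([-1,1]^k)\in\mathcal R_\phi$ from the definition of $\mathcal R_\phi$. Your sketched plan for this step does not work as described: rerunning the SVD composition on an arbitrary input $M_\tau\in\mathcal M$ produces an equivalence whose left slot is $\vec{\tilde\sigma}\,(O_2^T\tau)$, whereas membership of $\Xi'$ in $\mathcal A$ requires an equivalence for $\vec\sigma\tau$ with the \emph{same} quantified $\tau$. Since entrywise scalings do not commute with $O_2^T$ (the very non-commutativity your SVD was introduced to absorb), no relabeling converts one into the other; Condition \ref{cond_AM} only tells you that $O_2^T\tau$ lies in $\mathcal M$ (and even that is delicate, since $\|O_2^T\tau\|_{C^2}$ may exceed the bound $C_0$ defining $\mathcal M_0$, so the hypothesis of Condition \ref{cond_AM} need not apply), and it does not let you change which manifold sits inside the equivalence relative to the quantified input.

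The paper closes precisely this step by a different order of operations, which you should adopt: to show $\Xi\circ\Xi_1\in\mathcal A$ for an arbitrary input $M_\tau$, apply the defining property of $\Xi_1$ to $\tau$ to obtain an intermediate manifold $M_{\phi_1}\in\mathcal M$ and $\vec\sigma_1$, then apply the defining property of $\Xi$ to $\phi_1$ (not to $\tau$) to obtain $M_{\phi_2}$ and $\vec\sigma_2$; scale the first equivalence by $\vec\sigma_2$, restrict the second to $\Xi_1([-1,1]^k)$ using Part \eqref{item:lemma3.3_part1} of Lemma \ref{lem:AE_property}, and chain them using Part \eqref{item:lemma3.3_part2}. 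The input then remains $\tau$ throughout, only scaled by the entrywise product $\vec\sigma_2\vec\sigma_1\in[-1,1]^{n-k}$, which is exactly the form the definition of $\mathcal A$ demands. The same Lemma~\ref{lem:AE_property} chaining, applied to the two given equivalences, also yields the compatibility equivalence $(M_{\vec\sigma\vec\sigma_1\psi_1},[-1,1]^k)\overset{\Xi\circ\Xi_1}{\longmapsto}(M_\phi,\Xi(R_\psi))$ directly, with no orthogonal correction of $\psi_1$ and hence none of the $C^2$-norm caveat above; your explicit graph-form/SVD computation is a reasonable substitute for that single equivalence, but it cannot replace the closure argument, which is where your proposal currently fails.
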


\begin{proof}
    We first use Lemma \ref{lem:cond3.9_implies_rescaling_invariant} to show that $(\mathcal{M},\mathfrak{R})$ is $\mathcal{A}$-rescaling invariant. Let $M_\phi\in \mathcal M$ and $R_\phi\in \mathcal R_\phi$. By the definition of $\mathcal{R_\phi}$, there exist $\Xi \in \mathcal{A}$, $M_\psi \in \mathcal{M}$ and $\vec \sigma\in [-1,1]^{n-k}$, such that \eqref{eqn:rescaling invariant} holds. Since $M_\psi\in \mathcal M$ and $\vec\sigma\in [-1,1]^{n-k}$, Condition \ref{cond_AM} implies that $M_{\vec \sigma \psi}\in \mathcal M$.

To prove $(\mathcal{M},\mathfrak{R})$ is $\mathcal{A}$-compatible, we first claim that
\begin{equation}\label{eqn:Feb_19_A_transitive}
    \text{If $\Xi_1,\Xi_2\in\mathcal{A}$, then $\Xi_2\circ \Xi_1 \in \mathcal{A}$.}
\end{equation}
To see the claim, let $M_{\psi} \in \mathcal{M}$ be arbitrary, and we want to prove that there exists $M_\phi\in \mathcal M$ and $\vec \sigma\in [-1,1]^{n-k}$ such that
\begin{equation}\label{eqn:Mar_7_00}
    (M_{\vec \sigma \psi},[-1,1]^k) \overset{\Xi_2\circ \Xi_1}{\longmapsto} (M_\phi, \Xi_2\circ \Xi_1 ([-1,1]^k)).
\end{equation}
But $\Xi_1\in \mathcal A$, so there exist $M_{\phi_1}\in \mathcal M$ and $\vec \sigma_1\in [-1,1]^{n-k}$ such that 
\begin{equation}\label{eqn:Mar_7_01}
    (M_{\vec \sigma_1 \psi},[-1,1]^k) \overset{\Xi_1}{\longmapsto} (M_{\phi_1}, \Xi_1([-1,1]^k)).
\end{equation}
Since $\Xi_2\in \mathcal A$, with $\psi$ in the definition of $\mathcal A$ taken to be $\phi_1$, there exist $M_{\phi_2}\in \mathcal M$ and $\vec \sigma_2\in [-1,1]^{n-k}$ such that 
$$
    (M_{\vec \sigma_2 \phi_1},[-1,1]^k) \overset{\Xi_2}{\longmapsto} (M_{\phi_2}, \Xi_2([-1,1]^k)).
$$
In particular, by Part \eqref{item:lemma3.3_part1} of Lemma \ref{lem:AE_property}, we have
\begin{equation}\label{eqn:Mar_7_02}
    (M_{\vec \sigma_2 \phi_1},\Xi_1([-1,1]^k)) \overset{\Xi_2}{\longmapsto} (M_{\phi_2}, \Xi_2\circ \Xi_1([-1,1]^k)).
\end{equation}
Also, \eqref{eqn:Mar_7_01} implies that
\begin{equation}\label{eqn:Mar_7_03}
    (M_{\vec \sigma_2\vec \sigma_1 \psi},[-1,1]^k) \overset{\Xi_1}{\longmapsto} (M_{\vec \sigma_2\phi_1}, \Xi_1([-1,1]^k)).
\end{equation}
Combining \eqref{eqn:Mar_7_02} and \eqref{eqn:Mar_7_03} and using Part \eqref{item:lemma3.3_part2} of Lemma \ref{lem:AE_property} gives
\begin{equation*}
    (M_{\vec \sigma_2\vec \sigma_1 \psi},[-1,1]^k) \overset{\Xi_2\circ\Xi_1}{\longmapsto}(M_{\phi_2}, \Xi_2\circ \Xi_1([-1,1]^k)).
\end{equation*}
Therefore, setting $\vec \sigma=\vec \sigma_2\vec \sigma_1$ (entrywise product) and $\phi=\phi_2$, we have finished the proof of \eqref{eqn:Mar_7_00} and hence \eqref{eqn:Feb_19_A_transitive}.

Now we come to the proof of compatibility. By definition, we need to prove that for any $M_\phi,M_\psi \in \mathcal{M}$, $\Xi \in \mathcal{A}$, $\vec\sigma\in \R^{n-k}$ satisfying
    \begin{equation}\label{eqn:Mar_7_prop3.11_01}
    (M_{\vec \sigma \psi},[-1,1]^k) \overset{\Xi}{\longmapsto} (M_\phi, \Xi([-1,1]^k)),
    \end{equation}
    and any $R_\psi \in \mathcal{R}_\psi$, we have $\Xi(R_\psi) \in \mathcal{R}_\phi$.

But $\Xi(R_\psi) \in \mathcal{R}_\phi$ means that there exist $\Xi'\in \mathcal A$, $M_{\psi'}\in \mathcal M$, $\vec \sigma'\in [-1,1]^{n-k}$ such that $\Xi(R_\psi)=\Xi'([-1,1]^k)$, and 
\begin{equation*}
    (M_{\vec \sigma' \psi'},[-1,1]^k)\overset{\Xi'}{\longmapsto}  (M_\phi,\Xi'([-1,1]^k)).
\end{equation*}
But since $R_\psi\in \mathcal R_\psi$, there exist $\Xi_1\in \mathcal A$, $M_{\psi_1}\in \mathcal M$, $\vec \sigma_1\in [-1,1]^{n-k}$, such that $R_\psi=\Xi_1([-1,1]^k)$, and
\begin{equation*}
    (M_{\vec \sigma_1 \psi_1},[-1,1]^k)\overset{\Xi_1}{\longmapsto} (M_\psi,\Xi_1([-1,1]^k)).
\end{equation*}
This implies that
\begin{equation}\label{eqn:Mar_7_prop3.11_02}
    (M_{\vec \sigma\vec \sigma_1 \psi_1},[-1,1]^k)\overset{\Xi_1}{\longmapsto} (M_{\vec \sigma\psi},\Xi_1([-1,1]^k)).
\end{equation}
Also, by Part \eqref{item:lemma3.3_part1} of Lemma \ref{lem:AE_property}, \eqref{eqn:Mar_7_prop3.11_01} implies
\begin{equation}\label{eqn:Mar_7_prop3.11_03}
    (M_{\vec \sigma \psi},\Xi_1[-1,1]^k) \overset{\Xi}{\longmapsto} (M_\phi, \Xi\circ \Xi_1([-1,1]^k)).
\end{equation}
Combining \eqref{eqn:Mar_7_prop3.11_02} and \eqref{eqn:Mar_7_prop3.11_03} using Part \eqref{item:lemma3.3_part2} of Lemma \ref{lem:AE_property}, we have
\begin{equation*}
    (M_{\vec \sigma\vec \sigma_1 \psi_1},[-1,1]^k)\overset{\Xi\circ\Xi_1}{\longmapsto} (M_\phi, \Xi\circ \Xi_1([-1,1]^k)).
\end{equation*}
By \eqref{eqn:Feb_19_A_transitive}, $\Xi\circ \Xi_1\in \mathcal A$. Thus, \eqref{eqn:Mar_7_prop3.11_01} follows if we define $\vec \sigma'=\vec \sigma\vec \sigma_1$ (entrywise) and $\Xi'=\Xi\circ \Xi_1$. This finishes the proof.

\end{proof}

We would like to comment that the proposition gives an explicit and convenient construction of the families $\mathcal{A}$ and $\mathfrak{R}$. It can be verified directly that the families in Corollaries \ref{cor:radial} to \ref{cor:smooth_curve} can be  constructed likewise.

\subsubsection{Degeneracy determinant}\label{sec:degeneracy_determinant}
Given $\mathcal{M}$, $\mathfrak{R}$, we will define a function acting on $\mathcal M$, called the {\it degeneracy determinant}, that detects how far a manifold in $\mathcal M$ is from being ``degenerate".

\begin{defn}[Degeneracy determinant]\label{defn:degeneracy_determinant}
    Let $\mathcal{M}\subseteq \mathcal{M}_0$. We say that an operator $H$ on $\mathcal M$ that maps $M_\phi \in \M$ to a smooth function on $[-1,1]^k$ is a {\rm degeneracy determinant}, if there exist constants $C,C'\ge 1$ and $\beta\in (0,1]$ depending only on $\mathcal M$, such that the following holds:

    \begin{enumerate}
        \item \label{item:Lipschitz} (Lipschitz) For every $M_\phi\in \mathcal M$, 
        \begin{equation}\label{eqn:Lipschitz}
            |HM_\phi(x)-HM_\phi(y)|\le C|x-y|,\quad \forall x,y\in [-1,1]^k.
        \end{equation}
        
        \item \label{item:totally_degenerate_approximation} (Totally degenerate approximation)
        Let $M_\phi \in \M$ be such that $|H M_\phi(x)|\le \sigma$ on $[-1,1]^k$ for some $\sigma\in (0,1]$. Then there exists a manifold $M_{\psi}\in \mathcal M$ satisfying $HM_\psi\equiv 0$ on $[-1,1]^k$, and that            \begin{equation}\label{eqn:small_hessian_error}
            \|\phi-C'\psi\|_{L^\infty([-1,1]^k)}\leq C\sigma^{\beta}.
        \end{equation}
        \end{enumerate}
\end{defn}
In many cases, we also require that the operator $H$ is compatible with the family $\mathfrak R$ in the following sense.
\begin{defn}[Regularity of degeneracy determinant]\label{defn:degeneracy_determinant_reg}
    Let $\mathcal{M}\subseteq \mathcal{M}_0$, and $\mathfrak{R}$ be a collection of families $\mathcal{R}_\phi \subseteq \mathcal{R}_0$. We say that a degeneracy determinant $H$ on $\mathcal M$ is $\mathfrak R$-regular, if there exists a constant $C''\ge 1$ depending only on $\mathcal M,\mathfrak R$, such that the following holds:

    \begin{enumerate}

        \item \label{item:containment} For $M_\phi,M_\psi\in \mathcal M$ mentioned in Part \eqref{item:totally_degenerate_approximation} of Definition \ref{defn:degeneracy_determinant}, we have $\mathcal R_\psi\sub \mathcal R_\phi$.

        \item \label{item:regularity_small_Hessian_rescaling} (Regularity under rescaling) Let $R\in \mathcal R_\phi$. Then we have
        \begin{equation}\label{eqn:regularity_rescaling_determinant}
            \mu^{C''}|HM_{\phi}(\lambda_{R}x)|\le |HM_{\phi\circ \lambda_{R}}(x)|\le |HM_{\phi}(\lambda_{R}x)|,\quad \forall x\in [-1,1]^k,
        \end{equation}
    where $\mu$ is the width of $R$.
        \end{enumerate} 

\end{defn}
\textit{Remarks:} 
\begin{itemize}
\item The definition of degeneracy determinants is independent of rescaling invariance or compatibility under affine transformations.

    %\item It is straightforward to check that if $H\phi$ is any expression generated by taking finite sums and products of derivatives of $\phi$, then  \eqref{item:Lipschitz} and \eqref{item:regularity_small_Hessian_rescaling} hold. In fact, \eqref{item:totally_degenerate_approximation} also follows from Theorem \ref{thm:degeneracy_determinant_Lojasiewicz} below. As a motivating example, \eqref{item:Lipschitz} through \eqref{item:regularity_small_Hessian_rescaling} hold for $H\phi:=\det D^2\phi$, as in the case of \cite{LiYang2023}. The family $\mathcal M$ there is taken to be $(x,\phi(x))$ where $\phi\in \mathcal P_{2,d}$ and $\mathfrak R$ is taken to be the singleton $\{\mathcal R_0\}$. In this case, \eqref{item:Lipschitz} is trivial, \eqref{item:totally_degenerate_approximation} follows from \cite[Theorem 3.2]{LiYang2023}, and \eqref{item:regularity_small_Hessian_rescaling} holds with $C''=4$. For other examples, the reader may refer to Table \ref{tab:my_label}.
    \item Under the current setting, i.e. manifolds represented by graphs, we typically pick $\mathcal{R}_\phi$ to be a single $\mathcal{R}$ for all $\phi$. However, for manifolds in parametric form, rectangles in the coordinate space depend largely on the parameterisations of individual manifolds. When approximating $\phi$ by $\psi$, it is natural to require that their parameterisations are compatible, in the sense that the interesting collections of rectangles $\mathcal{R}_\phi, \mathcal{R}_\psi$ are the same. In fact, it turns out that a slightly weaker assumption $\mathcal R_\psi\sub \mathcal R_{\phi}$ suffices, as can be seen in the proof of the main theorem below.
\end{itemize}

In many applications, it will be important to check whether a mapping $H$ is a degeneracy determinant. For this purpose, we state the following proposition, whose proof will be given in Section \ref{sec:degen_approximation_theorem} below.
\begin{prop}\label{prop:degeneracy_determinant_Lojasiewicz}
Let $\mathcal M$ consist of graphs $\phi=(\phi_i)_{i=1}^{n-k}$ of polynomials $\phi_i\in \mathcal P_{k,d}$ (see Section \ref{sec:notation} for the notation), and let $\mathfrak R=\{\mathcal R\}$ for some $\mathcal R\sub \mathcal R_0$. Let $H:\mathcal M\to \mathcal P_{k,D}^0$ be a mapping obeying the following property: If $HM_\phi \equiv 0$ and $c\in \R$, then $HM_{c \phi} \equiv 0 $. Then $H$ is a degeneracy determinant on $\mathcal M$, with the constants $C,C',\beta$ depending only on $H$.
\end{prop}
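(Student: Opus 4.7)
The plan is to recast the problem in real algebraic geometry on the coefficient space of $\mathcal P_{k,d}^{n-k}$ and invoke the global \L{}ojasiewicz inequality. Identify each polynomial vector $\phi=(\phi_i)_{i=1}^{n-k}$ with its coefficient vector $a_\phi\in\R^N$ ($N=(n-k)\binom{k+d}{k}$); the bound $\|\phi\|_{L^\infty([-1,1]^k)}\le 1$ places $a_\phi$ in a compact set $K\subset\R^N$ by norm equivalence on a finite-dimensional polynomial space. Implicit in the statement and its intended use, $H$ acts polynomially on the coefficients of $\phi$, so I view $\phi\mapsto HM_\phi$ as a polynomial map $F:\R^N\to\R^{N'}$, $N'=\binom{k+D}{k}$, sending $a_\phi$ to the coefficient vector of $HM_\phi$.

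Properties \eqref{item:Lipschitz} and \eqref{item:regularity_small_Hessian_rescaling} are comparatively routine. For \eqref{item:Lipschitz}, continuity of $F$ on the compact $K$ bounds the coefficients of $HM_\phi$ uniformly, hence bounds $\|HM_\phi\|_{C^1([-1,1]^k)}$ by norm equivalence on $\mathcal P^0_{k,D}$; the mean value theorem then yields \eqref{eqn:Lipschitz}. For \eqref{item:regularity_small_Hessian_rescaling}, the linear part $D$ of $\lambda_R(x)=Dx+v$ satisfies $\|D\|\lesssim 1$ and $|\det D|\gtrsim\mu^k$ since $R\subseteq[-1,1]^k$ has smallest dimension $\mu$. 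Expanding $HM_{\phi\circ\lambda_R}(x)$ via the chain rule and the polynomial structure of $H$ yields a polynomial relationship with $HM_\phi(\lambda_R x)$ whose coefficients are polynomials in the entries of $D$ of bounded degree depending only on $H$; the stated bounds on $D$ then yield both inequalities with $C''=C''(H)$.

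The core of the argument is the totally degenerate approximation \eqref{item:totally_degenerate_approximation}. Define
$$
Z:=F^{-1}(0)=\{a\in\R^N:HM_{\phi_a}\equiv 0\}.
$$
This is a real-algebraic subset of $\R^N$, and the scaling hypothesis of the proposition states precisely that $cZ\subseteq Z$ for every $c\in\R$, i.e.\ $Z$ is a real cone through the origin. Fix a compact $K'\supset 3K$; the global real \L{}ojasiewicz inequality (Bochnak--Coste--Roy) applied to the polynomial map $F$ on $K'$ yields constants $\alpha\in(0,1]$ and $C_1>0$, depending only on $F$ (hence only on $H,k,d,D$), such that
$$
\dist(a,\;Z\cap K')\le C_1\,\|F(a)\|^{\alpha}\qquad\forall\,a\in K.
$$
Given $\phi\in\mathcal M$ with $\sup_{[-1,1]^k}|HM_\phi|\le\sigma$, norm equivalence on $\mathcal P^0_{k,D}$ gives $\|F(a_\phi)\|\lesssim\sigma$, so there exists $a_{\tilde\psi}\in Z\cap K'$ with $\|\phi-\tilde\psi\|_{L^\infty([-1,1]^k)}\lesssim\sigma^{\alpha}$. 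Since $\|\tilde\psi\|_{L^\infty}\le\|\phi\|_{L^\infty}+O(\sigma^\alpha)=O(1)$ but may slightly exceed $1$, I use the cone property of $Z$ to rescale $\tilde\psi$ by a bounded constant so that the rescaled function lies in $\mathcal M$, and absorb the rescaling into $C'$. This produces $\psi\in Z\cap\mathcal M$ satisfying \eqref{eqn:small_hessian_error} with $\beta=\alpha$. The condition $\mathcal R_\psi\subseteq\mathcal R_\phi$ is automatic since $\mathfrak R=\{\mathcal R\}$ consists of a single family.

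The principal obstacle is securing \L{}ojasiewicz constants $\alpha, C_1$ uniform across the entire family $\mathcal M$. This is precisely the benefit of recasting the problem on a fixed finite-dimensional ambient $\R^N$: because $F$ is a single polynomial map, the global \L{}ojasiewicz inequality yields constants depending only on $F$, and therefore only on $H,k,d,D$, independent of the individual $\phi\in\mathcal M$.
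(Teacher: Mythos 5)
Your proposal is correct and follows essentially the same route as the paper: identify $\phi$ with its coefficient vector using the norm equivalence on $\mathcal P_{k,d}$, regard $H$ as a polynomial map on that finite-dimensional coefficient space, apply a \L ojasiewicz inequality to bound the distance to its zero set by a power of $\sup_{[-1,1]^k}|HM_\phi|$, and use the scaling hypothesis to renormalise the approximant back into $\mathcal M$, with the Lipschitz and rescaling-regularity conditions treated as routine. The only cosmetic difference is that the paper invokes a global \L ojasiewicz inequality with explicit exponent for the single polynomial $S(u)=\norm{Q(u)}_2^2$, whereas you use a compact-set version for the vector-valued map directly.
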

%One can check that Condition \eqref{item:cond_homo} holds if each coefficient of $HM_\phi$ is a homogeneous polynomial in the coefficients of $\phi_i$. 
Using Proposition \ref{prop:degeneracy_determinant_Lojasiewicz} and the fact that $\det D^2 \phi$ satisfies \eqref{eqn:regularity_rescaling_determinant}, we obtain the following corollaries:
\begin{cor}\label{cor:hessian}
    Let $\mathcal M$ consist of graphs of polynomials $\phi\in \mathcal P_{n-1,d}$, and let $\mathfrak R=\{\mathcal R_0\}$. Then $HM_\phi:=\det D^2\phi$ is a $\mathfrak R$-regular degeneracy determinant on $\mathcal M$.
\end{cor}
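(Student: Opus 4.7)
The plan is to deduce this directly from Proposition \ref{prop:degeneracy_determinant_Lojasiewicz}. To apply that proposition, I need to verify two things: that $H$ maps $\mathcal{M}$ into $\mathcal{P}^0_{n-1,D}$ for some $D$, and that $H$ is compatible with scalar multiplication in the sense that $HM_\phi \equiv 0$ implies $HM_{c\phi} \equiv 0$ for every $c \in \R$.

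For the first point, if $\phi \in \mathcal{P}_{n-1,d}$, then each entry of the Hessian $D^2\phi$ is a polynomial in $n-1$ variables of degree at most $d-2$. Since $\det D^2\phi$ is a polynomial combination of $n-1$ entries of $D^2\phi$, it is a polynomial in $n-1$ variables of degree at most $(n-1)(d-2)$ (and identically zero if $d \le 1$). Thus $HM_\phi \in \mathcal{P}^0_{n-1,D}$ with $D := (n-1)(d-2)$.

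For the second point, the Hessian is linear in $\phi$, so $D^2(c\phi) = c\, D^2\phi$ and hence
\begin{equation*}
    HM_{c\phi} = \det D^2(c\phi) = c^{n-1}\det D^2\phi = c^{n-1} HM_\phi.
\end{equation*}
In particular, if $HM_\phi \equiv 0$ then $HM_{c\phi} \equiv 0$, as required.

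Both hypotheses of Proposition \ref{prop:degeneracy_determinant_Lojasiewicz} are therefore satisfied, and the conclusion follows. There is no real obstacle here; the corollary is essentially a direct check that $\det D^2\phi$ fits into the polynomial framework of the proposition, with all the substantive work (the Łojasiewicz-type argument producing the Lipschitz property, the totally degenerate approximation \eqref{eqn:small_hessian_error}, and the rescaling regularity) already encapsulated in Proposition \ref{prop:degeneracy_determinant_Lojasiewicz}. Note also that the condition $\mathcal{R}_\psi \subseteq \mathcal{R}_\phi$ in Definition \ref{defn:degeneracy_determinant}\eqref{item:totally_degenerate_approximation} is automatic since $\mathfrak{R}=\{\mathcal{R}_0\}$ is a singleton.
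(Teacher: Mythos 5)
Your proposal is correct and matches the paper's route: the paper simply derives Corollary \ref{cor:hessian} by invoking Proposition \ref{prop:degeneracy_determinant_Lojasiewicz}, and the two hypotheses you verify (that $\det D^2\phi$ lies in $\mathcal P^0_{n-1,D}$ with $D=(n-1)(d-2)$, and the homogeneity $HM_{c\phi}=c^{n-1}HM_\phi$ giving invariance of the zero set under scalar multiplication) are exactly the checks the paper leaves implicit, with the induced coefficient-space map $Q$ being polynomial since the determinant is a polynomial in the entries of $D^2\phi$, which are linear in the coefficients of $\phi$.
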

This is a higher dimensional analogue of the small Hessian theorem, that is, \cite[Theorem 3.2]{LiYang2023}, which works for polynomial hypersurfaces in $\R^n$. The next corollary works for polynomial curves in $\R^n$.

\begin{cor}\label{cor:analytic_curve}
    Let $\mathcal M$ consist of graphs $\phi=(\phi_i)_{i=1}^{n-1}$ of polynomials $\phi_i\in \mathcal P_{1,d}$, and let $\mathfrak R=\{\mathcal R_0\}$. Then $HM_\phi:=\det (W_m\phi (W_m\phi)^T)$, where $W_m$ is as defined in \eqref{eq:Wons}, is a $\mathfrak R$-regular degeneracy determinant on $\mathcal M$.
\end{cor}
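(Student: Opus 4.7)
The plan is to apply Proposition \ref{prop:degeneracy_determinant_Lojasiewicz} to the operator $HM_\phi := \det(W_m\phi\,(W_m\phi)^T)$, after which the corollary is immediate. Under the notation of the proposition, $k=1$ and $\mathfrak R = \{\mathcal R_0\}$, so only two hypotheses need to be checked: first, that $H$ maps $\mathcal M$ into $\mathcal P^0_{1,D}$ for some $D$ depending only on $n,d,m$; and second, the scaling condition that $HM_\phi \equiv 0$ implies $HM_{c\phi} \equiv 0$ for all $c \in \R$.

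For the first condition, I would simply count degrees: each entry of the $m \times (n-1)$ matrix $W_m\phi$ is of the form $\phi_j^{(i+1)}(s)$ with $2 \le i+1 \le m+1$ and $1 \le j \le n-1$, hence a polynomial of degree at most $d-2$. Consequently, every entry of the $m \times m$ matrix $W_m\phi\,(W_m\phi)^T$ is a polynomial of degree at most $2(d-2)$, and its determinant is a polynomial in $s$ of degree at most $D := 2m(d-2)$, with bound uniform in $\phi$. Thus $HM_\phi \in \mathcal P^0_{1,D}$.

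For the second condition, I would use the linearity of differentiation. For any scalar $c\in \R$, componentwise scaling $c\phi = (c\phi_1,\dots,c\phi_{n-1})$ gives $W_m(c\phi) = c\cdot W_m\phi$, and therefore
\begin{equation*}
W_m(c\phi)\,(W_m(c\phi))^T \;=\; c^2\, W_m\phi\,(W_m\phi)^T,
\qquad HM_{c\phi} \;=\; c^{2m}\, HM_\phi.
\end{equation*}
In particular, if $HM_\phi \equiv 0$, then $HM_{c\phi} \equiv 0$ for every $c\in \R$. Both hypotheses of Proposition \ref{prop:degeneracy_determinant_Lojasiewicz} are then satisfied, and the conclusion that $H$ is a degeneracy determinant on $\mathcal M$ follows at once.

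There is essentially no technical obstacle in this corollary itself; all of the substantive work (the \L ojasiewicz inequality argument on the coefficient space producing the exponent $\beta$ and the constants $C, C', C''$) is absorbed into Proposition \ref{prop:degeneracy_determinant_Lojasiewicz}. The only point worth double-checking when writing out the details is that the polynomial factor of $c^{2m}$ in the scaling identity does not lose anything (it only enters multiplicatively into the zero set), and that the degree bound $D$ remains valid in the edge cases $d \le 2$, where $W_m\phi$ and hence $HM_\phi$ vanish identically.
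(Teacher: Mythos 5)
Your proposal is correct and follows exactly the route the paper intends: the corollary is stated as an immediate consequence of Proposition \ref{prop:degeneracy_determinant_Lojasiewicz}, and your two verifications (the uniform degree bound making $HM_\phi\in\mathcal P^0_{1,D}$, with coefficients depending polynomially on those of $\phi$, and the homogeneity $HM_{c\phi}=c^{2m}HM_\phi$ giving the zero-set invariance) are precisely what is needed. No gap; the edge case $d\le 2$ you flag is harmless since $HM_\phi\equiv 0$ there and the hypotheses still hold trivially.
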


% (We also remark that it suffices to just take $\lambda$ to be the identity in this case; see Proposition \ref{prop:small_hess_general} in the appendix.)

\subsubsection{Trivial covering property}
Before we state the main theorem, we still need a mild technical assumption.

\begin{defn}[Trivial covering property]\label{defn:trivial_covering}
    Let $(\mathcal M,\mathfrak R)$ be $\mathcal A$-rescaling invariant and $\mathcal A$-compatible. 
    We say that $(\mathcal M,\mathfrak R)$ satisfies the trivial covering property if there exist constants $C_1,C_2,C\ge 1$, depending only on $\mathcal M,\mathfrak R,\mathcal A$, such that the following holds for every $M_\phi\in \mathcal M$: there exists a family $\mathcal R_{\mathrm{tri},\phi}\sub \mathcal R_\phi$ such that for every $K\ge 1$, we can cover $[-1,1]^k$ by $\le CK^{C_1}$ parallelograms $R_0\in \mathcal R_{\mathrm{tri},\phi}$, such that each $R_0$ has diameter $\le K^{-1}$, and has its width $\ge C^{-1}K^{-C_2}$. Moreover, the affine bijection $\lambda_{R_0}\in \mathcal A$.
        
\end{defn}
For example, in the case where $\mathcal R_\phi$ contains all the (axis-parallel) squares contained in $[-1,1]^k$, and $\mathcal A=\mathcal A_0$, we may simply cover $[-1,1]^k$ by squares of side length $\sim K^{-1}$. In this case, we may take $C_1=k$, $C_2=1$.

\subsection{Statement of theorem}

We are now ready to state the main theorem of this section, the degeneracy locating principle. 

Let $\mathcal A$ be a family of affine bijections on $\R^k$ that is closed under composition (see Definition \ref{defn:compatible}). Let $(\mathcal M,\mathfrak R)$ be a pair that is $\mathcal A$-rescaling invariant as in Definition \ref{defn:rescaling_invariant_pair} and $\mathcal{A}$-compatible as in Definition \ref{defn:compatible}, and assume it satisfies the trivial covering property as in Definition \ref{defn:trivial_covering}. Let $H$ be a $\mathfrak R$-regular degeneracy determinant defined as in Definition \ref{defn:degeneracy_determinant_reg}. (Also recall that we have fixed dimensions $1\le k\le m\le n-1$, and exponents $p,q,\alpha$ for decoupling, right before Lemma \ref{lem:equivalent_decoupling}.)

\begin{thm}[Degeneracy locating principle]\label{thm:degeneracy_locating_principle}
For $M_\phi\in \mathcal M$, there exist subfamilies $R_{\mathrm{sub,\phi}}$, $R_{\mathrm{deg,\phi}}$, $R_{\mathrm{non,\phi}}$ of $\mathcal R_\phi$, such that the following holds for every $\sigma>0$ and every $\eps>0$, where all constants $C$ (resp. $C_\eps$, overlap functions $B$)  below depend only on $\mathcal M,\mathfrak R,\mathcal A,H$ (resp. $\mathcal M,\mathfrak R,\mathcal A,H,\eps$):
    \begin{enumerate}[label=(\alph*)]
        \item \label{item:sublevel_set_decoupling} (Sublevel set decoupling) The subset
        \begin{equation*}
            \{x\in [-1,1]^k:|HM_\phi(x)|\le \sigma\}
        \end{equation*}
        can be decoupled (as in Definition \ref{defn:decoupling}) into $B$-overlapping parallelograms $R_{\mathrm{sub}}\in \mathcal R_{\mathrm{sub},\phi}$ at a cost of $C_\varepsilon\sigma^{-\varepsilon}$, on each of which $|HM_\phi|\le C\sigma$. Moreover, we require that $\lambda_{R_{\mathrm{sub}}}\in \mathcal A$.
    
    \item \label{item:degnerate_decoupling} (Degenerate decoupling) If $HM_\phi\equiv 0$ on $[-1,1]^k$, then $[-1,1]^k$ can be $\phi$-decoupled (as in Definition \ref{defn:graphical_decoupling_convex_hull}) at scale $\sigma$ into $(\phi,\sigma)$-flat $B$-overlapping parallelograms $R_{\mathrm{deg}}\in \mathcal R_{\mathrm{deg},\phi}$ at a cost of $C_\eps\sigma^{-\varepsilon}$. Moreover, we require that $\lambda_{R_{\mathrm{deg}}}\in \mathcal A$.

    \item \label{item:nondegnerate_decoupling} (Nondegnerate decoupling) If $|HM_\phi| \geq K^{-1}$ on $[-1,1]^k$ for some $K\ge 1$, then $[-1,1]^k$ can be $\phi$-decoupled (as in Definition \ref{defn:graphical_decoupling_convex_hull}) at scale $\sigma$ into $(\phi,\sigma)$-flat $B$-overlapping parallelograms $R_{\mathrm{non}}\in \mathcal R_{\mathrm{non},\phi}$ at a cost of $K^{C}\sigma^{-\varepsilon}$. (Importantly, the power $C$ on $K$ cannot depend on $\eps$.)

    \end{enumerate}
Moreover, we require that the collection of all such parallelograms (at scale $\sigma$) $R_{\mathrm{sub}}$, $R_{\mathrm{deg}}$, $R_{\mathrm{non}}$ has cardinality at most $K^{C_\eps}\sigma^{-C_\eps}$.
    
    Then we have the following conclusions:
    \begin{enumerate}[label=(\Roman*)]
        \item \label{item:Feb_22_01} For each $M_\phi\in \mathcal M$ and each $\delta>0$, $[-1,1]^k$ can be $\phi$-decoupled into $(\phi,\delta)$-flat $B'$-overlapping parallelograms $R_{\mathrm{final}}\in \mathcal R_\phi$ at the cost of $C'_\eps \delta^{-\eps}$, where $C'_\eps$ and $B'$ depend only on $\mathcal M,\mathfrak R,\mathcal A,H,\eps,p,q,\alpha$.

        \item \label{item:Feb_7_03} In addition to Part \ref{item:Feb_22_01}, assume the following is correct:
        \begin{itemize}
            \item each element of $\mathcal A$  is an affine bijection of the form $x\mapsto Dx+b$ where $D$ is diagonal,
            \item the parallelograms $R_0$ in Definition \ref{defn:trivial_covering} are disjoint axis-parallel rectangles,
            \item each decoupling in \ref{item:sublevel_set_decoupling}, \ref{item:degnerate_decoupling} and \ref{item:nondegnerate_decoupling} results in axis-parallel rectangles with disjoint interiors,
        \end{itemize}
        then the final parallelograms $R_{\mathrm{final}}$ are disjoint axis-parallel rectangles.
        \item \label{item:width_lower_bound} In addition to Part \ref{item:Feb_22_01}, suppose $k\ge 2$ and that each decoupling in \ref{item:sublevel_set_decoupling}, \ref{item:degnerate_decoupling} and \ref{item:nondegnerate_decoupling} results in parallelograms with width at least $\sigma$ when $\sigma\ll 1$. Suppose, in addition, that the following lower dimensional decoupling (*) holds:
        \begin{itemize}
            \item [(*)] Let $M_\phi\in \mathcal M$. Given a parallelogram $R$ that is either an element of $\mathcal R_{\mathrm{tri},\phi}$ from the trivial decoupling in Definition \ref{defn:trivial_covering}, or an element of $\mathcal R_{\mathrm{sub},\phi}$ in Assumption \ref{item:sublevel_set_decoupling}. Denote by $ w$ the width of $R$, and let $L\sub \R^{k-1}$ be the lower dimensional parallelogram obtained from $R$ by removing the shortest dimension of $R$. Then the following conditions hold:
            \begin{enumerate}[label=(\roman*)]
                \item $L$ can be $\phi|_L$-decoupled into $(\phi|_L, w)$-flat parallelograms $R'_{\mathrm{low}}$ at scale $ w$ at cost $C_\eps  w^{-\eps}$, with $R'_{\mathrm{low}}$ having width at least $ w$.
                \item Denote by $R_{\mathrm{low}}$ the parallelogram obtained from $R'_{\mathrm{low}}$ by adding back the dimension of length $  w$. Then we require that $\lambda_{R_{\mathrm{low}}}\in \mathcal A$, and that $R_{\mathrm{low}}$ is contained in $2R$. 
           
            \end{enumerate}  
        \end{itemize}  
        \noindent Then the final parallelograms $R_{\mathrm{final}}$ in Part \ref{item:Feb_22_01} will have width bounded below by $\delta$ (when $\delta\ll 1$). In particular, by the bounded overlap, the final collection consisting of $R_{\mathrm{final}}$ will have cardinality $\le C_\eps\delta^{-k}$.
    \end{enumerate}

\end{thm}

For example, in \cite{LiYang2023}, \ref{item:sublevel_set_decoupling} follows from Theorem \cite[Theorem 3.1]{LiYang2023}, \ref{item:degnerate_decoupling} follows from Theorem \cite[Theorem 1.4]{Yang2}, and \ref{item:nondegnerate_decoupling} follows from Proposition \ref{prop:Bourgain_Demeter_K-1_lowerbound}. %Also, note that the nondegenerate decoupling does not require that the resulting parallelograms $R_{\mathrm{non}}$ be inside $\mathcal R_\phi$. 

For more examples with $\mathfrak R$ taken to be a singleton $\{\mathcal R\}$, it is straight forward to check that the following applications of the degeneracy locating principle in Corollaries \ref{cor:radial} to \ref{cor:smooth_curve} satisfy the full assumptions in Theorem \ref{thm:degeneracy_locating_principle}. 

\begin{table}[h!]
    \centering
    \begin{tabular}{|c|c|c|c|}
    \hline   Corollary  & Manifolds $M_\phi \in \mathcal{M}$ & Elements in $\mathcal{R}$ & $HM_\phi$ \\
    \hline   \ref{cor:radial}   & $\phi(s,t') = P(s) + c\psi(t')$\tablefootnote{In the introduction, $\psi$ is taken to be a single function $\sqrt{1-|t'|^2}$ for simplicity. In reality, we should consider a larger family for rescaling invariance.}, & $(s,t') \in I \times Q$, & $P''(s)$ \\
    & $P\in \mathcal P_{1,d}$, $ D^2\psi\succ 0$, $|c|\le 1$ & $Q$ square &\\
    \hline   \ref{cor:dec_add}   & $\phi (x_1,x_2) = \phi_1(x_1)+c\phi_2(x_2) $\tablefootnote{For simplicity, we only write down the case for the family $\mathcal{M}_1^n$, $J=2,n=3,4,5$. The other families are straightforward generalisations of this case.} & $(x_1,x_2) \in R \times Q$, & $\det D^2 \phi_1$ \\
    & $\in \mathcal P_{2,d}$, $|\det D^2 \phi_2| \sim 1$, $|c|\le 1$ & $Q$ square &\\
    \hline   \ref{cor:smooth_curve}   & $\phi=(\phi_i)_{1\le i\le n-1}$ ,  & all intervals & $\det ((W_m\phi)$ \\
    & $\phi_i\in \mathcal P_{1,d}$ & i.e. $\mathcal{R}_0$ & $(W_m \phi)^{T}) $ \\
    \hline  
    \end{tabular}
    \caption{Applications of the degeneracy locating principles in the corollaries.}
    \label{tab:my_label}
\end{table}

It is clear that Corollaries \ref{cor:radial} and \ref{cor:smooth_curve}, and the special case of Corollary \ref{cor:dec_add} in the table above fall into the scenario of Part \ref{item:Feb_7_03}. Therefore, these manifolds can be decoupled into a collection of rectangles having non-overlapping interiors. By a similar construction to \cite[Proposition 3.1]{Yang2}, the decoupled $(\phi,\delta)$-flat rectangles are essentially those in the statements of the corollaries.

\subsection{Proof of Theorem \ref{thm:degeneracy_locating_principle}}
The rest of this section is devoted to the proof of Theorem \ref{thm:degeneracy_locating_principle}. It features an induction on scales argument similar to that of \cite[Section 5]{LiYang2023}. The only essential difference here is that we choose a different scale $K$ (the scale $M$ in \cite{LiYang2023}), which allows us to improve the overlapping bound of the parallelograms from $C_\eps \delta^{-\eps}$ to $C_\eps$ (we summarise this in Section \ref{sec:refined_IJ} in the appendix). This improvement is nontrivial (c.f. \cite{GMO24}).

All implicit constants below in the proof are allowed to depend on the family $\mathcal M,\mathfrak R,\mathcal A,H$ and parameters $\eps,p,q,\alpha$.

\subsubsection{The base case}
We may assume $\delta\ll_{\eps} 1$, otherwise the decoupling inequality is trivial.
 
Let $K=\delta^{-\eps}$. Our induction hypothesis is that the conclusion holds at all coarser scales $\delta'\gtrsim K^\beta\delta$, where $\beta$ is as in Part \eqref{item:totally_degenerate_approximation} of Definition \ref{defn:degeneracy_determinant}.

\subsubsection{Degenerate and nondegenerate parts}\label{subsub_curved_flat}
Let $M_\phi\in \mathcal M$. Partition $[-1,1]^k$ into two parts $S_{\text{non}}$ and $S_{\text{degen}}$ as follows:
\begin{align*}
    &S_{\text{non}}:=\{x\in [-1,1]^k: |HM_\phi(x)|>K^{-1}\},\\
    &S_{\text{degen}}:=\{x\in [-1,1]^k: |HM_\phi(x)|\leq K^{-1}\}.
\end{align*}

By the trivial covering property, we cover the whole $[-1,1]^k$ by $K^{O(1)}$ parallelograms $R_0\in \mathcal R_\phi$ of diameter at most $cK^{-1}$ for some $c\ll 1$, but each dimension of $R_0$ is $\gtrsim K^{-O(1)}$. We then consider:
\begin{align*}
    & S'_{\text{non}}:=\cup\{R_0:R_0\cap S_{\text{non}}\ne \varnothing\}.
    %& S'_{\text{degen}}:=\cup\{R_0:R_0\cap S_{\text{non}}= \varnothing\}.
\end{align*}
It suffices to decouple $S'_{\text{non}}$ and $S_{\text{degen}}$ separately.

\subsubsection{Nondegenerate subset}\label{subsub_nondeg}
We first deal with $S'_{\text{non}}$. First, by a trivial decoupling at cost $K^{O(1)}=\delta^{-O(\eps)}$, we may locate to one $R_0$. By the Lipschitz Condition \eqref{eqn:Lipschitz}, if $c$ is small enough, then we have $|HM_\phi|>K^{-1}/2$ on each $R_0$. 

Now we rescale $R_0$ to $[-1,1]^k$. Using the assumption that each dimension of $R_0$ is $\gtrsim K^{-O(1)}$ and using Part \eqref{item:regularity_small_Hessian_rescaling} of Definition \ref{defn:degeneracy_determinant_reg}, we see that $|HM_{\phi\circ \lambda_{R_0}}|\gtrsim K^{-O(1)}$ on $[-1,1]^k$. Thus, we may apply the nondegenerate decoupling Assumption \ref{item:nondegnerate_decoupling} (with $K^{O(1)}$ in place of $K$) to obtain parallelograms $R_{\mathrm{non}}\in \mathcal R_{\phi\circ\lambda_{R_0}}$. It remains to rescale the decoupled parallelograms $R_{\mathrm{non}}$ back to $R_0$. But $\lambda_{R_0}\in \mathcal A$ by Definition \ref{defn:trivial_covering}, and by the remark after Definition \ref{defn:compatible}, we have $\lambda_{R_0}(R_{\mathrm{non}})\in \mathcal R_\phi$.

\subsubsection{Degenerate subset}\label{sec:degenerate_subset}
Now we deal with $S_{\text{degen}}$, with the hope of reducing to the induction hypothesis.

\fbox{Sublevel set decoupling}

We apply the sublevel set decoupling Assumption \ref{item:sublevel_set_decoupling} at scale $\sigma:=K^{-1}$ to decouple $\{x\in [-1,1]^k:|HM_\phi(x)|<K^{-1}\}$ into parallelograms $R_{\mathrm{sub}}\in \mathcal R_\phi$ on each of which $|HM_\phi|\lesssim K^{-1}$. It remains to further $\phi$-decouple each $R_{\mathrm{sub}}$ into smaller $(\phi,\delta)$-flat parallelograms.

\fbox{Rescaling and approximation}

We rescale $R_{\mathrm{sub}}$ to $[-1,1]^k$. For simplicity of notation we denote $\tilde \phi:=\phi\circ \lambda_{R_{\mathrm{sub}}}$.

By Part \eqref{item:regularity_small_Hessian_rescaling} of Definition \ref{defn:degeneracy_determinant_reg}, we have $|HM_{\tilde \phi}|\lesssim K^{-1}$. Thus, by Part \eqref{item:totally_degenerate_approximation} of Definition \ref{defn:degeneracy_determinant}, we may approximate $\tilde \phi$ by $(C')^{-1}\psi$ with error $O(K^{-\beta})$, where $HM_\psi\equiv 0$ on $[-1,1]^k$.

\fbox{Degenerate decoupling}

By the degenerate decoupling Assumption \ref{item:degnerate_decoupling} with $\sigma=K^{-\beta}$, we may $\psi$-decouple $[-1,1]^k$ into $(\psi,K^{-\beta})$-flat parallelograms $R_{\mathrm{deg}}\in \mathcal R_\psi$, which is also $(\tilde \phi,O(K^{-\beta}))$-flat by the approximation. By Part \eqref{item:totally_degenerate_approximation} of Definition \ref{defn:degeneracy_determinant_reg} with $\phi$ taken to be $\tilde \phi$, we also have $R_{\mathrm{deg}}\in \mathcal R_{\tilde\phi}$.

\fbox{Rescaling and applying induction hypothesis}

By the rescaling invariance assumption in Definition \ref{defn:rescaling_invariant_pair}, we have
\begin{equation*}
    (M_{\vec \sigma\varphi},[-1,1]^k)\overset{\Xi}{\longmapsto} (M_{\tilde\phi}, R_{\mathrm{deg}}),
\end{equation*}
for some $ M_\varphi \in \M$ and some $\vec{\sigma}\in \R^{n-k}$ with $\min_{i} \sigma_i \sim K^{-\beta}$. 

Hence, by Lemma \ref{lem:equivalent_decoupling_A}, the $\tilde \phi$-decoupling of $R_{\mathrm{deg}}$ into $(\tilde\phi,\delta)$-flat parallelograms belonging to $\mathcal R_{\tilde \phi}$ is implied by the $\varphi$-decoupling of $[-1,1]^k$ into $(\varphi,O( K^\beta \delta))$-flat parallelograms belonging to $\mathcal R_\varphi$. The latter is a decoupling at a larger scale, whence the induction hypothesis can be applied. We have thus obtained rectangles $ R_{\mathrm{ind}}\in \mathcal R_{\tilde \phi}$ which $\tilde \phi$-decouple $R_{\mathrm{deg}}$. 

We then apply the affine bijection $\Xi:=\lambda_{R_{\mathrm{sub}}}\circ \lambda_{R_{\mathrm{deg}}}$ to these $ R_{\mathrm{ind}}$, and so it remains to show that $\Xi( R_{\mathrm{ind}})\in \mathcal R_\phi$. But $\Xi\in \mathcal A$, by the assumption that $\mathcal A$ is closed under composition, and Parts \ref{item:sublevel_set_decoupling} and \ref{item:degnerate_decoupling} of the decoupling assumptions. Then this follows from the remark after Definition \ref{defn:compatible}, with $\psi=\tilde \phi=\phi\circ \Xi$.

\subsubsection{Decoupling inequality}\label{sec_decoupling_Sec6}

We briefly describe the proof of the required decoupling inequality, and point out the differences from \cite[Section 5.3]{LiYang2023}.

Denote by $D(\delta)$ the cost of $\phi$-decoupling of $[-1,1]^k$ into $(\phi,\delta)$-flat parallelograms. Our goal is to show that $D(\delta)\leq C'_{\eps}\delta^{-\eps}$.

The first difference is the choice of $K=\delta^{-\eps}$ instead of $K\sim_\eps 1$ in \cite{LiYang2023} (where we used $M$ there in place of $K$). The second difference is that we now need to combine $\ell^q(L^p)$ decoupling inequalities obtained at each step, which is done by a simple dyadic decomposition argument with a logarithmic loss $O(|\log \delta|)$ at each step (see Proposition \ref{prop:combine_decoupling}, and recall the cardinality assumption in Theorem \ref{thm:degeneracy_locating_principle}). This is acceptable since we already lose $K^\eps=\delta^{-\eps^2}\gg |\log \delta|$ at each step of the induction.  
 
Carrying out the same proof and combining the decoupling inequalities obtained from the nondegenerate and degenerate parts in Sections \ref{subsub_curved_flat} and \ref{sec:degenerate_subset}, respectively, we essentially obtain the following bootstrap inequality:
\begin{equation}\label{eqn:bootstrap}
    D(\delta)\leq C_\eps\delta^{-\eps}+C_\eps D(K^\beta \delta)K^{\eps}\le C_\eps\delta^{-\eps}+C_\eps D(K^\beta \delta)\delta^{-\eps^2}.
\end{equation}
(Here we have omitted the unimportant implicit constants.) Iterating \eqref{eqn:bootstrap} for $N$ times, we obtain
\begin{align*}
     D(\delta)&\leq C_\eps\delta^{-\eps}(1+\delta^{-\eps^2}+\delta^{-2\eps^2}+\cdots+\delta^{-N\eps^2})+\delta^{-N\eps^2}D(K^{N\beta}\delta).
\end{align*}
We will stop this iteration once we reach $K^{N\beta}\delta\sim 1$, that is, when 
\begin{equation}
    N\sim (\beta \eps)^{-1}.
\end{equation}
In this case we have $D(K^{\beta N} \delta)\sim 1$, and 
\begin{equation*}
    1+\delta^{-\eps^2}+\delta^{-2\eps^2}+\cdots+\delta^{-N\eps^2}+\delta^{-N\eps^2}D(K^{N\beta}\delta)\lesssim \delta^{-O(\eps)},
\end{equation*}
which finishes the proof of the decoupling inequality. This establishes Part \ref{item:Feb_22_01}.

\subsubsection{Analysis of overlap}\label{sec:overlap}

There are two main sources of overlaps between parallelograms, namely, overlaps between parallelograms created within one iteration, and overlaps between different iterations.

The former kind of overlap is bounded by the overlap condition in the decoupling assumptions of Theorem \ref{thm:degeneracy_locating_principle}. For the second kind of overlap, we can now see that the choice of $K=\delta^{-\eps}$ leads to $N=O(1/\eps)$, so we may just apply Proposition \ref{prop:iterative_overlap} to roughly bound the overlapping number by $O_\eps(1)$. 
In the special case of Part \ref{item:Feb_7_03}, we see that there should be no overlap.

\subsubsection{Lower bound on dimensions}
We still need to prove Part \ref{item:width_lower_bound}. 

\fbox{Nondegenerate part}

For the nondegenerate part, we go back to the argument in Subsection \ref{subsub_nondeg} after arriving at one $R_0$. Denote by $w$ the width of $R_0$. By Definition \ref{defn:trivial_covering}, we have $K^{-C_2}\lesssim w\lesssim K^{-1}$. Using the extra decoupling Assumption (*) at scale $w$, we may further $\phi$-decouple $R_0$ into some $(\phi,w)$-flat parallelogram $R_{\mathrm{low}}\sub 2R_0$ at cost $O_\eps(w^{-\eps})=O_\eps(K^{O(\eps)})$, such that $\lambda_{R_{\mathrm{low}}}\in \mathcal A$. Moreover, the dimensions of $R_{\mathrm{low}}$ are $\lesssim K^{-1}$ and $\gtrsim w$.

Since $R_{\mathrm{low}}$ is $(\phi,w)$-flat, by Definition \ref{defn:rescaling_invariant_pair}, we have
\begin{equation*}
    (M_{\vec \sigma\phi_1},[-1,1]^k)\overset{\lambda_{R_{\mathrm{low}}}}{\longmapsto} (M_{\phi}, R_{\mathrm{low}}),
\end{equation*}
for some $M_{\phi_1}\in \mathcal M$ and some $\vec \sigma$ with $\min \sigma_i\sim w$. We then apply the nondegenerate decoupling Assumption \ref{item:nondegnerate_decoupling} at scale $w^{-1}\delta$ to $\phi_1$-decouple $[-1,1]^k$ into parallelograms $R_{\mathrm{non}}$, which have dimensions at least $w^{-1}\delta$, by the first sentence of \ref{item:width_lower_bound} (Note that no sublevel set decoupling or degenerate decoupling is used at this step.) Rescaling back, the final parallelograms are given by $\lambda_{R_\mathrm{low}}(R_{\mathrm{non}})$, whose dimensions are bounded below by $ww^{-1}\delta=\delta$.

\fbox{Degenerate part}

The proof will be similar to that of Subsection \ref{sec:degenerate_subset}, but we will outline the main differences. The part about sublevel set decoupling still goes through, with the extra property that each $R_{\mathrm{sub}}$ has width at least $K^{-1}$.

However, before the ``rescaling and approximation" and ``degenerate decoupling" steps, we will add an extra step of a lower dimensional decoupling to the nondegenerate part right above. More precisely, let $w$ be the width of $R_{\mathrm{sub}}$, and by the first sentence of \ref{item:width_lower_bound}, we have $w\ge K^{-1}$. Apply the extra decoupling Assumption (*) at scale $w$ to $\phi$-decouple $R_{\mathrm{sub}}$ into $(\phi,w)$-flat parallelograms $R_{\mathrm{low}}$, each with dimensions at least $w$ at a cost of $O_\varepsilon(w^{-\varepsilon}) =O_\varepsilon(K^\varepsilon)$.

Since $R_{\mathrm{low}}$ is $(\phi,w)$-flat, by Definition \ref{defn:rescaling_invariant_pair}, we have
\begin{equation*}
    (M_{\vec \sigma\phi_1},[-1,1]^k)\overset{\lambda_{R_{\mathrm{low}}}}{\longmapsto} (M_{\phi}, R_{\mathrm{low}}),
\end{equation*}
for some $M_{\phi_1}\in \mathcal M$ and some $\vec \sigma$ with $\min \sigma_i\sim w$. By the compatibility assumption in Definition \ref{defn:compatible}, it then suffices to study decoupling for $\phi_1$ on $[-1,1]^k$ at scale $w^{-1}\delta$, and prove that it produces parallelograms of dimensions at least $w^{-1}\delta$.

To this end, we consider two cases. Denote 
\begin{equation*}
    \gamma=(2C'')^{-1},
\end{equation*}
where $C''$ is as in \eqref{eqn:regularity_rescaling_determinant}.

If $w\le K^{-\gamma}$, then we can use the induction hypothesis of decoupling at scale $w^{-1}\delta\ge K^\gamma \delta$.

If $w>K^{-\gamma}$, then on $[-1,1]^k$, using \eqref{eqn:regularity_rescaling_determinant}, we have 
\begin{equation*}
    |HM_{\phi_1}|\lesssim w^{-C''}K^{-1}\le K^{C''\gamma-1}= K^{-1/2}.
\end{equation*}
Then we can continue with the same argument as ``rescaling and approximation", ``degenerate decoupling" (at scale $K^{-\beta/2}$) and ``rescaling and applying induction hypothesis" in Subsection \ref{sec:degenerate_subset} (with $K^\beta \delta$ replaced by $K^{\beta/2} w^{-1}\delta$). By the first sentence of \ref{item:width_lower_bound}, the parallelograms of the form $\lambda_{R_{\mathrm{deg}}}(R_\mathrm{ind})$ will have dimensions at least $K^{\beta/2} w^{-1}\delta K^{-\beta/2}=w^{-1}\delta$, as required.

\fbox{Decoupling inequality}

The bootstrap inequality is slightly different from \eqref{eqn:bootstrap} in this case, due to the extra step of decoupling at scale $w$. Essentially, \eqref{eqn:bootstrap} becomes
\begin{equation*}
    D(\delta)\leq C_\eps\delta^{-\eps}+C_\eps K^\eps\left(\sup_{K^{-\gamma}< w\le 1}D(w^{-1}K^{\beta/2} \delta)+\sup_{K^{-1}\le w\le K^{-\gamma}}D(w^{-1}\delta)\right).
\end{equation*}
A similar iteration should lead to $D(\delta)\lesssim_\eps \delta^{-\eps}$, and we omit the details.

\pushQED{\qed} 
\qedhere
\popQED

\section{Degenerate approximation theorem}\label{sec:degen_approximation_theorem}

In this section, our main goal is to prove the degenerate approximation theorem. As a corollary, we are able to prove Proposition \ref{prop:degeneracy_determinant_Lojasiewicz}.

In this context, we may denote $l:=n-k$ and slightly abuse notation by letting $H$ act on $\phi=(\phi_i)_{i=1}^{l}$ directly. 

%Also, $H$ has a natural extension to $(\mathcal P_{k,d}^0)^{l}$ (see Part \eqref{item:Pnd} in Section \ref{sec:notation} for the notation). Without loss of generality, we may just assume that $H$ acts on $(\mathcal P_{k,d}^0)^{l}$.

We need a little more notation. For convenience, we introduce the natural linear isomorphism $\Lambda_{k,d}: \mathcal{P}^0_{k,d}\to \R^{\binom{k+d}{k}}$ defined by
$\Lambda_{k,d}(\sum_{|\alpha|\leq d} c_\alpha x^\alpha) = (c_\alpha)_{|\alpha|\leq d}$. Define a norm on $\mathcal P^0_{k,d}$ by $\norm {\phi_i}=\sup_{x\in [-1,1]^k}|\phi_i(x)|$. By Corollary \ref{cor:poly_norm}, $\Lambda_{k,d}$ is also essentially a normed space isomorphism that maps $\mathcal P_{k,d}$ to the unit ball of $\R^{\binom{k+d}{k}}$, with its operator norm $\sim_{k,d} 1$. (For this reason, we encourage the reader to regard $\phi=(\phi_i)$ as vectors, not as polynomials.) We also define $\Lambda_{k,D}$ similarly. 

In addition, denote by $(\mathcal P^0_{k,d})^{l}$ the collection of polynomial mappings $\phi=(\phi_i)_{1\le i\le l}$, where each $\phi_i\in \mathcal P^0_{k,d}$. Then we may naturally define the ``tensor product" $\otimes\Lambda_{k,d}$ that maps from $(\mathcal P^0_{k,d})^l$ to $(\R^{\binom{k+d}{k}})^{l}\cong \R^{l\binom{k+d}{k}}$, which is still essentially a normed space isomorphism with the natural norm $\norm {\phi}:=\sup_{1\le i\le l}\sup_{x\in [-1,1]^k}|\phi_i(x)|$.

In this way, for $l\ge 1$, $H$ induces a polynomial map $Q=(Q_j)$ from $\R^{l\binom{k+d}{k}}$ to $\R^{\binom{k+D}{k}}$, where each $Q_j$ is a polynomial of degree bounded by some $D'=D'(H)$. (For example, when $l=1$, $H\phi:=\det D^2 \phi$, we have $D'=k$.) In this way, $H$ has a natural extension to all of $(\mathcal{P}_{k,d}^0)^{l} $. See Figure \ref{fig:cd} below.

\begin{figure}[h]
    \centering
        \[
        \begin{tikzcd}[column sep = large, row sep = large]
        (\mathcal{P}_{k,d}^0)^{l} \arrow{r}{H} \arrow[leftrightarrow]{d}{\bigotimes\Lambda_{k,d}} & \mathcal{P}_{k,D}^0 \arrow[leftrightarrow]{d}{\Lambda_{k,D}} \\
        \left (\R^{\binom{k+d}{k}}\right)^{l}\cong \R^{l\binom{k+d}{k}} \arrow{r}{Q} & \R^{\binom{k+D}{k}} 
        \end{tikzcd}
        \]
        \caption{Map $Q$ induced by $H$.}
        \label{fig:cd}
\end{figure}
We now state and prove the following main approximation theorem in this section.
\begin{thm}[Degenerate approximation theorem]\label{thm:lojaciewicz}
    Let $H: (\mathcal P_{k,d}^0)^{l} \mapsto \mathcal P_{k,D}^0$ be such that the map $Q$ as in Figure \ref{fig:cd} has all its components being polynomials. Then there exist constants $C=C(H)>0$ and $\beta=\beta(H)>0$ such that the following holds:

    For every $\sigma\in (0,1]$ and every $\phi\in (\mathcal P^0_{k,d})^{l}$ such that $\sup_{x\in [-1,1]^k} |H\phi(x)|\le \sigma$, there exists some $\bar \phi\in (\mathcal P_{k,d}^0)^{l}$ such that $H\bar \phi\equiv 0$, and that
    \begin{equation*}
        \sup_{x\in [-1,1]^k}|\phi(x)-\bar \phi(x)|\leq C \sigma^\beta.
    \end{equation*}
\end{thm}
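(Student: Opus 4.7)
My plan is to transfer the problem to the finite-dimensional coefficient space and apply the classical {\L}ojasiewicz inequality for real polynomials.

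\textbf{Step 1 (Reduction to coefficients).} I would first pass to coefficient space via the natural isomorphism $\bigotimes\Lambda_{k,d}:(\mathcal{P}_{k,d}^0)^{l}\to\mathbb{R}^{N}$ with $N=l\binom{k+d}{k}$. By Corollary \ref{cor:poly_norm}, this map is bi-Lipschitz between the sup norm on $[-1,1]^{k}$ and the Euclidean norm, with constants depending only on $k,d,l$; the same holds for $\Lambda_{k,D}$. Let $v:=\bigotimes\Lambda_{k,d}(\phi)\in\mathbb{R}^{N}$, so the hypothesis becomes $|Q(v)|\lesssim\sigma$, and the desired conclusion reduces to producing $\bar v$ in the real algebraic variety $Z:=Q^{-1}(0)\subseteq\mathbb{R}^{N}$ with $|v-\bar v|\lesssim\sigma^{\beta}$. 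Indeed, setting $\bar\phi:=(\bigotimes\Lambda_{k,d})^{-1}(\bar v)$ then gives $H\bar\phi\equiv 0$ (since $\Lambda_{k,D}$ is an isomorphism and $Q(\bar v)=0$), and by norm equivalence $\sup_{[-1,1]^{k}}|\phi-\bar\phi|\lesssim|v-\bar v|\lesssim\sigma^{\beta}$.

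\textbf{Step 2 ({\L}ojasiewicz inequality).} The core ingredient is the classical {\L}ojasiewicz inequality, applied to the non-negative real polynomial
\[
P(v):=|Q(v)|^{2}=\sum_{j} Q_{j}(v)^{2},
\]
whose zero locus coincides with $Z$. For any fixed compact set $K\subset\mathbb{R}^{N}$, this inequality produces constants $C_{K}>0$ and $\mu_{K}\geq 2$ (depending only on $P$ and $K$, and hence only on $H$ and $K$) such that
\[
P(v)\geq C_{K}^{-1}\operatorname{dist}(v,Z)^{\mu_{K}}\qquad \forall\, v\in K.
\]
Choosing $\bar v\in Z$ to realise this distance yields
\[
|v-\bar v|=\operatorname{dist}(v,Z)\leq C_{K}^{1/\mu_{K}}P(v)^{1/\mu_{K}}=C_{K}^{1/\mu_{K}}|Q(v)|^{2/\mu_{K}}\lesssim\sigma^{2/\mu_{K}},
\]
so one obtains the theorem with $\beta:=2/\mu_{K}$.

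\textbf{Step 3 (Main obstacle).} The chief subtlety is uniformity: the {\L}ojasiewicz constants $C_{K},\mu_{K}$ depend on the compact set $K$, so one must fix a single $K$ containing all admissible coefficient vectors once and for all. This is where the a priori boundedness of $\phi$ becomes essential: in the intended use within Proposition \ref{prop:degeneracy_determinant_Lojasiewicz} one has $\phi\in(\mathcal{P}_{k,d})^{l}$, hence $v$ lies in a fixed ball of $\mathbb{R}^{N}$ determined by $k,d,l$, and $C_{K},\mu_{K}$ reduce to constants depending only on $H$. Everything else—that $Q$ is polynomial (by hypothesis), that $Z$ is a genuine real algebraic variety, and that the norm-equivalence constants are universal—is routine. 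The real content of the proof is thus concentrated entirely in the {\L}ojasiewicz inequality, which is invoked as a black box; an explicit derivation would proceed via semi-algebraic geometry or Hironaka-style resolution of singularities applied to $P$.
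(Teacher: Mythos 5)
Your outline coincides with the paper's proof in Steps 1 and 2: pass to coefficient space via $\otimes\Lambda_{k,d}$ and Corollary \ref{cor:poly_norm}, form the single polynomial $S(u)=\|Q(u)\|_2^2$ whose zero set is $Z(Q)$, apply a \L ojasiewicz inequality to get a nearby zero $u_0$, and pull back $\bar\phi=(\otimes\Lambda_{k,d})^{-1}(u_0)$. The difference, and the genuine gap, is in Step 3. You invoke the classical \L ojasiewicz inequality on a compact set $K$, with constants $C_K,\mu_K$ depending on $K$, and then argue that $K$ can be fixed because "in the intended use" one has $\phi\in(\mathcal P_{k,d})^{l}$. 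But the theorem as stated quantifies over all of $(\mathcal P^0_{k,d})^{l}$, with $C$ and $\beta$ depending on $H$ alone, and the hypothesis $\sup_{[-1,1]^k}|H\phi|\le\sigma$ does not confine the coefficient vector $v$ to any fixed ball: for typical degeneracy determinants (e.g. $HM_\phi=\det D^2\phi$, or Wronskian-type $H$, which annihilate affine parts) one can add an arbitrarily large affine component to $\phi$ without changing $H\phi$, so $v$ ranges over an unbounded set while $|S(v)|$ stays small. Hence the compact-set inequality does not yield the stated uniform conclusion, and there is no obvious normalisation (such as rescaling $\phi$) that reduces to the bounded case, since $Q$ is not assumed homogeneous.

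The paper closes exactly this point by citing a global, effective \L ojasiewicz inequality (Proposition \ref{prop:Lojasiewicz}): for a real polynomial $S$ in $N$ variables of degree at most $2D'$ with nonempty zero set, $\mathrm{dist}(u,Z(S))\le C|S(u)|^{\gamma}$ for \emph{every} $u\in\R^N$, with $C,\gamma$ depending only on $N$ and the degree; with this substituted for your compact-set inequality, your argument becomes the paper's verbatim, with $\beta=2\gamma$. As written, your proof establishes only the restricted statement for $\phi$ in a fixed bounded subset; that weaker statement does happen to suffice for the application in Corollary \ref{cor:degeneracy_determinant_Lojasiewicz} (where $\phi_i\in\mathcal P_{k,d}$), but it does not prove Theorem \ref{thm:lojaciewicz} as stated.
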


The proof of Theorem \ref{thm:lojaciewicz} uses the following version of the \L ojasiewicz inequality. 
    \begin{prop}[\cite{Lojasiewicz}]\label{prop:Lojasiewicz}
        Let $S$ be an $N$-variate real polynomial of degree at most $D'\ge 2$, and denote by $Z(S)\sub \R^N$ the zero set of $S$, which we assume to be nonempty. Then for every $u\in \R^N$,
        \begin{equation*}
            \mathrm{dist}(u,Z(S))\le C|S(u)|^\gamma,
        \end{equation*}
        where $\gamma^{-1}=\max\{D'(3D'-4)^{N-1},2D'(3D'-3)^{N-2}\}$, and $C$ depends only on $N,D'$.
    \end{prop}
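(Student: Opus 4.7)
The plan is to prove this effective \L ojasiewicz inequality by induction on the dimension $N$, tracking how the exponent $\gamma^{-1}$ grows at each inductive step so as to arrive at the stated recursion $\gamma^{-1} = \max\{D'(3D'-4)^{N-1},\, 2D'(3D'-3)^{N-2}\}$.

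The base case $N=1$ is elementary. Factor $S(x) \in \R[x]$ as $c \prod_j (x - z_j)^{m_j}$ over $\C$. For a given $u \in \R$, split into two regimes: if $u$ is far from every real root (say at distance bounded below by a constant depending on $D'$ and the coefficients of $S$), then $|S(u)|$ is bounded below by a positive constant and the inequality is trivial; otherwise, let $r$ be the nearest real root with multiplicity $m \le D'$, and observe that $|S(u)| \gtrsim |u-r|^m$ after lower-bounding the remaining factors using the separation of the roots. This yields $\dist(u, Z(S)) \le C|S(u)|^{1/D'}$, consistent with the formula at $N=1$, namely $\gamma^{-1} = D' = D'(3D'-4)^0$.

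For the inductive step, assume the conclusion in dimension $N-1$. After a bounded linear change of coordinates depending only on $N$ and $D'$, we may assume $S(x', x_N)$ is monic in $x_N$ of degree $D \le D'$. The key auxiliary object is the discriminant $T(x') := \mathrm{Disc}_{x_N} S(x', \cdot)$, a polynomial in $N-1$ variables whose total degree is bounded by $O(D'(D'-1))$ via the classical resultant formula; this degree bound is the source of the multiplicative factor $3D'-4$ in the recursion. Given $u = (u', u_N)$, split into two cases. (i) If $u'$ is bounded away from $Z(T)$, then $S(u', \cdot)$ has only simple roots that vary Lipschitz-continuously with $u'$ with controlled constant, so a direct Newton-type perturbation produces a nearby point of $Z(S)$. (ii) If $u'$ is close to $Z(T)$, apply the induction hypothesis to $T$ (with exponent $\gamma_{N-1}$) to produce $v' \in Z(T)$ near $u'$, and then apply the univariate base case to $S(v', \cdot)$ to find a real root close to $u_N$. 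The total distance is bounded via the triangle inequality by $|u' - v'| + |u_N - r|$ for the appropriate real root $r$.

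Combining the two regimes yields the recursion $\gamma_N^{-1} \le (3D'-4)\,\gamma_{N-1}^{-1}$, which iterates from $\gamma_1^{-1} = D'$ to the first bound $D'(3D'-4)^{N-1}$. The secondary bound $2D'(3D'-3)^{N-2}$ arises from a parallel branch of the argument needed to handle the degenerate situation where the leading coefficient of $S$ in $x_N$ vanishes along a positive-dimensional subvariety; there one replaces the discriminant by a closely related auxiliary polynomial (built from $\partial S/\partial x_N$ or an appropriate subresultant), producing a slightly different exponent growth with factor $3D'-3$ and leading constant $2D'$. The main obstacle — and the most delicate part of the proof — is maintaining uniformity of the constant $C$ through the induction: when invoking the $(N-1)$-dimensional case for the auxiliary polynomial $T$, we need the constant to depend only on $N-1$ and $\deg T$, not on $T$ itself. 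This is achieved either by a compactness argument on the finite-dimensional space of polynomials of bounded degree with normalized coefficients (the normalization being supplied by the initial bounded change of coordinates), or by more explicit semi-algebraic bookkeeping, e.g.\ via cylindrical algebraic decomposition and effective degree tracking for every auxiliary polynomial introduced.
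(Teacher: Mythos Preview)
The paper does not prove this proposition at all: it is quoted directly from the reference \cite{Lojasiewicz} (note the bracket in the proposition heading), and the paper immediately remarks that ``the precise value is not important in this paper'' before using it as a black box in the proof of Theorem~\ref{thm:lojaciewicz}. So there is no proof in the paper to compare your attempt against.

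That said, your sketch has a genuine gap that is worth flagging. As stated, the proposition asserts that $C$ depends only on $N$ and $D'$, not on $S$. But this cannot hold without some normalization: for $N=1$ take $S(x)=\varepsilon x$; then $Z(S)=\{0\}$ and $\dist(1,Z(S))=1$ while $|S(1)|^\gamma=\varepsilon^\gamma\to 0$. Your base case implicitly acknowledges this when you let the separation constant depend ``on $D'$ and the coefficients of $S$,'' which contradicts the uniformity you later claim to propagate. The version actually needed in the paper's application only requires $C=C(S)$ (since $S$ is the fixed polynomial $\|Q(u)\|_2^2$ built from $H$), so this defect in the statement is harmless there, but your proof strategy as written cannot deliver a constant depending only on $N,D'$. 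Beyond that, the inductive step is too schematic: you do not address what happens when $S(u',\cdot)$ has no real root (the discriminant controls collisions of complex roots, not reality), and the ``parallel branch'' producing the $2D'(3D'-3)^{N-2}$ term is asserted rather than argued.
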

(The precise value is not important in this paper.) All implicit constants in the proof below depend only on $H$.
\begin{proof}[Proof of Theorem \ref{thm:lojaciewicz}]
    By the assumption of $H$, $Q:= \Lambda_{k,D} \circ H \circ (\otimes\Lambda^{-1}_{k,d}): \R^{l\binom{k+d}{k}} \mapsto \R^{\binom{k+D}{k}}$ is a vector of polynomials with their degrees bounded by some $D'=D'(H)$. Moreover, $S(u) := \|Q(u)\|_2^2$ is a $\binom{k+d}{k}$-variate real polynomial of degree at most $2D'$.

    Let $\phi \in (\mathcal{P}^0_{k,d})^l$ be such that $\sup_{x\in [-1,1]^k} |H\phi(x)|\le \sigma$, i.e. $\|Q(\Lambda_{k,d}\phi)\|_{\infty} \lesssim \sigma$, whence $\|S(\Lambda_{k,d}\phi)\|_{\infty} \lesssim \sigma^2$. Using Proposition \ref{prop:Lojasiewicz} with $N=\binom {k+d}k$ and $2D'$ in place of $D'$, there exists some $\gamma'>0$, depending in turn only on $H$, such that 
    $$
    \dist(\Lambda_{k,d}\phi , Z(S)) \lesssim |S(\Lambda_{k,d}\phi)|^{\gamma'} \lesssim \sigma^{2 \gamma'}.
    $$
    Thus, there exists $u_0 \in Z(S)$ such that $|\Lambda_{k,d}\phi - u_0|\lesssim \sigma^{2\gamma'}$. Let $\beta=2\gamma'$, and let $\bar \phi:=\Lambda_{k,d}^{-1}u_0$. Thus, $H\overline \phi\equiv 0$ since $S(u_0)=0$, and so $\bar \phi$ is as required.

\end{proof}

Proposition \ref{prop:degeneracy_determinant_Lojasiewicz}, which is rephrased as the following corollary using Figure \ref{fig:cd}, can then be deduced from Theorem \ref{thm:lojaciewicz}. (Recall $l=n-k$.)

\begin{cor}\label{cor:degeneracy_determinant_Lojasiewicz}
    Let $H: (\mathcal P^0_{k,d})^{l} \mapsto \mathcal P_{n,D}^0$ be a map such that the map $Q$ as in Figure \ref{fig:cd} obeys the following: if $u \in Z(Q)$ and $c\in \R$, then $cu \in Z(Q)$.   
    
    In the context of Definition \ref{defn:degeneracy_determinant_reg}, let $\mathcal{M}\subseteq \mathcal{M}_0$ be such that $\mathcal{M}$ consists of graphs of $\phi=(\phi_i)_{i=1}^{l}$ where each $\phi_i\in \mathcal{P}_{k,d}$, and $\mathfrak{R}=\{\mathcal R\}$ for some $\mathcal R\sub \mathcal R_0$. Then $H$ is a degeneracy determinant on $\mathcal M$, with constants $C,C',\beta$ depending only on $H$.
\end{cor}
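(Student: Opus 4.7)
The plan is to verify the three items of Definition \ref{defn:degeneracy_determinant} for the given $H$, $\mathcal{M}$, $\mathfrak R$. Item 2 (totally degenerate approximation) is the substantive one and follows almost immediately from Theorem \ref{thm:lojaciewicz}, while items 1 and 3 are polynomial regularity statements.

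For Item 1 (Lipschitz), I would argue as follows. By Corollary \ref{cor:poly_norm}, each $\phi\in\mathcal{M}$ has a bounded coefficient vector, with bound depending only on $k,d$. Since $Q$ is polynomial, the coefficient vector of $H\phi$ is then uniformly bounded in terms of $H$ alone. A polynomial of degree $D$ on $[-1,1]^k$ with uniformly bounded coefficients is Lipschitz with constant depending only on $H$, as required.

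For Item 2 (totally degenerate approximation), given $\phi\in\mathcal{M}$ with $\sup_{[-1,1]^k}|H\phi|\le\sigma$, Theorem \ref{thm:lojaciewicz} supplies $\bar\phi\in(\mathcal P^0_{k,d})^l$ with $H\bar\phi\equiv 0$ and $\|\phi-\bar\phi\|_\infty\le C\sigma^\beta$. Since $\|\phi\|_\infty\le 1$, we have $\|\bar\phi\|_\infty\le 1+C\sigma^\beta$. Setting $\lambda:=\max(1,\|\bar\phi\|_\infty)\in[1,1+C\sigma^\beta]$ and $\psi:=\bar\phi/\lambda$, we find $\psi\in(\mathcal P_{k,d})^l=\mathcal M$. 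The hypothesis on $Z(Q)$ translates through Figure \ref{fig:cd} into the implication $H\bar\phi\equiv 0\Rightarrow H(c\bar\phi)\equiv 0$ for every scalar $c$; hence $H\psi\equiv 0$. Taking $C':=1$, a single triangle inequality gives
\begin{equation*}
\|\phi-\psi\|_\infty\le\|\phi-\bar\phi\|_\infty+\|\bar\phi\|_\infty\,|1-\lambda^{-1}|\lesssim\sigma^\beta,
\end{equation*}
and the containment $\mathcal R_\psi\subseteq\mathcal R_\phi$ is automatic since $\mathfrak R=\{\mathcal R\}$ is a singleton.

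For Item 3 (regularity under rescaling), I would write $\lambda_R x=Dx+b$ and expand the coefficient vector of $\phi\circ\lambda_R$ as a polynomial in the coefficients of $\phi$ and in the entries of $D,b$. The containment $R\subseteq[-1,1]^k$ forces $\|D\|_\infty\lesssim 1$, yielding the upper bound $|H(\phi\circ\lambda_R)(x)|\le|H\phi(\lambda_R x)|$ by degree-counting in the monomials of $Q$. For the lower bound, the shortest dimension of $R$ being $\mu$ bounds below the relevant products of $D$-entries in each monomial, extracting $\mu^{C''}$ with $C''$ the total $D$-degree of $Q$.

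The hard part is making the lower bound in Item 3 precise: while canonical choices such as $H=\det D^2\phi$ and $H=\det(W_m\phi(W_m\phi)^T)$ transform cleanly under $\phi\mapsto\phi\circ\lambda_R$ via an explicit power of $\det D$, a general polynomial $Q$ on coefficient space need not. One must track how each monomial in $Q$ lifts under the coefficient change of variables induced by $\lambda_R$, and exploit the homogeneity of $Z(Q)$ to rule out cancellations that would defeat the lower bound. Once a uniform exponent $C''(H)$ is secured, the three items assemble into the degeneracy determinant property.
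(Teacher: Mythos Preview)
Your handling of Items \eqref{item:Lipschitz} and \eqref{item:totally_degenerate_approximation} of Definition \ref{defn:degeneracy_determinant} is correct and essentially matches the paper: the paper likewise dismisses Item \eqref{item:Lipschitz} as immediate and derives Item \eqref{item:totally_degenerate_approximation} from Theorem \ref{thm:lojaciewicz} by normalising $\bar\phi$ (your choice $C'=1$ with a $\phi$-dependent $\lambda$ is a clean variant of the paper's fixed $C'$).

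For Item \eqref{item:regularity_small_Hessian_rescaling} there is a genuine gap, and your instinct that ``a general polynomial $Q$ on coefficient space need not transform cleanly'' is exactly right: neither inequality in Item \eqref{item:regularity_small_Hessian_rescaling} holds under the stated hypotheses alone. A counterexample: take $k=l=1$, $d=2$, write $\phi(x)=ax^2+bx+c$, and set $H\phi(x):=a-b$. Then $Q(a,b,c)=a-b$ is polynomial with $Z(Q)=\{a=b\}$ scale-invariant, so the hypothesis is met. With $\phi(x)=\tfrac14 x^2+\tfrac14 x\in\mathcal P_{1,2}$ and $R=[-\tfrac12,\tfrac12]$ (so $\lambda_Rx=x/2$), one has $H\phi\equiv 0$ while $\phi\circ\lambda_R(x)=\tfrac1{16}x^2+\tfrac18 x$ gives $H(\phi\circ\lambda_R)\equiv -\tfrac1{16}$. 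The upper bound reads $\tfrac1{16}\le 0$, which is false; thus your ``degree-counting'' heuristic cannot work, and homogeneity of $Z(Q)$ does not rescue it.

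The paper's proof also dismisses Item \eqref{item:regularity_small_Hessian_rescaling} as ``obvious'', so it shares this gap at the level of the general statement. What makes Item \eqref{item:regularity_small_Hessian_rescaling} hold in every application the paper actually uses (Corollaries \ref{cor:hessian} and \ref{cor:analytic_curve}) is an extra structural feature not captured by the stated hypothesis: in those cases $H(\phi\circ\lambda_R)(x)=J(\lambda_R)\cdot H\phi(\lambda_R x)$ for an explicit scalar $J(\lambda_R)$ (for instance $(\det D)^2$ when $H=\det D^2$), with $\mu^{C''}\lesssim|J(\lambda_R)|\le 1$. Isolating this equivariance under affine reparametrisation---not merely that $Q$ is polynomial with a conic zero set---is the missing hypothesis you need.
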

\begin{proof}
By the definition of $H$, there exist constants $D'=D'(H)$ and $K=K(H)$ such that the map $Q$ as in Figure \ref{fig:cd} has all its components being polynomials of degree bounded by $D'$ and with coefficients bounded by $K$.

%Part \eqref{item:Lipschitz} in Definition \ref{defn:degeneracy_determinant_reg} is obvious, since $H\phi$ is a polynomial of bounded degree and coefficients. Part \eqref{item:regularity_small_Hessian_rescaling} follows from a direct computation using invariance of rotation and shear transformations of the Hessian determinant. More precisely, we have
%\begin{equation}
    %|HM_{\phi\circ \lambda_R}(x)|=|R|^2 |HM_\phi(\lambda_R x)|,
%\end{equation}
%where $|R|$ denotes the $k$-volume of $R$. In particular, we can take $C''=2k$.

To prove Part \eqref{item:totally_degenerate_approximation}, given $\phi\in (\mathcal P_{k,d})^l$ with $\sup_{x\in [-1,1]^k}|H\phi(x)|\le \sigma$, by Theorem \ref{thm:lojaciewicz}, there exists some $\bar \phi\in \mathcal P_{k,d}^0$ such that $H\bar \phi\equiv 0$, and that $\sup_{x\in [-1,1]^k}|\phi(x)-\bar \phi(x)|\leq C \sigma^\beta$. Note that $\|\bar \phi\| \leq \|\phi\|+C\sigma^{\beta}\le C'$ for some $C'\ge 1$. Defining $\psi:=(C')^{-1}\bar \phi$, we have $\psi \in \mathcal P_{k,d}$. Also, by the assumption that $cu\in Z(Q)$ whenever $u\in Z(Q)$, we have $H\psi\equiv 0$. Thus, Part \eqref{item:totally_degenerate_approximation} holds.
\end{proof}

\section{Appendix}\label{sec_appendix}

\subsection{Facts about parallelograms}
Recall from Section \ref{sec:equivalence_objects} that $CP$ denotes the concentric dilation of a parallelogram $P\sub \R^n$ by a factor of $C$.
\begin{prop}\label{prop:dilation_interchangable_with_affine_for_parallelogram}
    Let $P\sub \R^n$ be a parallelogram, and $\Lambda:\R^n\to \R^m$ be an affine transformation. For any $C>0$, we have $\Lambda(CP)=C\Lambda P$.
\end{prop}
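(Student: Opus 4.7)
The plan is to unwind both sides directly from the definition of concentric dilation. Write $\Lambda x = Ax + b$ with $A \in \R^{m \times n}$ and $b \in \R^m$, and let $c$ denote the center of the parallelogram $P$. Then by definition $CP = \{c + C(x-c) : x \in P\}$, and the center of the image parallelogram $\Lambda P$ is $\Lambda c = Ac + b$ (this follows because $\Lambda$ is affine and centrality is preserved by affine maps), so $C(\Lambda P) = \{\Lambda c + C(y - \Lambda c) : y \in \Lambda P\}$.

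The key step is the linearity of the difference: for any $x \in \R^n$,
\[
\Lambda(c + C(x-c)) - \Lambda c = A\bigl(c + C(x-c)\bigr) + b - (Ac + b) = CA(x-c) = C(\Lambda x - \Lambda c).
\]
Hence $\Lambda(c + C(x-c)) = \Lambda c + C(\Lambda x - \Lambda c)$. Applying this pointwise as $x$ ranges over $P$ gives
\[
\Lambda(CP) = \{\Lambda c + C(\Lambda x - \Lambda c) : x \in P\} = \{\Lambda c + C(y - \Lambda c) : y \in \Lambda P\} = C(\Lambda P),
\]
which is the desired identity.

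No real obstacle arises: the proof is a one-line computation once one notes that affine maps send the center of $P$ to the center of $\Lambda P$, so that the two concentric dilations have compatible centers. The only minor point to mention is that the argument works even when $A$ is not injective, in which case $\Lambda P$ may be a lower-dimensional parallelogram but the identity of sets still holds verbatim.
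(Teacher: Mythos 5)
Your proof is correct and takes essentially the same route as the paper's: both reduce to the one-line affine computation that dilation about the center commutes with $\Lambda$, the paper phrasing it via the parametrization $P=AQ_0+b$ of the unit cube while you phrase it via the center $c$ directly. The observation that $\Lambda$ maps the center of $P$ to the center of $\Lambda P$ (by symmetry of $P$ about $c$) is the only point worth stating, and you do state it.
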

\begin{proof}
Denote by $\lambda x:=Ax+b$ an affine bijection from $Q_0:=[-1,1]^n$ to $P$. Then $P=\lambda Q_0=AQ_0+b$, and $CP=(CA)Q_0+b$.

Write $\Lambda x:=Ux+v$, so $\Lambda(cP)=U(CAQ_0)+Ub+v=CUAQ_0+Ub+v$. On the other hand, $\Lambda P=UAQ_0+Ub+v$, so $C\Lambda P=CUAQ_0+Ub+v=\Lambda (CP)$.
\end{proof}

\begin{prop}
    If $P_1\sub P_2\sub \R^n$ are parallelograms and $C\ge 1$, the $CP_1\sub CP_2$. (Note the dilations are with respect to possibly different centres.)
\end{prop}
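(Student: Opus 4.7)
The plan is to reduce the claim to a statement about the centrally symmetric ``shapes'' of $P_1$ and $P_2$, and then close with a one-line convex combination. Write $c_i$ for the centre of $P_i$ and set $E_i := P_i - c_i$, so each $E_i$ is a convex, centrally symmetric (i.e.\ $-E_i = E_i$) body based at the origin. In these terms $CP_i = c_i + CE_i$, and the desired inclusion $CP_1 \subseteq CP_2$ is equivalent to
\[
E_1 + \tfrac{1}{C}(c_1 - c_2) \subseteq E_2.
\]

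Since $1/C \in (0,1]$, I would obtain this from the two endpoint inclusions
\[
E_1 \subseteq E_2 \qquad \text{and} \qquad E_1 + (c_1 - c_2) \subseteq E_2,
\]
by a single convex combination in $E_2$. The second endpoint is free: it is just the hypothesis $P_1 \subseteq P_2$ translated by $-c_2$. The first endpoint is the only place where central symmetry enters, and it is the main (and essentially only) obstacle in the proof.

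To establish $E_1 \subseteq E_2$, I would take $y \in E_1$ and use that both $c_1 + y$ and $c_1 - y$ lie in $P_1 \subseteq P_2$, which yields $y + (c_1 - c_2) \in E_2$ and $-y + (c_1 - c_2) \in E_2$. Applying $-E_2 = E_2$ to the second gives $y - (c_1 - c_2) \in E_2$, and then averaging with $y + (c_1 - c_2)$ using the convexity of $E_2$ produces $y \in E_2$. This is the only step that genuinely uses that $P_2$ is a parallelogram (rather than an arbitrary convex set), via its symmetry about the centre.

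With both endpoint inclusions in hand, the conclusion is immediate: for any $y \in E_1$,
\[
y + \tfrac{1}{C}(c_1 - c_2) \;=\; \bigl(1-\tfrac{1}{C}\bigr) y \;+\; \tfrac{1}{C}\bigl(y + (c_1 - c_2)\bigr)
\]
is a convex combination of two elements of $E_2$, hence lies in $E_2$. Translating back by $c_2$ yields $CP_1 \subseteq CP_2$, completing the proof.
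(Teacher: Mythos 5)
Your proof is correct, but it takes a genuinely different route from the paper. The paper first invokes its preceding proposition (that concentric dilation commutes with affine maps) to normalise $P_2=[-1,1]^n$, and then reduces to the one-dimensional case by projecting onto each coordinate: $\pi_i(P_1)\subseteq[-1,1]$ forces $\pi_i(CP_1)\subseteq[-C,C]$. You instead argue intrinsically with the centred bodies $E_i=P_i-c_i$: the hypothesis gives $E_1+(c_1-c_2)\subseteq E_2$ directly, the central symmetry of $P_1$ and of $P_2$ together with convexity give $E_1\subseteq E_2$, and a single convex combination with weight $1/C\in(0,1]$ yields $E_1+\tfrac1C(c_1-c_2)\subseteq E_2$, which is the claim. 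Your argument never uses the parallelogram structure beyond convexity and central symmetry about the centres, so it proves the more general statement for arbitrary centrally symmetric convex bodies dilated about their symmetry centres, and it avoids both the affine normalisation and the dimensional reduction; the paper's route is shorter to state because it leans on the affine-invariance proposition already established and the transparency of the cube $[-C,C]^n$. One small remark on your commentary: the step establishing $E_1\subseteq E_2$ uses the symmetry of $P_1$ (to know $c_1-y\in P_1$) as well as that of $P_2$, so it is not only $P_2$'s parallelogram structure that enters there; this does not affect correctness, since both sets are parallelograms by hypothesis.
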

\begin{proof}
By Proposition \ref{prop:dilation_interchangable_with_affine_for_parallelogram}, we may assume $P_2=[-1,1]^n$, so $CP_2=[-C,C]^n$. The case $n=1$ is easy. For a general $n$, it suffices to prove that $\pi_i(CP_1)\sub [-C,C]$ for each $1\le i\le n$, where $\pi_i$ denotes the projection onto the $i$-th coordinate. But Since $P_1\sub P_2$, we have $\pi_i(P_1)\sub [-1,1]$ for each $i$, and so $\pi_i(CP_1)\sub [-C,C]$ follows from the one-dimensional case.
\end{proof}
The previous proposition has the following corollary.
\begin{cor}\label{cor:transitivity_dilation}
    Let $P_1,P_2,P_3\sub \R^n$ be parallelograms, and $C_1,C_2\ge 1$ be constants. If $P_2\sub C_1 P_1$ and $P_3\sub C_2 P_2$, then $P_3\sub (C_1C_2)P_1$.
\end{cor}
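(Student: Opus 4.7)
The plan is to apply the preceding proposition once and then compose the two concentric dilations. Starting from the hypothesis $P_2 \subseteq C_1 P_1$, the preceding proposition (with $C = C_2$) gives
\[
C_2 P_2 \subseteq C_2 (C_1 P_1).
\]
Combining this with the other hypothesis $P_3 \subseteq C_2 P_2$ yields $P_3 \subseteq C_2(C_1 P_1)$.

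The only remaining task is to identify $C_2(C_1 P_1)$ with $(C_1 C_2)P_1$. This is essentially saying that concentric dilations compose multiplicatively, and it is immediate from the parametrisation used in Proposition \ref{prop:dilation_interchangable_with_affine_for_parallelogram}: writing $P_1 = A Q_0 + b$ with $Q_0 = [-1,1]^n$, one has $C_1 P_1 = (C_1 A) Q_0 + b$, and applying the same concentric dilation formula once more gives $C_2(C_1 P_1) = (C_1 C_2 A)Q_0 + b = (C_1 C_2) P_1$. Substituting this identity into the chain of inclusions above produces $P_3 \subseteq (C_1 C_2) P_1$, as required.

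There is no genuine obstacle here; the corollary is pure bookkeeping on top of the monotonicity established in the previous proposition together with the explicit parametrisation of concentric dilation. I expect the whole proof to take only two or three lines.
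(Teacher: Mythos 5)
Your proof is correct and is essentially the argument the paper intends: the corollary is stated as an immediate consequence of the monotonicity proposition, and your chain $P_3\sub C_2P_2\sub C_2(C_1P_1)=(C_1C_2)P_1$, with the last identity following from the parametrisation $P_1=AQ_0+b$, is exactly that route.
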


\subsection{Facts about polynomials}

The following facts about polynomials will be used extensively, and are the key to many of our analysis of polynomials.
\begin{prop}\label{prop:polycoeff}
For any $k$-variate real polynomial $P=\sum_\alpha c_\alpha x^\alpha$ of degree at most $d$, we have the following relation:
$$
\sup_{ [-1,1]^k}|P|\sim_{k,d}\max_\alpha|c_\alpha|.
$$
As a result, we also have the following relations:
\begin{align*}
&\sup_{[-1,1]^k}|P|\sim_{k,d} \sum_\alpha|c_\alpha|\\
&\sup_{ [-C,C]^k}|P|\sim_{k,d} \sup_{ B^k(0,C)}|P|\lesssim_{k,d}C^d \sup_{[-1,1]^k}|P|,
\end{align*}
for any constant $C\ge 1$.
\end{prop}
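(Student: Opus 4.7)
The upper direction $\sup_{[-1,1]^k}|P|\le \sum_\alpha |c_\alpha|$ is immediate from the triangle inequality, since $|x^\alpha|\le 1$ on $[-1,1]^k$. In particular $\sup_{[-1,1]^k}|P|\lesssim_{k,d} \max_\alpha |c_\alpha|$, with the implicit constant being the number $\binom{k+d}{k}$ of multi-indices $\alpha$ with $|\alpha|\le d$.

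For the reverse inequality $\max_\alpha |c_\alpha|\lesssim_{k,d}\sup_{[-1,1]^k}|P|$, I would argue by equivalence of norms on a finite-dimensional vector space. The space $\mathcal P_{k,d}^0$ of $k$-variate real polynomials of degree at most $d$ is a finite-dimensional real vector space of dimension $\binom{k+d}{k}$. On this space, both $P\mapsto \max_\alpha |c_\alpha|$ and $P\mapsto \sup_{[-1,1]^k}|P|$ are norms: the first is clearly a norm on the coefficient space via the isomorphism $\Lambda_{k,d}$, and the second is a norm because a polynomial of degree at most $d$ that vanishes on the open set $(-1,1)^k$ must vanish identically. Since all norms on a finite-dimensional real vector space are equivalent, there exists a constant $c_{k,d}>0$ such that $\max_\alpha |c_\alpha|\le c_{k,d}\sup_{[-1,1]^k}|P|$ for all such $P$. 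This gives the claimed $\sup_{[-1,1]^k}|P|\sim_{k,d}\max_\alpha |c_\alpha|$, and sandwiching $\max_\alpha |c_\alpha|\le \sum_\alpha |c_\alpha|\le \binom{k+d}{k}\max_\alpha |c_\alpha|$ then yields $\sup_{[-1,1]^k}|P|\sim_{k,d}\sum_\alpha |c_\alpha|$.

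For the scaling statement, fix $C\ge 1$ and define $Q(x):=P(Cx)=\sum_\alpha (C^{|\alpha|}c_\alpha) x^\alpha$, which still has degree at most $d$. Applying the equivalence of norms to $Q$,
\begin{equation*}
    \sup_{[-C,C]^k}|P|=\sup_{[-1,1]^k}|Q|\sim_{k,d}\max_\alpha |C^{|\alpha|}c_\alpha|\le C^d\max_\alpha |c_\alpha|\sim_{k,d}C^d\sup_{[-1,1]^k}|P|.
\end{equation*}
Finally, the comparison $\sup_{[-C,C]^k}|P|\sim_{k,d}\sup_{B^k(0,C)}|P|$ follows from the inclusions $B^k(0,C)\subseteq [-C,C]^k\subseteq B^k(0,C\sqrt k)$ together with the scaling bound just established (applied with $\sqrt k$ in place of $C$ to convert $\sup_{B^k(0,C\sqrt k)}$ back to $\sup_{B^k(0,C)}$, absorbing the factor into the implicit constant).

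The only potential obstacle is that the abstract norm-equivalence argument is non-constructive and gives no explicit value for the constant $c_{k,d}$; however, for the purposes of this paper all implicit constants are allowed to depend on $k,d$, so this is not an issue. If an explicit bound were ever needed, one could instead argue via Chebyshev-type extremal estimates or by a compactness/contradiction argument on the coefficient sphere.
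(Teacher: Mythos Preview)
Your argument is correct. The paper itself does not supply a proof of this proposition; it simply refers the reader to Kellogg \cite{Kellogg1928} for the classical estimate. Your route via equivalence of norms on the finite-dimensional space $\mathcal P^0_{k,d}$ is the standard soft argument, and the scaling and cube-versus-ball deductions you give are clean. The only step that could be tightened slightly is the final ball comparison: rather than ``converting $\sup_{B^k(0,C\sqrt k)}$ back to $\sup_{B^k(0,C)}$'', it is cleaner to use the inclusion $[-C/\sqrt k,C/\sqrt k]^k\subseteq B^k(0,C)$ and then apply your scaling bound to get $\sup_{[-C,C]^k}|P|\lesssim_{k,d}(\sqrt k)^d\sup_{[-C/\sqrt k,C/\sqrt k]^k}|P|\le (\sqrt k)^d\sup_{B^k(0,C)}|P|$; this avoids any appearance of circularity. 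Compared with the Kellogg reference, your approach is non-constructive and gives no explicit constant, whereas the classical literature provides quantitative bounds; as you note, this does not matter for the paper's purposes.
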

For a proof of this proposition, the reader may consult \cite{Kellogg1928}.

\begin{cor}\label{cor:polycoeff}
For any parallelogram $T \subset \R^k$ and any $\mu\ge 1$,
$$
\sup_{T} |P| \le  \sup_{\mu T} |P|\lesssim_{k,d}\mu^d\sup_{T} |P|.
$$
\end{cor}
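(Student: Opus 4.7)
The plan is to reduce the statement to Proposition \ref{prop:polycoeff} by an affine change of variables that turns $T$ into the standard cube $[-1,1]^k$.

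First, I would dispense with the lower bound $\sup_T |P| \le \sup_{\mu T}|P|$. Since $\mu \ge 1$, the concentric dilation $\mu T$ contains $T$: writing $T = \lambda([-1,1]^k)$ for an affine bijection $\lambda(x) = Ax + c(T)$ (where $c(T)$ is the centre of $T$), we have $\mu T = A([-\mu,\mu]^k) + c(T) \supseteq A([-1,1]^k) + c(T) = T$. The sup inequality is then immediate.

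For the upper bound, the plan is to pull back $P$ through $\lambda_T : [-1,1]^k \to T$ and apply Proposition \ref{prop:polycoeff} in the standard cube. Define $Q := P \circ \lambda_T$; since $\lambda_T$ is affine and $P$ has degree at most $d$, so does $Q$. By Proposition \ref{prop:dilation_interchangable_with_affine_for_parallelogram},
\[
\lambda_T([-\mu,\mu]^k) \;=\; \lambda_T\bigl(\mu[-1,1]^k\bigr) \;=\; \mu\, \lambda_T([-1,1]^k) \;=\; \mu T,
\]
so $\sup_{\mu T}|P| = \sup_{[-\mu,\mu]^k}|Q|$ and $\sup_T |P| = \sup_{[-1,1]^k}|Q|$. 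Applying the last estimate of Proposition \ref{prop:polycoeff} to $Q$ with constant $C = \mu$ gives
\[
\sup_{\mu T}|P| \;=\; \sup_{[-\mu,\mu]^k}|Q| \;\lesssim_{k,d}\; \mu^d \sup_{[-1,1]^k}|Q| \;=\; \mu^d \sup_{T}|P|,
\]
which is the desired bound.

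There is no real obstacle here: the argument is a one-line corollary of the standard cube version of the estimate, with Proposition \ref{prop:dilation_interchangable_with_affine_for_parallelogram} ensuring that the affine pullback intertwines concentric dilation correctly. The only thing to check carefully is that the pulled-back polynomial $Q$ still has degree at most $d$, which is automatic since composition with an affine map cannot increase the degree.
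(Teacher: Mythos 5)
Your proof is correct and is essentially the paper's own argument: the paper proves the corollary by applying Proposition \ref{prop:polycoeff} to $P\circ\lambda_T$, which is exactly your pullback, with Proposition \ref{prop:dilation_interchangable_with_affine_for_parallelogram} supplying the (routine) fact that $\lambda_T$ intertwines concentric dilation. Nothing further is needed.
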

The proof follows from applying Proposition \ref{prop:polycoeff} to $P \circ \lambda_{T}$.

Another corollary of Proposition \ref{prop:polycoeff} is that a polynomial can be identified with its coefficient vector.
\begin{cor}\label{cor:poly_norm}
    Let $\mathcal{P}_{k,d}^0$ denote the normed space consisting of all $k$-variate real polynomials $P$ of degree at most $d$, equipped with the norm $\|P\| : = \sup_{[-1,1]^k}|P|$. Let $\Lambda_{k,d}: \mathcal{P}_{k,d}^0 \mapsto \R^{\binom{k+d}{k}}$ be a map defined by $\Lambda_{k,d}(\sum_{|\alpha|\leq d} c_\alpha x^\alpha) = (c_\alpha)_{|\alpha|\leq d}$. Then $\Lambda_{k,d}$ is a linear isomorphism between the normed spaces $(\mathcal{P}_{k,d}^0,\|\cdot \|)$ and $(\R^{\binom{k+d}{k}}, \|\cdot\|_{\infty})$, with the two norms comparable up to a factor depending only on $k,d$.
\end{cor}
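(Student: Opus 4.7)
The plan is to observe that the corollary is essentially a repackaging of Proposition \ref{prop:polycoeff}, with only the elementary algebraic content about polynomial representations added on top. I would proceed in three short steps.

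First, I would verify that $\Lambda_{k,d}$ is a well-defined linear bijection between vector spaces. Linearity is immediate from the definition, since sums and scalar multiples of polynomials correspond to sums and scalar multiples of coefficient vectors. Injectivity follows from the standard fact that the monomials $\{x^\alpha : |\alpha|\le d\}$ are linearly independent as functions on $[-1,1]^k$ (any nonzero polynomial has only a measure-zero zero set, for instance), so two polynomials with distinct coefficient tuples cannot coincide. Surjectivity is obvious, since every tuple $(c_\alpha)_{|\alpha|\le d}\in \R^{\binom{k+d}{k}}$ is the image of the polynomial $\sum_{|\alpha|\le d} c_\alpha x^\alpha$. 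The codomain dimension $\binom{k+d}{k}$ matches the number of multi-indices $\alpha\in \mathbb Z_{\ge 0}^k$ with $|\alpha|\le d$ by a standard stars-and-bars count.

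Second, I would invoke Proposition \ref{prop:polycoeff} directly: for $P=\sum_\alpha c_\alpha x^\alpha\in \mathcal P_{k,d}^0$,
\[
\|\Lambda_{k,d}(P)\|_\infty = \max_\alpha |c_\alpha| \sim_{k,d} \sup_{[-1,1]^k}|P| = \|P\|.
\]
This is precisely the statement that $\Lambda_{k,d}$ and its inverse are bounded linear operators, with operator norms depending only on $k$ and $d$. Hence $\Lambda_{k,d}$ is a normed space isomorphism with the desired comparability of norms.

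Third, I would note that $(\R^{\binom{k+d}{k}},\|\cdot\|_\infty)$ is a Banach space (as it is finite-dimensional), so the equivalence of norms in Step 2 transports completeness to $(\mathcal P_{k,d}^0,\|\cdot\|)$, confirming it is a Banach space as claimed. There is no real obstacle here; the only nontrivial content lies in Proposition \ref{prop:polycoeff}, which has already been cited. The proof is essentially one sentence invoking that proposition together with a remark on the dimension of $\mathcal P_{k,d}^0$.
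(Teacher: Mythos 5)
Your proposal is correct and matches the paper's approach: the paper simply presents this as an immediate consequence of Proposition \ref{prop:polycoeff}, which supplies the norm equivalence $\max_\alpha|c_\alpha|\sim_{k,d}\sup_{[-1,1]^k}|P|$, with the linear-algebraic facts (linearity, bijectivity, dimension count) taken as routine. Your added remarks on injectivity and completeness are fine but are exactly the elementary padding the paper omits.
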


\subsection{Flatness of graphs}\label{sec:flatness}

In this subsection, we discuss the relation between various formulations of flatness of graphs. We first introduce a few other notions of flatness; in the end, we will discuss their relationships with Definition \ref{defn:graphically_flat}.

\subsubsection{Alternative definitions of flatness}
\begin{defn}\label{defn:new_flatness}
Let $1\le k\le m\le n-1$, and $\phi:[-1,1]^k\to \R^{n-k}$ be a $C^2$ function. Let $\omega\sub [-1,1]^k$ be a parallelogram and let $\delta>0$. We say that $\omega$ is $(\phi,\delta)$-flat (or $\phi$ is $\delta$-flat over $\omega$) in dimension $m$
\begin{enumerate}
    \item in the first sense, or $F1(\delta)$ holds, if there exist an affine transformation $L: \R^{k}\to \R^{n-k}$, and a matrix $U': \R^{n-k} \to \R^{n-m}$ with orthonormal rows such that 
    \begin{equation}\label{eqn:F1}
        \sup_{x\in \omega}|U'(\phi(x) - Lx)|\leq \delta;
    \end{equation}
    \item in the second sense, or $F2(\delta)$ holds, if there exists a matrix $U': \R^{n-k} \to \R^{n-m}$ with orthonormal rows, such that (here $\nabla\phi\in \R^{(n-k)\times k}$)
    \begin{equation}\label{eqn:F2}
        \sup_{x,y\in \omega}|U'(\phi(y)-\phi(x)-\nabla \phi(x) (y-x))|\leq \delta;
    \end{equation}
    
    \item in the third sense, or $F3(\delta)$ holds, if there exist a matrix $U': \R^{n-k} \to \R^{n-m}$ with orthonormal rows, such that
    \begin{equation}\label{eqn:F3}
        \sup_{\substack{x \in \omega, v\in \mathbb S^{k-1}  \\ t\in \R: x+t v\in \omega}} \left |U'(v^T D^2 \phi(x) v) \right| |t|^2 \leq \delta.
    \end{equation}
    Here, $v^T D^2 \phi(x) v \in \R^{n-k}$ is defined by $(v^TD^2 \phi_1(x)v , \dots, v^T D^2 \phi_{n-k}(x)v)^T$. Intuitively, \eqref{eqn:F3} means that the quadratic term in the Taylor expansion of each $\phi_i$ along any line segment contained in $\omega$ does not exceed $\delta$.
\end{enumerate}
\end{defn}
We record a useful observation here, which follows directly from matrix algebra:
\begin{equation}\label{eqn:Mar_1}
\begin{aligned}
    U'(v^T D^2 \phi(x) v)
    &=v^T D^2 (U'\phi)(x) v\\
    &:=\big(v^T D^2 (U'\phi)_1 (x) v,\dots, v^T D^2 (U'\phi)_{n-m} (x) v\big)^T.
\end{aligned}
\end{equation}
{\it Remarks.} 
\begin{itemize}
 \item In the special case of $k=m=n-1$, that is, the case of hypersurfaces, we have $U'\in O(1)=\{1,-1\}$. Without loss of generality, we take $U'=1$. In this case, the formulation \eqref{eqn:F2} is exactly the one used in 
\cite{LiYang} and \cite{LiYang2023}. The entire Section \ref{sec:radial} is in this setting.

\noindent As a side note, when $k=m=1$, $n=2$, \eqref{eqn:F3} is just \cite[Equation (1.2)]{Yang2}.

\item For many interesting cases in consideration, these three senses of flatness are equivalent up to harmless constants. See Propositions \ref{prop:F1F2F3_equivalent} and \ref{prop:squares_flat}.

\end{itemize}

%For a $C^2$ function $\phi:\omega\to \R^{n-k}$, we also use $\norm{D^2\phi}_\infty $ to mean $\sup_{1\le i\le n-k}\norm{D^2 \phi_i}_\infty$.

%Note that if $\omega$ has all its dimensions $\ge C_1 \delta$, then flatness in the first sense is equivalent to flatness in Definition \ref{defn:graphically_flat}, by Proposition \ref{prop:equivalence_flatness_general}. 

\begin{prop}\label{prop:new_flatness_affine_invariance}
    All three senses of flatness for a function $\phi$ are invariant under affine rescaling and adding an affine component to $\phi$. More precisely, let $\Xi:\R^k\to \R^k$ be an affine bijection and let $\Lambda:\R^k\to \R^{n-k}$ be an affine transformation. Let $\phi:\omega\to \R^{n-k}$ be $C^2$, and let $\psi:\Xi^{-1}(\omega)\to \R^{n-k}$ be defined by
    \begin{equation*}
        \psi(x):=\phi(\Xi x)+L(x).
    \end{equation*}
    Then $\omega$ is $(\phi,\delta)$-flat if and only if $\Xi(\omega)$ is $(\psi,\delta)$-flat, in any of the three senses above.
\end{prop}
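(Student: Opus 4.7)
The plan is to treat the two operations separately and then combine them, noting that both are reversible (for $\Xi$, because it is a bijection; for $L$, because affine functions are closed under subtraction). Since each of the three flatness conditions uses an auxiliary matrix $U'$ with orthonormal rows that may be chosen freely, I only need to produce, in each direction, a suitable affine function (for $F1$), or verify that the same $U'$ works (for $F2$ and $F3$).

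First I would handle $F1$. If $\omega$ is $(\phi,\delta)$-flat via an affine $L' : \R^k \to \R^{n-k}$ and $U'$, set $\tilde{L}(x) := L'(\Xi x) + L(x)$, which is affine as a composition of affine maps. Then for $x \in \Xi^{-1}(\omega)$,
\[
U'\bigl(\psi(x) - \tilde{L}(x)\bigr) = U'\bigl(\phi(\Xi x) - L'(\Xi x)\bigr),
\]
which has absolute value $\le \delta$ since $\Xi x \in \omega$. The converse direction is identical, writing $x = \Xi^{-1} y$.

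Next, for $F2$ and $F3$, I would let $A$ be the linear part of $\Xi$ so that $\Xi x = Ax + c$ and compute $\nabla \psi(x) = \nabla \phi(\Xi x) A$ and $D^2 \psi(x) = A^T D^2 \phi(\Xi x) A$ (noting $D^2 L \equiv 0$ because $L$ is affine). For $F2$, the key algebraic identity is
\[
\psi(y) - \psi(x) - \nabla \psi(x)(y-x) = \phi(\Xi y) - \phi(\Xi x) - \nabla \phi(\Xi x)\bigl(\Xi y - \Xi x\bigr),
\]
using that the affine term $L$ contributes zero to the second-order remainder and that $A(y-x) = \Xi y - \Xi x$. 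Applying $U'$ and taking suprema over $x,y \in \Xi^{-1}(\omega)$ on the left, equivalently $\Xi x, \Xi y \in \omega$ on the right, gives the equivalence. For $F3$, I would observe via \eqref{eqn:Mar_1} that
\[
U'\bigl(v^T D^2 \psi(x) v\bigr) = U'\bigl((Av)^T D^2 \phi(\Xi x)(Av)\bigr),
\]
and then reparametrise: set $w := Av/|Av| \in \mathbb S^{k-1}$ and $s := |Av|\,t$, so that $x + t v \in \Xi^{-1}(\omega)$ iff $\Xi x + s w \in \omega$, and $|t|^2 \cdot |v^T D^2 \psi(x) v| = s^2 \cdot |w^T D^2 \phi(\Xi x) w|$. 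Taking the supremum transfers the $F3$ condition in both directions.

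There is no real obstacle here; the argument is pure bookkeeping. The only mildly subtle point is checking that the rescaling $w = Av/|Av|$, $s = |Av|t$ in the $F3$ case preserves the supremum exactly (it does, because the condition is homogeneous in $(v,t)$ of total degree $2$ after accounting for the $|v|=1$ normalisation). It is worth remarking that this proposition justifies the name ``flatness'': the notion depends only on the affine-equivalence class of the pair $(\omega,\phi)$, which is precisely what is needed for Lemma \ref{lem:AE_property}\eqref{item:lemma3.3_part3}.
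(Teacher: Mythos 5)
Your proof is correct and is exactly the elementary direct verification the paper has in mind (the paper omits the details, citing a similar computation in \cite[Section 9.2]{LiYang}): produce $\tilde L$ for $F1$, cancel the affine term in the second-order remainder for $F2$, and use $D^2\psi(x)=A^TD^2\phi(\Xi x)A$ with the reparametrisation $w=Av/|Av|$, $s=|Av|t$ for $F3$, keeping the same $U'$ throughout. One cosmetic slip: $\nabla\psi(x)=\nabla\phi(\Xi x)A+\nabla L$ rather than $\nabla\phi(\Xi x)A$, but this does not affect your argument since, as you say, the affine part of $\psi$ drops out of the $F2$ remainder and of $D^2\psi$.
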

The proof is elementary, and similar to \cite[Section 9.2]{LiYang}.

\begin{comment}
Combining Proposition \ref{prop:new_flatness_affine_invariance} and Lemma \ref{lem:C2flat_entire}, we obtain the following useful characterisation of $F3$-flatness.
\begin{cor}
Let $1\le k\le m\le n-1$, let $\phi:[-1,1]^k\to \R^{n-k}$ with $\|\phi\|_{C^2} \leq 1$, and let $\omega\sub [-1,1]^k$ be a parallelogram. Then $\omega$ is $(\phi,O(\delta))$-flat in dimension $m$ in the third sense if and only if there exist a function $\psi:[-1,1]^k\to \R^{n-k}$ with $\|\psi\|_{C^2} \leq 1$, an affine transformation $L:\R^k\to \R^{n-k}$ with norm bounded by $O(1)$, and an orthonormal matrix $U : \R^{n-k} \to \R^{n-k}$, such that
    \begin{equation}\label{eqn:Mar_2}
        U(\phi\circ \lambda_\omega) = \vec \delta \psi+L,
    \end{equation}
    where $\vec \delta = (\delta_1,\dots,\delta_{n-k})$ and $\delta_i \sim \delta$ when $1\leq i\leq n-m$. All implicit constants here and the proof below depend only on $n$.
\end{cor}
\end{comment}

\begin{prop}\label{prop:F1F2F3_equivalent}
Fix $\phi,\omega,\delta$. We have $F3(\delta)\implies F2(C\delta)\implies F1(C\delta)$, for $C$ depending only on $n$. Conversely, if each entry of $\phi$ is a polynomial of degree at most $d$, then we also have $F1(\delta)\implies F3(C_{n,d}\delta)$, where $C_{n,d}$ is a constant depending only on $n,d$.  
\end{prop}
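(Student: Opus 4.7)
The plan is to establish both chains of implications using the same orthonormal matrix $U'$ throughout, so it suffices to reduce everything to assertions about the scalar-valued function $\tilde\phi := U'\phi$ (or $U'(\phi - L)$ in the converse direction). This way $U'(v^T D^2 \phi(x)v) = v^T D^2 \tilde\phi(x)v$ by \eqref{eqn:Mar_1}, and applying $U'$ is harmless for all three conditions.

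For $F3(\delta) \Rightarrow F2(C\delta) \Rightarrow F1(C\delta)$, the second implication is essentially free: given $F2(\delta)$, fix any $x_0 \in \omega$ and define the affine map $L(x) := \phi(x_0) + \nabla\phi(x_0)(x-x_0)$, so that $U'(\phi(x) - L(x))$ is exactly the first-order remainder appearing in \eqref{eqn:F2}. For $F3 \Rightarrow F2$, I would use the second-order Taylor integral remainder
\[
U'(\phi(y)-\phi(x)-\nabla\phi(x)(y-x)) = |y-x|^2\int_0^1 (1-s)\, v^T D^2 (U'\phi)(x_s)\, v\, ds,
\]
where $v = (y-x)/|y-x|$ and $x_s = x+s(y-x) \in \omega$ by convexity. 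At each $x_s$, from the two candidate displacements $s|y-x|$ and $(1-s)|y-x|$ along $\pm v$ (both landing in $\omega$), at least one has length $\ge |y-x|/2$; applying $F3(\delta)$ to that one gives $|U'(v^T D^2\phi(x_s)v)| \le 4\delta/|y-x|^2$, and the $(1-s)$ integral then yields $F2(2\delta)$.

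For the converse in the polynomial case, let $\tilde\phi := U'(\phi - L)$, a polynomial vector of degree $\le d$ satisfying $\sup_\omega |\tilde\phi| \le \delta$, whose Hessian agrees with $U' D^2\phi$ since $L$ is affine. Fix $x \in \omega$, a unit vector $v$, and $t$ with $x + tv \in \omega$. Restrict to the segment via $g(s) := \tilde\phi(x + sv)$, $s \in [0,t]$; then $g$ is a univariate polynomial of degree $\le d$ with $\sup_{[0,t]}|g| \le \delta$. Pulling back to the standard interval with $h(s) := g(\tfrac{t(s+1)}{2})$, Corollary \ref{cor:polycoeff} (or the coefficient bound in Proposition \ref{prop:polycoeff}) gives $\sup_{[-1,1]}|h''| \lesssim_d \delta$, while $h''(-1) = (t^2/4)\, g''(0) = (t^2/4)\, U'(v^T D^2\phi(x) v)$. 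This yields $F3(C_{n,d}\delta)$ with the same $U'$.

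The only real subtlety lies in the converse direction: one must ensure that the $t$ appearing in $F3$ matches the actual length of a segment inside $\omega$, which is why the reduction to the one-variable polynomial $g$ on $[0,t]$ is taken over exactly that segment rather than over the full parallelogram. The universal-constant direction involves no extra difficulty since convexity of $\omega$ automatically delivers the required line segments for both the Taylor integral and the two-sided displacement argument. All implicit constants depend only on $n$ (respectively $n,d$), as claimed.
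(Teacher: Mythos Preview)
Your argument is correct. The forward chain $F3\Rightarrow F2\Rightarrow F1$ is handled the same way as in the paper (which simply calls it trivial); your Taylor-integral treatment with the two-sided displacement trick is a clean way to make the $F3\Rightarrow F2$ step explicit.

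For the converse $F1\Rightarrow F3$ in the polynomial case, your route differs from the paper's. The paper first invokes affine invariance (Proposition~\ref{prop:new_flatness_affine_invariance}) to reduce to $\omega=[-1,1]^k$, and then applies the $k$-variate coefficient bound of Proposition~\ref{prop:polycoeff} to $U'(\phi-L)$ globally, so that every entry of $D^2(U'\phi)$ is $O_{n,d}(\delta)$ on the unit cube and $F3$ follows since $|t|\lesssim_k 1$ there. You instead stay on the original $\omega$, restrict $U'(\phi-L)$ to the segment $[x,x+tv]$, rescale to $[-1,1]$, and apply the one-dimensional case of Proposition~\ref{prop:polycoeff} to bound $h''$. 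Your approach is slightly more self-contained in that it does not require Proposition~\ref{prop:new_flatness_affine_invariance} as an input, and it makes transparent why the specific $t$ in the $F3$ condition (the length of the segment actually inside $\omega$) is the right scaling factor. The paper's approach is a bit shorter once affine invariance is granted. Both rest on the same Markov-type inequality.

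One cosmetic slip: you call $\tilde\phi=U'\phi$ ``scalar-valued'' in the opening paragraph, but it takes values in $\R^{n-m}$; you correctly switch to ``polynomial vector'' later, and all the estimates go through componentwise (at the cost of an $n$-dependent constant when recombining), so this does not affect the argument.
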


\begin{proof}
    $F3(\delta)\implies F2(C\delta)\implies F1(C\delta)$ is trivial. For the implication $F1(\delta)\implies F3(C_{n,d}\delta)$, by Proposition \ref{prop:new_flatness_affine_invariance}, we may assume $\omega=[-1,1]^k$. By the assumption of $F1(\delta)$, we have
    \begin{equation}
        \sup_{x\in [-1,1]^k}|U'(\phi(x)-Lx)|\le \delta,
    \end{equation}
    for some $U'$ with orthonormal rows. Since each entry of $\phi$ is a polynomial of degree at most $d$, so is each entry of $U'(\phi(x)-Lx)$. Using Proposition \ref{prop:polycoeff}, we see that all coefficients of each entry of $U'(\phi(x)-Lx)$ are $O_{n,d}(\delta)$, whence each entry of $v^T D^2(U'\phi(x))v$ is $O_{n,d}(\delta)$, for every $v\in \mathbb S^{k-1}$. Thus, by \eqref{eqn:Mar_1}, we have
    \begin{equation*}
        \sup_{x\in [-1,1]^k}|U'(v^T D^2 \phi(x)v)|=\sup_{x\in [-1,1]^k}|v^T D^2 (U' \phi)(x)v|\lesssim_{n,d} \delta.
    \end{equation*} 
\end{proof}

The following proposition is related to the flatness of parallelograms in the case of hypersurfaces.
\begin{prop}\label{prop:squares_flat}
Let $1\le k=m=n-1$ and $c\in (0,1]$. Then the following holds, where all implicit constants depend only on $c,n$.
\begin{enumerate}
    \item If $\norm{\phi}_{C^2}\le 1$ and all eigenvalues of  $D^2\phi$ are $\ge c>0$, then a subset $\omega\sub \R^{n-1}$ is $O(\delta)$-flat in any of the three senses in Definition \ref{defn:new_flatness} if and only if the diameter of $\omega$ is $\lesssim \delta^{1/2}$. Thus, a maximally $(\phi,\delta)$-flat subset must be a square of side length $\sim \delta^{1/2}$.
    \item If $\norm{\phi}_{C^2}\le 1$ and $|\det D^2\phi|\ge c>0$, and if $\omega\sub \R^{n-1}$ is a square, then $\omega$ is $O(\delta)$-flat in any of the three senses in Definition \ref{defn:new_flatness} if and only if the side length of $\omega$ is $\lesssim\delta^{1/2}$. Thus, a maximally $(\phi,\delta)$-flat square must have side length $\sim \delta^{1/2}$.
\end{enumerate}    
\end{prop}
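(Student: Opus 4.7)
The plan is to split each part into forward and converse directions. The forward direction in both parts (small size implies flat) is handled uniformly using $\|D^2\phi\|_\infty\le 1$: given that $\omega$ has diameter (resp.\ side length) at most $C_1\delta^{1/2}$, any segment $\{x+tv:|t|\le T\}\sub \omega$ with unit $v$ satisfies $T\lesssim \delta^{1/2}$, so $|v^T D^2\phi(x)v|\,T^2\le T^2\lesssim \delta$, yielding $F3(O(\delta))$. Proposition \ref{prop:F1F2F3_equivalent} then gives $F2(O(\delta))$ and $F1(O(\delta))$.

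For the converse direction of Part~(1), I would exploit $D^2\phi\succeq cI$. The cases $F3$ and $F2$ are immediate: positivity $v^T D^2\phi(x)v\ge c$ combined with either the $F3$ inequality directly, or with the integral Taylor identity $\phi(y)-\phi(x)-\nabla\phi(x)(y-x)=\int_0^1 (1-s)(y-x)^T D^2\phi(x+s(y-x))(y-x)\,ds$, yields $|y-x|\lesssim \sqrt{\delta/c}$ for all $x,y\in\omega$. The weakest case $F1$ uses strong convexity of $g:=\phi - L$ (where $|g|\le\delta$ on $\omega$): the midpoint inequality $g(\tfrac{x+y}{2})\le \tfrac{g(x)+g(y)}{2}-\tfrac{c}{8}|x-y|^2$ gives $\tfrac{c}{8}|x-y|^2\le 2\delta$, hence $\mathrm{diam}(\omega)\lesssim \sqrt{\delta/c}$.

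For the converse direction of Part~(2), the key algebraic observation is that $|\det D^2\phi(x_0)|\ge c$ combined with $\|D^2\phi\|_\infty\le 1$ forces at least one eigenvalue of $D^2\phi(x_0)$ to satisfy $|\lambda_0(x_0)|\gtrsim_n c$ (since $\prod_i|\lambda_i|=|\det D^2\phi(x_0)|\ge c$ while all $|\lambda_i|\lesssim_n 1$). Letting $v_0\in S^{k-1}$ be a corresponding unit eigenvector at the center $x_0$ of the square $\omega$ of side $l$, the segment $\{x_0+tv_0:|t|\le l/2\}$ lies in $\omega$ (as $\|v_0\|_\infty\le 1$). For $F3$ this immediately yields $c(l/2)^2\lesssim |v_0^T D^2\phi(x_0)v_0|(l/2)^2\le \delta$, hence $l\lesssim \sqrt{\delta/c}$. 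For $F2$, I would apply the integral Taylor formula along $v_0$ from $x_0$ and lower bound the integrand using uniform continuity of $D^2\phi$ on the compact set $[-1,1]^k$; for $F1$, I reduce to $F2$ by taking $L$ to be the first-order Taylor polynomial of $\phi$ at $x_0$.

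The maximality assertions then follow: a cube of side $c_0\delta^{1/2}$ (with $c_0$ depending only on $c,n$) is always $\delta$-flat by the forward direction, so any inclusion-maximal $\delta$-flat parallelogram contains such a cube; combined with the diameter (resp.\ longest-side) bound $\lesssim \delta^{1/2}$, it must be essentially a square of side $\sim \delta^{1/2}$. The main technical obstacle is the converse of Part~(2) for the $F1$ and $F2$ senses: without $C^3$ regularity on $\phi$ the Taylor remainder is awkward to bound cleanly. The workaround I propose is a dichotomy: either $l$ is small enough that $D^2\phi$ varies only slightly over $\omega$ (so the quadratic term dominates and the eigenvector argument goes through), or $l\gtrsim 1$, in which case $\delta\gtrsim l^2\gtrsim 1$ already and the conclusion $l\lesssim \sqrt\delta$ is automatic.
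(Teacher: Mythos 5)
Your treatment of Part (1), of both forward directions, and of the $F3$-converse of Part (2) is correct (the paper itself offers no proof, leaving the proposition as an exercise, so there is nothing to compare against there). The genuine gap is exactly where you flagged difficulty: the converse of Part (2) in the $F1$ and $F2$ senses. Your eigenvector argument at the centre $x_0$ does not localise, because $F1$/$F2$ only see function or gradient differences, and in the mixed-signature case $v_0^TD^2\phi(x)v_0$ may change sign as $x$ moves along the segment, so the Taylor integrand has no definite sign. Your proposed repair fails on both horns of the dichotomy. First, ``uniform continuity of $D^2\phi$'' only gives a modulus depending on the particular $\phi$ (the class $\norm{\phi}_{C^2}\le 1$ carries no uniform modulus for $D^2\phi$), whereas the proposition demands constants depending only on $c,n$; so the threshold below which ``$D^2\phi$ varies only slightly over $\omega$'' is $\phi$-dependent, not $\gtrsim 1$. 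Second, the other horn is circular: from ``$l\gtrsim 1$'' you assert ``$\delta\gtrsim l^2$ already'', but that lower bound on the $F1$-deviation of a unit-scale square is precisely the statement to be proved (ruling out that a function with $|\det D^2\phi|\ge c$ could be $\delta$-close to affine on a unit square with $\delta\ll 1$ is the entire content of the hyperbolic case, and nothing in your argument delivers it, uniformly or otherwise).

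A uniform argument that closes this gap uses the divergence structure of the Hessian determinant. Write $g:=\phi-L$ with $\sup_\omega|g|\le\delta$ (the $F1$ case suffices, since $F3\Rightarrow F2\Rightarrow F1$ up to constants). On the concentric half-square $\omega'$ of side $l/2$ one has $\sup_{\omega'}|\nabla g|\lesssim\delta^{1/2}$ by the one-dimensional Landau-type interpolation you already invoke (assuming $l\gtrsim\delta^{1/2}$, else there is nothing to prove). Since the cofactor matrix of $D^2g$ is divergence-free,
\begin{equation*}
\det D^2\phi=\det D^2 g=\tfrac1k\,\partial_i\bigl(\mathrm{cof}(D^2g)_{ij}\,\partial_j g\bigr),
\end{equation*}
(interpreted after mollification if $\phi$ is only $C^2$), so
\begin{equation*}
\Bigl|\int_{\omega'}\det D^2\phi\Bigr|\lesssim_n |\partial\omega'|\,\sup_{\partial\omega'}|\nabla g|\,\norm{D^2g}_\infty^{\,k-1}\lesssim_n l^{\,k-1}\delta^{1/2}.
\end{equation*}
On the other hand $\det D^2\phi$ is continuous, nonvanishing on the connected domain, hence of constant sign, so the left-hand side equals $\int_{\omega'}|\det D^2\phi|\ge c\,(l/2)^k$. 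Comparing gives $l\lesssim_{c,n}\delta^{1/2}$ with constants depending only on $c,n$, as claimed. With this replacing your dichotomy, the rest of your proposal (including the elliptic Part (1) via strong convexity and the loose maximality assertions) goes through.
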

We leave the elementary proof to the reader as an exercise.

The following lemma demonstrates a good property that is only enjoyed by flatness in the third sense.
\begin{lem}\label{lem:C2flat_entire}
    Let $1\le k\le m\le n-1$, and let $\phi:[-1,1]^k\to \R^{n-k}$ with $\|\phi\|_{C^2} \leq 1$. Then the following statements are equivalent for $\delta\in (0,1]$:
    \begin{enumerate}
        \item \label{item:Mar_6_01} The unit cube $[-1,1]^k$ is $(\phi,O(\delta))$-flat in dimension $m$ in the third sense.
        \item \label{item:Mar_6_02} There exist a function $\psi:[-1,1]^k\to \R^{n-k}$ with $\|\psi\|_{C^2} \leq 1$, an affine transformation $L:\R^k\to \R^{n-k}$ with norm bounded by $O(1)$, and an orthonormal matrix $U : \R^{n-k} \to \R^{n-k}$, such that (see \eqref{eqn:vector:entrywise_product} for the notation)
    \begin{equation}\label{eqn:Mar_1_01}
        U\phi = \vec \delta \psi+L,
    \end{equation}
    where $\vec \delta = (\delta_1,\dots,\delta_{n-k})$ and $\delta_i \sim \delta$ when $1\leq i\leq n-m$.
    \item \label{item:Mar_6_03} The same statement of Part \eqref{item:Mar_6_02} holds, and in addition, $\min_i\delta_i\sim \delta$, $\max_i \delta_i\lesssim 1$.
    \end{enumerate} 
    All implicit constants here and the proof below depend only on $n$.
\end{lem}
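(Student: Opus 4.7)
The plan is to establish the three statements equivalent via the cycle $(3) \Rightarrow (2) \Rightarrow (1) \Rightarrow (3)$. The first implication is immediate, since $(2)$ is just $(3)$ after dropping the auxiliary bounds $\min_i \delta_i \gtrsim \delta$ and $\max_i \delta_i \lesssim 1$.

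For $(2) \Rightarrow (1)$, I would take $U'$ to consist of the first $n-m$ rows of $U$. For $1 \leq i \leq n-m$, the identity $(U'\phi)_i = \delta_i\psi_i + L_i$ together with $\|D^2\psi_i\|_\infty \leq \|\psi\|_{C^2} \leq 1$ and $|\delta_i|\lesssim \delta$ yields $\|D^2(U'\phi)_i\|_\infty \lesssim \delta$. Since any admissible triple $(x,v,a)$ in the $F3$ definition satisfies $|a| \leq 2\sqrt{k}$, we get $|U'(v^T D^2\phi(x)v)||a|^2 \lesssim_n \delta$, which is precisely $F3$ at level $O(\delta)$.

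The main work is $(1) \Rightarrow (3)$. First, extend $U'$ to a full orthogonal matrix $U \in O(n-k)$ by appending $m-k$ orthonormal rows (none if $m=k$). The key technical step is to derive from $F3$ a pointwise Hessian bound $\|D^2(U\phi)_i\|_\infty \lesssim_n \delta$ for $1 \leq i \leq n-m$: for each $x \in [-1,1]^k$ and $v \in \mathbb{S}^{k-1}$, one finds an admissible $a$ comparable to half the chord of $[-1,1]^k$ through $x$ in direction $v$, which is generically bounded below by a dimensional constant (since the cube's width in direction $v$ is $2\sum_j|v_j|\geq 2\|v\|_2 = 2$); polarization then upgrades the quadratic-form bound to a bound on every entry of the Hessian. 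Granting this, define $L_i(x) := (U\phi)_i(0) + \nabla(U\phi)_i(0)\cdot x$ for $i \leq n-m$; by Taylor's theorem with integral remainder, $\|(U\phi)_i - L_i\|_\infty$ and $\|\nabla((U\phi)_i - L_i)\|_\infty$ are both $\lesssim_n \delta$. Setting $\delta_i := C_n\delta$ for a large enough dimensional $C_n$ and $\psi_i := ((U\phi)_i - L_i)/\delta_i$ yields $\|\psi_i\|_{C^2}\leq 1$. For $i > n-m$, take $L_i := 0$, $\delta_i := 1$, and $\psi_i := (U\phi)_i$, which satisfies $\|\psi_i\|_{C^2} \leq \|\phi\|_{C^2} \leq 1$ by the orthogonality of $U$. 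Assembling the pieces gives $U\phi = \vec\delta\psi + L$ with all required properties, and the extra conditions $\min_i\delta_i \gtrsim \delta$ and $\max_i\delta_i \leq 1$ follow by construction.

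The main obstacle is the pointwise Hessian bound near the boundary corners of $[-1,1]^k$, where the chord can degenerate in certain tangential directions (e.g., at $x = (1,-1)$ in direction $(1,1)/\sqrt{2}$) and the direct chord-length argument yields no $F3$ control. I expect this is handled by a continuity/density argument together with the a priori bound $\|\phi\|_{C^2} \leq 1$, or by first integrating $F3$ to recover $F2$ (as in Proposition~\ref{prop:F1F2F3_equivalent}) and then working with the second-order variation rather than the pointwise Hessian. Once this technical point is dispatched, the rest of the proof is a routine combination of Taylor approximation and orthogonal decomposition.
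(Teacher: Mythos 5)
Your proposal follows essentially the same route as the paper: the cycle $(3)\Rightarrow(2)$ trivially, $(2)\Rightarrow(1)$ by taking $U'$ to be the first $n-m$ rows of $U$ and using the identity $U'(v^TD^2\phi\, v)=v^TD^2(U'\phi)v$, and $(1)\Rightarrow(3)$ by extending $U'$ to $U\in O(n-k)$, deducing $\norm{D^2(U\phi)_i}_\infty\lesssim\delta$ for $i\le n-m$, subtracting the linearisation at $0$, rescaling by $C\delta$, and setting $\delta_i=1$, $\psi_i=(U\phi)_i$ for $i>n-m$. The one point you flag --- that the chord through $x$ in direction $v$ can degenerate near corners, so $F3$ does not directly give the pointwise bound $|v^TD^2(U'\phi)_i(x)v|\lesssim\delta$ for \emph{every} $(x,v)$ --- is real, and the paper simply asserts the Hessian bound without comment; but your proposed repairs are not quite the right ones. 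A continuity/density argument does not help (the obstruction persists at interior points near a corner in tangential directions, where $F3$ only yields $\delta/|t|^2$ with $|t|$ small), and the a priori bound $\norm\phi_{C^2}\le1$ is far too weak; passing to $F2$ also does not obviously recover the Hessian bound you need for $\norm{\psi_i}_{C^2}\le1$. The clean fix is: for each fixed $x\in[-1,1]^k$ the set of directions $v$ admitting some $|t|\ge c_k$ with $x+tv\in[-1,1]^k$ contains a spherical cap of angular radius depending only on $k$ (e.g.\ directions whose ray from $x$ meets $B(0,1/2)$), so $F3$ bounds the quadratic form $v\mapsto v^TD^2(U'\phi)_i(x)v$ by $O(\delta)$ on a fixed cap; since quadratic forms restricted to the sphere form a finite-dimensional space, the sup over a cap is comparable to the sup over the whole sphere (alternatively, an explicit polarisation within the cap), giving $\norm{D^2(U'\phi)_i}_\infty\lesssim_k\delta$. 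With that in hand your Taylor-expansion and rescaling steps close the argument; also note that for $i>n-m$ one only gets $\norm{(U\phi)_i}_{C^2}\lesssim_n1$ (the paper's $C^2$ norm is a sup over components), which is harmless since the statement only requires $\max_i\delta_i\lesssim1$.
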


\begin{proof}
``\eqref{item:Mar_6_03} $\implies $\eqref{item:Mar_6_02}" This part is trivial.

``\eqref{item:Mar_6_02} $\implies $\eqref{item:Mar_6_01}" Given $U$, we take $U'=[I_{n-m}|O_{m-k}]U\in \R^{(n-m)\times (n-k)}$, so by \eqref{eqn:Mar_1},
\begin{align*}
    U'(v^T D^2 \phi(x) v)&=[I_{n-m}|O_{m-k}]U(v^T D^2 \phi(x)v)\\
    &=[I_{n-m}|O_{m-k}](v^T D^2 (U\phi)(x))\\
    &=[I_{n-m}|O_{m-k}](v^T D^2 (\vec\delta\psi)(x))\\
    &=(\delta_1 v^T D^2 \psi_1(x)v, \dots, \delta_{n-m}v^T D^2 \psi_{n-m}(x)v)^T.
\end{align*}
But since $\delta_i\sim\delta$ for $1\le i\le n-m$, and $\norm{\psi}_{C^2}\le 1$, the result follows.

``\eqref{item:Mar_6_01} $\implies $\eqref{item:Mar_6_03}" Given $U'$, we let $U \in O(n-k)$ such that $U'$ equals the first $n-m$ rows of $U$. Then \eqref{eqn:F3} and \eqref{eqn:Mar_1} imply that $\norm{D^2 (U'\phi)_i}_\infty\lesssim \delta$ for every $1\le i\le n-m$. But by definition of $U'$, this means that $\norm{D^2 (U\phi)_i}_\infty\lesssim \delta$ for every $1\le i\le n-m$. Denote by $L_i:\R^k\to \R$ the linear approximation of $(U\phi)_i$ at $0$, and define $\psi_i:=(C\delta)^{-1}((U\phi)_i-L_i)$ for a suitable constant $C\sim 1$, such that $\norm{\psi_i}_{C^2}\le 1$ for $1\le i\le n-m$. We finally define
\begin{equation*}
\begin{aligned}
    &\psi=(\psi_1,\dots,\psi_{n-m},(U\phi)_{n-m+1},\dots,(U\phi)_{n-k}),\\
    &L=(L_i,\dots,L_{n-m},0,\dots,0),\\
    & \vec \delta=(C\delta, \dots,C\delta,1,\dots, 1),
\end{aligned}
\end{equation*}
such that $\norm{D^2\psi}_\infty\le 1$, $\norm{L}\lesssim 1$, and that \eqref{eqn:Mar_1_01} holds.

\end{proof}

\subsubsection{Relation with Definition \ref{defn:graphically_flat}}
We now discuss the relationship between flatness in Definition \ref{defn:graphically_flat} with flatness in Definition \ref{defn:new_flatness}.

\begin{lem}\label{lem:flat_lemma1}
Let $C_0\ge 1$ and $C_1\ge 1$. Then there exists some $c_0$ depending on $n,C_0,C_1$, such that for every $\delta\in (0,c_0]$ the following holds:

Let $A_1\in \R^{(n-m)\times k}$ and $A_2\in \R^{(n-m)\times (n-k)}$ be arbitrary real matrices whose entries are bounded by $C_0$, and let $b\in \R^{n-m}$. Let $\omega\sub \R^{k}$ be a parallelogram centred at $0$, and assume that all its $k$ dimensions are at least $C_1\delta$. Let $\phi:\omega\to \R^{n-k}$ be a $C^2$ function that satisfies $\norm{\phi}_{C^2}\le C_0$, $\phi(0)=0$ and $\nabla\phi(0)=0$. If
\begin{equation}\label{eqn:flatness_lemma1}
    \sup_{x\in \omega}|A_1x+A_2\phi(x)+b|\le \delta,
\end{equation}
then $\norm{A_1}\lesssim_n C_1^{-1}$.
\end{lem}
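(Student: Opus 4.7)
The plan is to compare the linear term $A_1 x$ with the quadratic term $A_2 \phi(x)$ on the largest Euclidean ball contained in $\omega$, exploiting that $\phi$ vanishes to second order at the origin.

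I first set $x = 0$ in \eqref{eqn:flatness_lemma1} and use $\phi(0) = 0$ to obtain $|b| \le \delta$. Since $\phi(0) = 0$, $\nabla \phi(0) = 0$ and $\|D^2 \phi\|_\infty \le C_0$, Taylor's theorem gives $|\phi(x)| \lesssim_n C_0 |x|^2$ for $x \in \omega$. Combined with the entrywise bound on $A_2$ (which yields $|A_2 y| \lesssim_n C_0 |y|$) and with \eqref{eqn:flatness_lemma1}, this produces the pointwise estimate
\begin{equation*}
|A_1 x| \lesssim_n \delta + C_0^2 |x|^2, \qquad x \in \omega.
\end{equation*}

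Next, I upgrade this into a bound on $\|A_1\|$ by showing $\omega$ contains a centered Euclidean ball of radius $\sim C_1 \delta$. Writing $\omega = \{x : |x \cdot u_i| \le l_i\}$ with $|u_i| = 1$, the hypothesis that all $k$ widths $2 l_i$ are $\ge C_1 \delta$, together with the trivial estimate $|x \cdot u_i| \le |x|$, gives $B(0, C_1 \delta / 2) \subseteq \omega$. Setting $r := C_1 \delta / 2$ and taking the supremum of the pointwise estimate over $|x| = r$ then yields
\begin{equation*}
\|A_1\| \;\lesssim_n\; \frac{\delta}{r} + C_0^2 r \;=\; \frac{2}{C_1} + \frac{C_0^2 C_1 \delta}{2}.
\end{equation*}
Choosing $c_0 := c_n \, (C_0^2 C_1^2)^{-1}$ for a sufficiently small $n$-dependent constant $c_n$, the second summand is absorbed by $1/C_1$ whenever $\delta \le c_0$, producing $\|A_1\| \lesssim_n C_1^{-1}$.

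The entire argument is a quadratic-versus-linear comparison on a ball of radius $\sim C_1\delta$. The only mildly delicate point is the passage from the dimension hypothesis to the inscribed ball, but this is immediate from the half-space description of a parallelogram since the normals $u_i$ are unit vectors; I do not anticipate a serious obstacle.
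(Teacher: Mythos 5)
Your proposal is correct and follows essentially the same route as the paper's proof: set $x=0$ to bound $|b|$, use the second-order vanishing of $\phi$ to bound $|A_2\phi(x)|\lesssim_n C_0^2|x|^2$, and test the inequality at points of $\omega$ of norm $\sim C_1\delta$ (the inscribed ball of radius $C_1\delta/2$ you construct is exactly what justifies the paper's choice of such a point), absorbing the quadratic term by taking $c_0$ small depending on $n,C_0,C_1$.
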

\begin{proof}

    Taking $x=0$ in \eqref{eqn:flatness_lemma1} gives $|b|\le \delta$, and so 
    \begin{equation}
        \sup_{x\in \omega}|A_1x+A_2\phi(x)|\le 2\delta.
    \end{equation}    
    By the definition of $\norm{A_1}$, there exists some $x\in \omega$ such that $|A_1x|=\norm {A_1} |x|$. Also, since $\omega$ is centred at $0$, and $\omega$ has width at least $C_1 \delta$, we may take some $x\in \omega$ with $|x|\sim_n C_1 \delta$. Since $\phi(0)=0$, $\nabla\phi(0)=0$, we have
    \begin{equation*}
        |A_2\phi(x)|\lesssim_n C_0^2|x|^2\lesssim_n C_0^2 C_1^2 \delta^2\le C_0^2 C_1^2 c_0 \delta,
    \end{equation*}
    which is less than $\delta$ if $c_0$ is chosen to be small enough, depending on $n,C_0,C_1$. Thus, we have $\norm {A_1} |x|=|A_1x|\le 3\delta$, which implies $\norm {A_1}\lesssim_n C_1^{-1}$.
\end{proof}

\begin{lem}\label{lem:Oct_21_elementary}
Let $U$ be a $p\times q$ matrix and $V$ be a $q\times r$ matrix, where $p\le \min\{q,r\}$. Let $a$, $b$ be the $p$-volume of the parallelograms formed by the rows of $U$, $UV$, respectively. Then $b\lesssim_{p,q,r} \norm{V}a$.
\end{lem}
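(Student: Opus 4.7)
The plan is to realize both $p$-volumes as Gram determinants and then reduce the inequality to a one-line positive-semi-definite comparison. Since the $p$ rows of $U$ span a $p$-parallelepiped in $\R^q$ whose $p$-volume is $\sqrt{\det(UU^T)}$, and similarly for the rows of $UV$ in $\R^r$, I would write
\[
a^2 = \det(UU^T), \qquad b^2 = \det\bigl((UV)(UV)^T\bigr) = \det(UVV^T U^T).
\]

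The key observation is that $VV^T$ is a $q\times q$ positive semi-definite matrix whose largest eigenvalue is $\|V\|^2$ (the squared operator norm). Hence in the Loewner order one has $VV^T \preceq \|V\|^2 I_q$. The Loewner order is preserved under conjugation by the rectangular matrix $U$, so
\[
UVV^T U^T \;\preceq\; \|V\|^2\, UU^T
\]
as $p\times p$ PSD matrices, and monotonicity of $\det$ on the PSD cone yields $\det(UVV^T U^T) \le \|V\|^{2p}\det(UU^T)$. Taking square roots gives $b \le \|V\|^p a$, which is then absorbed into the $\lesssim_{p,q,r}$ notation.

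A purely combinatorial alternative, should one prefer it, is two applications of the Cauchy-Binet formula: first
\[
b^2 \;=\; \sum_{\substack{S\subseteq [r]\\|S|=p}} \det(U V_S)^2,
\]
where $V_S$ is the $q\times p$ submatrix of $V$ with columns indexed by $S$, and then
\[
\det(U V_S) \;=\; \sum_{\substack{T\subseteq [q]\\|T|=p}} \det(U_T)\,\det((V_S)_T).
\]
Bounding each $p\times p$ minor of $V_S$ by $\|V\|^p$ (using $|\det M|\le \|M\|^p$ for $p\times p$ matrices) and applying Cauchy-Schwarz against the Cauchy-Binet identity $\det(UU^T) = \sum_{|T|=p}\det(U_T)^2 = a^2$ yields the same inequality with a constant depending only on $p,q,r$ (through the cardinalities $\binom{q}{p}$ and $\binom{r}{p}$).

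There is really no obstacle here: the statement is a standard linear-algebra fact and either route works in a few lines. I would write up the PSD-monotonicity proof as the primary one because it avoids combinatorial bookkeeping and makes clear that the only feature of $V$ being used is its top singular value.
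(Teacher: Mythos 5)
Your argument reaches the right inequality by a genuinely different route from the paper's. The paper's proof is a one-line geometric argument: it regards $u\mapsto V^Tu$ as a Lipschitz map with constant $\sim\norm{V}$ and appeals to the fact that Lipschitz maps enlarge the ($p$-dimensional) measure of the row parallelepiped by at most a power of the Lipschitz constant. You instead identify $a^2=\det(UU^T)$ and $b^2=\det(UVV^TU^T)$ as Gram determinants and use $VV^T\preceq\norm{V}^2I_q$, stability of the Loewner order under the congruence $X\mapsto UXU^T$, and monotonicity of $\det$ on the PSD cone; your Cauchy--Binet variant is likewise correct. This is more self-contained and more rigorous than the paper's sketch (which, as written, conflates $\mathcal L^q$, $\mathcal L^r$ with $p$-dimensional Hausdorff measure), and it cleanly isolates that only the top singular value of $V$ matters.

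The one step you should not wave through is the final ``absorption'': from $b\le\norm{V}^p a$ you cannot conclude $b\lesssim_{p,q,r}\norm{V}a$, because the implicit constant may depend on $p,q,r$ but not on $\norm{V}$. Indeed, with $p=q=r=2$, $U=I_2$ and $V=tI_2$ one has $a=1$, $b=t^2$ and $\norm{V}=t$, so the first-power bound fails as $t\to\infty$; this also shows the lemma as literally stated is only correct with $\norm{V}^p$ in place of $\norm{V}$ (or under an implicit normalisation $\norm{V}\lesssim 1$), and the paper's own proof commits the same exponent slip, since a $C$-Lipschitz map scales $p$-dimensional measure by at most $C^p$, not $C$. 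The discrepancy is harmless where the lemma is applied, namely in the proof of Part (2) of Proposition \ref{prop:equivalence_flatness_general}, where $V=[-\nabla\phi(c)\,|\,I_{n-k}]$ has $\norm{V}=O(C_0)$, so $\norm{V}^p\lesssim_{C_0,n}1$ and the conclusion there is unchanged. So keep your PSD-monotonicity proof, but state the conclusion as $b\le\norm{V}^p a$ (or $b\lesssim_{p,q,r}\norm{V}^p a$ via Cauchy--Binet) rather than claiming the power of $\norm{V}$ can be absorbed into the constant.
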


\begin{proof}
    Denote by $L:\R^q\to \R^r$ the mapping $Lu:=V^Tu$, where we regard $u$ as a column vector. Then $L$ is Lipschitz with constant $\sim \norm V=\norm {V^T}$. Thus, for every measurable subset $S\sub \R^q$, we have $\mathcal L^r(LS)\lesssim \norm V \mathcal L^q(S)$. In particular, let $S$ be the parallelogram formed by the column vectors of $U^T$, so $\mathcal L^q(S)=a$. Also, $\mathcal L^r(LS)=b$. Thus, the result follows. 
\end{proof}

Now we return to the discussion of flatness in Definition \ref{defn:graphically_flat}. 

Let $C_0\ge 1$ be fixed, and let $C_1$ be a large enough constant that depends on $n$ but not $C_0$. With this, take $c_0$ as in Lemma \ref{lem:flat_lemma1} depending on $n,C_0,C_1$.

\begin{prop}\label{prop:equivalence_flatness_general}
Let $\omega\sub [-1,1]^k$ be a parallelogram with all its $k$ dimensions bounded below by $C_1 \delta$, and $\delta\in (0,c_0]$. Let $\phi:\omega\to \R^{n-k}$ be a $C^2$ function such that $\norm{\phi}_{C^2}\le C_0$. 

Assume $\omega$ is $(\phi,\delta)$-flat in dimension $m$ as in Definition \ref{defn:graphically_flat}, namely, there exist a matrix $A=[A_1|A_2]$ with orthonormal rows, where $A_2\in \R^{(n-m)\times (n-k)}$, and a vector $b\in \R^{n-m}$, such that
\begin{equation}\label{eqn:Feb_26}
    \sup_{x\in \omega}|A_1 x+A_2\phi(x)+b|\le \delta.
\end{equation}
Then we have the following conclusions.
\begin{enumerate}
    \item \label{item:Feb_26_01} $k\le m$.
    \item \label{item:Feb_26_02} The rows of $A_2$ form a parallelogram with $(n-m)$-volume $\sim 1$. 
    \item \label{item:Feb_26_03} Let $\Xi:\R^k\to \R^k$ be an affine bijection given by $x\mapsto Ux+u$ where $\norm{U}\le C_0$. Let $\Lambda:\R^k\to \R^{n-k}$ be an affine transformation given by $x\mapsto Vx+v$ where $\norm{V}\le C_0$. Let
    \begin{equation*}
        \psi(y):=\phi(\Xi y)+\Lambda y,\quad  y\in \Xi^{-1}(\omega).
    \end{equation*}
    Then $\Xi^{-1}(\omega)$ is $(\psi,O(\delta))$-flat in dimension $m$. 
    \item \label{item:Feb_26_04} If we additionally have $k=m$, then
\begin{equation}\label{eqn:Oct_21_01}
    \sup_{x\in \omega}|\phi(x)-Bx-b'|\lesssim \delta,
\end{equation}
for some $B\in \R^{(n-m)\times m}$ and $b'\in \R^{n-m}$, where $B$ has $O(1)$ entries. 
\end{enumerate}
All implicit constants in this proposition and its proof below in turn depend only on $n,C_0$.

\end{prop}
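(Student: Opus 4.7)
The plan is to reduce the entire proposition to the linear-algebraic consequence of Lemma \ref{lem:flat_lemma1} applied after a standard affine normalization. Let $x_0$ denote the centre of $\omega$ and set $\tilde\phi(y):=\phi(y+x_0)-\phi(x_0)-\nabla\phi(x_0)y$, so that $\tilde\phi(0)=0$, $\nabla\tilde\phi(0)=0$, and $\|\tilde\phi\|_{C^2}=O(C_0)$. Substituting $x=y+x_0$ into \eqref{eqn:Feb_26} and regrouping yields
\begin{equation*}
|\tilde A_1 y + A_2\tilde\phi(y)+\tilde b|\le \delta,\qquad y\in \omega-x_0,
\end{equation*}
where $\tilde A_1:=A_1+A_2\nabla\phi(x_0)$ has entries of size $O(C_0)$ and $\tilde b:=A_1 x_0+A_2\phi(x_0)+b$. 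Applying Lemma \ref{lem:flat_lemma1} (with a constant of size $O(C_0)$ in place of $C_0$, and shrinking $c_0$ accordingly) produces $\|\tilde A_1\|\lesssim_n C_1^{-1}$.

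The key observation underlying parts \eqref{item:Feb_26_01} and \eqref{item:Feb_26_02} is that $\tilde A:=[\tilde A_1\mid A_2]$ arises from $A$ by the right multiplication $\tilde A = AS$ with the shear $S=\bigl[\begin{smallmatrix}I&0\\ \nabla\phi(x_0)&I\end{smallmatrix}\bigr]$, which satisfies $\|S\|,\|S^{-1}\|\lesssim_{n,C_0}1$. Since $AA^T=I$, we have $\tilde A\tilde A^T\succeq\sigma_{\min}(S)^2 I$, so for every $u\in\R^{n-m}$ the vector $w:=u^T\tilde A$ in the row space of $\tilde A$ satisfies $|w|\gtrsim|u|$ while its first $k$ coordinates $w_1=u^T\tilde A_1$ satisfy $|w_1|\lesssim C_1^{-1}|u|\lesssim C_1^{-1}|w|$. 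If $k>m$, the projection of the $(n-m)$-dimensional row space of $\tilde A$ onto $\R^{n-k}$ has nontrivial kernel, producing a nonzero $w=(w_1,0)$ with $|w|=|w_1|\lesssim C_1^{-1}|w|$, a contradiction for $C_1$ large; this proves \eqref{item:Feb_26_01}. For \eqref{item:Feb_26_02}, substituting $A_1=\tilde A_1-A_2\nabla\phi(x_0)$ into the orthonormality identity $A_2A_2^T=I-A_1A_1^T$ yields $A_2(I+\nabla\phi(x_0)\nabla\phi(x_0)^T)A_2^T=I+O(C_1^{-1})$, whose determinant is $\sim 1$; since $I+\nabla\phi\nabla\phi^T$ has eigenvalues in $[1,1+C_0^2]$, a Loewner-order sandwich gives $\det(A_2A_2^T)\sim_{n,C_0}1$, i.e.\ the $(n-m)$-volume of the rows of $A_2$ is $\sim 1$.

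Part \eqref{item:Feb_26_03} follows by direct substitution: putting $x=\Xi y$ and $\phi(x)=\psi(y)-Vy-v$ into \eqref{eqn:Feb_26} gives $|(A_1 U-A_2 V)y+A_2\psi(y)+(A_1 u-A_2 v+b)|\le \delta$ on $\Xi^{-1}(\omega)$. The matrix $[A_1U-A_2V\mid A_2]$ equals $A\cdot T$ with $T=\bigl[\begin{smallmatrix}U&0\\ -V&I\end{smallmatrix}\bigr]$ invertible and of bounded norm; the same Loewner sandwich as above gives $\det((AT)(AT)^T)\sim 1$, so a QR-type factorization $AT=R\hat A$ produces an orthonormal-rowed $\hat A$ together with $\|R^{-1}\|=O(1)$, and absorbing $R^{-1}$ into the translation term delivers the desired $(\psi,O(\delta))$-flatness. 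Part \eqref{item:Feb_26_04} is then immediate: when $k=m$, $A_2$ is square of size $(n-k)\times(n-k)$ with $|\det A_2|\sim 1$ and $\|A_2\|\le 1$, so $\|A_2^{-1}\|=O(1)$ via $\sigma_{\min}(A_2)\ge|\det A_2|/\|A_2\|^{n-k-1}$; left-multiplying \eqref{eqn:Feb_26} by $A_2^{-1}$ gives \eqref{eqn:Oct_21_01} with $B=-A_2^{-1}A_1$ and $b'=-A_2^{-1}b$, where $\|B\|=O(1)$. The main technical obstacle is part \eqref{item:Feb_26_02}, since the orthonormality of rows of $A$ alone does not prevent $\|A_1\|$ from being close to $1$ (and hence $\|A_2\|$ from being close to $0$); one must carefully exploit the shear-corrected smallness of $\tilde A_1$ together with the identity $A_2A_2^T=I-A_1A_1^T$ to extract the nondegeneracy of $A_2$.
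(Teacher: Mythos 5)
Your proof is correct and follows essentially the same route as the paper: normalise at the centre of $\omega$ by subtracting the tangent plane, apply Lemma \ref{lem:flat_lemma1} to get $\norm{A_1+A_2\nabla\phi(c)}\lesssim C_1^{-1}$, and then finish by linear algebra (your shear factorisation $\tilde A=AS$, kernel-of-projection argument and Loewner sandwich replace the paper's factorisation $E=A_2[-\nabla\phi(c)\,|\,I_{n-k}]$, its rank inequality and Lemma \ref{lem:Oct_21_elementary}, while your explicit QR-renormalisation in part \eqref{item:Feb_26_03} and the $A_2^{-1}$ step in part \eqref{item:Feb_26_04} match the paper, with the QR step usefully spelling out what the paper leaves implicit). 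The only caveat is quantitative: your arguments for parts \eqref{item:Feb_26_01}--\eqref{item:Feb_26_02} carry errors of size $O(C_0C_1^{-1})$ (through $\sigma_{\min}(S)$ and the cross terms $\tilde A_1\nabla\phi(c)^TA_2^T$), hence need $C_1\gg C_0$, whereas the paper's setup takes $C_1=C_1(n)$ independent of $C_0$; this is harmless for the applications, and for part \eqref{item:Feb_26_01} it can be removed by observing that $(w_1,0)S^{-1}=(w_1,0)$ exactly, so $|u|=|w_1|\lesssim_n C_1^{-1}|u|$ directly.
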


Note that the implication from \eqref{eqn:Oct_21_01} to Definition \ref{defn:graphically_flat} is essentially trivial (if on the right hand side of \eqref{eqn:Oct_22}, the $\le \delta$ is replaced by $\lesssim_n \delta$). Proposition \ref{prop:equivalence_flatness_general} provides the nontrivial converse when $\delta$ is small enough. (It is trivial when $\delta\sim_{n,C_0} 1$.) %Also, Part \eqref{item:Feb_26_03} means that if all dimension $\delta$-flatness in Definition \ref{defn:graphically_flat} is essentially affine invariant under affine transformations.

\begin{proof}
    Given a parallelogram $\omega$ with centre $c$, we denote $\omega_0:=\omega-c$, and denote
\begin{equation*}
    \tilde \phi(x):=\phi(x+c)-\phi(c)-\nabla \phi(c) x,
\end{equation*}
so that $\|\tilde \phi\|_{C^2}\lesssim_n C_0$, $\tilde \phi(0)=0$ and $\nabla \tilde \phi(0)=0$. (Here $x,c$ are assumed to be column vectors in $\R^k$, so $\nabla\phi(c)\in \R^{(n-k)\times k}$.) Thus, \eqref{eqn:Feb_26} becomes
\begin{equation*}
    \sup_{x\in \omega_0}|(A_1+A_2\nabla\phi(c))x+A_2\tilde \phi(x)+A_2\phi(c)+b|\le\delta.
\end{equation*}
Since the rows of $[A_1\,|\,A_2]$ are orthonormal, in particular, the matrices $A_1+A_2\nabla \phi(c)$ and $A_2$ have entries $\lesssim C_0$. We may thus apply Lemma \ref{lem:flat_lemma1} with $\tilde \phi,\omega_0,O(C_0)$ to obtain
\begin{equation*}
    \norm{A_1+A_2\nabla\phi(c)}\lesssim C_1^{-1}.
\end{equation*}
Denote
\begin{equation*}
    E:=[-A_2\nabla\phi(c)\,|\,A_2]=A_2[-\nabla\phi(c)\,|\,I_{n-k}],
\end{equation*}
and so $[A_1\,|\,A_2]=E+O (C_1^{-1})$. But since the rows of $[A_1\,|\,A_2]$ are orthonormal, if $C_1$ is chosen to be large enough, then the rows of $E$ form a parallelogram of $(n-m)$-volume $\sim 1$.

In particular, $\rank (E)=n-m$. Since $\rank([-\nabla \phi(c)\,|\,I_{n-k}])=n-k$, by the elementary formula $\rank(UV)\le \min\{\rank(U),\rank(V)\}$ for arbitrary matrices $U,V$ with suitable dimensions, we have $n-m\le n-k$, i.e., $k\le m$. This gives Part \eqref{item:Feb_26_01}.

To prove Part \eqref{item:Feb_26_02}, we note that since $[-\nabla\phi(c)\,|\,I_{n-k}]$ has entries bounded by $O(C_0)$, by Lemma \ref{lem:Oct_21_elementary}, the rows of $A_2$ form a parallelogram of $(n-m)$-volume $\sim 1$.

To prove Part \eqref{item:Feb_26_03}, we apply \eqref{eqn:Feb_26} to $x=\Xi y$ to get
\begin{equation*}
    \sup_{y\in \Xi^{-1}(\omega)}|(A_1 U-A_2 V)y+A_2\psi(y)+b+A_1 u-A_2 v|\le \delta.
\end{equation*}
Since $\norm{A_1 U-A_2 V}\lesssim C_0$ and $\norm{A_2}\le 1$, it suffices to show that the rows of the matrix 
\begin{equation*}
    F:=[A_1'\,|\,A_2]:=[A_1U-A_2 V\,|\,A_2]
\end{equation*}
form a parallelogram of $(n-m)$-volume $V_F\sim 1$. To see this, we compute that
\begin{equation*}
    V_F^2=\det (FF^T)=\det \big(A_1'(A_1')^T +A_2A_2^T\big)\ge \det \big(A_2A_2^T\big),
\end{equation*}
where the last inequality follows from an elementary result in linear algebra, namely, for symmetric positive semidefinite matrices $P,Q$ of the same size, we always have $\det(P+Q)\ge \det(P)$. But by Part \eqref{item:Feb_26_02}, we have $\det (A_2A_2^T)\sim 1$, and so the result follows.

For Part \eqref{item:Feb_26_04}, if in addition $k=m$, then $A_2$ is a square matrix whose determinant has absolute value $\gtrsim 1$. Multiplying \eqref{eqn:Feb_26} by $A_2^{-1}$, we arrive at \eqref{eqn:Oct_21_01}.

\end{proof}

%Note that by this definition, if $k\ge n-m$, then a parallelogram $\omega\sub \R^k$ is automatically $(\phi,\delta)$-flat in dimension $m$ if at least $n-m$ dimensions of the parallelogram are less than $\delta$, regardless of the function $\phi$; this is seen by taking $\lambda$ to be the projection onto the plane spanned by those $n-m$ dimensions.

%\Franky{to show proposition 4.3, we only need existence of $A'$ and $b'$. In this case, I suggest to include the fact that we can pick $A' = D\phi(c(\omega))$. Also, I don't see why $[-1,1]^k$ is important. See also my comment in Chapter 1.}

\subsection{Various formulations of decoupling}
We state and prove a technical lemma in the following.

\begin{lem}\label{lem:decoupling_entire_strip}
    Let $\phi:[-1,1]^k\to \R^{n-k}$ be $C^2$, and $\Omega\sub [-1,1]^k$ be a parallelogram. For $\delta>0$, $\Omega$ can be $\phi$-decoupled into parallelograms $\omega$ at scale $\delta$ if and only if 
    $N^\phi_\delta(\Omega)$ can be decoupled into the parallelograms $\omega\times \R^{n-k}$ at the same cost.   
    
    %the following two statements are equivalent:
    %\begin{enumerate}%[label=(\alph*)]
        %\item \label{item:Mar_3_a}  at cost $C$.
        %\item \label{item:Mar_3_b}cost $C$.   
    %\end{enumerate}    
\end{lem}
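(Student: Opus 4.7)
The plan is to observe that the two formulations produce identical classes of admissible test functions, so their sharp decoupling constants must agree. The key identity is that for any parallelogram $\omega \sub \Omega$, since $\omega$ is convex, one has
\begin{equation*}
    (\omega\times \R^{n-k})\cap N_\delta^\phi(\Omega)
    \;=\; N_\delta^\phi(\omega)
    \;=\; \mathrm{Co}(N_\delta^\phi(\omega))\cap N_\delta^\phi(\Omega).
\end{equation*}
The first equality follows immediately from the definition \eqref{eqn:vertical_nbhd}: a point $(x,\phi(x)+c)\in N_\delta^\phi(\Omega)$ lies in $\omega\times \R^{n-k}$ precisely when $x\in \omega$, which is exactly the condition for membership in $N_\delta^\phi(\omega)$. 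For the second equality, note that $\mathrm{Co}(N_\delta^\phi(\omega))$ projects onto $\omega$ in the first $k$ coordinates (again by convexity of $\omega$), so its intersection with $N_\delta^\phi(\Omega)$ consists of points $(x,\phi(x)+c)$ with $x\in\omega$ and $|c|\le\delta$.

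Next, I would invoke Definition \ref{defn:decoupling}: the decoupling constant depends on the parallelograms $R$ only through $R\cap S$, since the admissible test functions $f_R$ are required to be Fourier supported on $R\cap S$. Taking $S=N_\delta^\phi(\Omega)$, the display above shows that the class of admissible $f_\omega$ in the statement ``$\Omega$ can be $\phi$-decoupled into parallelograms $\omega$ at scale $\delta$'' (where $R$ is equivalent to $\mathrm{Co}(N_\delta^\phi(\omega))$) coincides with the class in the statement ``$N_\delta^\phi(\Omega)$ can be decoupled into $\omega\times \R^{n-k}$'': in both cases, $f_\omega$ is Fourier supported on $N_\delta^\phi(\omega)$, and the $L^p$ sides of \eqref{eq:defn_decoupling} compare the same quantities. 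Therefore the smallest admissible constants agree exactly.

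Finally, I would tidy up two minor technicalities. First, since Definition \ref{defn:decoupling} demands honest (bounded) parallelograms, I would truncate $\omega\times \R^{n-k}$ to $\omega\times [-M,M]^{n-k}$ for $M$ large enough that $N_\delta^\phi(\Omega)\sub \R^k\times [-M,M]^{n-k}$; this truncation does not alter $R\cap S$, so the argument above is unaffected, and the constant $M$ depends only on $\norm{\phi}_\infty$ and $\delta\le 1$. Second, the bounded overlap condition transfers cleanly in both directions: if $\{\omega\}$ is boundedly overlapping in $\R^k$, then $\{\omega\times[-M,M]^{n-k}\}$ inherits the same overlap bound in $\R^n$, and the parallelograms equivalent to $\mathrm{Co}(N_\delta^\phi(\omega))$ are contained in $\omega\times \R^{n-k}$ (after allowing a harmless constant dilation permitted by equivalence), so they inherit bounded overlap as well. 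There is no substantial obstacle here; the only delicate point is the verification of the support identity above, which is essentially a matter of unwinding definitions and using the convexity of $\omega$.
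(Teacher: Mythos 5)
Your proof is correct and follows essentially the same route as the paper: both rest on the set identity $N^\phi_\delta(\Omega)\cap \mathrm{Co}(N^\phi_\delta(\omega)) = N^\phi_\delta(\Omega)\cap(\omega\times\R^{n-k})$ (proved via convexity of $\omega\times\R^{n-k}$ and the projection of the convex hull), combined with the observation that the decoupling constant only depends on $R\cap S$. The only cosmetic difference is that you assume $\omega\sub\Omega$, whereas the paper handles general $\omega$ by noting the intersection equals $N^\phi_\delta(\omega\cap\Omega)$; your argument adapts verbatim.
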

\begin{proof}
By our formulation in Definitions \ref{defn:decoupling} and \ref{defn:graphical_decoupling_convex_hull}, it suffices to show that
    \begin{equation*}
        N^\phi_\delta(\Omega)\cap \mathrm{Co}({N^\phi_\delta(\omega)})=N^\phi_\delta(\Omega)\cap (\omega\times \R^{n-k}).
    \end{equation*}
    It suffices to show $\supseteq$. But $N^\phi_\delta(\Omega)\cap (\omega\times \R^{n-k})=N^\phi_\delta(\omega\cap \Omega)$, which is a subset of both $N^\phi_\delta(\Omega)$ and $\mathrm{Co}({N^\phi_\delta(\omega)})$. Thus, the result follows.

\end{proof}

\subsection{Combining decoupling inequalities}
We provide a proof of how we can combine different $\ell^q(L^p)$ decoupling inequalities at different levels.
\begin{prop}\label{prop:combine_decoupling}
    Let $\mathcal T$ be a $B$-overlapping cover of a parallelogram $P_0\sub \R^n$ by parallelograms $T$. Let $C_2\ge C_1\ge 1$. For each $T$, let $\mathcal S(T)$ be a $B'$-overlapping cover of $C_1 T$ by parallelograms $S\sub C_2 T$. Denote $\mathcal S=\sqcup_{T\in \mathcal T}\mathcal S(T)$, where we assume that $\mathcal S(T)$ is disjoint with $\mathcal S(T')$ for $T\ne T'$ (as \underline{collections} of parallelograms). Then we have
    \begin{enumerate}
        \item \label{item_01_Nov_4}$\mathcal S$ is a $B''$-overlapping cover of $P_0$, where
        \begin{equation*}
            B''(\mu):=B(C_2\mu)B'(\mu).
        \end{equation*}
        \item \label{item_02_Nov_4} For all $p,q,\alpha$ we have (recall Definition \ref{defn:decoupling}).
    \begin{equation*}
        \mathrm{Dec}(P_0,\mathcal S,p,q,\alpha)\le 2^{\alpha+\frac 1 2-\frac 1 q} \log_2 (\# \mathcal S)\mathrm{Dec} (P_0,\mathcal T,p,q,\alpha)\sup_{T\in \mathcal T}\mathrm{Dec}(C_1T,\mathcal S(T),p,q,\alpha).
    \end{equation*}
    \end{enumerate}
\end{prop}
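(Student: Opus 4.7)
The plan is to handle the two parts separately. Part (1) is a direct counting argument, while Part (2) relies on a dyadic pigeonhole over the cardinalities $M_T := \#\mathcal{S}(T)$ together with the two given decoupling inequalities.

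For Part (1), fix $\mu \geq 1$ and $x \in \R^n$, and decompose
\begin{equation*}
    \sum_{S \in \mathcal{S}} \mathbf{1}_{\mu S}(x) \;=\; \sum_{T \in \mathcal{T}} \sum_{S \in \mathcal{S}(T)} \mathbf{1}_{\mu S}(x).
\end{equation*}
For each $T$, the inner sum is bounded pointwise by $B'(\mu)$, and is nonzero only if $x \in \mu S$ for some $S \in \mathcal{S}(T)$. Since $S \subseteq C_2 T$ and dilations are monotone on parallelograms, $\mu S \subseteq \mu(C_2 T) = C_2 \mu T$, which forces $x \in C_2 \mu T$. The $B$-overlapping assumption on $\mathcal{T}$ then yields the total bound $B'(\mu) B(C_2 \mu) = B''(\mu)$.

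For Part (2), set $\beta := \alpha + \tfrac{1}{2} - \tfrac{1}{q}$, and partition $\mathcal{T}$ dyadically by cardinality as $\mathcal{T}_l := \{T : M_T \in [2^l, 2^{l+1})\}$ for $l = 0,1,\dots,L-1$ with $L \leq \log_2(\#\mathcal{S})$, and let $\mathcal{S}_l := \bigsqcup_{T \in \mathcal{T}_l} \mathcal{S}(T)$, which partitions $\mathcal{S}$. By the triangle inequality,
\begin{equation*}
    \Big\|\sum_{S \in \mathcal{S}} f_S\Big\|_{L^p} \;\leq\; \sum_{l=0}^{L-1} \Big\|\sum_{S \in \mathcal{S}_l} f_S\Big\|_{L^p}.
\end{equation*}
For each $l$, define $F_T := \sum_{S \in \mathcal{S}(T)} f_S$ and apply the outer $\mathcal{T}$-decoupling to $\{F_T\}_{T \in \mathcal{T}_l}$; then apply the inner $\mathcal{S}(T)$-decoupling to each $F_T$, using the uniform bound $M_T \leq 2^{l+1}$ on $\mathcal{T}_l$ to obtain $\|F_T\|_{L^p} \leq D_0 \cdot 2^{\beta} \cdot 2^{l\beta}\,\big\|\|f_S\|_{L^p}\big\|_{\ell^q(\mathcal{S}(T))}$. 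Raising to the $q$-th power, summing over $T \in \mathcal{T}_l$, and using the disjointness of the collections $\mathcal{S}(T)$ across $T$ gives
\begin{equation*}
    \Big\|\sum_{S \in \mathcal{S}_l} f_S\Big\|_{L^p} \;\leq\; D_1 D_0 \cdot 2^\beta \cdot (\#\mathcal{T}_l)^\beta \cdot 2^{l\beta} \,\big\|\|f_S\|_{L^p}\big\|_{\ell^q(\mathcal{S}_l)}.
\end{equation*}
Since $\#\mathcal{T}_l \cdot 2^l \leq \sum_{T \in \mathcal{T}_l} M_T = \#\mathcal{S}_l \leq \#\mathcal{S}$, the factor $(\#\mathcal{T}_l)^\beta \cdot 2^{l\beta}$ is at most $(\#\mathcal{S})^\beta$. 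Summing over the at most $\log_2(\#\mathcal{S})$ values of $l$ and using the trivial bound $\|\|f_S\|\|_{\ell^q(\mathcal{S}_l)} \leq \|\|f_S\|\|_{\ell^q(\mathcal{S})}$ then produces the claimed estimate.

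The main technical subtlety will be applying the outer $\mathcal{T}$-decoupling to the sub-collection $\mathcal{T}_l$ so that one picks up the factor $(\#\mathcal{T}_l)^\beta$ (which combines with $2^{l\beta}$ to give the correct $(\#\mathcal{S})^\beta$), rather than $(\#\mathcal{T})^\beta$; this uses the standard observation that the decoupling constant for a sub-collection is controlled by that of the full collection, so one can replace $\mathrm{Dec}(\mathcal{T}_l)$ by $D_1 = \mathrm{Dec}(\mathcal{T})$ while retaining the restricted cardinality. A secondary point is that each $F_T$ is Fourier supported in $\bigcup_{S \in \mathcal{S}(T)} S \subseteq C_2 T$ rather than in $T$ itself, but this is absorbed by the overlap function (or handled by a harmless enlargement of the reference parallelograms that costs only a dimensional constant).
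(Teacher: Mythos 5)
Your proof is correct and follows essentially the same route as the paper's: the same counting argument for Part (1) (bounding $\mu S\subseteq C_2\mu T$ via monotonicity of concentric dilations and then invoking the $B$- and $B'$-overlaps), and for Part (2) the same dyadic pigeonholing on $\#\mathcal S(T)$, triangle inequality over the dyadic classes, nested application of the outer and inner decouplings, and the bound $\#\mathcal T_l\cdot 2^{l}\le\#\mathcal S$ producing the factor $2^{\alpha+\frac12-\frac1q}\log_2(\#\mathcal S)$. The two caveats you flag explicitly --- using $\mathrm{Dec}(P_0,\mathcal T)$ with the restricted cardinality $(\#\mathcal T_l)^{\alpha_0}$ for a sub-collection, and the Fourier support of $F_T$ lying in $C_2T$ rather than $T$ --- are handled at the same implicit level in the paper's own proof, so they do not set your argument apart from it.
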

\begin{proof}
We prove Part \eqref{item_01_Nov_4} first. Let $\mu\ge 1$ and $x\in \R^n$. If $x\in \mu S$, then we must have $x\in C_2\mu T$, by $S\sub CT$ and Corollary \ref{cor:transitivity_dilation}. But then the parallelograms $\{C_2\mu T:T\in \mathcal T\}$ have $B(C_2\mu)$-overlap, so this means that $x$ lies in at most $B(C_2\mu)$ many $C_2\mu T$'s. Thus, we have
\begin{align*}
    \sum_{S\in \mathcal S}1_{\mu S}(x)
    &=\sum_{T\in \mathcal T}\sum_{S\in \mathcal S(T)}1_{\mu S}(x)\\
    &=\sum_{T\in \mathcal T}\sum_{S\in \mathcal S(T)}1_{\mu S\cap C_2\mu T}(x)\\
    &\le B(C_2\mu)\sum_{S\in \mathcal S(T)}1_{\mu S\cap C_2\mu T}(x)\\
    &\le B(C_2\mu)\sum_{S\in \mathcal S(T)}1_{\mu S}(x)\\
    &\le B(C_2\mu)B'(\mu).
\end{align*}
We then prove Part \eqref{item_02_Nov_4}. Denote $\alpha_0=\alpha+\frac 1 2-\frac 1 q$. For each $k\ge 1$, denote
    \begin{equation*}
        \mathcal T_k:=\{T\in \mathcal T:\# \mathcal S(T)\in [2^{k-1},2^k)\}.
    \end{equation*}
    Then there are at most $\log_2 (\#\mathcal S)$ many such $k$'s. Given test functions $f_S$ each Fourier supported on $S$, and denote $f_T:=\sum_{S\in\mathcal S(T)}f_S$, $f_k:=\sum_{T\in \mathcal T_k}f_T$, and $f:=\sum_{k }f_k$. Then we have
    \begin{align*}
        &\norm f_p\\
        &\le \sum_{k}\norm{f_k}_p\\
        &\le \sum_k \mathrm{Dec} (P_0,\mathcal T,p,q,\alpha)(\#\mathcal T_k)^{\alpha_0} \norm{\norm{f_T}_p}_{\ell^q(T\in \mathcal T_k)}\\
        &\le \sum_k \mathrm{Dec} (P_0,\mathcal T,p,q,\alpha)(\#\mathcal T_k)^{\alpha_0} 2^{k\alpha_0}\sup_{T\in \mathcal T} \mathrm{Dec} (C_1T,\mathcal S(T),p,q,\alpha)\norm{\norm{f_S}_p}_{\ell^q(S\in \mathcal S)}\\
        &\le 2^{\alpha_0}\sum_k \mathrm{Dec} (P_0,\mathcal T,p,q,\alpha) (\#\mathcal S)^{\alpha_0} \sup_{T\in \mathcal T} \mathrm{Dec} (C_1T,\mathcal S(T),p,q,\alpha)\norm{\norm{f_S}_p}_{\ell^q(S\in \mathcal S)}\\
        &\le 2^{\alpha_0}\log_2 (\#\mathcal S)\mathrm{Dec} (P_0,\mathcal T,p,q,\alpha) (\#\mathcal S)^{\alpha_0} \sup_{T\in \mathcal T} \mathrm{Dec} (C_1T,\mathcal S(T),p,q,\alpha)\norm{\norm{f_S}_p}_{\ell^q(S\in \mathcal S)}.
    \end{align*}
\end{proof}

\subsection{Nested meshes}

Since proofs of both principles will be based on iterative constructions of covering by parallelograms, we need detailed analysis of the overlaps between the covering parallelograms. 

\begin{defn}[Nested mesh]\label{defn:dyadic_mesh}
Let $R_0\sub \R^n$ be a parallelogram. For $1\le i\le N$, let $B_i$ be overlap functions as in Definition \ref{defn:enlarged_overlap}, and $C_{2}\ge C_{1}\ge 1$ be real numbers.

A sequence $\mathcal R_i$, $1\le i\le N$ of collections of parallelograms of $\R^n$ is called a $(B_i,C_{1},C_{2})$-nested mesh covering $R_0$, if the following is true. 
\begin{enumerate}
    \item \label{item_Nov_2_01} The collection $\mathcal R_1=\mathcal R_1(R_0)$ covers $R_0$, and is $B_1$-overlapping as in Definition \ref{defn:enlarged_overlap}.
    \item \label{item_Nov_2_02} For each $1\le i \le N-1$, each $R_i\in \mathcal R_i$, there exists a subcollection $\mathcal R_{i+1}(R_i)\sub \mathcal R_{i+1}$ covering $C_{1} R_i$, such that $\cup \mathcal R_{i+1}(R_i)\sub C_{2} R_i$. Then we require that
    \begin{equation*}
    \mathcal R_{i+1}=\bigcup_{R_i\in \mathcal R_i}\mathcal R_{i+1}(R_i),
\end{equation*}
where we may assume the union is disjoint (as collection $\mathcal R$ of subsets $R$, not to be confused with disjointness of the subsets $R$). Also, $\mathcal R_{i+1}$ is $B_{i+1}$-overlapping.
\end{enumerate}
\end{defn}

In practice, it is often easier to control the overlap function of the subcollections $\mathcal R_{i+1}(R_i)$ first. Then we have the control of the overlap function of $\mathcal R_{i+1}$ via the following proposition.
\begin{prop}\label{prop:iterative_overlap}
Fix $1\le i\le N-1$. For each $R_{i}\in \mathcal R_i$ and each subcollection $\mathcal R_{i+1}(R_i)$ covering $C_{1}R_i$ and contained in $C_{2}R_i$, denote by $B_{i,R_i}$ the overlap function of $\mathcal R_{i+1}(R_i)$. Define $\overline B_{i+1}:=\sup_{R_{i}\in \mathcal R_{i} }B_{i,R_i}$ (and $\overline B_1:=B_1$). Then the overlap function $B_{i+1}$ for $\mathcal R_{i+1}$ can be chosen to be
\begin{equation*}
    B_{i+1}(\mu):=\prod_{i'=1}^{i} \overline B_{i'}\left(\prod_{i''=i'}^{i}C_{2}\mu\right).
\end{equation*}

\end{prop}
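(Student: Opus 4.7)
The plan is a direct induction on $i$, reducing the overlap count at level $i+1$ to that at level $i$ by summing over parents.

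The key step is a one-step bootstrap. Because the union $\mathcal{R}_{i+1} = \bigsqcup_{R_i \in \mathcal R_i} \mathcal R_{i+1}(R_i)$ is disjoint as a union of collections (Part \eqref{item_Nov_2_02} of Definition \ref{defn:dyadic_mesh}), for any $x \in \R^n$ and $\mu \ge 1$ I can decompose
\begin{equation*}
\sum_{R_{i+1} \in \mathcal R_{i+1}} 1_{\mu R_{i+1}}(x) \;=\; \sum_{R_i \in \mathcal R_i} \sum_{R_{i+1} \in \mathcal R_{i+1}(R_i)} 1_{\mu R_{i+1}}(x).
\end{equation*}
Since $R_{i+1} \subseteq C_2 R_i$, the monotonicity of concentric dilation for nested parallelograms (the proposition preceding Corollary \ref{cor:transitivity_dilation}) gives $\mu R_{i+1} \subseteq \mu(C_2 R_i) = C_2\mu\, R_i$, where the second equality uses that $R_i$ and $C_2 R_i$ share a common centre. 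Hence the inner sum vanishes unless $x \in C_2 \mu R_i$, and when nonzero it is bounded by $\overline B_{i+1}(\mu)$, the uniform bound on the overlap of $\mathcal R_{i+1}(R_i)$. This produces the recursion
\begin{equation*}
B_{i+1}(\mu) \;\le\; B_i(C_2\mu)\,\overline B_{i+1}(\mu).
\end{equation*}

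Iterating from the base case $B_1 = \overline B_1$, at each step the surviving $B_\cdot$ factor picks up an extra $C_2$ in its argument. After $i$ applications this yields
\begin{equation*}
B_{i+1}(\mu) \;\le\; \overline B_1(C_2^{\,i}\mu)\,\overline B_2(C_2^{\,i-1}\mu)\cdots \overline B_i(C_2\mu)\,\overline B_{i+1}(\mu),
\end{equation*}
which matches the claimed product formula once one reads each inner product $\prod_{i''=i'}^{i} C_2$ as the scalar $C_2^{\,i-i'+1}$ (with empty product equal to $1$).

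I do not foresee any substantive obstacle. The only delicate point is the containment $\mu R_{i+1} \subseteq C_2 \mu R_i$: here $\mu R_{i+1}$ is dilated about the centre of $R_{i+1}$ while $C_2 \mu R_i$ is dilated about the centre of $R_i$, so one cannot merely pull the factor $\mu$ outside, and the monotonicity-of-dilation lemma for nested parallelograms with possibly distinct centres is essential. Once this is in hand, the remainder is routine bookkeeping.
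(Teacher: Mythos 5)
Your proof is correct and is essentially the paper's own argument: the one-step bound $B_{i+1}(\mu)\le B_i(C_2\mu)\,\overline B_{i+1}(\mu)$ you derive via the decomposition over parents and the containment $\mu R_{i+1}\subseteq C_2\mu R_i$ is exactly Part \eqref{item_01_Nov_4} of Proposition \ref{prop:combine_decoupling}, whose iteration is all the paper does. The only discrepancy is bookkeeping: your iterated bound carries the additional factor $\overline B_{i+1}(\mu)$, i.e.\ the outer product should run to $i'=i+1$ (with the empty product $\prod_{i''=i+1}^{i}C_2=1$), so the displayed formula in the proposition stopping at $i'=i$ is an off-by-one in indexing rather than a gap in your argument.
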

\begin{proof}
    This follows from iterative applications of Part \eqref{item_01_Nov_4} Proposition \ref{prop:combine_decoupling}.
\end{proof}
If we allow the overlap number to depend on $N$, we may simply take $B:=B_{N}$, such that every $\mathcal R_i$, $1\le i\le N$ is $B$-overlapping. 

In the canonical case where each $\mathcal R_{i+1}(R_{i})$, $0\le i\le N-1,$ is the tiling of $R_i$ by $2^n$ congruent parallelograms, we may choose $C_1=C_2=1$ and $B(\mu)=B_1(\mu)=\mu^n$. In general, to ensure that the collections $\mathcal R_i$ stay within $2R_0$, we have the following proposition.
\begin{prop}\label{prop:tiling_containedin_2R0}
    With the assumptions of Definition \ref{defn:dyadic_mesh}, if $\mathcal R_1$ is a tiling of $R_0$ by an $N_0\times \cdots N_0$-grid of congruent parallelograms, where $N_0\ge C_2^{N-1}$, then for every $R_1\in \mathcal R_1$ and $2\le i\le N$, we have $\cup \mathcal R_i(R_1)\sub C_2^{N-1}R_1$. In particular, for every $1\le i\le N$, we have $\cup\mathcal R_i\sub 2R_0$. 
\end{prop}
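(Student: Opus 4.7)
The plan is a direct induction on the level $i$. First I will prove by induction on $2 \le i \le N$ that for every $R_1 \in \mathcal{R}_1$,
\[
\cup \mathcal{R}_i(R_1) \sub C_2^{i-1} R_1,
\]
where $\mathcal{R}_i(R_1)$ denotes the $i$-th generation descendants of $R_1$, defined iteratively by $\mathcal{R}_i(R_1) := \bigcup_{R_{i-1}\in \mathcal{R}_{i-1}(R_1)} \mathcal{R}_i(R_{i-1})$. The base case $i=2$ is precisely the containment $\cup \mathcal{R}_2(R_1) \sub C_2 R_1$ from Part \eqref{item_Nov_2_02} of Definition \ref{defn:dyadic_mesh}. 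For the inductive step, every $R_i \in \mathcal{R}_i(R_1)$ satisfies $R_i \sub C_2^{i-1} R_1$ by the hypothesis and $\cup \mathcal{R}_{i+1}(R_i) \sub C_2 R_i$ by the definition; chaining via Corollary \ref{cor:transitivity_dilation} gives $\cup \mathcal{R}_{i+1}(R_i) \sub C_2 \cdot C_2^{i-1} R_1 = C_2^i R_1$, and taking the union over $R_i \in \mathcal{R}_i(R_1)$ closes the induction.

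Second, I will deduce the ``in particular'' clause by combining the above with the tiling hypothesis. Since $\mathcal{R}_1$ tiles $R_0$ as an $N_0\times\dots\times N_0$ grid of congruent parallelograms, each $R_1 \in \mathcal{R}_1$ is a translate $c_1 + N_0^{-1}(R_0 - c_0)$ with $c_0, c_1$ the centres of $R_0, R_1$ and $c_1 \in R_0$. By Proposition \ref{prop:dilation_interchangable_with_affine_for_parallelogram} (commuting concentric dilation through affine maps), the enlargement is
\[
C_2^{N-1} R_1 \;=\; c_1 + (C_2^{N-1}/N_0)(R_0 - c_0),
\]
and $C_2^{N-1}/N_0 \le 1$ by assumption. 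A one-line check using the coordinate description $R_0 = \{x : |x\cdot u_j| \le L_j\}$ (after translating $c_0$ to the origin) shows that $c_1 + (R_0 - c_0) \sub 2R_0$, since $|c_1\cdot u_j| \le L_j$ and $|x\cdot u_j| \le L_j$. Summing over $R_1 \in \mathcal{R}_1$ gives $\cup \mathcal{R}_i \sub 2R_0$ for every $1 \le i \le N$.

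I do not anticipate any real obstacle: the argument is purely geometric bookkeeping about how concentric dilations of parallelograms interact with translations, and the two lemmas already in the appendix (Corollary \ref{cor:transitivity_dilation} and Proposition \ref{prop:dilation_interchangable_with_affine_for_parallelogram}) handle exactly the subtleties that arise. The mildest care needed is that ``concentric dilation'' is with respect to the centre of each individual $R_i$, which is precisely what both lemmas are designed to accommodate.
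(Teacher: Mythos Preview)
Your proposal is correct and follows essentially the same approach as the paper's proof: both argue by induction on $i$ using Corollary \ref{cor:transitivity_dilation} to obtain $\cup\mathcal R_i(R_1)\sub C_2^{i-1}R_1$, and then observe that $C_2^{N-1}R_1\sub 2R_0$ when $N_0\ge C_2^{N-1}$. You give more explicit detail on this last containment (via Proposition \ref{prop:dilation_interchangable_with_affine_for_parallelogram} and a coordinate check), which the paper leaves to the reader, but the argument is the same.
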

\begin{proof}
    The result is trivial for $i=1$. Fix one $R_1\in \mathcal R_1$. For $i=2$, by assumption, each $R_2\in \mathcal R_2(R_1)$ is contained in $C_2 R_1$. Similarly, each $R_3\in \mathcal R_3(R_2)$ is contained in $C_2 R_2$, which is in turn contained in $C_2^2 R_1$ by Corollary \ref{cor:transitivity_dilation}. In this fashion, we have that each $R_N\in \mathcal R_N$ is contained in $C_2^{N-1}R_1$, which is in turn contained in $2R_0$ if $N_0\ge C_2^{N-1}$.
\end{proof}

\subsection{On Corollary \ref{cor:radial}}\label{sec:corallary_1.4}
To rigorously prove Corollary \ref{cor:radial}, we need to show that the special case of Theorem \ref{thm:radial_principle} gives us exactly the same family $\mathcal S_\delta=\mathcal I_\delta$, under the condition that $r\in \mathcal P_{1,d}$. Note that $\#\mathcal I_\delta\lesssim \delta^{-1/2}$, since in one-dimension, every interval of length $\lesssim\delta^{1/2}$ must be $(r,O(\delta))$-flat. (All implicit constants in the previous sentence depend only on $\norm {r''}_{\infty}$.) 

Thus, we will need to check the condition (*) holds in Theorem \ref{thm:radial_principle}. The part about $T_\sigma$ is trivial, so it remains to prove the part about $S_\sigma$.

We first prove the following lemma, which establishes \eqref{eqn:Nov_21} with $C_r$ depending on $d$ only.
\begin{lem}
    Let $P\in \mathcal P_{1,d}$. Let $0<2\delta<\sigma\le 1$. Let $I_\delta\sub [1,2]$ be a $(P,\delta)$-flat interval, and let $I_\sigma\sub [1,2]$ be an interval such that $2I_\sigma\cap [1,2]$ is not $(P,\sigma)$-flat. If $I_\delta\cap I_\sigma\ne \varnothing$, then $I_\delta\sub C_d I_\sigma$, where $C_d$ depends on $d$ only.
\end{lem}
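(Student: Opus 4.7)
The plan is to argue by contradiction: if $|I_\delta|$ were much larger than $|I_\sigma|$, the affine function witnessing the $(P,\delta)$-flatness of $I_\delta$ would also witness $(P,\sigma)$-flatness of $2I_\sigma\cap[1,2]$, contradicting the hypothesis. First, an easy geometric observation: if $|I_\delta|\le |I_\sigma|$, then $I_\delta\cap I_\sigma\ne\varnothing$ forces $I_\delta\subseteq 3I_\sigma$, so we may assume $|I_\delta|>|I_\sigma|$ and reduce to proving $|I_\delta|\le K_d|I_\sigma|$ for a constant $K_d$ depending only on $d$.

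Assume for contradiction that $|I_\delta|>K|I_\sigma|$ for some large $K$ to be fixed in terms of $d$. A direct center-radius computation (the farthest point of $2I_\sigma$ from the center of $I_\delta$ lies at distance at most $|I_\delta|/2+3|I_\sigma|/2\le (1+3/K)|I_\delta|/2$) shows $2I_\sigma\subseteq(1+3/K)I_\delta$. By $(P,\delta)$-flatness choose an affine $A$ with $\sup_{I_\delta}|P-A|\le\delta$; then $Q:=P-A$ is a polynomial of degree at most $d$. The key step is to bound $\sup_{(1+3/K)I_\delta}|Q|$ by a factor close to $\delta$. This is precisely the Chebyshev extremal inequality: after affinely rescaling $I_\delta$ to $[-1,1]$,
\[
\sup_{(1+3/K)I_\delta}|Q|\le T_d(1+3/K)\cdot\delta,
\]
where $T_d$ is the degree-$d$ Chebyshev polynomial. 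Since $T_d(1)=1$ and $T_d$ is continuous, for $K=K_d$ large enough (depending only on $d$) we get $T_d(1+3/K_d)\le 2$. Then $\sup_{2I_\sigma\cap[1,2]}|P-A|\le 2\delta<\sigma$, so $2I_\sigma\cap[1,2]$ is $(P,\sigma)$-flat, contradicting the hypothesis. Hence $|I_\delta|\le K_d|I_\sigma|$, and combined with $I_\delta\cap I_\sigma\ne\varnothing$ a routine computation yields $I_\delta\subseteq(2K_d+3)I_\sigma$, so one may take $C_d:=2K_d+3$.

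The hard part is the quantitative Chebyshev-type estimate $T_d(1+\varepsilon)\to 1$ as $\varepsilon\to 0$ for fixed $d$: the cruder bound from Corollary \ref{cor:polycoeff}, namely $\sup_{\mu T}|P|\lesssim_d\mu^d\sup_T|P|$, is insufficient by itself, because its $d$-dependent implicit constant may exceed $2$, preventing the contradiction even as $\mu\to 1$. One avoids this either by invoking the Chebyshev extremal polynomial directly, or equivalently by an iterated Markov-brothers argument: Markov's inequality $\sup_{[-1,1]}|Q'|\le d^2\sup_{[-1,1]}|Q|$ applied recursively to the derivatives of the rescaled $Q$ gives $\sup_{[-(1+\varepsilon),1+\varepsilon]}|Q|\le(1+O(d^2\varepsilon))\sup_{[-1,1]}|Q|$ for $d^2\varepsilon$ small, so $K_d\gtrsim d^2$ suffices.
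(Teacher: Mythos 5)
Your proof is correct, and it follows the same strategy as the paper's own argument: assume $I_\delta$ is much longer than $I_\sigma$, deduce that $2I_\sigma\sub (1+c)I_\delta$ for a small dilation factor $c=c_d$, propagate the $(P,\delta)$-flatness of $I_\delta$ to the slightly enlarged interval, and contradict the hypothesis via $\sigma>2\delta$; the endgame (from $|I_\delta|\lesssim_d|I_\sigma|$ plus $I_\delta\cap I_\sigma\ne\varnothing$ to $I_\delta\sub C_dI_\sigma$) is also identical. The one genuine difference is how the key quantitative step is justified. The paper cites Proposition \ref{prop:polycoeff}/Corollary \ref{cor:polycoeff}, asserting that $(1+c)I_\delta$ is $(P,O((1+c)^d)\delta)$-flat and that $(1+c)^d\ll 2$ for small $c$; you correctly point out that the corollary as stated carries a $d$-dependent implicit constant that does not tend to $1$ as the dilation factor tends to $1$, so on its own it cannot beat the fixed ratio $\sigma/\delta>2$, and you replace it with the Chebyshev extremal bound $\sup_{(1+\eps)I}|Q|\le T_d(1+\eps)\sup_I|Q|$ (or an iterated Markov argument), whose factor genuinely approaches $1$. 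This is a sharper, fully rigorous version of the step the paper treats loosely; note also that the same constant-free estimate can be extracted from the paper's own toolkit in one extra line, e.g.\ by bounding $\sup_{(1+c)I}|Q'|\le C_d\sup_I|Q|$ via the coefficient comparison and then using the mean value theorem to get $\sup_{(1+c)I}|Q|\le (1+C_dc)\sup_I|Q|\le 2\sup_I|Q|$ for $c\le 1/C_d$, so both routes land in the same place.
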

\begin{proof}[Proof of lemma]
    Since $I_\delta\cap I_\sigma\ne \varnothing$, if $l(I_\delta)\lesssim_d l(I_\sigma)$ then we are done. If not, then there exists a tiny $c=c_d$ such that $(1+c)I_\delta\supseteq 2I_\sigma$.

    Since $2I_\sigma$ is not $(P,\sigma)$-flat, neither is $(1+c)I_\delta$. But $I_\delta$ is $(P,\delta)$-flat, so $(1+c)I_\delta$ is $(P,O(1+c)^d)$-flat by Proposition \ref{prop:polycoeff}. But for $c$ small enough, we have $(1+c)^d\ll 2$, and this contradicts $\sigma>2\delta$. Thus, $I_\delta\sub C_d I_\sigma$.
\end{proof}
It remains to check that for the sum surface $\phi(s,t):=r(s)+\sqrt{1-|t|^2/(10l)}$, its coordinate space can be $\phi$-decoupled into $I_\delta\times T$ at scale $\delta$, where $I_\delta\in \mathcal I_\delta$ and $T$ comes from a tiling of $[-1,1]^l$ by cubes of side length $\delta^{1/2}$, with the decoupling constant independent of the coefficients of $r$. 

Strictly speaking, for the $s$ coordinate, the statement of Theorem \ref{thm:degeneracy_locating_principle} only provides the existence of some family $\mathcal I'_\delta$ of intervals that $\phi$-decouples $[-1,1]$, but it does not imply that $\mathcal I'_\delta$ agrees with our given $\mathcal I_\delta$. To prove the latter stronger result, one may use an adaptation of the proof of \cite[Theorem 4.6]{Yang2} with $\phi(s)=r(s)$ replaced by $r(s)+\sqrt{1-|t|^2/(10l)}$, where the terminology ``sub-admissible partition for $\phi$ at scale $\delta$" is essentially the same as saying that each $2I_\delta$ is not $(\phi,\delta)$-flat here. We omit the details.

\subsection{Bourgain-Demeter with weaker curvature lower bound}
In this section, we give a proof of a variant of Bourgain-Demeter decoupling \cite[Theorem 1.1]{BD2015} when we only have a weak lower bound for the Gaussian curvature.
\begin{prop}\label{prop:Bourgain_Demeter_K-1_lowerbound}
Let $\phi:[-1,1]^{n-1}\to \R$ be $C^{2,\zeta}$ where $\zeta\in (0,1]$, and suppose that $\inf |\det D^2 \phi|\ge K^{-1}$ for some $K\ge 1$. For every $0<\delta<1$, denote by $\mathcal R_\delta$ a tiling of $[-1,1]^{n-1}$ by rectangles $T$ of side length $\delta^{1/2}$. Then for $2\le p\le \frac{2(n+1)}{n-1}$, $[-1,1]^{n-1}$ can be $\phi$-$\ell^p(L^p)$ decoupled into the $(\phi,O(\delta))$-flat cubes $\mathcal R_\delta$ at the cost of $C_\eps \delta^{-\eps}K^{O(1)}$ for every $\eps>0$. Moreover, if $D^2\phi$ is positive-semidefinite, then the above can be upgraded to $\ell^2(L^p)$ decoupling. Here, the constant $C_\eps$ depends only on the $C^{2,\zeta}$ norm of $\phi$ (as well as $\eps,n,p$).
\end{prop}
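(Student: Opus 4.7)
The plan is to reduce Proposition \ref{prop:Bourgain_Demeter_K-1_lowerbound} to the classical Bourgain--Demeter decoupling theorems \cite{BD2015, BD2017} via a partition-and-rescale argument, paying only a polynomial factor in $K$.

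First, I would use the $C^{2,\zeta}$ assumption to partition $[-1,1]^{n-1}$ into $O(K^{(n-1)/\zeta})$ axis-aligned sub-cubes $\tau$ of side length $\rho \sim K^{-1/\zeta}$, so that on each $\tau$ the Hessian $D^2\phi$ deviates from its value at the centre $A_\tau := D^2\phi(c_\tau)$ by at most a small dimensional multiple of $K^{-1}$ in operator norm. Combined with $\|\phi\|_{C^2}\le 1$ and $|\det D^2\phi|\ge K^{-1}$, the eigenvalues $\lambda_1(x),\dots,\lambda_{n-1}(x)$ of $D^2\phi(x)$ on $\tau$ all lie in $[cK^{-1}, 1]$ in absolute value for some dimensional constant $c>0$; by continuity and connectedness of $[-1,1]^{n-1}$, the signature of $D^2\phi$ is constant on the entire domain.

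Next, fix $\tau$ and diagonalise $A_\tau = Q_\tau \Lambda_\tau Q_\tau^T$ with $\Lambda_\tau = \mathrm{diag}(\lambda_1,\dots,\lambda_{n-1})$. I would perform the orthogonal change of coordinates $x = c_\tau + Q_\tau u$ followed by the anisotropic rescaling $w_i = |\lambda_i|^{1/2} u_i$; up to an affine correction (irrelevant for decoupling by Proposition \ref{prop:new_flatness_affine_invariance}), this transforms $\phi|_\tau$ into a $C^{2,\zeta}$ function $\psi$ on a box $R_\tau$ of side lengths $\nu_i = \rho|\lambda_i|^{1/2} \in [\rho K^{-1/2},\rho]$, satisfying $D^2\psi(w) = \mathrm{diag}(\mathrm{sgn}\,\lambda_i) + O(K^{-1/2})$ and $\|\psi\|_{C^{2,\zeta}}\lesssim 1$. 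After extending $\psi$ to the unit cube $[-1,1]^{n-1}$ by a model paraboloid of the same signature matching $\psi$ on $R_\tau$ to leading order, I would apply \cite{BD2015} in the positive semi-definite case to obtain an $\ell^2(L^p)$ decoupling, or \cite{BD2017} in the indefinite case to obtain an $\ell^p(L^p)$ decoupling, at scale $\delta$ into $(\psi,O(\delta))$-flat $\delta^{1/2}$-cubes in $w$-space at cost $C_\varepsilon \delta^{-\varepsilon}$; restriction to $R_\tau$ yields the decoupling of $R_\tau$.

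Pulling the $w$-cubes back through $u_i = |\lambda_i|^{-1/2} w_i$ and $x = c_\tau + Q_\tau u$ yields rotated parallelograms in $x$-space with side lengths $|\lambda_i|^{-1/2}\delta^{1/2}$ in the direction of the $i$-th eigenvector of $A_\tau$, each $(\phi,O(\delta))$-flat with Fourier support in a common slab of thickness $O(\delta)$. Such flat parallelograms can then be further decoupled into axis-aligned $\delta^{1/2}$-cubes of $\mathcal P_\delta$ via a trivial Plancherel/Hölder argument at cost $\lesssim (\#\text{subcubes})^{1/2-1/p} \lesssim K^{O(1)}$, and this re-tiling preserves the $\ell^2$ structure in the positive semi-definite case. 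Combining decouplings across the $K^{O(1)}$ sub-cubes $\tau$ via Proposition \ref{prop:combine_decoupling} and absorbing all polynomial-in-$K$ losses yields the claimed cost $C_\varepsilon \delta^{-\varepsilon}K^{O(1)}$. The main obstacle is the anisotropic rescaling step: the rescaled box $R_\tau$ is genuinely non-cubic with aspect ratios up to $K^{1/2}$, and the extension to a unit cube together with the final re-tiling into axis-aligned cubes must be executed so that each step contributes at most a polynomial in $K$, requiring careful bookkeeping but using only standard cylindrical and Plancherel-based techniques.
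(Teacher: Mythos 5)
Your overall strategy is the same as the paper's: localize to cubes of side $\sim K^{-1/\zeta}$ on which $D^2\phi$ is essentially constant (cost $K^{O(1)}$), diagonalise and rescale anisotropically by $|\lambda_i|^{-1/2}$ so that the Hessian becomes $\mathrm{diag}(\mathrm{sgn}\,\lambda_i)+\text{small}$, apply \cite{BD2015}/\cite{BD2017}, pull back, and re-tile the resulting $|\lambda_i|^{-1/2}\delta^{1/2}$-rectangles into $\delta^{1/2}$-cubes by a trivial decoupling costing $K^{O(1)}$. The place where your execution has a genuine gap is the step you defer as ``careful bookkeeping'': after rescaling only the domain of the single cube $\tau$, the function $\psi$ lives on a box $R_\tau$ of aspect ratio up to $K^{1/2}$, and to quote Bourgain--Demeter (stated for graphs over the full unit cube with $|\det D^2|\gtrsim 1$ everywhere) you must produce an extension of $\psi$ to $[-1,1]^{n-1}$ satisfying those hypotheses uniformly. ``Matching a model paraboloid to leading order'' does not do this as stated: with no transition region the extended graph is discontinuous at scale $\gg\delta$, and with a cutoff $\chi$ adapted to $R_\tau$ the gluing terms $\nabla\chi\otimes\nabla(\psi-P)$ and $(\psi-P)D^2\chi$ are of size up to $\sim K^{1/2}$ and $\sim K$ respectively (cutoff derivatives scale like the inverse of the thin side $\rho|\lambda_{\min}|^{1/2}$, while $\psi-P$ and its gradient are controlled by the long side $\rho$), so the determinant lower bound --- which \cite{BD2017} needs and which cannot be traded for a $K^{O(1)}$ loss if it degenerates --- is destroyed in the transition region. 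The extension can be salvaged, but only by exploiting the anisotropic structure of the deviation, namely $|\partial_{ij}(\psi-P)|\lesssim c_0K^{-1}|\lambda_i\lambda_j|^{-1/2}$ together with a product cutoff adapted to the side lengths of $R_\tau$; this is precisely the nontrivial computation your sketch omits. Two smaller inaccuracies: the error in the rescaled Hessian is $O(c_0)$ (a small constant, coming from the entry $|\lambda_{\min}|^{-1}\cdot c_0K^{-1}$), not $O(K^{-1/2})$; and $\|\psi\|_{C^{2,\zeta}}\lesssim 1$ is false after the rescaling (the H\"older seminorm grows polynomially in $K$), though only the $C^2$ bound and the determinant bound are actually needed.

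The paper avoids the extension problem entirely by inserting one more trivial decoupling before rescaling: each $\eta$-cube is tiled into boxes whose $i$-th side length is $\eta\sqrt{p\,|a_i|^{-1}}$ (with $p=|a_1\cdots a_{n-1}|$), at a further cost of $K^{O(1)}$; the anisotropic affine map then carries each such box exactly onto $[-1,1]^{n-1}$, and after dividing the function by the single scalar $p\eta^2$ the Hessian is uniformly nondegenerate on the whole unit cube, so \cite{BD2015,BD2017} apply with no gluing. You should either carry out the anisotropic gluing estimates above or, more simply, adopt this extra eigenvalue-adapted tiling, after which your argument coincides with the paper's proof.
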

The key output of this proposition is the $K^{O(1)}$ loss required in Assumption \ref{item:nondegnerate_decoupling} of Theorem \ref{thm:degeneracy_locating_principle}, which is needed in the proof of Theorem \ref{thm:refine_IJ}. It remains open whether we can relax the condition to $\phi$ just being $C^2$.

\begin{proof}
For convenience we write $k=n-1$. Let 
\begin{equation}\label{eqn:eta_appendix}
    \eta:=c_0 K^{-\frac 1 {\zeta}},
\end{equation}
for some small enough $c_0$ to be determined. By losing $\eta^{-O(1)}=K^{O(1)}$, we may trivially decouple $[-1,1]^{k}$ into cubes $R_0$ of side length $\eta$. It then suffices to decouple each $R_0$. Without loss of generality, we just take $R_0=[0,\eta]^{k}$. By affine invariance, we may assume $\phi(0)=0$, $\nabla \phi(0)=0$.

    Write $\phi(x)=Q(x)+E(x)$, where $Q(x)$ is the quadratic form of $\phi$ at $0$, and $|E(x)|\lesssim |x|^{2+\zeta}$. By a rotation, we may assume $Q(x)=\sum_{i=1}^{k}\frac{a_i}2 x_1^2$, where
    \begin{equation}
        \sup_i |a_i|\lesssim 1,\quad p:=|a_1\cdots a_k|\ge K^{-1}.
    \end{equation}
    In particular, each $|a_i|\gtrsim K^{-1}$. We then do another trivial decoupling, partitioning for each $i$ the $i$-th coordinate of $R_0$ into intervals of length $l_i:=\eta\sqrt{p|a_i|^{-1}}$. The loss of this trivial decoupling is $K^{O(1)}$.

    Denote by $R_1$ one smaller rectangle obtained in this way, and denote by $c$ its centre. Denote $x':=(l_1x_1,\dots,l_kx_k)$, and define $\overline \phi=\overline \phi_{R_1}$ by
    \begin{equation*}
        \overline \phi(x)=\phi(x'+c)-\phi(c)-\nabla \phi(c)\cdot x',
    \end{equation*}
    so $\overline \phi(0)=0$, $\nabla\overline \phi(0)=0$, and we can compute
    \begin{align*}
        D^2 \overline \phi(x)
        &=\begin{bmatrix}
            \partial_{11}\phi l_1^2 & \cdots & \partial_{1k}\phi l_1l_k\\
            \vdots & \ddots & \vdots\\
             \partial_{1k}\phi l_1l_k & \cdots & \partial_{kk}\phi l_k^2
        \end{bmatrix}(x'+c)\\
        &=p\eta^2\begin{bmatrix}
            1 + \partial_{11}E  |a_1|^{-1} & \cdots & \partial_{1k}E  |a_1a_k|^{-1/2}\\
            \vdots & \ddots & \vdots\\
             \partial_{1k}E  |a_1a_k|^{-1/2} & \cdots & 1+\partial_{kk}E |a_k|^{-1}
        \end{bmatrix}(x'+c).        
    \end{align*}
    We then let $\tilde \phi(x)=p^{-1}\eta^{-2}\overline \phi(x)$, so that $\tilde \phi(0)=0$, $\nabla \tilde \phi(0)=0$, and 
    \begin{equation*}
        D^2 \tilde \phi(x)=\begin{bmatrix}
            1 + \partial_{11}E  |a_1|^{-1} & \cdots & \partial_{1k}E  |a_1a_k|^{-1/2}\\
            \vdots & \ddots & \vdots\\
             \partial_{1k}E  |a_1a_k|^{-1/2} & \cdots & 1+\partial_{kk}E |a_k|^{-1}
        \end{bmatrix}(x'+c).
    \end{equation*}
    Now we use the assumption that for each pair $i,j$,
    \begin{equation*}
        |\partial_{ij}E(x'+c)|\lesssim |x'+c|^{\zeta}\lesssim \eta^{\zeta},
    \end{equation*}
    so that 
    \begin{equation*}
        |a_ia_j|^{-1/2}|\partial_{ij}E(x'+c)|\lesssim \eta^{\zeta}K^{-1}\lesssim c_0\ll 1,
    \end{equation*}
    using \eqref{eqn:eta_appendix}. Thus, $D^2 \tilde \phi(x)$ is approximately an identity matrix, with $|\det D^2 \tilde \phi(x)|\sim 1$. Also, using $\tilde \phi(0)=0$, $\nabla \tilde \phi(0)=0$ we have $\|\tilde \phi\|_{C^2}\lesssim 1$. 

    We may now apply Bourgain-Demeter \cite{BD2015,BD2017} to $\phi$-decouple $[-1,1]^k$ into $(\tilde \phi, p^{-1}\eta^{-2}\delta)$-flat cubes of side length $p^{-1/2}\eta^{-1}\delta^{1/2}$. Reversing the rescaling from $x$ to $x'$, these cubes become rectangles oriented in some direction, and with all their dimensions bounded below by $\delta^{1/2}$. Lastly, we apply trivial decoupling for a third time to decouple those rectangles into cubes of side length $\delta^{1/2}$, with a loss of the form $K^{O(1)}$. We may assume the cubes are axis-parallel. This finishes the proof.
\end{proof}

\subsection{Refined uniform decoupling inequalities for bivariate polynomials}\label{sec:refined_IJ}

The following theorem is a refinement of \cite{LiYang2023}, which removes the dependence of the overlap function on $\delta$, and provides a lower bound for the dimensions of the decoupling parallelograms. (See Theorem 1.4 of \cite{LiYang2023} and Theorem 2.3 of \cite{Li22}.)

\begin{thm}[Refined uniform decoupling for bivariate polynomials]\label{thm:refine_IJ}
    Let $d\in \N$, $2\le p\le 4$, $0<\delta  \ll_{n,d}  1$, $0 < \varepsilon \ll 1$, and $\phi\in 
\mathcal P_{2,d}$. Then there exists a family of $(\phi,\delta)$-flat parallelograms $\mathcal R_\delta = \mathcal R_\delta(\phi,\varepsilon)$ such that the following holds: 
\begin{enumerate}
    \item $[-1,1]^2$ can be $\phi$-$\ell^p(L^p)$decoupled into $\mathcal{P}_\delta$ at a cost of $ O_\varepsilon( \delta^{-\eps})$; moreover, if $D^2 \phi$ is positive-semidefinite, then this $\ell^p(L^p)$ decoupling can be upgraded to $\ell^2(L^p)$ decoupling. 
    \item $\cup \mathcal R_\delta$ covers $[-1,1]^2$ and is contained in $[-2,2]^2$;
    \item The overlap function of $\mathcal R_\delta$ is $O(1)$; in particular, the cardinality of $\mathcal R_\delta$ is bounded by $\delta^{-O(1)}$.
    \item Each $P_\delta \in \mathcal R_\delta$ has width at least $\delta$.
\end{enumerate}
All implicit constants here may only depend on $d,n,p,\eps$.
\end{thm}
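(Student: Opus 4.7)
\textbf{Proof proposal for Theorem \ref{thm:refine_IJ}.}

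The plan is to deduce Theorem \ref{thm:refine_IJ} as an application of the degeneracy locating principle (Theorem \ref{thm:degeneracy_locating_principle}) to the family $\mathcal M$ of graphs of polynomials $\phi\in\mathcal P_{2,d}$, with $\mathfrak R=\{\mathcal R_0\}$ and $\mathcal A$ the collection of bounded affine bijections on $\R^2$. The degeneracy determinant is $HM_\phi:=\det D^2\phi$, which satisfies all three properties of Definition~\ref{defn:degeneracy_determinant} by Corollary~\ref{cor:hessian}. Rescaling invariance and $\mathcal A$-compatibility are immediate from the fact that $\mathcal P_{2,d}$ is closed under bounded affine rescalings, and the trivial covering property holds by tiling $[-1,1]^2$ by axis-parallel squares. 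The target exponent $p\in[2,4]$ matches the range in which both \cite{BD2015,BD2017} and \cite{LiYang2023,Yang2} provide sharp results.

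The three decoupling hypotheses in Theorem~\ref{thm:degeneracy_locating_principle} will be verified as follows. For hypothesis \ref{item:sublevel_set_decoupling} (sublevel set decoupling for $\{|\det D^2\phi|\le \sigma\}$), apply \cite[Theorem 3.1]{LiYang2023}; the cardinality and overlap bounds there are already independent of $\sigma$ (or can be made so by the same argument used for the main theorem). For hypothesis \ref{item:degnerate_decoupling} (degenerate case $\det D^2\phi\equiv 0$), after an affine transformation the surface is a cylinder over a polynomial curve of degree $\le d$, so the decoupling reduces to the $\ell^p(L^p)$ decoupling for polynomial curves in $\R^2$ from \cite{Yang2}, with $\ell^2(L^p)$ if $D^2\phi$ is positive-semidefinite. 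For hypothesis \ref{item:nondegnerate_decoupling} (nondegenerate case $|\det D^2\phi|\ge K^{-1}$), invoke Proposition~\ref{prop:Bourgain_Demeter_K-1_lowerbound}, which yields the desired $K^{O(1)}\delta^{-\eps}$ loss with $p\le 4=2(n+1)/(n-1)$ for $n=3$; the upgraded $\ell^2(L^p)$ version under positive-semidefiniteness is exactly the one stated there.

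The first refinement (overlap function independent of $\delta$) is the main payoff of the principle: choosing the scale parameter $K=\delta^{-\eps}$ in Section~\ref{sec_decoupling_Sec6} bounds the number of induction steps by $N=O(1/\eps)$, and then Proposition~\ref{prop:iterative_overlap} combines the overlap functions of the three inner decouplings (each already $O(1)$ uniformly in $\delta$) into a final overlap function $B'=B'(\eps)$ independent of $\delta$. The cardinality bound $\delta^{-O(1)}$ follows from the cardinality assumption in Theorem~\ref{thm:degeneracy_locating_principle}, iterated $O(1/\eps)$ times. Containment of $\cup\mathcal P_\delta$ in $[-2,2]^2$ follows from Proposition~\ref{prop:tiling_containedin_2R0}, after starting the iteration with a sufficiently fine initial tiling of $[-1,1]^2$.

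The second refinement, a uniform dimension lower bound of $\delta$, is where the main obstacle lies: the sublevel and degenerate steps can in principle produce parallelograms whose shorter dimension is much smaller than $\delta$. To handle this, I plan to post-process the output $\mathcal P_\delta$ as follows: any $P\in\mathcal P_\delta$ with a dimension $<\delta$ is automatically $(\phi,\delta)$-flat in the direction where it is thin (because the second derivative contribution is then bounded by $\|D^2\phi\|_\infty \cdot\delta^2\ll\delta$), so we may harmlessly thicken $P$ in that direction up to width $\delta$ (intersected with $[-2,2]^2$) without destroying $(\phi,\delta)$-flatness; the thickened family still covers $[-1,1]^2$, remains contained in $[-2,2]^2$, and the overlap increases by at most a constant factor. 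The hard part to verify carefully will be that this thickening preserves the $\ell^p(L^p)$ (and $\ell^2(L^p)$) decoupling constant up to a harmless factor, which follows because the Fourier support of each $f_P$ remains contained in the convex hull of the thickened vertical neighbourhood, so the original decoupling inequality still applies with the same constant.
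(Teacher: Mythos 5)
Your overall route differs from the paper's: you deduce the theorem by feeding $H=\det D^2\phi$ into the degeneracy locating principle (Theorem \ref{thm:degeneracy_locating_principle}) with the three inputs \cite[Theorem 3.1]{LiYang2023}, \cite{Yang2} and Proposition \ref{prop:Bourgain_Demeter_K-1_lowerbound}, whereas the paper's proof works directly with the induction of \cite[Section 5]{Li22}, making two surgical modifications (the number of steps becomes $1/\eps$, the iteration stops once $|\det D^2|\gtrsim\delta^{\eps}$, and the last step is Proposition \ref{prop:Bourgain_Demeter_K-1_lowerbound}). For conclusions (1)--(3) your black-box route is consistent with how the paper itself advertises Theorem \ref{thm:degeneracy_locating_principle} (the choice $K=\delta^{-\eps}$, $N=O(1/\eps)$ steps, Propositions \ref{prop:iterative_overlap} and \ref{prop:tiling_containedin_2R0}), and modulo checking the uniform overlap/cardinality hypotheses for the sublevel-set input it is a reasonable alternative presentation.

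The genuine gap is in conclusion (4), the lower bound $\ge\delta$ on the dimensions. Theorem \ref{thm:degeneracy_locating_principle} gives no such bound, and your post-processing by thickening does not work as stated. First, the claim that ``the original decoupling inequality still applies with the same constant'' is backwards: a function adapted to a thickened piece $P'$ has Fourier support in $N^\phi_\delta(P')$, and when the original $P$ has width $w\ll\delta$ in the thin direction this set is \emph{not} contained in any $O(1)$-dilate of $\mathrm{Co}(N^\phi_\delta(P))$ (the dilate only reaches $O(w)$ in that direction, while the thickening moves by $\sim\delta$ along the tangent plane, and the $\sim\delta$ transverse edge cannot absorb a horizontal displacement of size $\delta$). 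So decoupling into the thickened family is a genuinely different inequality and does not follow formally from the one you obtained; covering $P'$ by the original pieces instead costs a factor controlled only by the (a priori unbounded) number of thin pieces per $\delta$-strip. Second, for the same reason the assertion that the overlap grows by at most a constant is unjustified: if the output contains many parallel slabs of width $\ll\delta$ inside a common $\delta$-strip (which nothing in Theorem \ref{thm:degeneracy_locating_principle} rules out), thickening each to width $\delta$ makes the overlap blow up like $\delta/w$. (A smaller point: the ``automatic flatness'' of the thickened piece is true, but not for the reason you give --- the term to control is the mismatch $t(\partial_v\phi(x)-\partial_v L)$, handled by re-choosing the affine approximation, not the $\|D^2\phi\|\delta^2$ term.) The paper avoids all of this by building the dimension bound into the construction: each of the $O(1/\eps)$ steps of the \cite{Li22} iteration rescales by factors bounded below by fixed powers of $\delta^{\eps}$, and the terminal application of Proposition \ref{prop:Bourgain_Demeter_K-1_lowerbound} produces caps whose dimensions, after undoing the rescalings, are $\ge\delta$. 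If you want to keep your route, you would need to track dimension lower bounds quantitatively through the induction in Theorem \ref{thm:degeneracy_locating_principle} (strengthening its hypotheses to record the shortest dimension produced at each step), rather than repair the output afterwards.
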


\begin{proof}
Let $\mathcal M$ be the collection of graphs of all polynomials $\phi\in \mathcal P_{2,d}$ over $[-1,1]^2$. Let $\mathfrak R=\{\mathcal R_0\}$ where $\mathcal R_0$ is the collection of all parallelograms contained in $[-1,1]^2$. Let $\mathcal A$ be the collection of all affine bijections on $\R^2$. Let $HM_\phi=\det D^2 \phi$.

By Theorem \ref{thm:degeneracy_locating_principle}, it suffices to check the following ingredients:
    \begin{itemize}
    \item $(\mathcal M,\mathfrak R)$ is $\mathcal A$-rescaling invariant: this follows from Proposition \ref{prop:polycoeff}.
    \item $(\mathcal M,\mathfrak R)$ is $\mathcal A$-compatible: this is trivial, as $\mathcal R_0$ is the collection of all parallelograms.
    \item $H$ is a $\mathfrak R$-regular degeneracy determinant: this follows from Corollary \ref{cor:hessian}.
    \item Trivial covering property: this is trivial, since every cube $R_0$ of side length $K^{-1}$ belongs to $\mathcal R$.
    \item Sublevel set decoupling: this follows from Theorem \ref{thm:2D_general_uniform_IJ} below. Note that it gives the required width lower bound.
    \item Totally degenerate decoupling: this follows from \cite[Theorem 1.4]{Yang2}. Note that it gives the required width lower bound, since it even gives a $\delta^{1/2}$ lower bound when decoupling at scale $\delta$. 
    \item Nondegenerate decoupling: this follows from Proposition \ref{prop:Bourgain_Demeter_K-1_lowerbound}. Note that it gives the required width lower bound, since it even gives a $\delta^{1/2}$ lower bound when decoupling at scale $\delta$. 
    \item Width lower bounds: the respective width lower bounds have been discussed right above. To check Condition (*), the lower dimensional decoupling follows from \cite[Theorem 1.4]{Yang2}. Also, by Proposition \ref{prop:constant_partition} below, we can slightly improve the width lower bound $\gtrsim \delta$ to $\ge \delta$ (or even $\gg_{d,n} \delta$), when $\delta\ll_{d,n}1$.
    \end{itemize}

\end{proof}
{\it Remark. }With the overlap bound independent of $\delta$, one drawback is that we cannot remove the dependence of the cover $\mathcal R_\delta$ on $\eps$, which we managed to do in \cite[Section 2]{LiYang2023}. However, we anticipate that the overlap bound being independent of $\delta$ is more important. See, for example, \cite[Section 5]{GMO24}.

The following theorem is a slightly refined version of \cite[Theorem 3.1]{LiYang2023} by adding a width lower bound. It can be deduced directly from \cite[Proposition 5.1]{Li22}.
\begin{thm}\label{thm:2D_general_uniform_IJ}
For each $d\geq 0$ and $\varepsilon>0$, there is a constant $C_{\varepsilon,d}$ such that the following holds. For any polynomial $\phi\in \mathcal P_{2,d}$, any $0<\delta<1$, there exists a cover $\mathcal R_\delta$ of the set
$$
\{(x,y)\in [-1,1]^2:|\phi(x,y)|\le\delta\}
$$
by rectangles $R$ such that the following holds:
\begin{enumerate}
    \item $|\phi|\lesssim \delta$ on each $R$.
    \item Each $R$ has width at least $\delta$, and the overlap function of  $\mathcal R_\delta$ depends only on $d$.
    \item For any $2\le p\le 6$, $ \{(x,y)\in [-1,1]^2:|\phi(x,y)|\le\delta\}$ can be $\ell^2(L^p)$ decoupled into $\mathcal R_\delta$ at cost $C_{\eps,d}\delta^{-\eps}$.
\end{enumerate}

\end{thm}

The following simple proposition shows that decoupling for polynomials is robust with respect to constants ($C_i$ below) depending only on $n,d$.
\begin{prop}\label{prop:constant_partition}
   For every $C_1$, there exists $C_2=C_2(C_1)$ such that the following holds. Let $\phi\in \mathcal P_{n,d}$, and assume that $R\sub [-1,1]^n$ is a parallelogram of width $w\le C_2^{-1}$. If $R$ is $(\phi,C_3 w)$-flat (in any sense of Definition \ref{defn:new_flatness}), then we can partition $R$ into $O_{C_1,C_2,C_3}(1)$ smaller parallelograms $R'$ of width $w$, such that each $R'$ is $(\phi,C_1^{-1}w)$-flat (in any sense of Definition \ref{defn:new_flatness}).
\end{prop}
This means that to decouple at scale $w\ll 1$ with width lower bound $w$, it suffices to decouple at scale $O(w)$ with width lower bound $w$ (or equivalently, decouple at scale $w$ with width lower bound $\gtrsim w$).

We prove a stronger lemma below. Applying Lemma \ref{lem:C2_flatness} with $k=m=n-1$ and Proposition \ref{prop:F1F2F3_equivalent} gives Proposition \ref{prop:constant_partition}.
\begin{lem}\label{lem:C2_flatness}
For every $C_1\geq 1$, there exists $C_2=C_2(C_1)$ such that the following holds. Suppose $\phi:[-1,1]^k\to \R^{n-k}$ satisfies $\norm{\phi}_{C^2}\le 1$. Assume that $R\sub [-1,1]^k$ is a parallelogram of width $w\le C_2^{-1}$. If $R$ is $(\phi,C_3 w)$-flat, then we can partition $R$ into $O_{C_1,C_2,C_3}(1)$ smaller parallelograms $R'$ of width $w$, such that each $R'$ is $(\phi,C_1^{-1}w)$-flat.

Here and in the proof below, by flatness we mean flatness in the \underline{third} sense as in \eqref{eqn:F3}, in some dimension $m\in [k,n-1]$.
\end{lem}

\begin{proof}
    Without loss of generality, write $R=\bar R\times [-w/2,w/2]$. Define
    \begin{equation}\label{eqn:Jul_3}
        \psi(x,y)=\phi\circ \lambda_{R}(x,y)=\phi(\lambda_{\bar R}x,w y).
    \end{equation}
Since $R$ is $(\phi,C_3w)$-flat in the third sense, $[-1,1]^k$ is $(\psi,C_3 w)$-flat in the third sense, by Proposition \ref{prop:new_flatness_affine_invariance}. But by Lemma \ref{lem:C2flat_entire}, there exist $\psi:[-1,1]^k\to [-1,1]^{n-k}$ with $\norm{\psi}_{C^2}\le 1$, an affine transformation $L:\R^k\to \R^{n-k}$ with norm bounded by $O_n(1)$, and an orthonormal matrix $U : \R^{n-k} \to \R^{n-k}$, such that    \begin{equation*}
        U\phi = C_3\vec w \psi+L,
    \end{equation*}
where $\vec w = (w_1,\dots,w_{n-k})=O(1)$ and $w_i \sim_n w\sim_n \min_i w_i$ when $1\leq i\leq n-m$. Without loss of generality, we assume $U=I_{n-k}$ and $L=0$, so $\phi=\vec w \psi$.

Now we partition $[-1,1]^k$ into parallelograms of the form $Q\times [-1,1]$, where $Q$ is a cube of side length $c\sim_{C_2,C_3,n} 1$ to be determined. Define
\begin{equation*}
    \eta(x,y)=\psi(\lambda_Q x,y).
\end{equation*}
By Proposition \ref{prop:new_flatness_affine_invariance} and Lemma \ref{lem:C2flat_entire} again,
it suffices to prove $\norm{\eta_i}_{C^2}\ll_{n} C_1^{-1}C_3^{-1}$, $1\le i\le n-m$.

Indeed, by \eqref{eqn:Jul_3}, we have $\norm{\partial_{yy} \eta_i}_\infty=O_{n}(w)$, which is $\ll_{n} C_1^{-1}C_3^{-1}$ if $C_2$ is small enough, since $w\le C_2^{-1}$.

Also, $\norm{\partial_{y}\nabla_x\eta_i}_\infty=O_{C_1,n}(c)\ll_{n} C_1^{-1}C_3^{-1}$, and  $\norm{D^2_x\eta_i}_\infty=O_{C_1,n}(c^2)\ll_{n} C_1^{-1}C_3^{-1}$. Then the result follows if $c$ is small enough. 
\end{proof}
\bibliographystyle{alpha}
\bibliography{reference}

\end{document}